\documentclass[reqno]{amsart}
\usepackage{amsfonts}
\usepackage{mathrsfs}
\usepackage{color}
\usepackage{cite}
\usepackage{amscd}
\usepackage{latexsym}
\usepackage{amsfonts}
\usepackage{graphicx}
\usepackage{CJK,indentfirst,amsmath,amsfonts,amssymb,amsthm,cite,cases, subeqnarray,setspace,hyperref}
\allowdisplaybreaks[4]

\makeatletter
\@namedef{subjclassname@2020}{\textup{2020} Mathematics Subject Classification}
\makeatother

\oddsidemargin .5cm \evensidemargin .5cm
\marginparwidth 106pt \marginparsep 7pt \topmargin 0.5cm
\headsep 19pt
\headheight 13pt
\textheight 595pt
\textwidth  400pt  
\sloppy
\begin{document}
\title[The bidirectional NLS approximation for the Euler-Poisson system]
{The bidirectional NLS approximation for the one-dimensional Euler-Poisson system}
\author{Huimin Liu, Yurui Lu and Xueke Pu}

\address{Huimin Liu \newline
Faculty of Applied Mathematics, Shanxi University of Finance and Economics, Taiyuan 030006, P.R.China}
\email{hmliucqu@163.com}

\address{Yurui Lu, Xueke Pu \newline
School of Mathematics and Information Sciences, Guangzhou University, Guangzhou 510006, P.R. China}
\email{luyuruigzhu@163.com; xuekepu@gzhu.edu.cn}

\thanks{The first author H. Liu was supported by the Basic Research Project of Shanxi Province of China under 202403021211156 and 202303021211140, the third author X. Pu was supported by NSFC (Grant 12471220, 12431003) and the Natural Science Foundation of Guangdong Province of China under 2019A1515012000.}
\subjclass[2020]{35Q35; 35Q55} 
\keywords{bidirectional NLS approximation; ion-acoustic plasma; Euler-Poisson system; normal-form transformation}

\begin{abstract}
The nonlinear Schr\"{o}dinger (NLS) equation is known as a universal equation describing the evolution of the envelopes of slowly modulated spatially and temporarily oscillating wave packet in various dispersive systems. In this paper, we prove that under a certain multiple scale transformation, solutions to the Euler-Poisson system can be approximated by the sums of two counter-propagating waves solving the NLS equations. It extends the earlier results [Liu and Pu, Comm. Math. Phys., 371(2), (2019)357-398], which justify the unidirectional NLS approximation to the Euler-Poisson system for the ion-coustic wave. We demonstrate that the solutions could be convergent to two counter-propagating wave packets, where each wave packet involves independently as a solution of the NLS equation. We rigorously prove the validity of the NLS approximation for the one-dimensional Euler-Poisson system by obtaining uniform error estimates in Sobolev spaces. The NLS dynamics can be observed at a physically relevant timespan of order $\mathcal{O}(\epsilon^{-2})$. As far as we know, this result is the first construction and valid proof of the bidirectional NLS approximation.
\end{abstract}

\maketitle \numberwithin{equation}{section}
\newtheorem{proposition}{Proposition}[section]
\newtheorem{theorem}{Theorem}[section]
\newtheorem{lemma}[theorem]{Lemma}
\newtheorem{remark}[theorem]{Remark}
\newtheorem{hypothesis}[theorem]{Hypothesis}
\newtheorem{definition}{Definition}[section]
\newtheorem{corollary}{Corollary}[section]
\newtheorem{assumption}{Assumption}[section]

\tableofcontents

\section{\textbf{Introduction}}
\setcounter{section}{1}\setcounter{equation}{0}

The two-fluid Euler-Poisson system is a classical  model to describe dynamics of two separate compressible fluids of ions and electrons interacting with their self-consistent electrostatic field. Let $n_{i}(t,x)$ and $v_{i}(t,x)$ (resp. $n_{e}(t,x)$ and $v_{e}(t,x)$) be the number density and the velocity of ions (resp. electrons) at time $t$ and position $x$, respectively. Let $\phi$ be the electric potential that satisfy the Poisson equation. In one dimension, the two-fluid Euler-Poisson system consists of the hydrodynamic equations coupled to the Poisson equation
\begin{equation}
\begin{cases}\label{twoEP}
\partial_{t}n_{i}+\partial_{x}(n_{i}v_{i})=0,\\
n_{i}M_{i}(\partial_{t}v_{i}+v_{i}\partial_{x}v_{i})+T_{i}\partial_{x}P_{i}
+en_{i}\partial_{x}\phi=0,\\
\partial_{t}n_{e}+\partial_{x}(n_{e}v_{e})=0,\\
n_{e}M_{e}(\partial_{t}v_{e}+v_{e}\partial_{x}v_{e})+T_{e}\partial_{x}P_{e}
-en_{e}\partial_{x}\phi=0,\\
-\partial_{x}^{2}\phi=4\pi e(n_{i}-n_{e}),
\end{cases}
\end{equation}
where $e>0$ is the electron charge, $M_{i}$ and $T_{i}$ (resp. $M_{e}$ and $T_{e}$) are the mass and effective temperature of an ion (resp. electron). Here the pressure laws $P_{i}=P_{i}(n_{i})$ and $P_{e}=P_{e}(n_{e})$ are assumed to be smooth and strictly increasing. 

It is well-known that the ratio of the electron mass and the ion mass $\frac{M_{e}}{M_{i}}\ll1$. That means the much heavier ions can be treated approximately as motionless with a constant density $n_{i}(t,x)\equiv n_{0}$. Therefore a reduced model of \eqref{twoEP} to describe the electron fluid dynamics in an ion background (Langmuir waves) in 1D is as follows
\begin{subequations}\label{eEP}
\begin{numcases}{(eEP)}
\partial_{t}n_{e}+\partial_{x}(n_{e}v_{e})=0,\\
n_{e}M_{e}(\partial_{t}v_{e}+v_{e}\partial_{x}v_{e})+T_{e}\partial_{x}P_{e}
-en_{e}\partial_{x}\phi=0,\\
-\partial_{x}^{2}\phi=4\pi e(n_{0}-n_{e}).
\end{numcases}
\end{subequations}
On the other hand, under the zero electron mass assumption $\frac{M_{e}}{M_{i}}\rightarrow0$, the electron density satisfies the famous Boltzmann relation
\begin{equation*}
\begin{split}
n_{e}=n_{0}e^{e\phi/T_{e}}
\end{split}
\end{equation*}
from the momentum equation for electrons in \eqref{twoEP}. The one dimensional Euler-Poisson system can then be written as
\begin{subequations}\label{iEP}
\begin{numcases}{(iEP)}
\partial_{t}n_{i}+\partial_{x}(n_{i}v_{i})=0,\\
n_{i}M_{i}(\partial_{t}v_{i}+v_{i}\partial_{x}v_{i})+T_{i}\partial_{x}P_{i}
+en_{i}\partial_{x}\phi=0,\\
-\partial_{x}^{2}\phi=4\pi e(n_{i}-n_{0}e^{e\phi/T_{e}}),
\end{numcases}
\end{subequations}
describing the motion of an ion-acoustic plasma. We often call the ion-acoustic plasma hot for $T_{i}>0$ and cold for $T_{i}=0$, respectively. We take the pressure $P_i(n_i)=n_i$ in this paper.

In recent years, there are a large number of studies on the global existence of solutions and approximations to the Euler-Poisson system. The Euler-Poisson system for ions as well as electrons are both important PDE models arising in plasma physics, share some basic difficulties as other important fluid models, and are far from being well understood. For the electron fluid in Euler-Poisson system, 3D shock waves do develop for large perturbations of the constant state equilibrium of $n_{e}\equiv n_{0}, v_{e}\equiv0$ in \eqref{eEP} because of the quasi-linear hyperbolic nature (see \cite{GTZ99}). On the other hand, a Klein-Gordon effect in the linearization system around $n_{e}\equiv n_{0}, v_{e}\equiv0$ enhances linear decay rate, so the 3D global irrotational solutions with small velocity were obtained in the seminal paper  of Guo \cite{Guo98}. In addition, the global smooth irrotational small solutions are constructed in 2D independently for \eqref{eEP} in \cite{IP13,Jang12,JLZ14,LW14}. Furthermore, Guo, Han and Zhang \cite{GHZ17} finally completely settled this problem and proved that no shocks form for the 1D Euler-Poisson system for electrons \eqref{eEP}. For the ion fluid in Euler-Poisson system, the nonlinear Poisson equation for the electric potential in \eqref{iEP} presents a new mathematical challenge to prevent shock formation near $n_{i}\equiv n_{0}, v_{i}\equiv0$. The 3D global smooth irrotational solutions with small amplitude was constructed in \cite{GP11}, while it remains an outstanding problem if shocks can develop for \eqref{iEP} in 2D and 1D of \eqref{iEP}.

For the long wave approximation, the rigorous justification of the Korteweg-de Vries (KdV) equation \cite{A} from the one dimensional Euler-Poisson system was proved by Guo and Pu \cite{GP14}, for both the hot and cold ion plasmas. Since then, the long wavelength limit for the Euler-Poisson system and related models are extensively studied. In particular,  modified KdV (mKdV) equation in 1D, Kadomtsev-Petviashvili-II (KP-II) equation in 2D and Zakharov-Kuznetsov (ZK) equation in 3D from the Euler-Poisson system for ions can be found in \cite{PX21,P,LLS}. The long wavelength limit of the Vlasov-Poisson system was studied by Han-Kwan \cite{HK13} and was recently generalized to two-directional approximation in a fixed torus by Yang and Zhao \cite{YZ24}. For a reduced quantum two-fluid Euler-Poisson system, the long wavelength limit was proved rigorously, deriving the quantum KdV equation in 1D and the KP limit in 2D for both cold and hot plasma by the authors \cite{LP,LP23}. There are also some results for the bidirectional long wavelength limit. Schneider \cite{S98} showed the KdV approximation results for a Boussinesq equation,  that the long wavelength solutions split into two pieces, one a right moving wave train and one a left moving wave train. This bidirectional KdV approximation result \cite{S98} was further extended by giving a significantly better high order approximation than the KdV equation alone. See \cite{WW02}. The bidirectional long wave limit of the Euler-Korteweg system was studied in \cite{BGC18}, which provides a new method to show the long-wave approximation of the solutions to KdV equation or Burgers equation according to the different regimes of the parameters. For the Euler-Poisson system for ions, Liu and Yang \cite{LY20} justified rigorously the bidirectional long wave approximation, demonstrating that the solutions could be convergent to two wave packets in opposite directions, where each wave packet involves independently as a solution of the Burgers or KdV equation.

Since the oscillatory solution of the KdV equation is nothing but a solution of the nonlinear Schr\"odinger (NLS) equation in the small wave number region with frequency induced from the dispersive relation, considering the NLS approximation for the Euler-Poisson system is a more interesting problem. The authors \cite{LP19} proved rigorously the NLS approximation to the Euler-Poisson system for the first time for the ion-acoustic plasma by applying normal-form transformations \cite{Shatah85} and constructing an appropriate energy function. In the paper \cite{LP19}, for the ion Euler-Poisson system \eqref{iEP}, we assume
\begin{equation}
\begin{split}\label{fom}
\begin{pmatrix} n_{i}-1 \\ v_{i} \end{pmatrix}=\epsilon\Psi_{NLS}+\mathcal{O}(\epsilon^{2}),
\end{split}
\end{equation}
with
\begin{equation}
\begin{split}\label{LS}
\epsilon\Psi_{NLS}=\epsilon A(X+c_{g}T,\epsilon T)E^{1}\varrho(k_{0})+c.c.,
\end{split}
\end{equation}
then the NLS equation is derived for the complex amplitude $A$,
\begin{equation}
\begin{split}\label{A}
\partial_{\theta}A=i\mu_{1}\partial_{X_{+}}^{2}A+i\mu_{2}A|A|^{2},
\end{split}
\end{equation}
where $T=\epsilon t\in\mathbb{R}, \ X=\epsilon x\in\mathbb{R}$, $E^{1}=e^{i(k_{0}x+\omega_{0}t)}$.  Here, in \eqref{A}, the coefficients $\mu_{j}=\mu_{j}(k_{0})\in \mathbb{R}$ with $j\in\{1,2\}$, $\varrho(k_{0})=(1,-q(k_{0}))^{T}$, $\theta=\epsilon T=\epsilon^{2}t$ is the slow time scale and $X_{+}=X+c_{g}T=\epsilon(x+c_{g}t)$ is the slow spatial scale. The approximation \eqref{fom} is also called the modulation approximation. In this approximation, $0<\epsilon\ll1$ is a small perturbation parameter, $\omega_{0}>0$ is the basic temporal wave number associated to the basic spatial wave number $k_{0}>0$ of the underlying temporally and spatially oscillating wave train $e^{i(k_{0}x+\omega_{0}t)}$, $c_{g}$ is the group velocity and `c.c.' denotes the complex conjugate. The NLS \eqref{A} is derived in order to describe the slow modulations in time and in space of the wave train $e^{i(k_{0}x+\omega_{0}t)}$, with time and space scales of the modulations being $\mathcal{O}(1/\epsilon^{2})$ and $\mathcal{O}(1/\epsilon)$, respectively. For the ion Euler-Poisson system \eqref{iEP}, the basic spatial wave number $k=k_{0}$ and the basic temporal wave number $\omega=\omega_{0}$ are related via the following linear dispersion relation
\begin{equation}
\begin{split}\label{equation3}
\omega(k)=k\sqrt{\frac{2+k^{2}}{1+k^{2}}}=k\widehat{q}(k), \ \ \ \ \ \ \ \widehat{q}(k)=\sqrt{\frac{2+k^{2}}{1+k^{2}}}.
\end{split}
\end{equation}
From the dispersion relation \eqref{equation3}, the group velocity $c_{g}=\omega'(k_{0})$ of the wave packet can be computed.

Indeed, in the past decades, various NLS approximation results have been proved. In the absence of quadratic terms a simple application of Gronwall's inequality yields the validity of modulation equations for extended systems with cubic nonlinearities \cite{KS}. However, if the dispersive system contains quasilinear quadratic terms, the appearance of the $\mathcal{O}(\epsilon)$ terms and the loss of derivatives make it difficult to justify the NLS dynamics on the correct timescale $\mathcal{O}(\epsilon^{-2})$, because of the loss of double derivatives for the transformed system and the occurrence of resonances. The idea is to apply the normal-form transformations and other techniques. By a near identity change of variables, the terms of order $\mathcal{O}(\epsilon)$ are eliminated if a non-resonance condition holds \cite{K}. For dispersive systems with quasilinear quadratic terms that lose only half a derivative and whose dispersive relation does not satisfy the non-resonance condition, the NLS approximation can be proved by using normal-form transformations and the Cauchy-Kowalevskaya theorem, such as the water wave systems in \cite{S,D,D21}. For dispersive systems with the quasilinear quadratic terms that lose one derivative but without resonances, the uniform error estimate can be obtained by using the normal-form transformation to construct a modified energy, such as the quasilinear quadratic Klein-Gordon equation \cite{D17}. While in the Euler-Poisson system \eqref{iEP} for ion-acoustic plasma, resonances occur and the quasilinear quadratic terms lose one derivative, the authors used the corresponding normal-form transformations twice to deal with the resonances and then constructed a modified energy functional to obtain uniform error estimates. In this way, we proved in \cite{LP19} rigorously that the solutions $(n_{i}-1, v_{i})$ of the ion Euler-Poisson system \eqref{iEP} can be approximated by the oscillating wave packet $\epsilon\Psi_{NLS}$ whose complex amplitude satisfies the NLS equation \eqref{A}. Moreover, we justified the NLS approximation for the cold ion Euler-Poisson system, i.e., $T_i=0$ in \eqref{iEP}, where the resonances occur, the quasilinear quadratic terms lose one derivative, and the linearized operators do not have any regularity \cite{LBP24}. For some special cases, when the quasilinear quadratic terms lose more than one derivative but all linear operators have the same regularity properties, the NLS approximation was also obtained by using a modified energy based on the normal-form transformations \cite{D21,H22}.

All the above results are unidirectional NLS approximations. Note that the ansatz \eqref{fom} with \eqref{LS} leads to waves moving to the left. By replacing the vector $\varrho(k_{0})=(1,-q(k_{0}))^{T}$ with $\sigma(k_{0})=(1,q(k_{0}))^{T}$ as well as $\omega_{0}$ and $c_{g}$ with $-\omega_{0}$ and $-c_{g}$, we will obtain waves moving to the right. I.e., if we assume
\begin{equation}
\begin{split}\label{fomm}
\begin{pmatrix} n_{i}-1 \\ v_{i} \end{pmatrix}=\epsilon\Phi_{NLS}+\mathcal{O}(\epsilon^{2}),
\end{split}
\end{equation}
with
\begin{equation}
\begin{split}\label{PLS}
\epsilon\Phi_{NLS}=\epsilon B(X-c_{g}T,\epsilon T)F^{1}\sigma(k_{0})+c.c.,
\end{split}
\end{equation}
then the NLS equation can be derived for the complex amplitude $B$,
\begin{equation}
\begin{split}\label{B}
\partial_{\theta}B=i\nu_{1}\partial_{X_{-}}^{2}B+i\nu_{2}B|B|^{2},
\end{split}
\end{equation}
where $X_{-}=X-c_{g}T=\epsilon(x-c_{g}t)$, $F^{1}=e^{i(k_{0}x-\omega_{0}t)}$, $\nu_{j}=\nu_{j}(k_{0})\in \mathbb{R}$ with $j\in\{1, \ 2\}$ and $\sigma(k_{0})=(1, \ q(k_{0}))^{T}$. Similarly, by using the same method as done in \cite{LP19} we can prove the solutions $(n_{i}-1, v_{i})$ of \eqref{iEP} can be approximated by the right going oscillating wave packet $\epsilon\Phi_{NLS}$ whose complex amplitude satisfies the NLS equation \eqref{B}, without any new essential difficulties.

In this paper, we are concerned about whether or not the solutions $(n_{i}-1, v_{i})$ of \eqref{iEP} can be approximated by $\epsilon\Psi_{NLS}+\epsilon\Phi_{NLS}$, approximations that admit the left going and right going oscillating wave packets at the same time.  However, as far as the authors know, all the existing NLS type approximation results are unidirectional. 
This paper contains the first construction and valid proof of the bidirectional NLS approximation results with error estimates in Sobolev spaces on the correct time scale $\mathcal{O}(\epsilon^{-2})$, taking the 1D Euler-Poisson system \eqref{iEP} for ions as an example. That is to say, we prove first formally and then rigorously that the solutions of the 1D Euler-Poisson system for ions can be approximated by two counter-propagating oscillating wave packets, each of which involves independently as a solution of the NLS equation. See Theorem \eqref{Thm1} below. We believe that similar results can be obtained for the afore mentioned systems, such as the water wave system and the quasilinear quadratic Klein-Gordon equation. The mathematical treatments will be much more complex than the unidirectional NLS approximation in \cite{LP19}.

To obtain the bidirectional NLS approximation of the ion Euler-Poisson system \eqref{iEP}, 
we let
\begin{equation}
\begin{split}\label{foma}
\begin{pmatrix} n_{i}-1 \\ v_{i} \end{pmatrix}=\epsilon\Psi_{NLS}+\epsilon\Phi_{NLS}+\mathcal{O}(\epsilon^{2}),
\end{split}
\end{equation}
with
\begin{equation}
\begin{cases}\label{BLS}
&\epsilon\Psi_{NLS}=\epsilon A(X+c_{g}T),\epsilon T)E^{1}\varrho(k_{0})+c.c.,\\
&\epsilon\Phi_{NLS}=\epsilon B(X-c_{g}T),\epsilon T)F^{1}\sigma(k_{0})+c.c.
\end{cases}
\end{equation}
We will show that the following two independent NLS equations can be derived
\begin{equation}
\begin{cases}\label{fAB}
\partial_{\theta}A=i\mu_{1}\partial_{X_{+}}^{2}A+i\mu_{2}A|A|^{2},\\
\partial_{\theta}B=i\nu_{1}\partial_{X_{-}}^{2}B+i\nu_{2}B|B|^{2},
\end{cases}
\end{equation}
and prove the following main theorem in this paper.
\begin{theorem}\label{Thm1}
Fix $s_{N}\geq6$. For all $k_{0}>0$, $C_{1}>0$ and $T_{0}>0$, there exist $C_{2}>0$ and $\epsilon_{0}>0$ such that for all solutions $A$ and $B$ of the NLS equations \eqref{fAB} with
\begin{equation*}
\begin{split}
\sup_{T\in[0,T_{0}]}\|A(\cdot,T),B(\cdot,T)\|_{H^{s_{N}}(\mathbb{R},\mathbb{C})}\leq C_{1},
\end{split}
\end{equation*}
the following holds. For all $\epsilon\in(0,\epsilon_{0})$, there are solutions
\begin{equation*}
\begin{split}
\begin{pmatrix} n_{i}-1 \\ v_{i} \end{pmatrix}\in \big(C([0,T_{0}/\epsilon^{2}],H^{s_{N}}(\mathbb{R},\mathbb{R}))\big)^{2}
\end{split}
\end{equation*}
of the ion Euler-Poisson system \eqref{iEP} that satisfy
\begin{equation*}
\begin{split}
\sup_{t\in[0,T_{0}/\epsilon^{2}]}\bigg\|\begin{pmatrix} n_{i}-1 \\ v_{i} \end{pmatrix}-\epsilon\Psi_{NLS}(\cdot,t)-\epsilon\Phi_{NLS}(\cdot,t)\bigg\|_{\big(H^{s_{N}}(\mathbb{R},\mathbb{R})\big)^{2}}
\leq C_{2}\epsilon^{3/2}.
\end{split}
\end{equation*}
\end{theorem}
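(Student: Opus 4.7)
The plan is to adapt the strategy developed in \cite{LP19} for the unidirectional case, with the principal new ingredient being the careful treatment of interactions between the left-going packet $\epsilon\Psi_{NLS}$ and the right-going packet $\epsilon\Phi_{NLS}$. First, I would carry out the formal derivation by substituting the ansatz \eqref{foma}--\eqref{BLS} into \eqref{iEP} and collecting terms according to the Fourier modes $E^{\ell}F^{m}=e^{i(\ell+m)k_{0}x+i(\ell-m)\omega_{0}t}$ and powers of $\epsilon$. At order $\epsilon$ one recovers the dispersion relation \eqref{equation3} and the eigenvectors $\varrho(k_{0})$, $\sigma(k_{0})$; at order $\epsilon^{2}$ the solvability conditions at the $E^{1}$ and $F^{1}$ modes force the group velocity $c_{g}=\omega'(k_{0})$, while the second-harmonic and mean modes as well as the new cross modes $E^{1}F^{1}=e^{2ik_{0}x}$ and $E^{1}\bar{F}^{1}=e^{2i\omega_{0}t}$ can be inverted algebraically provided the non-resonance conditions $\omega(2k_{0})\neq 0$ and $2\omega_{0}\neq\omega(0)=0$ hold. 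At order $\epsilon^{3}$, the solvability conditions at the $E^{1}$ and $F^{1}$ modes independently yield the two decoupled NLS equations \eqref{fAB}.

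The next step is to construct an improved approximation $\epsilon\Psi_{a}+\epsilon\Phi_{a}$ by adding corrector terms at higher Fourier modes up to sufficiently high order in $\epsilon$ so that the residual obtained from substituting it into \eqref{iEP} is $\mathcal{O}(\epsilon^{7/2})$ in $H^{s_{N}}$. A crucial observation distinguishing the bidirectional setting from the unidirectional one is that each corrector now contains products of $A$-modes and $B$-modes (e.g.\ terms like $AB\,e^{2ik_{0}x}$ and $A\bar{B}\,e^{2i\omega_{0}t}$). One must verify that the linearized operator is invertible on each of these cross modes, which amounts to checking an enlarged set of non-resonance conditions of the form $\omega((\ell+m)k_{0})\neq(\ell-m)\omega_{0}$.

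Setting $(n_{i}-1,v_{i})^{T}=\epsilon\Psi_{a}+\epsilon\Phi_{a}+\epsilon^{3/2}R$, I would derive the quasilinear error equation for $R$, whose linear part is governed by the dispersive symbol of \eqref{equation3} and whose right-hand side consists of $\mathcal{O}(\epsilon)$ quadratic terms losing one derivative plus smaller cubic and residual contributions. As in \cite{LP19}, a direct energy estimate fails because of the derivative loss and the resonance of the $E^{1}$ and $F^{1}$ modes with the mean mode. The hard part will be to extend the two-step normal-form transformation of \cite{LP19} to the bidirectional setting: the first normal form removes the $\mathcal{O}(\epsilon)$ quadratic resonant terms by a near-identity change of unknown $R\mapsto\widetilde{R}$, whose kernel now involves a much richer family of denominators of the form $\omega(k+k')\pm\omega(k)\pm\omega(k')$ coming from products of the two wave packets; the second normal-form transformation is then used to annihilate the cubic obstructions to the energy estimate.

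With the transformed equation in hand, one constructs a modified energy $\mathcal{E}_{s}(\widetilde{R})$ equivalent to $\|\widetilde{R}\|_{H^{s_{N}}}^{2}$ whose derivative satisfies an estimate of the form $\frac{d}{dt}\mathcal{E}_{s}\leq C\epsilon^{2}\mathcal{E}_{s}+C\epsilon^{2}$, which by Gronwall's inequality yields $\|\widetilde{R}\|_{H^{s_{N}}}\leq C$ uniformly on $[0,T_{0}/\epsilon^{2}]$; undoing the normal-form transformation then produces the claimed $\mathcal{O}(\epsilon^{3/2})$ bound. I expect the main technical obstacle to be precisely the verification and uniform control of the enlarged non-resonance conditions: the two counter-propagating packets produce many three-wave interaction combinations absent in the unidirectional case, and each denominator arising in the normal-form transformations must be bounded away from zero uniformly in $\epsilon$ on the relevant Fourier supports, a task that requires a careful case analysis on the explicit dispersion relation $\omega(k)=k\sqrt{(2+k^{2})/(1+k^{2})}$, together with sharp bilinear estimates ensuring that the transformation stays bounded on $H^{s_{N}}$ in the presence of the mixed $AB$-interactions.
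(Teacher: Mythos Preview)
Your overall architecture---formal expansion, improved approximation with small residual, error equation, normal forms plus modified energy---matches the paper's. But there is one genuine gap in the formal derivation step, and it is precisely the new idea that makes the bidirectional result work.

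You write that ``at order $\epsilon^{3}$, the solvability conditions at the $E^{1}$ and $F^{1}$ modes independently yield the two decoupled NLS equations.'' This is not what happens. When you collect the $\epsilon^{3}E^{1}$ terms you obtain an equation of the form
\[
\partial_{\theta}A \;=\; -\tfrac{i}{2}\omega''(k_{0})\partial_{X_{+}}^{2}A + i\nu_{1}A|A|^{2} + i\widetilde{\nu}_{1}A|B|^{2},
\]
and symmetrically for $B$, so the solvability conditions naturally produce a \emph{coupled} cubic Schr\"odinger system. To obtain the decoupled equations \eqref{fAB} the paper introduces additional correctors $\epsilon^{2}\widetilde{\Upsilon}_{\pm1}$ concentrated at the $E^{\pm1}$ and $F^{\pm1}$ modes, whose amplitudes $\mathcal{I}_{1},\mathcal{J}_{1}$ absorb the cross terms via the inhomogeneous transport equations
\[
\partial_{T}\mathcal{I}_{1}-c_{g}\partial_{X}\mathcal{I}_{1}=i\widetilde{\nu}_{1}A|B|^{2},\qquad
\partial_{T}\mathcal{J}_{1}+c_{g}\partial_{X}\mathcal{J}_{1}=i\widetilde{\nu}_{2}B|A|^{2}.
\]
This is not an algebraic inversion and does not fall under the non-resonance checks you list. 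The nontrivial point (Lemma~\ref{L3} in the paper) is that, because $A$ and $B$ travel with opposite group velocities, integrating along characteristics gives $\|\mathcal{I}_{1}(T)\|_{H^{s}}\lesssim\|A\|_{H^{s}}\|B\|_{H^{s}}^{2}$ uniformly in $T$, rather than a bound that grows like $T$. Without this mechanism and this estimate, your approximation either fails to decouple the NLS equations or produces correctors that are $\mathcal{O}(T)$ and destroy the residual bound on $[0,T_{0}/\epsilon^{2}]$.

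Two smaller points. First, the scaling $\epsilon^{3/2}R$ is too tight: with $\beta=3/2$ the self-interaction $\epsilon^{\beta}Q(R,R)$ is only $\mathcal{O}(\epsilon^{3/2})$, not $\mathcal{O}(\epsilon^{2})$, so Gronwall on $[0,T_{0}/\epsilon^{2}]$ fails. The paper takes $\beta=5/2$ and recovers the $\epsilon^{3/2}$ error in the theorem from the $\mathcal{O}(\epsilon^{3/2})$ gap between the extended approximation $\epsilon\Theta$ and the leading-order ansatz $\epsilon\Psi_{NLS}+\epsilon\Phi_{NLS}$. Second, your expectation that the main obstacle is an ``enlarged set of non-resonance conditions'' in the normal forms is misplaced: since both $\widehat{\psi}_{c}$ and $\widehat{\phi}_{c}$ are supported near $\pm k_{0}$, the denominators $j\omega(k)\mp\omega(k-m)-p\omega(m)$ arising in the normal-form kernels have exactly the same structure as in \cite{LP19}; the paper handles them with the same weight function $\vartheta$ and low/high frequency splitting $P^{0},P^{1}$, and no genuinely new small-divisor analysis is needed there.
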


Let us give some remarks about the proof of Theorem \ref{Thm1}. Firstly, we need to construct formal bidirectional approximations with two counter propagating oscillating wave packets, each of which evolves as an independent NLS equation.  Therefore, we need to cancel the interactions of the two NLS equations. In particular, let $E^{j_{1}}$ and $F^{j_{2}}$ be as above, we then hope that the coefficients of the order $\epsilon^{l}E^{j_{1}}F^{j_{2}}$ are zero, where the positive integer $l$ and the integers $j_{1}$ and $\ j_{2}$ satisfy $|j_{1}|+|j_{2}|\leq l$. However, the left and right moving oscillating wave trains will certainly interact at some higher orders such as in $\epsilon^{2}E^{1}F^{1}$, $\epsilon^{3}E^{1}$ and $\epsilon^{3}F^{1}$. This phenomenon will not happen in the unidirectional NLS approximation in \cite{LP19}. To cancel these interactions, we add two types of correctors to our formal approximation solutions. The first type of correctors $\epsilon^{2}\widetilde{\Upsilon}_{\pm1\pm1}$ in \eqref{appr} are used to balance the interactions in the order $\epsilon^{2}E^{\pm1}F^{\pm1}$. The amplitude functions of this type of correctors are determined by some simple algebraic relations \eqref{f11}. To obtain two independent NLS equations without any interaction terms, we add the second type of correctors $\epsilon^{2}\widetilde{\Upsilon}_{\pm1}$ in \eqref{appr} to balance the interaction terms appeared in the coefficients of the order $\epsilon^{3}E^{1}$ and $\epsilon^{3}F^{1}$. The amplitude functions of this type of correctors satisfy two inhomogeneous transport equations \eqref{Modify1}-\eqref{Modify2}. In addition, even in the case of a purely right (or left) going oscillating wave train, solutions to the Euler-Poisson system \eqref{iEP} are not exactly described by solutions of the NLS equation. Therefore we still need to add the third type of correctors $\epsilon^{2}\widetilde{\Psi}_{n}$ and $\epsilon^{2}\widetilde{\Phi}_{n}$ with $n=0, \pm2$ in \eqref{appr} to balance the coefficients of the order $\epsilon^{2}E^{n}$ and $\epsilon^{2}F^{n}$ with $n=0, \pm2$. The amplitude functions of the third type of correctors are determined by algebraic relations or inhomogeneous, linear PDE's \eqref{A2} and \eqref{A0}. So far we have constructed a proper formal approximation solution $\epsilon\widetilde{\Theta}$ in the equations \eqref{app} with \eqref{appr}-\eqref{lowapp} below.

Secondly, we need to make the residual $Res(\epsilon\widetilde{\Theta})$ defined in \eqref{residual} small enough to obtain an uniform estimate of the error $R$ defined in \eqref{before}, the difference between the real solution and the approximation solution for the Euler-Poisson system \eqref{iEP}, whose evolution satisfies the equation \eqref{error}. Here, the residual $Res(\epsilon\widetilde{\Theta})$ in \eqref{residual} represents the terms that do not cancel after inserting the approximation $\epsilon\widetilde{\Theta}$ into the Euler-Poisson system \eqref{iEP}. For this, we further modify the approximation $\epsilon\widetilde{\Theta}$ in two steps. First, we add the higher order terms such as $\epsilon^{2}\widetilde{\Psi}_{p}, \ \epsilon^{2}\widetilde{\Phi}_{p}$ and $ \epsilon^{3}\widetilde{\Upsilon}_{r}$ in \eqref{extt} into the approximation to obtain the higher order approximation $\epsilon\widetilde{\Theta}^{ext}$, which is much more complex than the unidirectional NLS approximation in \cite{LP19,D,D17}. Then we apply some cut-off function to the approximation $\epsilon\widetilde{\Theta}^{ext}$ to keep the support of the approximation being restricted to small neighborhoods of integer multiples of the basic wave number in Fourier space. By these two modifications, the final approximation $\epsilon\Theta$ becomes analytic and the residual $Res(\epsilon\Theta)$ is small enough. These modifications are treated in Sect. 4 in detail.

Thirdly, to justify the bidirectional NLS approximation, we need to obtain an uniform estimate for the error $R$ on sufficiently long timescale of order $\mathcal{O}(\epsilon^{-2})$. However, the quasilinear quadratic terms of \eqref{iEP} will lead to the appearance of $\mathcal{O}(\epsilon)$ terms in the evolution equation \eqref{error} of error $R$, which can perturb the linear evolution in such a way that the solutions begin to grow on time scale $\mathcal{O}(\epsilon^{-1})$ and hence we would lose control over the size of $R$ on the desired time scale $\mathcal{O}(\epsilon^{-2})$. Thus, we use normal-form transformation to eliminate these terms of $\mathcal{O}(\epsilon)$. In the process of elimination, two kinds of difficulties arise for the Euler-Poisson system \eqref{iEP}. One difficulty is the occurrence of the trivial and nontrivial resonances in eliminating the low frequency terms of $\mathcal{O}(\epsilon)$ due to the continuous linear dispersive relation \eqref{equation3}. The other difficulty is the loss of one derivative of the quasilinear quadratic terms, resulting into the loss of double derivatives of the transformed system in eliminating the high frequency terms of $\mathcal{O}(\epsilon)$. Therefore, it is difficult to obtain the uniform estimate for the error $R$. In this paper, we deal with the resonances by taking a weight function to modify the error $R$, and then make twice normal-from transformations directly to eliminate those low frequency terms of $\mathcal{O}(\epsilon)$. To solve the difficulty of the loss of derivatives, we use the normal-from transformations corresponding to those high frequency terms of $\mathcal{O}(\epsilon)$ to construct a modified energy function. The uniform estimate of the error $R$ is obtained finally by using the important properties of the normal-from transformation. Such a process is much more complex and different from the unidirectional NLS approximation due to the counter propagating oscillating wave packets. We believe that the method developed in this paper can be applied to obtain bidirectional NLS approximation for some other related dispersive systems.

\begin{remark}
Compared with the solution $(n_{i}-1, v_{i})$ and the approximation $\epsilon\Psi_{NLS}+\epsilon\Phi_{NLS}$, which are both of order $\mathcal{O}(\epsilon)$ in $L^{\infty}$, the error of order $\mathcal{O}(\epsilon^{3/2})$ is small enough such that the dynamics of the NLS equation can be observed in the ion Euler-Poisson system \eqref{iEP}. Secondly, the Fourier transform of $\epsilon\Psi_{NLS}+\epsilon\Phi_{NLS}$ is sufficiently concentrated around the wave numbers $\pm k_{0}$, hence by using a modified approximation that has compact support in Fourier space but differs only slightly from $\epsilon\Psi_{NLS}+\epsilon\Phi_{NLS}$, the smoothness of the error can be made equal to the assumed smoothness of the amplitude. Finally, in the following proof of Theorem \ref{Thm1}, we always assume that $s_N$ is an integer to simplify the proof, although it can be generalized to all real numbers $s_N\geq6$.
\end{remark}

\begin{remark}
In the ion Euler-Poisson system \eqref{iEP}, the ion pressure is given by $P_i(n_i)=n_i$. In fact, the result in this paper can be generalized to the general $\gamma$-law of the ion pressure, i.e. $P_i(n_i)=n_i^{\gamma}$ for $\gamma\geq1$.
\end{remark}

\textbf{Notations}. Let  $\mathbb{K}=\mathbb{R}$ or $\mathbb{K}=\mathbb{C}$. We denote the Fourier transform of a function $u\in L^{2}(\mathbb{R},\mathbb{K})$ by
\begin{equation*}
\begin{split}
\widehat{u}(k)=\frac{1}{2\pi}\int_{\mathbb{R}}u(x)e^{-ikx}dx.
\end{split}
\end{equation*}
Let $H^{s}(\mathbb{R},\mathbb{K})$ be the space of functions mapping from $\mathbb{R}$ into $\mathbb{K}$ with the standard norm
\begin{equation*}
\begin{split}
\|u\|_{H^{s}(\mathbb{R},\mathbb{K})} =\big(\int_{\mathbb{R}}|\widehat{u}(k)|^{2} (1+|k|^{2})^{s}dk\big)^{1/2}.
\end{split}
\end{equation*}
We also write $L^{2}$ and $H^{s}$ instead of $L^{2}(\mathbb{R},\mathbb{R})$ and $H^{s}(\mathbb{R},\mathbb{R})$. Moreover, we define the space $L^{p}(m)(\mathbb{R},\mathbb{K})$ of $u$ such that $\sigma^{m}u\in L^{p}(\mathbb{R},\mathbb{K})$, where $\sigma(x)=(1+x^{2})^{1/2}$. Furthermore, we write $A\lesssim B$ if $A\leq CB$ for a constant $C>0$, and $A=\mathcal{O}(B)$ if $A\lesssim B$ and $B\lesssim A$.

This paper is organized as follows. We first give the basic ideas in Section 2. Then we diagonalize the Euler-Poisson system and then give the formal derivation of the bidirectional NLS approximation in Section 3.  In Section 4, we further modify the approximation by using a cutoff function and adding high order terms, and give estimates for the residual. In Section 5, we derive the evolutionary error equation and apply the projection operators to extract the low as well as high frequency terms that need to be eliminated. We then restate the main Theorem \ref{Thm1} in terms of the error for the diagonalized system in Theorem \ref{Thm2}. These two theorems are equivalent. The last three sections are dedicated to proof of Theorem \ref{Thm2}. In Section 6, we apply normal-form transformations twice to eliminate the low frequency terms and obtain the transformed error equations. In Section 7, we analyze the properties of the normal-form transformation for the high frequency terms. In Section 8, we construct an energy functional for the transformed error by using the normal-form transformation for the high frequency terms to deal with the difficulties caused by the loss of derivatives, and then we prove the uniform energy estimates for the error.

\section{\textbf{The basic ideas}}
In order to simplify the presentation, we consider an abstract evolutionary problem
\begin{equation}
\begin{split}\label{abstractt}
\partial_{t}U=\Lambda U+Q(U,U)+N(U),
\end{split}
\end{equation}
with $U=U(x,t)\in\mathbb{R}^{2}$, $x\in\Bbb R$ and $t\in\mathbb{R}^+$. Here, $\Lambda$ is a linear operator whose symbol is a diagonal matrix of the form
\begin{equation*}
\begin{split}
\widehat{\Lambda}(k)=diag\{i\omega(k),-i\omega(k)\},
\end{split}
\end{equation*}
where $k\in\mathbb{R}$ represents the wave number and $\omega$ is a piecewise smooth real-valued odd function of $k$. In this paper below, $\omega(k)$ is the dispersive relation of \eqref{iEP}, defined in \eqref{equation3}. In \eqref{abstractt}, the linear operator $\Lambda$ generates a uniformly bounded semigroup, $Q$ is a symmetric bilinear operator and $N$ is a high order operator whose order is at least cubic. The ion Euler-Poisson system \eqref{iEP} can be reduced to \eqref{abstractt}. See \eqref{abstract} for details. 

To prove Theorem \ref{Thm1}, we need to do two things. First, the formal bidirectional NLS approximation \eqref{foma}-\eqref{fAB} for the Euler-Poisson system \eqref{iEP} should be constructed. Secondly, the error between the real solution and the approximation solution needs to be estimated uniformly in $\epsilon$ in some Sobolev space for a physically relevant timescale of order $\mathcal{O}(\epsilon^{-2})$. We sketch the main ideas in the following two subsections. 

\subsection{\textbf{Constructing an approximation solution}}
To obtain the above two independent NLS equations \eqref{fAB} formally, 
it is essential to make the following two types of corrections to such an approximation \eqref{foma}.
\begin{itemize}
  \item  Corrections to balance the interactions of  the left and right moving wave trains. In particular, we will add $\epsilon^{2}\Upsilon_{NLS}$ of \eqref{ULS} to balance the coefficients at order of $E^{j}$ and $F^{j}$ with $j=\pm1$ and $\epsilon^{2}\Upsilon_{h}$ of \eqref{ULS} to balance the coefficients at order of $E^{j_{1}}F^{j_{2}}$ with $j_{1},j_{2}=\pm1$.
  \item Corrections  $\epsilon^{2}(\Psi_{h}+\Phi_{h})$ of \eqref{ULS} to balance the coefficients at order of $E^{j}$ and $F^{j}$ with $j=0,\pm2$.
\end{itemize}
%
%

To incorporate these two types of corrections, we add additional functions $\epsilon^{2}\Psi_{h}$, $\epsilon^{2}\Phi_{h}$, $\epsilon^{2}\Upsilon_{NLS}$ and $\epsilon^{2}\Upsilon_{h}$ to leading terms $\epsilon\Psi_{NLS}$ and $\epsilon\Phi_{NLS}$. Therefore, we consider the bidirectional NLS approximation of the following form
\begin{equation}
\begin{split}\label{ULS}
 \epsilon\widetilde{\Theta}=\epsilon(\Psi_{NLS}+\Phi_{NLS})+\epsilon^{2}(\Psi_{h}+\Phi_{h})
 +\epsilon^{2}(\Upsilon_{NLS}+\Upsilon_{h}),
\end{split}
\end{equation}
with
\begin{equation}\label{formal}
\begin{cases}
\epsilon\Psi_{NLS}&=\epsilon A\big(X+c_{g}T,\epsilon T\big)E^{1}\begin{pmatrix}1\\0\end{pmatrix}+c.c,\\
\epsilon\Phi_{NLS}&=\epsilon B\big(X-c_{g}T,\epsilon T\big)F^{1}\begin{pmatrix}0\\1\end{pmatrix}+c.c,\\
\epsilon^{2}\Psi_{h}&=\epsilon^{2}\begin{pmatrix}A_{01}(X+c_{g}T,\epsilon T)\\ A_{02}(X+c_{g}T,\epsilon T)\end{pmatrix}
+\epsilon^{2}\begin{pmatrix}A_{21}(X+c_{g}T,\epsilon T)\\ A_{22}(X+c_{g}T,\epsilon T)\end{pmatrix}E^{2}+c.c,\\
\epsilon^{2}\Phi_{h}&=
\epsilon^{2}\begin{pmatrix}B_{01}(X-c_{g}T,\epsilon T)\\ B_{02}(X-c_{g}T,\epsilon T)\end{pmatrix}+\epsilon^{2}\begin{pmatrix}B_{21}(X-c_{g}T,\epsilon T)\\ B_{22}(X-c_{g}T,\epsilon T)\end{pmatrix}F^{2}+c.c,\\
\epsilon^{2}\Upsilon_{NLS}
&=\epsilon^{2}\begin{pmatrix}\mathcal{I}(X,T,\epsilon T)E^{1}\\ \mathcal{J}(X,T,\epsilon T)F^{1}\end{pmatrix}+c.c, \\ \epsilon^{2}\Upsilon_{h}&=\epsilon^{2}\begin{pmatrix}f_{111}(X,T,\epsilon T)\\ f_{112}(X,T,\epsilon T)\end{pmatrix}E^{1}F^{1}+
\epsilon^{2}\begin{pmatrix}f_{-111}(X,T,\epsilon T)\\ f_{-112}(X,T,\epsilon T)\end{pmatrix}E^{-1}F^{1}
+c.c,
\end{cases}
\end{equation}
where $X=\epsilon x, T=\epsilon t, E^{j}=e^{ij(k_{0}x+\omega_{0}t)}, F^{j}=e^{ij(k_{0}x-\omega_{0}t)}$, $\theta=\epsilon T=\epsilon^{2}t$ is the slow time scale and $X_{+}=X+c_{g}T$ and $X_{-}=X-c_{g}T$ are the slow spatial scale. In the above modulation approximation, $0<\epsilon\ll1$ is a small perturbation parameter, $\omega_{0}>0$ is the basic temporal wave number associated to the basic spatial wave number $k_{0}>0$ of the underlying temporally and spatially oscillating wave trains $E^{1}$ and $F^{1}$, and $c_{g}=\omega'(k_{0})$ is the group velocity and `c.c' denotes the complex conjugate. Here, $A, B, A_{2j}, B_{2j}, \mathcal{I}, \mathcal{J}, f_{mnj}, g_{mnj}$ with $j=1, 2$ and $m, n=\pm1$ are complex-valued functions, and $A_{0j}$ and $B_{0j}$ with $j=1,2$ are real-valued functions. NLS equations are derived in order to describe the slow modulations in time and in space of the wave train $E^{1}$ and $F^{1}$, with time and space scales being of $\mathcal{O}(\epsilon^{-2})$ and $\mathcal{O}(\epsilon^{-1})$, respectively. For the Euler-Poisson system \eqref{iEP}, the basic spatial and temporal wave numbers $k=k_{0}$ and $\omega=\omega_{0}$ are related via the linear dispersion relation \eqref{equation3}.

Inserting such formal ansatz $U=\epsilon\widetilde{\Theta}$ \eqref{ULS}-\eqref{formal} into the abstract evolutionary equation \eqref{abstractt}, we obtain that the functions from the higher order correctors $\Psi_{h}$, $\Phi_{h}$ and $\Upsilon_{h}$ satisfy
\begin{equation*}
\begin{split}
A_{0j}&=\gamma_{0j}(k_{0})|A|^{2}, \ \ \ \ \ \ \ \ \ \ \ \ \ B_{0j}=\gamma_{0j}(k_{0})|B|^{2},\\
A_{2j}&=\gamma_{2j}(k_{0})A^{2}, \ \ \ \ \ \ \ \ \ \ \ \  \ \ \ B_{2j}=\gamma_{2j}(k_{0})B^{2},\\
f_{mnj}&=\gamma_{mnj}(k_{0})A_{m}B_{n},
\end{split}
\end{equation*}
for $\gamma_{0j}(k_{0}), \gamma_{2j}(k_{0}), \gamma_{mn}(k_{0})\in \mathbb{R}$ with $j=1,2,m,n=\pm1$, and $A_{1}=A, A_{-1}=\overline{A}, B_{1}=B,B_{-1}=\overline{B}$.

The complex amplitudes $A$ and $B$ at leading order in $\epsilon$ satisfy the following independent NLS equations,
\begin{equation}
\begin{split}\label{AB}
\partial_{\theta}A&=i\omega''(k_{0})\partial_{X_{+}}^{2}A+i\nu_{1}(k_{0})A|A|^{2},\\
\partial_{\theta}B&=-i\omega''(k_{0})\partial_{X_{-}}^{2}B+i\nu_{2}(k_{0})B|B|^{2},
\end{split}
\end{equation}
where coefficients $\nu_{j}(k_{0}), \beta_{j}(k_{0})\in \mathbb{R}$ with $j=1, 2$. The correctors $\mathcal{I}$ and $\mathcal{J}$ satisfy the following transport equations,
\begin{equation}
\begin{split}\label{fg}
\partial_{T}\mathcal{I}-c_{g}\partial_{X}\mathcal{I}&=i\beta_{1}(k_{0})A|B|^{2},\\
\partial_{T}\mathcal{J}+c_{g}\partial_{X}\mathcal{J}&=i\beta_{2}(k_{0})B|A|^{2}.
\end{split}
\end{equation}
The NLS equations from \eqref{AB} are completely integrable Hamiltonian systems \cite{A} and the correctors $\mathcal{I}$ and $\mathcal{J}$ have some good estimates in \eqref{IJ1} below.

To prove uniform estimates for the error and justify the bidirectional NLS approximation, we define the residual
\begin{equation}\label{residual}
\begin{split}
Res(U)=-\partial_{t}U+\Lambda U+Q(U,U)+N(U).
\end{split}
\end{equation}
This residual $Res(U)$ contains all terms that do not cancel after inserting $U=\epsilon\widetilde{\Theta}$ into system \eqref{abstractt} and is small for $U=\epsilon\widetilde{\Theta}$. However, it is not easy to justify such bidirectional NLS approximation with such an approximation $U=\epsilon\widetilde{\Theta}$. Therefore, we consider the following two-step modifications to \eqref{ULS}.

\begin{itemize}
  \item  Extend $\epsilon\widetilde{\Theta}$ to a higher order form $\epsilon\widetilde{\Theta}^{ext}$ of \eqref{ext} by adding sufficiently higher order correction terms. This step makes the residual $Res(\epsilon\widetilde{\Theta})$ small of order $\mathcal{O}(\epsilon^{m})$ for sufficiently large positive integer $m$. See Section 4.1 for details.
  \item Modify the extended approximation $\epsilon\widetilde{\Theta}^{ext}$ by applying some cut-off function such that the support of the final approximation $\epsilon\Theta$ of \eqref{ab} is restricted to small neighborhoods of integer multiples of the basic wave number $k_{0}>0$ in Fourier space. This step changes the approximation slightly, but benefits us in controlling the resonances and makes the final approximation analytic. See Section 4.2 for details.
\end{itemize}
With these modifications, we can write the final approximation $\epsilon\Theta$ as
\begin{equation*}
\begin{split}
\epsilon\Theta=\epsilon\Psi_{1}+\epsilon\Psi_{-1}+\epsilon\Phi_{1}+\epsilon\Phi_{-1}
+\epsilon^{2}\Theta_{r}
=:\epsilon\Psi_{c}+\epsilon\Phi_{c}+\epsilon^{2}\Theta_{r},
\end{split}
\end{equation*}
with compact supports in Fourier sapce
\begin{equation*}
\begin{split}
&supp \ \widehat{\Psi}_{c}=\{k\mid|k\pm k_{0}|\leq\delta\},\\
&supp \ \widehat{\Phi}_{c}=\{k\mid|k\pm k_{0}|\leq\delta\},\\
&supp \ \widehat{\Theta}_{r}=\{k\mid|k\pm \ell k_{0}|\leq\delta, \ \ \ell=0, \pm1, \pm2, \pm3, \pm4\},
\end{split}
\end{equation*}
for some $\delta>0$ sufficiently small, but independent of $0<\epsilon\ll1$.

\subsection{\textbf{Uniform error estimates}}
To prove Theorem \ref{Thm1} we have to estimate the error
 \begin{equation}
\begin{split}\label{before}
\epsilon^{\beta}R=U-\epsilon\Theta
\end{split}
\end{equation}
to be of order $\mathcal{O}(\epsilon^{\beta})$ for some $\beta>1$ on a time scale $t\in[0,T_{0}/\epsilon^{2}]$. We also call $R$ the error and we have to show that it is of order $\mathcal{O}(1)$ for all $t\in[0,T_{0}/\epsilon^{2}]$. Inserting \eqref{before} into \eqref{abstractt}, we find
\begin{equation}
\begin{split}\label{error}
\partial_{t}R=&\Omega R+2\epsilon Q(\Psi_{c}+\Phi_{c},R)+2\epsilon^{2} Q(\Theta_{r},R)+\epsilon^{\beta} Q(R,R)+\epsilon^{2}\widetilde{N}(R)+\epsilon^{-\beta}Res(\epsilon\Theta).
\end{split}
\end{equation}
Here $\epsilon^{2}\widetilde{N}(R)$ contains all terms from $N(U)$ after inserting \eqref{before} into \eqref{abstractt} and is of $\mathcal{O}(\epsilon^{2})$ since $N(U)$ is at least cubic. For the error equation \eqref{error}, the linear operator $\Omega$ generates a uniformly bounded semigroup. Assume $\beta>2$, then the last four terms in \eqref{error} are $\mathcal{O}(\epsilon^{2})$ bounded over the relevant time interval $\mathcal{O}(\epsilon^{-2})$, thanks to Lemma \ref{L2}. However, the $\mathcal{O}(\epsilon)$ linear term $\epsilon Q(\Psi_{c}+\Phi_{c},R)$ can perturb the linear evolution largely and make the solutions begin to grow on time scales $\mathcal{O}(\epsilon^{-1})$. Therefore, it is difficult to desire a uniform bound of $R$ on the desired time scale $\mathcal{O}(\epsilon^{-2})$. To give a uniform bound of the error $R$ over the desired time intervals $\mathcal{O}(\epsilon^{-2})$, we apply 
the following normal-form transformation
\begin{equation}
\begin{split}\label{equation45}
\widetilde{R}_{j_{1}}:=R_{j_{1}}+\epsilon B^{+}_{j_{1}}(\Psi_{c},R)
+\epsilon B^{-}_{j_{1}}(\Phi_{c},R),
\end{split}
\end{equation}
with
\begin{equation}
\begin{split}\label{equation46}
\widehat{B}^{+}_{j_{1}}(\Psi_{c},R)=\sum_{j_{2\in\{\pm1\}}}\int \widehat{b}^{+}_{j_{1}j_{2}}(k,k-\ell,\ell)\widehat{\Psi}_{c}(k-\ell)\widehat{R}_{j_{2}}(\ell)d\ell,\\
\widehat{B}^{-}_{j_{1}}(\Phi_{c},R)=\sum_{j_{2\in\{\pm1\}}}\int \widehat{b}^{-}_{j_{1}j_{2}}(k,k-\ell,\ell)\widehat{\Phi}_{c}(k-\ell)\widehat{R}_{j_{2}}(\ell)d\ell,
\end{split}
\end{equation}
where $j_{1}\in\{\pm1\}$ and $\widehat{R}_{j_{1}}$ refers to the $j_{1}$ component of $\widehat{R}$. The kernel function $\widehat{b}^{\pm}_{j_{1},j_{2}}$ of the normal-form transformation is a quotient whose denominator is
\begin{equation}
\begin{split}\label{deno}
-j_{1}\omega(k)\mp\omega(k-\ell)+j_{2}\omega(\ell).
\end{split}
\end{equation}
From the linear dispersive relation \eqref{equation3}, we see that $\omega(k)$ satisfies $\omega(0)=0$ and $\omega(-k)=-\omega(k)$. Thus there exists a resonance at the wave number $k=0$ that makes the denominator \eqref{deno} of $\widehat{b}^{\pm}_{j_{1},j_{2}}$ equal to zero if $j_{2}=\mp1$. Fortunately, this resonance is trivial since the nonlinear term also vanishes at $k=0$ and hence the normal-form transformation \eqref{equation45} is well-defined for all $|k|\leq\delta$. However, there exists another nontrivial resonance at the wave number $k=\ell$ if $j_{1}=j_{2}$ such that \eqref{equation45} is not well-defined. Therefore, we cannot apply \eqref{equation45} directly. We will rather use an improved method (see \cite{S,S98}). The method is to make a suitable rescaling of the error function $R$ by introducing a weight function dependent on the wave number, and then to use a number of special normal-form transformations.

Precisely, define a weight function $\vartheta$ by its Fourier transform
\begin{equation}
\begin{split}\label{v}
\widehat{\vartheta}(k)=\Big\{\begin{matrix} 1\ \ \ \ \ \ \ \ \ \ \ \ \ \ \ \ \ \ \ \ \ \ \ \ \ \ \ \ \text{for} \ \ |k|>\delta, \\ \epsilon+(1-\epsilon)| k|/\delta \ \ \ \ \ \ \ \ \ \text{for} \ \ |k|\leq\delta,\end{matrix}
\end{split}
\end{equation}
where $\delta>0$ is sufficiently small, but independent of $0<\epsilon\ll1$. This makes $\widehat{\vartheta}(k)\widehat{R}(k)$ small at wave numbers close to zero to reflect the fact that the nonlinearity vanishes at $k=0$. Modify the solution $U$ of \eqref{before} by using the above weight function $\vartheta$
\begin{equation}
\begin{split}\label{equation48}
U=\epsilon\Psi+\epsilon^{\beta}\vartheta R.
\end{split}
\end{equation}
Here, $\vartheta R$ is defined by $\widehat{\vartheta R}=\widehat{\vartheta}\widehat{R}$, in a slight abuse of notation, avoiding using the convolution notation $\vartheta\ast R$. Inserting \eqref{equation48} into \eqref{abstractt}, we obtain
\begin{equation}
\begin{split}\label{equation49}
\partial_{t}R=& \Lambda R+2\epsilon \vartheta^{-1}Q(\Psi_{c}+\Phi_{c},\vartheta R)+2\epsilon^{2}\vartheta^{-1}Q(\Theta_{r},\vartheta R)+\epsilon^{\beta} \vartheta^{-1}Q(\vartheta R, \vartheta R)\\
&+\epsilon^{2}\vartheta^{-1}\widetilde{N}(\vartheta R)+\epsilon^{-\beta}\vartheta^{-1}Res(\epsilon\Theta).
\end{split}
\end{equation}
Note that the last four terms of \eqref{equation49} are still at least $\mathcal{O}(\epsilon^{2})$ although $\widehat{\vartheta}^{-1}$ is of order $\mathcal{O}(\epsilon^{-1})$ for $|k|\leq\delta$, thanks to Lemma \ref{L2} and the fact that the kernels of these terms are smaller than $|k|$ (see \eqref{ker}). Therefore, only the remaining linear term $2\epsilon \vartheta^{-1}Q(\Psi_{c}+\Phi_{c},\vartheta R)$ needs to be eliminated.

Since the system \eqref{abstractt} is quasilinear, the size of the nonlinear term depends on whether $k$ is close to zero or not in Fourier space. To analyze the behaviors in these two regions more clearly, we define two projection operators $P^{0}$ and $P^{1}$ by the Fourier multiplier
\begin{equation}
\begin{split}\label{equation58}
\widehat{P}^{0}(k)=\chi_{\mid k\mid\leq\delta}(k)\ \ \ and \ \ \ \widehat{P}^{1}(k)=\mathbf{1}-\widehat{P}^{0}(k),
\end{split}
\end{equation}
for $\delta>0$ sufficiently small (the same constant $\delta$ in the definition of $\vartheta$), but independent of $0<\epsilon\ll1$. We will write $R=R^{0}+R^{1}$ with $R^{j}=P^{j}R$, for $j=0,1$. Note that these superscripts are different from those subscripts that denote the components of $R$. Applying the projection operators $P^{0}$ and $P^{1}$ to \eqref{equation49}, we will obtain an equivalent system for $R^{0}$ and $R^{1}$,
\begin{align}\label{0R}
\partial_{t}R^{0}=\Lambda R^{0}+2\epsilon P^{0}\vartheta^{-1} Q(\Psi_{c}+\Phi_{c},\vartheta R^{1})+\mathcal{O}(\epsilon^{2}),
\end{align}
\begin{align}\label{1R}
\partial_{t}R^{1}=\Lambda R^{1}+2\epsilon P^{1}\vartheta^{-1} Q(\Psi_{c}+\Phi_{c},\vartheta_{0} R^{0})+2\epsilon P^{1}\vartheta^{-1} Q(\Psi_{c}+\Phi_{c},\vartheta R^{1})+\mathcal{O}(\epsilon^{2}).
\end{align}
where $\widehat{\vartheta}_{0}=\widehat{\vartheta}-\epsilon$. Here we have ignored the term $2\epsilon P^{0}\vartheta^{-1} Q(\Psi_{c}+\Phi_{c},\vartheta R^{0})$ in \eqref{0R} due to the property of $P^{0}$ and compact support of $\Psi_{c}+\Phi_{c}$ in Fourier apace. In fact, $(\widehat{\Psi}_{c}+\widehat{\Phi}_{c})(k-m)=0$ unless $|(k-m)\pm k_{0}|<\delta$ and $\widehat{R}^{0}(m)=0$ for $|m|>\delta$. We can also classify all terms of $\mathcal{O}(\epsilon)$ in \eqref{0R}-\eqref{1R} as two categories:
\begin{description}
  \item[low frequency terms]
  \begin{equation*}
  \begin{split}
  &2\epsilon P^{0}\vartheta^{-1} Q(\Psi_{c}+\Phi_{c},\vartheta R^{1})\ \ \ \ \ \ \ \text{from \eqref{0R}, and}\\
  &2\epsilon P^{1}\vartheta^{-1} Q(\Psi_{c}+\Phi_{c},\vartheta_{0} R^{0})\ \ \ \ \ \ \text{from \eqref{1R},}
  \end{split}
  \end{equation*}
  \item[high frequency terms]
  \begin{equation*}
  \begin{split}
  2\epsilon P^{1}\vartheta^{-1} Q(\Psi_{c}+\Phi_{c},\vartheta R^{1})\ \ \ \ \ \ \  \text{from \eqref{1R}.}\ \ \ \ \ \
  \end{split}
  \end{equation*}
\end{description}

In order to obtain uniform estimates for $(R^{0},R^{1})$ on a time scale $\mathcal{O}(\epsilon^{-2})$, we will make normal-form transformations to eliminate these $\mathcal{O}(\epsilon)$ terms from \eqref{0R}-\eqref{1R} as done in \eqref{equation45}. In the process of eliminating the above two low frequency terms, we find that both the trivial resonance at the wave number $k=0$ and the nontrivial resonances at the wave number $k=\pm k_{0}$ appear. The corresponding normal-form transformations are still well-defined since the nonlinear term in \eqref{abstractt} is also equal to zero at $k=0$ in Fourier space, and the nontrivial resonances at $k=\pm k_{0}$ can be balanced by $\widehat{\vartheta}_{0}$ thanks to the introduction of the weight function $\widehat{\vartheta}$. However, there is a remaining term of $\mathcal{O}(\epsilon)$ after the first normal-form transformation since $\widehat{\vartheta}^{-1}(k)=\epsilon^{-1}$ for $|k|<\delta$, which necessitates the use of the second normal-form transformation (See Section 6).

For the high frequency term $2\epsilon P^{1}\vartheta^{-1} Q(\Psi_{c}+\Phi_{c},\vartheta R^{1})$ in \eqref{1R}, there are no resonances in the corresponding normal-form transformation. However, because of the loss of one derivative in the high frequency term, the normal-form transformation loses one derivative, which makes the corresponding term of transformed equation lose even two derivatives. This makes it difficult to obtain the uniform estimate for the error. Therefore, we cannot use the normal-form transformation directly for the high frequency term in \eqref{1R}. We would rather construct an energy function via the normal-form transformation, to treat the high frequency term in \eqref{1R} and deal with the loss of derivatives at the same time (See Section 7 for details).


\section{\textbf{Formal Derivation of the NLS Approximation}}
In this section, we first diagonalize the linear part of the equation \eqref{iEP} around the equilibrium solution $(n_{i},v_{i})=(1,0)$ to obtain an abstract evolutionary system \eqref{abstract} in the form \eqref{abstractt}. We then formally obtain the two independent NLS equations \eqref{NLS1}-\eqref{NLS2} by constructing a modified modulation approximation \eqref{app} containing counter propagating oscillating wave packets of the abstract system \eqref{abstract} in subsection 3.2.
\subsection{\textbf{Reformulation}}
For notational convenience, we set the parameters $e=T_{i}=M_{i}=\frac{1}{4\pi}=1$ for the ion-Euler-Poisson system \eqref{iEP}. Setting the perturbation $(\rho,v)=(n_{i}-1,v_{i})$ around the constant equilibria $(1,0)$, we obtain the following Euler-Poisson system in terms of $(\rho, v,\phi)$,
\begin{subequations}\label{EP}
\begin{numcases}{}
\partial_{t}\rho+\partial_{x}v+\partial_{x}(\rho v)=0,\\
\partial_{t}v+v\partial_{x} v+\frac{\partial_{x}\rho}{1+\rho}+\partial_{x}\phi=0,\\
\partial_{x}^{2}\phi=e^{\phi}-1-\rho.\label{EP-poisson}
\end{numcases}
\end{subequations}
The Poisson equation \eqref{EP-poisson} defines an inverse operator $\rho\mapsto\phi(\rho)$
\begin{equation}
\begin{split}\label{equation5}
\phi(\rho)=&(1-\partial_{x}^{2})^{-1}\rho
-\frac{1}{2}(1-\partial_{x}^{2})^{-1}\big[(1-\partial_{x}^{2})^{-1}\rho\big]^{2}\\
&-\frac{1}{3!}(2+\partial_{x}^{2})(1-\partial_{x}^{2})^{-2}
\big[(1-\partial_{x}^{2})^{-1}\rho\big]^{3}+\mathcal{M}(\rho),
\end{split}
\end{equation}
where $\mathcal{M}$ represents the high order terms that satisfy some good properties \cite{GP11}. Thus we can further write \eqref{EP} the following form,
\begin{equation}
\begin{split}\label{equation6}
\partial_{t}&\begin{pmatrix}\rho\\ v\end{pmatrix}+\begin{pmatrix}0&\partial_{x}\\ \partial_{x}(1-\partial_{x}^{2})^{-1}+\partial_{x}&0\end{pmatrix}\begin{pmatrix}\rho\\ v\end{pmatrix}\\
=&\begin{pmatrix}-\partial_{x}(\rho v)\\-\frac{1}{2}\partial_{x}v^{2}+\frac{1}{2}\partial_{x}\rho^{2}
+\frac{1}{2}\partial_{x}(1-\partial_{x}^{2})^{-1}[(1-\partial_{x}^{2})^{-1}\rho]^{2}-\partial_{x}\mathcal{H}(\rho)]
\end{pmatrix},
\end{split}
\end{equation}
isolating the linear, quadratic and higher order terms. Here $$\mathcal{H}(\rho)=-\frac{1}{3!}(2+\partial_{x}^{2})(1-\partial_{x}^{2})^{-2}
\big[(1-\partial_{x}^{2})^{-1}\rho\big]^{3}+\frac{\rho^{3}}{3} +\mathcal{M}(\rho)+[\ln(1+\rho)-\rho+\frac{\rho^{2}}{2}-\frac{\rho^{3}}{3}].$$ Setting
\begin{equation}
\begin{split}\label{rho}
\begin{pmatrix}\rho\\ v\end{pmatrix}=S\begin{pmatrix}U_{1}\\ U_{-1}\end{pmatrix} \ \ \  and \ \ \ S=\begin{pmatrix}1& 1\\-q(|\partial_{x}|)& q(|\partial_{x}|)\end{pmatrix},
\end{split}
\end{equation}
we can diagonalize \eqref{equation6} into
\begin{equation}
\begin{split}\label{equation7}
\partial_{t}U_{j}
=&j\Omega U_{j}+\frac{1}{2}\partial_{x}\bigg[(U_{1}+U_{-1})q(|\partial_{x}|)(U_{1}-U_{-1})\bigg]
+\frac{j\partial_{x}}{4q(|\partial_{x}|)}\bigg[q(|\partial_{x}|)(U_{1}-U_{-1})\bigg]^{2}\\
&-\frac{j\partial_{x}}{4q(|\partial_{x}|)}(U_{1}+U_{-1})^{2}-\frac{j\partial_{x}}{4q(|\partial_{x}|)}\frac{1}{\langle\partial_{x}\rangle^{2}}
\bigg[\frac{1}{\langle\partial_{x}\rangle^{2}}(U_{1}+U_{-1})\bigg]^{2}\\
&+\frac{j}{2}\frac{\partial_{x}}{q(|\partial_{x}|)}\mathcal{H}(U_{1}+U_{-1}),
\end{split}
\end{equation}
where $j\in\{\pm1\}$, $\langle\partial_{x}\rangle=\sqrt{1-\partial_{x}^{2}}$ and $\widehat{\Omega}(k)=i\omega(k)$ with $\omega(k)=k\widehat{q}(k)$ (see \eqref{equation3}) in Fourier space. In this form, \eqref{equation7} fits in the abstract equation
\begin{equation}
\begin{split}\label{abstract}
\partial_{t}U_{j}=j\Omega U_{j}+Q_{j}(U,U)+N_{j}(U,U,U)+\mathcal{V}_{j}(U),
\end{split}
\end{equation}
where $Q_{j}(U,U)$, $N_{j}(U,U,U)$ and $\mathcal{V}_{j}(U)$ represent the quadratic, cubic and higher order terms, respectively. We remark that here we extract from the last term in \eqref{equation7} the cubic  terms $N_j(U,U,U)$,  which will be used in the process of deriving the NLS equations, in particular in the following \eqref{bet1}-\eqref{bet2}. 
In Fourier space, $Q_{j}(U,U)$ and $N_{j}(U,U,U)$ have the following form
\begin{equation}
\begin{split}\label{qua}
\widehat{Q_{j}(U,U)}(k,t)&=:\int\sum_{m,n\in\{\pm1\}}i\alpha_{mn}^{j}(k,k-l,l)
\widehat{U}_{m}(k-l,t)\widehat{U}_{n}(l,t)dl,\\
\widehat{N_{j}(U,U,U)}(k,t)&=:\int\int\sum_{m,n,p\in\{\pm1\}}i\beta_{mnp}^{j}(k,k-l,l-s,s)
\widehat{U}_{m}(k-l,t)\widehat{U}_{n}(l-s,t)\widehat{U}_{p}(s,t)dlds,
\end{split}
\end{equation}
with
\begin{equation}
\begin{split}\label{ker}
&\alpha_{mn}^{j}(k,k-l,l)=n\frac{k}{2}\hat{q}(l)
+\frac{jk}{4\hat{q}(k)}\Big[mn\hat{q}(k-l)\hat{q}(l)-1
-\frac{1}{\langle k\rangle^{2}\langle k-l\rangle^{2}\langle l\rangle^{2}}\Big],\\
&\beta_{mnp}^{j}(k,k-l,l-s,s)=-\frac{jk}{6\hat{q}(k)}\Big[\frac{2-k^{2}}{2\langle k\rangle^{2}\langle k-l\rangle^{2}\langle l-s\rangle^{2}\langle s\rangle^{2}}-1\Big].
\end{split}
\end{equation}
From the expression of the coefficients $\alpha_{mn}^{j}(k,k-l,l)$ and $\beta_{mnp}^{j}(k,k-l,l-s,s)$, we see that the worst growth in these coefficients is $\mathcal{O}(k)$ as $|k|\rightarrow\infty$ since $\hat{q}(k)=\mathcal{O}(1)$ as $|k|\rightarrow\infty$. Thus, the nonlinearity from \eqref{abstract} will lose one derivative.

\subsection{\textbf{Formal expansion}}
In this subsection, two independent NLS equations will be derived as formal approximations of the ion Euler-Poisson system. For this, we set
\begin{equation}
\begin{split}\label{app}
\epsilon\widetilde{\Theta}=&\epsilon\widetilde{\Psi}+\epsilon\widetilde{\Phi}+\epsilon^{2}\widetilde{\Upsilon},
\end{split}
\end{equation}
with
\begin{equation}
\begin{split}\label{appr}
&\epsilon\widetilde{\Psi}=\epsilon\widetilde{\Psi}_{1}+\epsilon\widetilde{\Psi}_{-1}+\epsilon^{2}\widetilde{\Psi}_{0}
+\epsilon^{2}\widetilde{\Psi}_{2}+\epsilon^{2}\widetilde{\Psi}_{-2},\\
&\epsilon\widetilde{\Phi}=\epsilon\widetilde{\Phi}_{1}+\epsilon\widetilde{\Phi}_{-1}+\epsilon^{2}\widetilde{\Phi}_{0}
+\epsilon^{2}\widetilde{\Phi}_{2}+\epsilon^{2}\widetilde{\Phi}_{-2},\\
&\epsilon^{2}\widetilde{\Upsilon}=\epsilon^{2}\widetilde{\Upsilon}_{1}+\epsilon^{2}\widetilde{\Upsilon}_{-1}
+\epsilon^{2}\widetilde{\Upsilon}_{11}+\epsilon^{2}\widetilde{\Upsilon}_{-1-1}
+\epsilon^{2}\widetilde{\Upsilon}_{1-1}+\epsilon^{2}\widetilde{\Upsilon}_{-11},
\end{split}
\end{equation}
where
\begin{align}
\epsilon\widetilde{\Psi}_{\pm1}
=&\epsilon\tilde{A}_{\pm1}(X+c_{g}T,\epsilon T)E^{\pm1}\begin{pmatrix}1\\0\end{pmatrix}, \
\epsilon^{2}\widetilde{\Psi}_{0}=\epsilon^{2}\begin{pmatrix}\tilde{A}_{01}(X+c_{g}T,\epsilon T)\\ \tilde{A}_{0-1}(X+c_{g}T,\epsilon T)\end{pmatrix},\nonumber\\
\epsilon^{2}\widetilde{\Psi}_{\pm2}=&\epsilon^{2}\begin{pmatrix}\tilde{A}_{\pm21}(X+c_{g}T,\epsilon T)\\ \tilde{A}_{\pm2-1}(X+c_{g}T,\epsilon T)\end{pmatrix}E^{\pm2},\nonumber\\
\epsilon\widetilde{\Phi}_{\pm1}
=&\epsilon\tilde{B}_{\pm1}(X-c_{g}T,\epsilon T)F^{\pm1}\begin{pmatrix}0\\1\end{pmatrix}, \
\epsilon^{2}\widetilde{\Phi}_{0}=\epsilon^{2}\begin{pmatrix} \tilde{B}_{01}(X-c_{g}T,\epsilon T)\\ \tilde{B}_{0-1}(X-c_{g}T,\epsilon T)\end{pmatrix},\nonumber\\
\epsilon^{2}\widetilde{\Phi}_{\pm2}=&\epsilon^{2}\begin{pmatrix}\tilde{B}_{\pm21}(X-c_{g}T,\epsilon T)\\ \tilde{B}_{\pm2-1}(X-c_{g}T,\epsilon T)\end{pmatrix}F^{\pm2},\nonumber\\
\epsilon^{2}\widetilde{\Upsilon}_{\pm1}
=&\epsilon^{2}\tilde{\mathcal{I}}_{\pm1}(X,T,\epsilon T)E^{\pm1}\begin{pmatrix}1\\ 0 \end{pmatrix}
+\epsilon^{2}\tilde{\mathcal{J}}_{\pm1}(X,T,\epsilon T)F^{\pm1}\begin{pmatrix}0\\ 1 \end{pmatrix}, \nonumber \\ \epsilon^{2}\widetilde{\Upsilon}_{\pm1\pm1}=&\epsilon^{2}\begin{pmatrix}\tilde{f}_{\pm1\pm11}(X,T,\epsilon T)\\ \tilde{f}_{\pm1\pm1-1}(X,T,\epsilon T)\end{pmatrix}E^{\pm1}F^{\pm1}. \label{lowapp}
\end{align}
Here and hereafter, $X=\epsilon x, \ T=\epsilon t, \ E^{j}=e^{ij(k_{0}x+\omega_{0}t)}, \ F^{j}=e^{ij(k_{0}x-\omega_{0}t)}$ with $\omega_{0}=\omega(k_{0})$, $\omega(k)$ is the linear dispersive relation defined in \eqref{equation3} and $c_{g}=\omega'(k_{0})$. Denote $\theta=\epsilon T, \ X_{+}=X+c_{g}T, \ X_{-}=X-c_{g}T$, then $\theta$ is the slow time scale and $X_{\pm}$ is the slow spatial scale. In addition, the complex functions satisfy $\overline{\tilde{A}_{j}}=\tilde{A}_{-j}$, $\overline{\tilde{B}_{j}}=\tilde{B}_{-j}$, $\overline{\tilde{\mathcal{I}}_{j}}=\tilde{\mathcal{I}}_{-j}$, $\overline{\tilde{\mathcal{J}}_{j}}=\tilde{\mathcal{J}}_{-j}$, $\overline{\tilde{A}_{2j}}=\tilde{A}_{-2j}$, $\overline{\tilde{B}_{2j}}=\tilde{B}_{-2j}$ and $\overline{\tilde{f}_{j_{1}j_{2}j}}=\tilde{f}_{-j_{1}-j_{2}j}$, and functions $\tilde{A}_{0j}$ and $\tilde{B}_{0j}$ are real functions with $j, j_{1},  j_{2}\in\{\pm1\}$.

Note that the ansatz $\epsilon\widetilde{\Psi}$ represents oscillating wave trains moving to the left, and the ansatz $\epsilon\widetilde{\Phi}$ represents oscillating wave trains moving to the right. In order to remove the interaction of these two counter propagating waves $\epsilon\widetilde{\Psi}$ and $\epsilon\widetilde{\Phi}$, we add the correction term $\epsilon^{2}\widetilde{\Upsilon}$ which will be defined later in the formal expansion $\epsilon\widetilde{\Theta}$. Inserting \eqref{app}-\eqref{lowapp} into \eqref{abstract}-\eqref{ker}, and replacing the dispersion relation $\omega=\omega(k)$ by their Taylor expansions around $k=jk_{0}$ in all the terms of $\omega\tilde{A}_{j}E^{j}, \ \omega\tilde{A}_{j\ell}E^{j}, \ \omega\tilde{B}_{j}F^{j},  \ \omega\tilde{B}_{j\ell}F^{j}, \ \omega\tilde{\mathcal{I}}_{j}E^{j},  \ \omega\tilde{\mathcal{J}}_{j}F^{j}$ and around $k=(j_{1}+j_{2})k_{0}$ in all the terms of $\omega\tilde{f}_{j_{1}j_{2}j}E^{j_{1}}F^{\tilde{j_{2}}}$, we have the following Lemma.
\begin{lemma}\label{L1}
Assume that $\Omega$ is a Fourier multiplier operator with symbol $i\omega(k)$. Let $\digamma_{\iota}$ be a complex amplitude function with $\iota=1,2,3$. Then we have
\begin{align*}
\Omega \big( \digamma_{1}(X+c_{g}T,\epsilon T)E^{j} \big)
=&i \bigg\{\omega(jk_{0})\digamma_{1}(X+c_{g}T,\epsilon T)
-i\epsilon\omega'(jk_{0})\partial_{X_{+}}\digamma_{1}(X+c_{g}T,\epsilon T)\\
&-\frac{1}{2}\epsilon^{2}\omega''(jk_{0})\partial_{X_{+}}^{2}
\digamma_{1}(X+c_{g}T,\epsilon T)+\cdot\cdot\cdot\bigg\}E^{j},\\
\Omega \big(\digamma_{2}(X-c_{g}T,\epsilon T)F^{j} \big)
=&i \bigg\{\omega(jk_{0})\digamma_{2}(X-c_{g}T,\epsilon T)
-i\epsilon\omega'(jk_{0})\partial_{X_{-}}\digamma_{2}(X-c_{g}T,\epsilon T)\\
&-\frac{1}{2}\epsilon^{2}\omega''(jk_{0})\partial_{X_{-}}^{2}\digamma_{2}(X-c_{g}T,\epsilon T)+\cdot\cdot\cdot\bigg\}F^{j},\\
\Omega \big(\digamma_{3}(X,T,\epsilon T)E^{j_{1}}F^{j_{2}} \big)
=&i \bigg\{\omega((j_{1}+j_{2})k_{0})\digamma_{3}(X,T,\epsilon T)
-i\epsilon\omega'((j_{1}+j_{2})k_{0})\partial_{X}\digamma_{3}(X,T,\epsilon T)\\
&-\frac{1}{2}\epsilon^{2}\omega''((j_{1}+j_{2})k_{0})\partial_{X}^{2}\digamma_{3}(X,T,\epsilon T)+\cdot\cdot\cdot\bigg\}E^{j_{1}}F^{j_{2}}.
\end{align*}
\end{lemma}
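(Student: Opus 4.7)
The plan is to work in Fourier space, where $\Omega$ is multiplication by $i\omega(k)$, and to exploit the fact that each wave packet in the statement has Fourier support concentrated in a narrow (width $\mathcal{O}(\epsilon)$) window around an integer multiple of $k_0$. The expansion then reduces to a Taylor expansion of the symbol $\omega(k)$ around the appropriate base wavenumber.

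First I would handle the case $\Omega\bigl(\digamma_1(X+c_{g}T,\epsilon T)E^{j}\bigr)$. A direct computation in physical space gives
$$
\partial_{x}\bigl[\digamma_1(X+c_gT,\epsilon T)E^{j}\bigr]=\bigl(\epsilon\,\partial_{X_+}\digamma_1+ij k_0\digamma_1\bigr)E^{j},
$$
so for any polynomial $p$ we have $p(-i\partial_x)[\digamma_1 E^{j}]=p(jk_0-i\epsilon\partial_{X_+})\digamma_1\cdot E^{j}$. Extending by the spectral calculus, the Fourier multiplier $\Omega=i\omega(-i\partial_x)$ acts on a slowly modulated carrier $E^{j}$ as the operator $i\omega(jk_0-i\epsilon\partial_{X_+})$ on the amplitude $\digamma_1$. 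Taylor expanding
$$
\omega(jk_0+z)=\omega(jk_0)+z\,\omega'(jk_0)+\tfrac{1}{2}z^{2}\omega''(jk_0)+\cdots
$$
and substituting $z=-i\epsilon\partial_{X_+}$ produces exactly the first displayed identity in the lemma.

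Then I would repeat the argument for the other two cases. For $\digamma_2 F^{j}$, note that $\partial_{x}[\digamma_2(X-c_gT,\epsilon T)F^{j}]=(\epsilon\partial_{X_-}\digamma_2+ij k_0\digamma_2)F^{j}$, so the same spectral reasoning gives $\Omega=i\omega(jk_0-i\epsilon\partial_{X_-})$ acting on $\digamma_2$ with carrier $F^{j}$, which after Taylor expansion around $jk_0$ yields the second identity. For the bilinear wave packet $\digamma_3(X,T,\epsilon T)E^{j_1}F^{j_2}$, the underlying carrier is $e^{i(j_1+j_2)k_0 x}e^{i(j_1-j_2)\omega_0 t}$ and the only slow $x$-dependence is through $X=\epsilon x$, so
$$
\partial_{x}[\digamma_3 E^{j_1}F^{j_2}]=\bigl(\epsilon\partial_{X}\digamma_3+i(j_1+j_2)k_0\digamma_3\bigr)E^{j_1}F^{j_2}.
$$
Taylor expanding $\omega$ around $(j_1+j_2)k_0$ and substituting $-i\epsilon\partial_{X}$ for the increment gives the third identity.

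The only subtlety is that, for $\digamma$ merely smooth rather than entire, the expansion should be read as a finite Taylor polynomial plus a remainder; the remainder can be estimated in Sobolev norms by the standard bound for smooth Fourier multipliers acting on frequency-translated amplitudes, and such remainders are swept into the residual estimates carried out later in Section 4. Since Lemma L1 is used only to read off the coefficients of $\epsilon^{n}E^{j}$, $\epsilon^{n}F^{j}$ and $\epsilon^{n}E^{j_1}F^{j_2}$ appearing in the formal residual, truncating the Taylor series at any fixed order is harmless. The main — and essentially only — point requiring care is the bookkeeping of the scales and the operator identity $-i\partial_{x}\leftrightarrow j k_0-i\epsilon\partial_{X_\pm}$ on modulated carriers; once that is in place the proof is a direct computation.
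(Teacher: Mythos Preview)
Your proof is correct and follows essentially the same approach as the paper: both argue by Taylor expanding the symbol $\omega(k)$ around the carrier wavenumber $jk_0$ (or $(j_1+j_2)k_0$) and then reading off the resulting operator on the slow amplitude via the Fourier transform. Your presentation via the operator identity $-i\partial_x\leftrightarrow jk_0-i\epsilon\partial_{X_\pm}$ is simply a more explicit packaging of the paper's one-line proof.
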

\begin{proof}
We can expand the symbol $\omega(k)$ in a Taylor series around $jk_{0}$ or $(j_{1}+j_{2})k_{0}$, and then compute the inverse Fourier transform of $\omega(k)\widehat{\digamma_{\iota}}(k)$ with $\iota=1,2,3$.
\end{proof}

Let the coefficients of the $\epsilon^{l}E^{j_{1}}F^{j_{2}}$ be zero, where the positive integer $l$ and the integers $j_{1}, \ j_{2}$ satisfy $|j_{1}|+|j_{2}|\leq l$.

For $\mathcal{O}(\epsilon E^{1}F^{0})$ and $\mathcal{O}(\epsilon E^{0}F^{1})$, we have
\begin{equation*}
\begin{split}
i\omega_{0}\tilde{A}_{1}&=i\omega(k_{0})\tilde{A}_{1},\\
-i\omega_{0}\tilde{B}_{1}&=-i\omega(k_{0})\tilde{B}_{1},
\end{split}
\end{equation*}
which are satisfied since $\omega_{0}=\omega(k_{0})$.

For $\mathcal{O}(\epsilon^{2}E^{0}F^{0})$, we have
\begin{equation*}
\begin{split}
0=j\omega(0)(\tilde{A}_{0j}+\tilde{B}_{0j})
+2\alpha^{j}_{11}\Big|_{k=0}\tilde{A}_{1}\tilde{A}_{-1}
+2\alpha^{j}_{-1-1}\Big|_{k=0}\tilde{B}_{1}\tilde{B}_{-1},
\end{split}
\end{equation*}
which is satisfied since $\omega(0)=\alpha^{j}_{mn}|_{k=0}=0$ for $m,n\in\{\pm1\}$ in light of \eqref{equation3} and \eqref{ker}.

For $\mathcal{O}(\epsilon^{2}E^{1}F^{0})$ and $\mathcal{O}(\epsilon^{2}E^{0}F^{1})$, we have
\begin{equation*}
\begin{split}
c_{g}\partial_{X_{+}}\tilde{A}_{1}+i\omega_{0}\tilde{\mathcal{I}}_{1}
&=\omega'(k_{0})\partial_{X_{+}}\tilde{A}_{1}+i\omega(k_{0})\tilde{\mathcal{I}}_{1},\\
-c_{g}\partial_{X_{-}}\tilde{B}_{1}-i\omega_{0}\tilde{\mathcal{J}}_{1}
&=-\omega'(k_{0})\partial_{X_{-}}\tilde{B}_{1}-i\omega(k_{0})\tilde{\mathcal{J}}_{1},
\end{split}
\end{equation*}
which are satisfied since $c_{g}=\omega'(k_{0})$ and $\omega_{0}=\omega(k_{0})$.

For $\mathcal{O}(\epsilon^{2}E^{2}F^{0})$ and $\mathcal{O}(\epsilon^{2}E^{0}F^{2})$, we have
\begin{equation*}
\begin{split}
2i\omega_{0}\tilde{A}_{2j}&=ij\omega(2k_{0})\tilde{A}_{2j}
+i\alpha^{j}_{11}(2k_{0},k_{0},k_{0})(\tilde{A}_{1})^{2},\\
-2i\omega_{0}\tilde{B}_{2j}&=ij\omega(2k_{0})\tilde{B}_{2j}
+i\alpha^{j}_{-1-1}(2k_{0},k_{0},k_{0})(\tilde{B}_{1})^{2},
\end{split}
\end{equation*}
where $\alpha_{mn}^{j}$ with $m, \ n\in\{\pm1\}$ is defined in \eqref{ker}.
Since $\omega(2k_{0})\neq \pm2\omega_{0}=2\omega(k_{0})$ according to the form of the dispersion relation \eqref{equation3}, $\tilde{A}_{2j}$ and $\tilde{B}_{2j}$ are well-defined in terms of $(\tilde{A}_{1})^{2}$ and $(\tilde{B}_{1})^{2}$ respectively as follows
\begin{equation}
\begin{split}\label {A2}
\tilde{A}_{2j}&=\frac{\alpha^{j}_{11}(2k_{0},k_{0},k_{0})}{2\omega_{0}
-j\omega(2k_{0})}(\tilde{A}_{1})^{2},\\
\tilde{B}_{2j}&=\frac{\alpha^{j}_{-1-1}(2k_{0},k_{0},k_{0})}
{-2\omega_{0}-j\omega(2k_{0})}(\tilde{B}_{1})^{2},
\end{split}
\end{equation}
where $\tilde{A}_{1}$ and $\tilde{B}_{1}$ will be fixed in the following at the order of $\mathcal{O}(\epsilon^{3}E^{1}F^{0})$ and $\mathcal{O}(\epsilon^{3}E^{0}F^{1})$.

For the interaction terms $\mathcal{O}(\epsilon^{2}E^{j_{1}}F^{j_{2}})$ with $j_{1}, j_{2}\in\{\pm1\}$, we have
\begin{equation*}
\begin{split}
i(j_{1}-j_{2})\omega_{0}\tilde{f}_{j_{1}j_{2}j}
=&ij\omega((j_{1}+j_{2})k_{0})\tilde{f}_{j_{1}j_{2}j}
+i\alpha^{j}_{j_{1}j_{2}}\left((j_{1}+j_{2})k_{0},j_{1}k_{0},j_{2}k_{0}\right)
\tilde{A}_{j_{1}}\tilde{B}_{j_{2}}.
\end{split}
\end{equation*}
Since $(j_{1}-j_{2})\omega_{0}-j\omega\big((j_{1}+j_{2})k_{0}\big)\neq0$ for $j_{1}, j_{2}\in\{\pm1\}$, $\tilde{f}_{j_{1}j_{2}j}$ is well-defined in terms of $\tilde{A}_{j_{1}}\tilde{B}_{j_{2}}$ by
\begin{equation}
\begin{split}\label{f11}
&\tilde{f}_{j_{1}j_{2}j}=\frac{\alpha^{j}_{j_{1}j_{2}}
\big((j_{1}+j_{2})k_{0},j_{1}k_{0},j_{2}k_{0}\big)}
{(j_{1}-j_{2})\omega_{0}-j\omega\big((j_{1}+j_{2})k_{0}\big)}
\tilde{A}_{j_{1}}\tilde{B}_{j_{2}},\ \ \ \ \ j\in\{\pm1\}.
\end{split}
\end{equation}
Note that $\alpha^{j}_{j_{1}j_{2}}
\big((j_{1}+j_{2})k_{0},j_{1}k_{0},j_{2}k_{0}\big)=0$ for $j_{1}+j_{2}=0$ in light of \eqref{ker}. Thus,  we have
\begin{align*}
\tilde{f}_{11j}&=\frac{\alpha^{j}_{11}
\big(2k_{0},k_{0},k_{0}\big)}
{-j\omega\big(2k_{0}\big)}
\tilde{A}_{1}\tilde{B}_{1},\\
\tilde{f}_{-1-1j}&=\overline{\tilde{f}_{11j}}, \
\tilde{f}_{1-1j}=\tilde{f}_{-11j}=0,
\end{align*}
 where $\tilde{A}_{j_{1}}, \tilde{B}_{j_{2}}$ with $j_{1},j_{2}\in\{\pm1\}$ will be fixed in the following at the order of $\mathcal{O}(\epsilon^{3}E^{1}F^{0})$ and $\mathcal{O}(\epsilon^{3}E^{0}F^{1})$.

Next we turn to $\mathcal{O}(\epsilon^{3})$. For $\mathcal{O}(\epsilon^{3}E^{0}F^{0})$, we have
\begin{align*}
c_{g}(\partial_{X_{+}}\tilde{A}_{0j}-\partial_{X_{-}}\tilde{B}_{0j})
=&j\omega'(0)(\partial_{X_{+}}\tilde{A}_{0j}
+\partial_{X_{-}}\tilde{B}_{0j})
+2\partial_{k}\alpha_{11}^{j}(0,k_{0},k_{0})\partial_{X_{+}}|\tilde{A}_{1}|^{2}\\
&+2\partial_{k}\alpha_{-1-1}^{j}(0,k_{0},k_{0})\partial_{X_{-}}|\tilde{B}_{1}|^{2}
+2\alpha_{11}^{j}(0,k_{0},k_{0})(\tilde{A}_{1}\widetilde{\mathcal{I}}_{-1}
+\tilde{A}_{-1}\widetilde{\mathcal{I}}_{1})\\
&+2\alpha_{-1-1}^{j}(0,k_{0},k_{0})(\tilde{B}_{1}\widetilde{\mathcal{J}}_{-1}
+\tilde{B}_{-1}\widetilde{\mathcal{J}}_{1}),
\end{align*}
where the terms in the form of $\tilde{A}_{1}\partial_{X_{+}}\tilde{A}_{-1}$
have been omitted since their coefficients are zero.
Note that $\alpha_{mn}^{j}(0,k_{0},k_{0})=0$ with $m,n\in\{\pm1\}$ in light of \eqref{ker}. The last two terms then vanish in the above equation. Since $c_{g}\pm\omega'(0)\neq0$, we can take the following choice such that $\tilde{A}_{0j}$ and $\tilde{B}_{0j}$ are well-defined,
\begin{equation}
\begin{split}\label{A0}
\tilde{A}_{0j}=\frac{2\partial_{k}\alpha_{11}^{j}(0,k_{0},k_{0})}
{c_{g}-j\omega'(0)}|\tilde{A}_{1}|^{2}, \ \ \ \ \
\tilde{B}_{0j}=\frac{2\partial_{k}\alpha_{-1-1}^{j}(0,k_{0},k_{0})}
{-c_{g}-j\omega'(0)}|\tilde{B}_{1}|^{2},
\end{split}
\end{equation}
where $\tilde{A}_{1}$ and $\tilde{B}_{1}$ will be fixed in the following at the order of $\mathcal{O}(\epsilon^{3}E^{1}F^{0})$ and $\mathcal{O}(\epsilon^{3}E^{0}F^{1})$.

For $\mathcal{O}(\epsilon^{3}E^{1}F^{0})$ and $\mathcal{O}(\epsilon^{3}E^{0}F^{1})$, we have
\begin{align*}
\partial_{\theta}\tilde{A}_{1}&+\partial_{T}\tilde{\mathcal{I}}_{1}
=-\frac{i}{2}\omega''(k_{0})\partial^{2}_{X_{+}}\tilde{A}_{1}
+\omega'(k_{0})\partial_{X}\tilde{\mathcal{I}}_{1}
+2i\sum_{j}\alpha_{1j}^{1}(k_{0},k_{0},0)\tilde{A}_{1}(\tilde{A}_{0j}+\tilde{B}_{0j})\nonumber\\
&+2i\sum_{j}\alpha_{1j}^{1}(k_{0},-k_{0},2k_{0})\tilde{A}_{-1}\tilde{A}_{2j}
+2i\sum_{j}\alpha_{-1j}^{1}(k_{0},k_{0},0)(\tilde{B}_{1}\tilde{f}_{1-1j}
+\tilde{B}_{-1}\tilde{f}_{11j})\nonumber\\
&+3i\beta_{111}^{1}(k_{0},k_{0},k_{0},-k_{0})\tilde{A}_{1}
(|\tilde{A}_{1}|^{2}+|\tilde{B}_{1}|^{2}),
\end{align*}
\begin{align*}
\partial_{\theta}\tilde{B}_{1}&+\partial_{T}\tilde{\mathcal{J}}_{1}
=\frac{i}{2}\omega''(k_{0})\partial^{2}_{X_{-}}\tilde{B}_{1}
-\omega'(k_{0})\partial_{X}\tilde{\mathcal{J}}_{1}
+2i\sum_{j}\alpha_{-1j}^{-1}(k_{0},k_{0},0)\tilde{B}_{1}(\tilde{B}_{0j}+\tilde{A}_{0j})\nonumber\\
&+2i\sum_{j}\alpha_{-1j}^{-1}(k_{0},-k_{0},2k_{0})\tilde{B}_{-1}\tilde{B}_{2j}
+2i\sum_{j}\alpha_{1j}^{-1}(k_{0},k_{0},0)(\tilde{A}_{1}\tilde{f}_{-11j}
+\tilde{A}_{-1}\tilde{f}_{11j})\\
&+3i\beta_{111}^{-1}(k_{0},k_{0},k_{0},-k_{0})\tilde{B}_{1}
(|\tilde{A}_{1}|^{2}+|\tilde{B}_{1}|^{2})\nonumber,
\end{align*}
where $\alpha_{mn}^{j}$ and $\beta_{mnp}^{j}$ with $m, \ n, \ p, \ j\in\{\pm1\}$ are defined in the equation \eqref{ker}.

Inserting \eqref{A2}-\eqref{A0} to above equations, we obtain
\begin{align}\label{bet1}
\partial_{\theta}\tilde{A}_{1}+\partial_{T}\tilde{\mathcal{I}}_{1}
=&-\frac{i}{2}\omega''(k_{0})\partial^{2}_{X_{+}}\tilde{A}_{1}
+\omega'(k_{0})\partial_{X}\tilde{\mathcal{I}}_{1}
+i\nu_{1}(k_{0})\tilde{A}_{1}|\tilde{A}_{1}|^{2}
+i\widetilde{\nu}_{1}(k_{0})\tilde{A}_{1}|\tilde{B}_{1}|^{2},
\end{align}
\begin{align}\label{bet2}
\partial_{\theta}\tilde{B}_{1}+\partial_{T}\tilde{\mathcal{J}}_{1}
=&\frac{i}{2}\omega''(k_{0})\partial^{2}_{X_{-}}\tilde{B}_{1}
-\omega'(k_{0})\partial_{X}\tilde{\mathcal{J}}_{1}
+i\nu_{2}(k_{0})\tilde{B}_{1}|\tilde{B}_{1}|^{2}
+i\widetilde{\nu}_{2}(k_{0})\tilde{B}_{1}|\tilde{A}_{1}|^{2},
\end{align}
where $\theta=\epsilon T=\epsilon^{2}t$ is defined below \eqref{lowapp} and $\nu_{m}(k_{0})$ and $\widetilde{\nu}_{m}(k_{0})$ with $m\in\{1,2\}$ are real valued  functions only depending on basic spatial wave number $k_{0}>0$. As mentioned above the correctors $\tilde{\mathcal{I}}_{1}$ and $\tilde{\mathcal{J}}_{1}$ are introduced to remove the interaction terms of the counter propagating waves. Thus we take the following form to make sure the above equalities are valid and then obtain the NLS equations for $\tilde{A}_{1}$ and $\tilde{B}_{1}$,
\begin{equation}
\begin{split}\label{NLS1}
\partial_{\theta}\tilde{A}_{1}=-\frac{i}{2}\omega''(k_{0})
\partial^{2}_{X_{+}}\tilde{A}_{1}
+i\nu_{1}(k_{0})\tilde{A}_{1}|\tilde{A}_{1}|^{2},
\end{split}
\end{equation}
\begin{equation}
\begin{split}\label{NLS2}
\partial_{\theta}\tilde{B}_{1}=\frac{i}{2}\omega''(k_{0})
\partial^{2}_{X_{-}}\tilde{B}_{1}
+i\nu_{2}(k_{0})\tilde{B}_{1}|\tilde{B}_{1}|^{2},
\end{split}
\end{equation}
and the evolutionary equations for the stationary waves $\tilde{\mathcal{I}}_{1}$ and $\tilde{\mathcal{J}}_{1}$,
\begin{equation}
\begin{split}\label{Modify1}
\partial_{T}\tilde{\mathcal{I}}_{1}-\omega'(k_{0})\partial_{X}\tilde{\mathcal{I}}_{1}
=i\widetilde{\nu}_{1}(k_{0})\tilde{A}_{1}|\tilde{B}_{1}|^{2},
\end{split}
\end{equation}
\begin{equation}
\begin{split}\label{Modify2}
\partial_{T}\tilde{\mathcal{J}}_{1}+\omega'(k_{0})\partial_{X}\tilde{\mathcal{J}}_{1}
=i\widetilde{\nu}_{2}(k_{0})\tilde{B}_{1}|\tilde{A}_{1}|^{2}.
\end{split}
\end{equation}
Recalling that $\omega'(k_{0})=c_{g}$ and applying the following Lemma \ref{L3}, we have the following estimates for the correctors $\tilde{\mathcal{I}}_{1}$ and $\tilde{\mathcal{J}}_{1}$
\begin{equation}
\begin{split}\label{IJ1}
\|\tilde{\mathcal{I}}_{1}(T)\|_{H^{s}}\leq C\|\tilde{A}_{1}\|_{H^{s}}\|\tilde{B}_{1}\|_{H^{s}}^{2},\\
\|\tilde{\mathcal{J}}_{1}(T)\|_{H^{s}}\leq C\|\tilde{B}_{1}\|_{H^{s}}\|\tilde{A}_{1}\|_{H^{s}}^{2},
\end{split}
\end{equation}
for all $T\geq0$.
\begin{lemma}[commutator estimate]\label{L10}
Let $m\geq1$ be an integer and define
\begin{equation}
\begin{split}\label{w}
[\nabla^{m},f]g:=\nabla^{m}(fg)-f\nabla^{m}g.
\end{split}
\end{equation}
Then
\begin{equation}
\begin{split}\label{w11}
\|[\nabla^{m},f]g\|_{L^{p}}\leq \|\nabla f\|_{L^{p_{1}}}\|\nabla^{m-1}g\|_{L^{p_{2}}}+\|\nabla^{m} f\|_{L^{p_{3}}}\|g\|_{L^{p_{4}}},
\end{split}
\end{equation}
where $p,  p_{2}, p_{3}\in(1,\infty)$ and
\begin{equation*}
\begin{split}
\frac{1}{p}=\frac{1}{p_{1}}+\frac{1}{p_{2}}=\frac{1}{p_{3}}+\frac{1}{p_{4}}.
\end{split}
\end{equation*}
\end{lemma}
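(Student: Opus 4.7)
The plan is to prove the commutator estimate \eqref{w11} by expanding $[\nabla^{m},f]g$ via the iterated Leibniz formula and then controlling each term by a combination of Hölder's inequality, Gagliardo--Nirenberg interpolation, and the weighted AM--GM inequality. Since the paper operates in one spatial dimension, I take $\nabla=\partial_{x}$; the usual Leibniz rule then gives
\begin{equation*}
[\nabla^{m},f]g = \nabla^{m}(fg)-f\nabla^{m}g = \sum_{k=1}^{m}\binom{m}{k}(\nabla^{k}f)(\nabla^{m-k}g),
\end{equation*}
so the estimate reduces to bounding $\|(\nabla^{k}f)(\nabla^{m-k}g)\|_{L^{p}}$ for each $k=1,\dots,m$ by the right-hand side of \eqref{w11}.

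The endpoint contributions $k=1$ and $k=m$ are handled immediately: Hölder yields $\|(\nabla f)(\nabla^{m-1}g)\|_{L^{p}}\leq \|\nabla f\|_{L^{p_{1}}}\|\nabla^{m-1}g\|_{L^{p_{2}}}$ and $\|(\nabla^{m}f)g\|_{L^{p}}\leq \|\nabla^{m}f\|_{L^{p_{3}}}\|g\|_{L^{p_{4}}}$, matching the two summands in \eqref{w11} verbatim. For the intermediate range $2\leq k\leq m-1$, I introduce auxiliary exponents
\begin{equation*}
\frac{1}{q_{k}}=\frac{\theta_{k}}{p_{1}}+\frac{1-\theta_{k}}{p_{3}},\qquad \frac{1}{r_{k}}=\frac{\theta_{k}}{p_{2}}+\frac{1-\theta_{k}}{p_{4}},\qquad \theta_{k}:=\frac{m-k}{m-1},
\end{equation*}
and observe that the hypothesis $\tfrac{1}{p_{1}}+\tfrac{1}{p_{2}}=\tfrac{1}{p_{3}}+\tfrac{1}{p_{4}}=\tfrac{1}{p}$ forces $\tfrac{1}{q_{k}}+\tfrac{1}{r_{k}}=\tfrac{1}{p}$, so Hölder closes and reduces the problem to controlling $\|\nabla^{k}f\|_{L^{q_{k}}}\|\nabla^{m-k}g\|_{L^{r_{k}}}$. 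Then the standard Gagliardo--Nirenberg inequality (applied to $h:=\nabla f$ for $f$, interpolating $k-1$ derivatives between $0$ and $m-1$) yields
\begin{equation*}
\|\nabla^{k}f\|_{L^{q_{k}}}\lesssim \|\nabla f\|_{L^{p_{1}}}^{\theta_{k}}\|\nabla^{m}f\|_{L^{p_{3}}}^{1-\theta_{k}},\qquad \|\nabla^{m-k}g\|_{L^{r_{k}}}\lesssim \|\nabla^{m-1}g\|_{L^{p_{2}}}^{\theta_{k}}\|g\|_{L^{p_{4}}}^{1-\theta_{k}}.
\end{equation*}

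Multiplying these two estimates produces the geometric mean $\big[\|\nabla f\|_{L^{p_{1}}}\|\nabla^{m-1}g\|_{L^{p_{2}}}\big]^{\theta_{k}}\big[\|\nabla^{m}f\|_{L^{p_{3}}}\|g\|_{L^{p_{4}}}\big]^{1-\theta_{k}}$, and the weighted AM--GM inequality $a^{\theta}b^{1-\theta}\leq \theta a+(1-\theta)b$ bounds it by a convex combination of the two summands on the right of \eqref{w11}. Summing against the binomial coefficients $\binom{m}{k}$ absorbs a universal constant $C=C(m)$ and completes the proof. The only technical point to monitor is admissibility of the Gagliardo--Nirenberg exponents: the assumptions $p_{2},p_{3}\in(1,\infty)$ are exactly what is needed to avoid the endpoint failure of Gagliardo--Nirenberg on the ``high derivative'' sides, while $p_{1}$ and $p_{4}$ are permitted to reach $\infty$ because they appear on the ``low derivative'' sides where the classical interpolation allows endpoint Lebesgue spaces. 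I expect the bookkeeping of these exponents, rather than any deep analytic difficulty, to be the main obstacle.
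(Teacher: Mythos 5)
Your proposal is correct. The paper itself gives no argument for Lemma \ref{L10}: it simply cites Kato--Ponce \cite{KP88}, whose proof of the (more general, fractional-order) estimate for $J^{s}=(1-\Delta)^{s/2}$ goes through Fourier-analytic machinery of Coifman--Meyer type rather than anything resembling your computation. What you have written out is the standard elementary proof of the integer-order version (the Klainerman--Majda route): Leibniz expansion, Hölder on the two endpoint terms $k=1$ and $k=m$, and for $2\leq k\leq m-1$ the Gagliardo--Nirenberg interpolation
$\|\nabla^{k}f\|_{L^{q_{k}}}\lesssim\|\nabla f\|_{L^{p_{1}}}^{\theta_{k}}\|\nabla^{m}f\|_{L^{p_{3}}}^{1-\theta_{k}}$,
$\|\nabla^{m-k}g\|_{L^{r_{k}}}\lesssim\|\nabla^{m-1}g\|_{L^{p_{2}}}^{\theta_{k}}\|g\|_{L^{p_{4}}}^{1-\theta_{k}}$
with $\theta_{k}=(m-k)/(m-1)$, followed by weighted AM--GM. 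Your exponent bookkeeping checks out: the scaling relation $\tfrac{1}{q_{k}}+\tfrac{1}{r_{k}}=\tfrac1p$ does follow from $\tfrac{1}{p_{1}}+\tfrac{1}{p_{2}}=\tfrac{1}{p_{3}}+\tfrac{1}{p_{4}}=\tfrac1p$, and your observation about which exponents must avoid the Gagliardo--Nirenberg endpoints (the high-derivative slots $p_{2},p_{3}$, not the low-derivative slots $p_{1},p_{4}$) is exactly the right point to flag, since the paper later applies the lemma with $L^{\infty}$ in the low-derivative positions. Your route buys a self-contained, purely real-variable proof but is restricted to integer $m$; the cited reference buys fractional orders at the cost of harmonic analysis. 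One cosmetic remark: as stated in the paper the inequality carries no constant, whereas your argument (and the true estimate) produces a constant $C=C(m)$ from the binomial coefficients and the interpolation inequalities; this is harmless since the lemma is only ever invoked up to constants.
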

\begin{proof}
The proof can be found in \cite{KP88}, for example.
\end{proof}
\begin{lemma}\label{L3}
Assume that $s$ is a nonnegative integer. Let $F^{+}, F^{-}\in H^{s}(\mathbb{R})$. If $h$ and $\tilde{h}$ are solutions of
\begin{align*}
[\partial_{T}-c_{g}\partial_{X}]h(T,X)=F^{+}(X+c_{g}T)[F^{-}(X-c_{g}T)]^{2}, \  \ \ \ \ h\big|_{T=0}=0,\\
[\partial_{T}+c_{g}\partial_{X}]\tilde{h}(T,X)=F^{-}(X-c_{g}T)[F^{+}(X+c_{g}T)]^{2}, \  \ \ \ \  \tilde{h}\big|_{T=0}=0.
\end{align*}
Then there exists a constant $C$ depending only on $c_{g}$ such that
\begin{align*}
\|h(T)\|_{H^{s}}&\leq C\|F^{+}\|_{H^{s}}\|F^{-}\|^{2}_{H^{s}},\\
\|\tilde{h}(T)\|_{H^{s}}&\leq C\|F^{-}\|_{H^{s}}\|F^{+}\|^{2}_{H^{s}},
\end{align*}
for all $s\geq0$.
\end{lemma}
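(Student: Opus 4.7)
The plan is to solve each transport equation explicitly along characteristics and then exploit a simple change-of-variables trick to turn the time integral into a bounded antiderivative. First, for $h$, I would change coordinates to $Y=X+c_gT$, which is preserved by $\partial_T-c_g\partial_X$. With $H(T,Y):=h(T,Y-c_gT)$, the equation reduces to
\begin{equation*}
\partial_T H(T,Y)=F^{+}(Y)\bigl[F^{-}(Y-2c_gT)\bigr]^{2},\qquad H(0,Y)=0,
\end{equation*}
so that $H(T,Y)=F^{+}(Y)\,G(T,Y)$ with $G(T,Y):=\int_0^T\bigl[F^{-}(Y-2c_g\tau)\bigr]^{2}\,d\tau$. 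The substitution $z=Y-2c_g\tau$ then yields the crucial identity
\begin{equation*}
G(T,Y)=\frac{1}{2c_g}\int_{Y-2c_gT}^{Y}\bigl[F^{-}(z)\bigr]^{2}\,dz,
\end{equation*}
from which $\|G(T)\|_{L^{\infty}_Y}\leq \frac{1}{2c_g}\|F^{-}\|_{L^{2}}^{2}$ uniformly in $T\geq 0$. The $s=0$ case then follows at once from $\|H(T)\|_{L^{2}_Y}\leq \|G\|_{L^{\infty}_Y}\|F^{+}\|_{L^{2}}$, and unitarity of the translation $X\mapsto Y$ transfers the bound back to $h$.

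For $s\geq 1$, I would expand $\partial_Y^{s}(F^{+}G)$ by the Leibniz rule. Derivatives of $G$ come from the fundamental theorem of calculus: for $k\geq 1$,
\begin{equation*}
\partial_Y^{k}G(T,Y)=\frac{1}{2c_g}\Bigl(\partial_Y^{k-1}\bigl[F^{-}(Y)\bigr]^{2}-\partial_Y^{k-1}\bigl[F^{-}(Y-2c_gT)\bigr]^{2}\Bigr),
\end{equation*}
so that $\|\partial_Y^{k}G(T)\|_{L^{2}}\leq \frac{C}{c_g}\|[F^{-}]^{2}\|_{H^{k-1}}$ by translation invariance of the $L^{2}$ norm. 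Each Leibniz summand $\partial_Y^{j}F^{+}\cdot\partial_Y^{s-j}G$ is then controlled by pairing the lower-order factor in $L^{\infty}$ via the one-dimensional embedding $H^{1}\hookrightarrow L^{\infty}$ with the remaining factor in $L^{2}$; together with the $H^{s}$-algebra property in one dimension, which gives $\|[F^{-}]^{2}\|_{H^{s-1}}\leq C\|F^{-}\|_{H^{s}}^{2}$ for $s\geq 1$, this produces the claimed bound $\|h(T)\|_{H^{s}}\leq C\|F^{+}\|_{H^{s}}\|F^{-}\|_{H^{s}}^{2}$ with $C$ depending only on $c_g$. The estimate for $\tilde h$ is obtained identically after replacing $c_g$ by $-c_g$ and interchanging the roles of $F^{+}$ and $F^{-}$, which leaves every step unchanged.

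The main obstacle, and the reason a naive approach fails, is that Minkowski's inequality applied directly to $G(T,Y)=\int_0^T[F^{-}(Y-2c_g\tau)]^{2}\,d\tau$ yields a bound that is linear in $T$; likewise, a standard $L^{2}$ energy estimate for the transport equation produces the same linear-in-$T$ growth. The decisive step is therefore to keep $F^{+}$ outside the time integral and to measure $G$ in $L^{\infty}_Y$ rather than in $L^{2}_Y$, which converts the $\tau$-integral into a difference of antiderivatives of $[F^{-}]^{2}$ and makes the uniform-in-$T$ bound transparent.
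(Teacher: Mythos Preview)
Your proof is correct and follows essentially the same route as the paper: both arguments solve the transport equation along characteristics to obtain the product representation $h=F^{+}\cdot G$ with $G$ an antiderivative of $(F^{-})^{2}$, bound $G$ in $L^{\infty}$ for the $s=0$ case, and then handle $s\geq 1$ by distributing derivatives on the product. The only cosmetic difference is that the paper organizes the higher-order step through the Kato--Ponce commutator estimate (Lemma~\ref{L10}), whereas you carry out the Leibniz expansion and Sobolev embedding directly; the underlying mechanism is identical.
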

\begin{proof}
By the method of characteristics, we have
\begin{align*}
h(T,X)=&\int_{0}^{T}F^{+}(X+(s-T)(-c_{g})+c_{g}s)(F^{-}(X+(s-T)(-c_{g})-c_{g}s))^{2}ds\\
=&F^{+}(X+c_{g}T)\int_{0}^{T}(F^{-}(X+c_{g}T-2c_{g}s))^{2}ds\\
=&\frac{1}{2c_{g}}F^{+}(X+c_{g}T) \int_{X-c_{g}T}^{X+c_{g}T}(F^{-}(\tau))^{2}d\tau.
\end{align*}
Then for $s=0$, we have
\begin{align*}
\|h(T)\|_{L^{2}}
\leq&\frac{1}{2c_{g}}\left\|F^{+}(X+c_{g}T)\right\|_{L^{2}}
\left\|\int_{X-c_{g}T}^{X+c_{g}T}(F^{-}(\tau))^{2}d\tau\right\|_{L^{\infty}}\\
\leq&\frac{1}{2c_{g}}\left\|F^{+}\right\|_{L^{2}}\left\|F^{-}\right\|^{2}_{L^{2}}.
\end{align*}
For $s\geq1$, we use \eqref{w} to write
\begin{align*}
\partial_{X}^{s}h(T,X)
=&\frac{1}{2c_{g}}\Bigg(\partial_{X}^{s}F^{+}(X+c_{g}T)
\int_{X-c_{g}T}^{X+c_{g}T}(F^{-}(\tau))^{2}d\tau\\
&+\left[\partial_{X}^{s}, \int_{X-c_{g}T}^{X+c_{g}T}(F^{-}(\tau))^{2}d\tau\right] F^{+}(X+c_{g}T)\Bigg).
\end{align*}
Then, we have
\begin{align*}
\|\partial_{X}^{s}h(T,X)\|_{L^{2}}
\leq&\frac{1}{2c_{g}}\Bigg(\left\|\partial_{X}^{s}F^{+}(X+c_{g}T)\right\|_{L^{2}}
\left\|\int_{X-c_{g}T}^{X+c_{g}T}(F^{-}(\tau))^{2}d\tau\right\|_{L^{\infty}}\\
&+\left\|\partial_{X}\int_{X-c_{g}T}^{X+c_{g}T}(F^{-}(\tau))^{2}d\tau\right\|_{L^{\infty}}
\|\partial_{X}^{s-1}F^{+}(X+c_{g}T)\|_{L^{2}}\\
&+\left\|\partial_{X}^{s}\int_{X-c_{g}T}^{X+c_{g}T}(F^{-}(\tau))^{2}d\tau\right\|_{L^{2}}
\|F^{+}(X+c_{g}T)\|_{L^{\infty}}\Bigg)\\
\leq&\frac{1}{2c_{g}}\Big(\|\partial_{X}^{s}F^{+}\|_{L^{2}}
\left\|F^{-}\right\|^{2}_{L^{2}}
+2\left\|F^{-}\right\|^{2}_{H^{1}}
\|\partial_{X}^{s-1}F^{+}\|_{L^{2}}
+\left\|F^{-}\right\|^{2}_{H^{s}}\|F^{+}\|_{H^{1}}\Big),
\end{align*}
where we have used commutator estimate \eqref{w11} and Sobolev embedding $H^{1}\hookrightarrow L^{\infty}$. So we have
\begin{align*}
\|h(T)\|_{H^{s}}
\leq C\left\|F^{+}\right\|_{H^{s}}\left\|F^{-}\right\|^{2}_{H^{s}}.
\end{align*}
Similarly, we can obtain the estimate for $\tilde{h}$.
\end{proof}

\section{\textbf{The Modified Approximation}}
To justify the bidirectional NLS approximation \eqref{app} (with  \eqref{NLS1} and \eqref{NLS2}), it is helpful to modify the approximation $\epsilon\widetilde{\Theta}$ to an improved approximation $\epsilon\Theta$. 
In this section, we will proceed in two steps to make such an improvement. First, we extend the approximation $\epsilon\widetilde{\Theta}$ to $\epsilon\widetilde{\Theta}^{ext}$ by adding higher order correction terms. The first step makes the residual \eqref{residual} be of order $\mathcal{O}(\epsilon^{m})$ for arbitrary large positive integer $m$. Secondly, we modify $\epsilon\widetilde{\Theta}^{ext}$ to $\epsilon\Theta$ by applying some cut-off function so that the support of $\epsilon\Theta$ is restricted to small neighborhoods of integer multiples of the basic wave number $k_{0}>0$ in Fourier space. The second step makes $\epsilon\Theta$ analytic and gives us a much simpler control on the resonances.

\subsection{\textbf{From $\epsilon\widetilde{\Theta}$ to $\epsilon\widetilde{\Theta}^{ext}$}}
In this step the previous approximation $\epsilon\widetilde{\Theta}$ is extended to $\epsilon\widetilde{\Theta}^{ext}$ by adding higher order terms. Specifically, we define
\begin{equation}
\begin{split}\label{ext}
\epsilon\widetilde{\Theta}^{ext}=&\epsilon\widetilde{\Psi}^{ext}
+\epsilon\widetilde{\Phi}^{ext}+\epsilon^{2}\widetilde{\Upsilon}^{ext},
\end{split}
\end{equation}
with
\begin{equation}
\begin{split}\label{extt}
\epsilon\widetilde{\Psi}^{ext}=\epsilon\widetilde{\Psi}+\epsilon^{2}\widetilde{\Psi}_{p}, \ \
\epsilon\widetilde{\Phi}^{ext}=\epsilon\widetilde{\Phi}+\epsilon^{2}\widetilde{\Phi}_{q}, \ \
\epsilon^{2}\widetilde{\Upsilon}^{ext}=\epsilon^{2}\widetilde{\Upsilon}
+\epsilon^{3}\widetilde{\Upsilon}_{r},
\end{split}
\end{equation}
where $\epsilon\widetilde{\Psi}$ and $\epsilon\widetilde{\Phi}$ and $\epsilon^{2}\widetilde{\Upsilon}$ are given in \eqref{appr}-\eqref{lowapp} and the others are of the form
\begin{align*}
\epsilon^{2}\widetilde{\Psi}_{p}=&\sum_{\ell\in\{\pm1\}}\sum_{n=1}^{4}\epsilon^{1+n}\begin{pmatrix} \tilde{A}_{\ell1}^{n}(X+c_{g}T,\epsilon T)\\ \tilde{A}_{\ell-1}^{n}(X+c_{g}T,\epsilon T)\end{pmatrix}E^{\ell}
    +\sum_{\ell\in\{\pm2\}}\sum_{n=1}^{3}\epsilon^{2+n}\begin{pmatrix}
    \tilde{A}_{\ell1}^{n}(X+c_{g}T,\epsilon T)
    \\ \tilde{A}_{\ell-1}^{n}(X+c_{g}T,\epsilon T)\end{pmatrix}E^{\ell} \\
    &+\sum_{n=1}^{3}\epsilon^{2+n}\begin{pmatrix}\tilde{A}_{01}^{n}(X+c_{g}T,\epsilon T)
    \\ \tilde{A}_{0-1}^{n}(X+c_{g}T,\epsilon T) \end{pmatrix}
    +\sum_{\ell\in\{\pm3\}}\sum_{n=0}^{2}\epsilon^{3+n}\begin{pmatrix}
    \tilde{A}_{\ell1}^{n}(X+c_{g}T,\epsilon T)
    \\ \tilde{A}_{\ell-1}^{n}(X+c_{g}T,\epsilon T) \end{pmatrix} E^{\ell} \\
    &+\sum_{\ell\in\{\pm4\}}\sum_{n=0}^{1}\epsilon^{4+n}\begin{pmatrix}
    \tilde{A}_{\ell1}^{n}(X+c_{g}T,\epsilon T)
    \\ \tilde{A}_{\ell-1}^{n}(X+c_{g}T,\epsilon T)  \end{pmatrix}E^{\ell}
    +\sum_{\ell\in\{\pm5\}}\epsilon^{5}\begin{pmatrix}\tilde{A}_{\ell1}^{0}(X+c_{g}T,\epsilon T)
    \\ \tilde{A}_{\ell-1}^{0}(X+c_{g}T,\epsilon T)  \end{pmatrix}E^{\ell},\\
\epsilon^{2}\widetilde{\Phi}_{q}=&\sum_{\ell\in\{\pm1\}}\sum_{n=1}^{4}\epsilon^{1+n}\begin{pmatrix} \tilde{B}_{\ell1}^{n}(X-c_{g}T,\epsilon T)\\ \tilde{B}_{\ell-1}^{n}(X-c_{g}T,\epsilon T)\end{pmatrix}F^{\ell}
    +\sum_{\ell\in\{\pm2\}}\sum_{n=1}^{3}\epsilon^{2+n}\begin{pmatrix}
    \tilde{B}_{\ell1}^{n}(X-c_{g}T,\epsilon T)
    \\ \tilde{B}_{\ell-1}^{n}(X-c_{g}T,\epsilon T)\end{pmatrix}F^{\ell} \\
    &+\sum_{n=1}^{3}\epsilon^{2+n}\begin{pmatrix}\tilde{B}_{01}^{n}(X-c_{g}T,\epsilon T)
    \\ \tilde{B}_{0-1}^{n}(X-c_{g}T,\epsilon T) \end{pmatrix}
    +\sum_{\ell\in\{\pm3\}}\sum_{n=0}^{2}\epsilon^{3+n}\begin{pmatrix}
    \tilde{B}_{\ell1}^{n}(X-c_{g}T,\epsilon T)
    \\ \tilde{B}_{\ell-1}^{n}(X-c_{g}T,\epsilon T) \end{pmatrix} F^{\ell} \\
    &+\sum_{\ell\in\{\pm4\}}\sum_{n=0}^{1}\epsilon^{4+n}\begin{pmatrix}
    \tilde{B}_{\ell1}^{n}(X-c_{g}T,\epsilon T)
    \\ \tilde{B}_{\ell-1}^{n}(X-c_{g}T,\epsilon T)  \end{pmatrix}F^{\ell}
    +\sum_{\ell=\pm5}\epsilon^{5}\begin{pmatrix}\tilde{B}_{\ell1}^{0}(X-c_{g}T,\epsilon T)
    \\ \tilde{B}_{\ell-1}^{0}(X-c_{g}T,\epsilon T)  \end{pmatrix}F^{\ell},\\
\epsilon^{3}\widetilde{\Upsilon}_{r}=&\sum_{\ell\in\{\pm1\}}\sum_{n=1}^{3}
\epsilon^{2+n}\begin{pmatrix}\tilde{\mathcal{I}}_{\ell1}^{n}(X,T,\epsilon T)\\ \tilde{\mathcal{I}}_{\ell-1}^{n}(X,T,\epsilon T) \end{pmatrix}E^{\ell}
+\sum_{\ell\in\{\pm1\}}\sum_{n=1}^{3}
\epsilon^{2+n}\begin{pmatrix}\tilde{\mathcal{J}}_{\ell1}^{n}(X,T,\epsilon T)\\ \tilde{\mathcal{J}}_{\ell-1}^{n}(X,T,\epsilon T) \end{pmatrix}F^{\ell}\\
&+\sum_{\ell_{1},\ell_{2}\in\{\pm1\}}\sum_{n=1}^{3}\epsilon^{2+n}
    \begin{pmatrix}\tilde{f}_{\ell_{1}\ell_{2}1}^{n}(X,T,\epsilon T)
    \\ \tilde{f}_{\ell_{1}\ell_{2}-1}^{n}(X,T,\epsilon T) \end{pmatrix} E^{\ell_{1}}F^{\ell_{2}}\\
&+\sum_{\substack{\ell_{1},\ell_{2}\in\{\pm1,\pm2\}\\|\ell_{1}|+|\ell_{2}|=3}}
\sum_{n=0}^{2}\epsilon^{3+n}
    \begin{pmatrix}\tilde{f}_{\ell_{1}\ell_{2}1}^{n}(X,T,\epsilon T)
    \\ \tilde{f}_{\ell_{1}\ell_{2}-1}^{n}(X,T,\epsilon T) \end{pmatrix} E^{\ell_{1}}F^{\ell_{2}}\\
&+\sum_{\substack{\ell_{1},\ell_{2}\in\{\pm1,\pm2,\pm3\}\\|\ell_{1}|+|\ell_{2}|=4}}
\sum_{n=0}^{1}\epsilon^{4+n}
    \begin{pmatrix}\tilde{f}_{\ell_{1}\ell_{2}1}^{n}(X,T,\epsilon T)
    \\ \tilde{f}_{\ell_{1}\ell_{2}-1}^{n}(X,T,\epsilon T) \end{pmatrix} E^{\ell_{1}}F^{\ell_{2}}\\
&+\sum_{\substack{\ell_{1},\ell_{2}\in\{\pm1,\pm2,\pm3,\pm4\}\\|\ell_{1}|+|\ell_{2}|=5}}
\epsilon^{5}\begin{pmatrix}\tilde{f}_{\ell_{1}\ell_{2}1}^{0}(X,T,\epsilon T)
    \\ \tilde{f}_{\ell_{1}\ell_{2}-1}^{0}(X,T,\epsilon T) \end{pmatrix} E^{\ell_{1}}F^{\ell_{2}},
\end{align*}
where $\tilde{A}_{-\ell j}^{n}=\ \overline{\tilde{A}_{\ell j}^{n}}$, $\tilde{B}_{-\ell j}^{n}=\overline{\tilde{B}_{\ell j}^{n}}$, $\tilde{\mathcal{I}}_{-\ell j}^{n}=\overline{\tilde{\mathcal{I}}_{\ell j}^{n}}$, $\tilde{\mathcal{J}}_{-\ell j}^{n}=\overline{\tilde{\mathcal{J}}_{\ell j}^{n}}$ and $\tilde{f}_{-\ell_{1}-\ell_{2}j}^{n} =\overline{\tilde{f}_{\ell_{1}\ell_{2}j}^{n}}$ with $j\in\{\pm1\}$. The functions $\tilde{A}_{\ell j}, \ \tilde{A}_{\pm 1}, \ \tilde{B}_{\ell j}, \ \tilde{B}_{\pm 1}, \ \tilde{\mathcal{I}}_{\pm 1}, \ \tilde{\mathcal{J}}_{\pm 1}$ and $\tilde{f}_{\pm1\pm1j}$ from Section 3 correspond to $\tilde{A}_{\ell j}^{0}, \ \tilde{A}_{\pm11}^{0}, \ \tilde{B}^{0}_{\ell j}, \ \tilde{B}_{\pm1-1}^{0}, \ \tilde{\mathcal{I}}_{\pm11}^{0}, \ \tilde{\mathcal{J}}_{\pm 1-1}^{0}$, and $\tilde{f}_{\pm1\pm1j}^{0}$, respectively. In the $j^{th}$ component, we can rewrite the extended approximation in \eqref{ext} as
\begin{align}\label{appext}
\epsilon\widetilde{\Theta}_{j}^{ext}=\epsilon\widetilde{\Psi}_{j}^{ext}
+\epsilon\widetilde{\Phi}_{j}^{ext}
+\epsilon^{2}\widetilde{\Upsilon}_{j}^{ext},
\end{align}
with
\begin{align*}
\epsilon\widetilde{\Psi}_{j}^{ext}
=:&\sum_{|\ell|\leq5}\sum_{\lambda_{j}(\ell,n)\leq5}\epsilon^{\lambda_{j}(\ell,n)}
 \tilde{A}_{\ell j}^{n}(X+c_{g}T,\epsilon T)E^{\ell},\nonumber\\
\epsilon\widetilde{\Phi}_{j}^{ext}
=:&\sum_{|\ell|\leq5}\sum_{\eta_{j}(\ell,n)\leq5}\epsilon^{\eta_{j}(\ell,n)} \tilde{B}_{\ell j}^{n}(X-c_{g}T,\epsilon T)F^{\ell},\nonumber\\
\epsilon^{2}\widetilde{\Upsilon}_{j}^{ext}=&\sum_{\ell\in\{\pm1\}}\sum_{2\leq\zeta_{j}(n)\leq5}
\epsilon^{\zeta_{j}(n)}\tilde{\mathcal{I}}_{\ell j}^{n}(X,T,\epsilon T)E^{\ell}+\sum_{\ell\in\{\pm1\}}\sum_{2\leq\mu_{j}(n)\leq5}
\epsilon^{\mu_{j}(n)}\tilde{\mathcal{J}}_{\ell j}^{n}(X,T,\epsilon T)F^{\ell}\nonumber\\
&+\sum_{\substack{2\leq|\ell_{1}|+|\ell_{2}|\leq5\\ \ell_{1}\ell_{2}\neq0}}
\sum_{2\leq\chi(\ell_{1},\ell_{2},n)\leq5}\epsilon^{\chi(\ell_{1},\ell_{2},n)}
\tilde{f}_{\ell_{1}\ell_{2}j}^{n}(X,T,\epsilon T)E^{\ell_{1}}F^{\ell_{2}},
\end{align*}
where $j\in\{\pm1\}, \ \ell\in\mathbb{Z}, \ n\in\mathbb{N}, \ \lambda_{j}(\ell,n)=1+||\ell|-1|+n, \ \eta_{j}(\ell,n)=1+||\ell|-1|+n, \ \zeta_{j}(n)=\mu_{j}(n)=2+n$ and $\chi(\ell_{1},\ell_{2},n)=|\ell_{1}|+|\ell_{2}|+n$. In addition, $\lambda_{1}(\ell,n)=\lambda_{-1}(\ell,n)$ except for $\lambda_{-1}(1,n)=\lambda_{1}(1,n)+2$, $\eta_{1}(\ell,n)=\eta_{-1}(\ell,n)$ except for $\eta_{1}(1,n)=\eta_{-1}(1,n)+2$, $\zeta_{-1}(n)=\zeta_{1}(n)+1$ and $\mu_{1}(n)=\mu_{-1}(n)+1$.

Inserting the above extended approximation \eqref{appext} into system \eqref{equation7}, we will show that the residual $Res(\epsilon\widetilde{\Theta}^{ext})$ (see \eqref{residual}) is formally at least of order $\mathcal{O}(\epsilon^{6})$ if $\tilde{A}^{n}_{\ell j}$, $\tilde{B}^{n}_{\ell j}$, $\tilde{\mathcal{I}}_{\ell j}^{n}$,  $\tilde{\mathcal{J}}_{\ell j}^{n}$ and $\tilde{f}_{\ell_{1}\ell_{2}j}^{n}$ are chosen in a suitable way. The functions $\tilde{A}^{n}_{\ell j}$ in $\epsilon^{2}\widetilde{\Psi}_{p}$, $\tilde{B}^{n}_{\ell j}$ in $\epsilon^{2}\widetilde{\Phi}_{q}$ and $\tilde{\mathcal{I}}_{\ell j}^{n}$, $\tilde{\mathcal{J}}_{\ell j}^{n}$, $\tilde{f}_{\ell_{1}\ell_{2}j}^{n}$ in $\epsilon^{3}\widetilde{\Upsilon}_{r}$ are chosen by a similar procedure as $\tilde{A}_{\pm 1}$, $\tilde{A}_{\pm2j}$, $\tilde{A}_{0j}$ in $\epsilon\widetilde{\Psi}$, $\tilde{B}_{\pm 1}$, $\tilde{B}_{\pm2j}$, $\tilde{B}_{0j}$ in $\epsilon\widetilde{\Phi}$ and $\tilde{\mathcal{I}}_{\pm1}$, $\tilde{\mathcal{J}}_{\pm1}$, $\tilde{f}_{\pm1\pm1j}$ in $\epsilon^{2}\widetilde{\Upsilon}$ respectively.

In Section 3, we obtain the NLS equations \eqref{NLS1}-\eqref{NLS2} for $\tilde{A}_{1}=\tilde{A}^{0}_{11}$ and $\tilde{B}_{1}=\tilde{B}^{0}_{1-1}$ by letting the coefficients of $\mathcal{O}(\epsilon^{3}E^{1}F^{0})$ and $\mathcal{O}(\epsilon^{3}E^{0}F^{1})$ be zero, i.e.,
\begin{equation}
\begin{split}\label{NLS11}
\partial_{\theta}\tilde{A}^{0}_{11}=-\frac{i}{2}\omega''(k_{0})
\partial^{2}_{X_{+}}\tilde{A}^{0}_{11}
+i\nu_{1}(k_{0})\tilde{A}^{0}_{11}|\tilde{A}^{0}_{11}|^{2},
\end{split}
\end{equation}
\begin{equation}
\begin{split}\label{NLS12}
\partial_{\theta}\tilde{B}^{0}_{1-1}=\frac{i}{2}\omega''(k_{0})
\partial^{2}_{X_{-}}\tilde{B}^{0}_{1-1}
+i\nu_{2}(k_{0})\tilde{B}^{0}_{1-1}|\tilde{B}^{0}_{1-1}|^{2}.
\end{split}
\end{equation}
In fact, we also obtain the equations of $\tilde{A}^{0}_{1-1}$ and $\tilde{B}^{0}_{11}$ by letting the coefficients of $\mathcal{O}(\epsilon^{3}E^{1}F^{0})$ and $\mathcal{O}(\epsilon^{3}E^{0}F^{1})$ for the other components be zero
\begin{align*}
i\omega_{0}(\tilde{A}^{0}_{1-1}&+\tilde{\mathcal{I}}^{0}_{1-1})
=-i\omega(k_{0})(\tilde{A}^{0}_{1-1}+\tilde{\mathcal{I}}^{0}_{1-1})
+2i\sum_{j}\alpha_{1j}^{-1}(k_{0},k_{0},0)\tilde{A}^{0}_{11}(\tilde{A}^{0}_{0j}+\tilde{B}^{0}_{0j})\\
&+2i\sum_{j}\alpha_{1j}^{-1}(k_{0},-k_{0},2k_{0})\tilde{A}^{0}_{-11}\tilde{A}^{0}_{2j}
+2i\sum_{j}\alpha_{-1j}^{-1}(k_{0},k_{0},0)\tilde{B}^{0}_{1-1}\tilde{f}^{0}_{1-1j}\\
&+2i\sum_{j}\alpha_{-1j}^{-1}(k_{0},-k_{0},2k_{0})\tilde{B}^{0}_{-1-1}\tilde{f}^{0}_{11j}
+3i\beta_{111}^{-1}(k_{0},k_{0},k_{0},-k_{0})\tilde{A}^{0}_{11}
|\tilde{A}_{1}|^{2}\\
&+6i\beta_{111}^{-1}(k_{0},k_{0},k_{0},-k_{0})\tilde{A}^{0}_{11}|\tilde{B}_{1}|^{2},\\
-i\omega_{0}(\tilde{B}^{0}_{11}&+\tilde{\mathcal{J}}^{0}_{11})
=i\omega(k_{0})(\tilde{B}^{0}_{11}+\tilde{\mathcal{J}}^{0}_{11})
+2i\sum_{j}\alpha_{-1j}^{1}(k_{0},k_{0},0)\tilde{B}^{0}_{1-1}(\tilde{B}^{0}_{0j}+\tilde{A}^{0}_{0j})\\
&+2i\sum_{j}\alpha_{-1j}^{1}(k_{0},-k_{0},2k_{0})\tilde{B}^{0}_{-1-1}\tilde{B}^{0}_{2j}
+2i\sum_{j}\alpha_{1j}^{1}(k_{0},k_{0},0)\tilde{A}_{-1}\tilde{f}_{11j}\\
&+2i\sum_{j}\alpha_{1j}^{1}(k_{0},-k_{0},2k_{0})\tilde{A}_{-1}\tilde{f}_{11j}
+6i\beta_{111}^{1}(k_{0},k_{0},k_{0},-k_{0})\tilde{B}^{0}_{1-1}
|\tilde{A}^{0}_{11}|^{2}\\
&+3i\beta_{111}^{1}(k_{0},k_{0},k_{0},-k_{0})\tilde{B}^{0}_{1-1}|\tilde{B}^{0}_{1-1}|^{2}.
\end{align*}
Inserting \eqref{A2}-\eqref{A0} to above equations, we obtain
\begin{align*}
i\omega_{0}(\tilde{A}^{0}_{1-1} +\tilde{\mathcal{I}}^{0}_{1-1}) &=-i\omega(k_{0})(\tilde{A}^{0}_{1-1} +\tilde{\mathcal{I}}^{0}_{1-1}) +i\sigma_{1}(k_{0})\tilde{A}^{0}_{11}|\tilde{A}^{0}_{11}|^{2} +i\tilde{\sigma}_{1}(k_{0})\tilde{A}^{0}_{11}|\tilde{B}^{0}_{1-1}|^{2},\\
-i\omega_{0}(\tilde{B}^{0}_{11} +\tilde{\mathcal{J}}^{0}_{11}) &=i\omega(k_{0})(\tilde{B}^{0}_{11}+\tilde{\mathcal{J}}^{0}_{11}) +i\sigma_{2}(k_{0})\tilde{B}^{0}_{1-1}|\tilde{B}^{0}_{1-1}|^{2} +i\tilde{\sigma}_{2}(k_{0})\tilde{B}^{0}_{1-1}|\tilde{A}^{0}_{11}|^{2},
\end{align*}
where $\sigma_{m}(k_{0})$ and $\widetilde{\sigma}_{m}(k_{0})$ with $m\in\{1,2\}$ are real functions depending only on basic spatial wave number $k_{0}>0$. Thus we can take the following form such that the above equalities hold
\begin{equation}
\begin{split}\label{Modify22}
\tilde{A}^{0}_{1-1}&
=\frac{\sigma_{1}(k_{0})}{2\omega(k_{0})}\tilde{A}^{0}_{11}|\tilde{A}^{0}_{11}|^{2}, \
 \ \ \ \ \ \ \ \ \  \tilde{\mathcal{I}}^{0}_{1-1}
=\frac{\tilde{\tilde{\sigma}}_{1}(k_{0})}{2\omega(k_{0})}\tilde{A}^{0}_{11}|\tilde{B}^{0}_{1-1}|^{2},\\
\tilde{B}^{0}_{11}&
=-\frac{\sigma_{2}(k_{0})}{2\omega(k_{0})}\tilde{B}^{0}_{1-1}|\tilde{B}^{0}_{1-1}|^{2}, \ \ \ \ \
\tilde{\mathcal{J}}^{0}_{11}
=-\frac{\tilde{\sigma}_{1}(k_{0})}{2\omega(k_{0})}\tilde{B}^{0}_{1-1}|\tilde{A}^{0}_{11}|^{2},
\end{split}
\end{equation}
where $\tilde{A}^{0}_{11}$ and $\tilde{B}^{0}_{1-1}$ satisfy the NLS equations \eqref{NLS11} and \eqref{NLS12}.

For $\mathcal{O}(\epsilon^{3}E^{2}F^{0})$ and $\mathcal{O}(\epsilon^{3}E^{0}F^{2})$, we have
\begin{align}\label{A21}
(2i\omega_{0}-ij\omega(2k_{0}))\tilde{A}^{1}_{2j}
=&(j\omega'(2k_{0})-c_{g})\partial_{X_{+}}\tilde{A}^{0}_{2j}
+2i\sum_{\iota=1,2,3} \partial_{\iota}\alpha_{11}^{j}(2k_{0},k_{0}, k_{0})\partial_{X_{+}}(\tilde{A}^{0}_{11})^{2} \nonumber\\
&+2\alpha_{11}^{j}(2k_{0},k_{0},k_{0})\tilde{A}^{0}_{11} (\tilde{A}^{1}_{11}
+\tilde{\mathcal{I}}^{0}_{11}),\nonumber\\
(-2i\omega_{0}-ij\omega(2k_{0}))\tilde{B}^{1}_{2j}
=&(j\omega'(2k_{0})+c_{g})\partial_{X_{-}}\tilde{B}^{0}_{2j} +2i\sum_{\iota=1,2,3}\partial_{\iota}\alpha_{-1-1}^{j}(2k_{0}, k_{0},k_{0})\partial_{X_{-}}(\tilde{B}^{0}_{1-1})^{2}\nonumber\\
&+2\alpha_{-1-1}^{j}(2k_{0},k_{0},k_{0})\tilde{B}^{0}_{1-1}(\tilde{B}^{1}_{1-1}
+\tilde{\mathcal{J}}^{0}_{1-1}).
\end{align}
In light of \eqref{A2} and \eqref{Modify1}-\eqref{Modify2}, we know that $\tilde{A}^{1}_{2j}$ and $\tilde{B}^{1}_{2j}$ will be determined since $2\omega_{0}\pm j\omega(2k_{0})\neq0$, where $\tilde{A}^{1}_{11}$ and $\tilde{B}^{1}_{1-1}$ satisfy inhomogeneous linear NLS equations \eqref{linearNLS} which will be fixed at the order of $\mathcal{O}(\epsilon^{4}E^{1}F^{0})$ and $\mathcal{O}(\epsilon^{4}E^{0}F^{1})$ below.

For $\mathcal{O}(\epsilon^{3}E^{3}F^{0})$ and $\mathcal{O}(\epsilon^{3}E^{0}F^{3})$, we have
\begin{equation}
\begin{split}\label{A03}
(3i\omega_{0}&-ij\omega(3k_{0}))\tilde{A}^{0}_{3j}
=2i\sum_{\tilde{j}}\alpha_{1\tilde{j}}^{j}(3k_{0},k_{0},2k_{0})
\tilde{A}^{0}_{11}\tilde{A}^{0}_{2\tilde{j}}
+i\beta_{111}^{j}(3k_{0},k_{0},k_{0},k_{0})(\tilde{A}^{0}_{11})^{3},\\
(-3i\omega_{0}&-ij\omega(3k_{0}))\tilde{B}^{0}_{3j}
=2i\sum_{\tilde{j}}\alpha_{-1\tilde{j}}^{j}(3k_{0},k_{0},2k_{0})
\tilde{B}^{0}_{1-1}\tilde{B}^{0}_{2\tilde{j}}
+i\beta_{-1-1-1}^{j}(3k_{0},k_{0},k_{0},k_{0})(\tilde{B}^{0}_{1-1})^{3}.
\end{split}
\end{equation}
In light of \eqref{A2} and the NLS equations \eqref{NLS11}-\eqref{NLS12}, $\tilde{A}^{0}_{3j}$ and $\tilde{B}^{0}_{3j}$ can be determined since $3\omega_{0}\pm j\omega(3k_{0})\neq0$.

For the interaction terms $\mathcal{O}(\epsilon^{3}E^{\ell_{1}}F^{\ell_{2}})$ with $\ell_{1}\in\{\pm1\}$ and $\ell_{2}\in\{1, \ 2\}$, we have
\begin{equation}
\begin{split}\label{f111}
\big[i(\ell_{1}&-1)\omega_{0}-ij\omega((\ell_{1}+1)k_{0})\big]\tilde{f}^{1}_{\ell_{1}1j}
=j\omega'((\ell_{1}+1)k_{0})\partial_{X}\tilde{f}^{0}_{\ell_{1}1j}
-\partial_{T}\tilde{f}^{0}_{\ell_{1}1j}\\
&+2\sum_{\iota=1,2,3}\partial_{\iota}\alpha^{j}_{1-1}((\ell_{1}+1)k_{0},\ell_{1}k_{0},k_{0})
\partial_{X}(\tilde{A}^{0}_{\ell_{1}1}\tilde{B}^{0}_{1-1})\\
&+2i\alpha^{j}_{1-1}((\ell_{1}+1)k_{0},\ell_{1}k_{0},k_{0})
\Big(\tilde{A}^{0}_{\ell_{1}1}(\tilde{\mathcal{J}}^{0}_{1-1}+\tilde{B}_{1-1}^{1})
+\tilde{B}^{0}_{1-1}(\tilde{\mathcal{I}}^{0}_{\ell_{1}1}+\tilde{A}_{\ell_{1}1}^{1})\Big),\\
\big[i(\ell_{1}&-2)\omega_{0}-ij\omega((\ell_{1}+2)k_{0})\big]\tilde{f}^{0}_{\ell_{1}2j}
=2i\sum_{\tilde{j}}\alpha^{j}_{1\tilde{j}}((\ell_{1}+2)k_{0},\ell_{1}k_{0},2k_{0})
\tilde{A}^{0}_{\ell_{1}1}\tilde{B}^{0}_{2\tilde{j}}\\
&+2i\sum_{\tilde{j}}\alpha^{j}_{-1\tilde{j}}((\ell_{1}+2)k_{0},\ell_{1}k_{0},2k_{0})
\tilde{B}^{0}_{1-1}\tilde{f}_{\ell_{1}1\tilde{j}}^{0}).
\end{split}
\end{equation}
In light of \eqref{A2}, \eqref{f11}, \eqref{Modify1}-\eqref{Modify2} and the NLS equations \eqref{NLS11}-\eqref{NLS12}, $\tilde{f}^{1}_{\ell_{1}1j}$ and $\tilde{f}^{0}_{\ell_{1}2j}$ will be determined since $i(\ell_{1}-\ell_{2})\omega_{0}-ij\omega((\ell_{1}+\ell_{2})k_{0})\neq0$ for $\ell_{1}=\pm1, \ \ell_{2}=1,2$, where $\tilde{A}^{1}_{11}$ and $\tilde{B}^{1}_{1-1}$ satisfy inhomogeneous linear NLS equations \eqref{linearNLS} which will be fixed at the order of $\mathcal{O}(\epsilon^{4}E^{1}F^{0})$ and $\mathcal{O}(\epsilon^{4}E^{0}F^{1})$. Similarly, the case $\ell_{1}=1, \ 2, \ \ell_{2}=\pm1$ can be obtained.

Now we consider the terms of $\mathcal{O}(\epsilon^{4})$. We proceed exactly as before by writing out the terms proportional to $\epsilon^{4}E^{\ell_{1}}F^{\ell_{2}}$ for $|\ell_{1}|+|\ell_{2}|\leq4$, though we have to consider more choices of $(\ell_{1},\ell_{2})$ to account for additional terms generated by the nonlinearity. In fact, only the cases $(\ell_{1},\ell_{2})=(0,0), \ (\ell_{1},\ell_{2})=(1,0)$ and $(\ell_{1},\ell_{2})=(0,1)$ really need additional comment.

For $\mathcal{O}(\epsilon^{4}E^{0}F^{0})$, we have
\begin{align*}
c_{g}&(\partial_{X_{+}}\tilde{A}^{1}_{0j}-\partial_{X_{-}}\tilde{B}^{1}_{0j})
+\partial_{\theta}\tilde{A}^{0}_{0j}+\partial_{\theta}\tilde{B}^{0}_{0j}
=j\omega'(0)(\partial_{X_{+}}\tilde{A}^{1}_{0j}
+\partial_{X_{-}}\tilde{B}^{1}_{0j})\\
&+2\partial_{k}\alpha_{11}^{j}(0,k_{0},-k_{0})\partial_{X}
\Big[\tilde{A}^{0}_{11}(\tilde{A}^{1}_{-11}+\mathcal{I}^{0}_{-11})
+\tilde{A}^{0}_{-11}(\tilde{A}^{1}_{11}+\mathcal{I}^{0}_{11})\Big]\\
&+2\partial_{k}\alpha_{-1-1}^{j}(0,k_{0},-k_{0})\partial_{X}
\Big[\tilde{B}^{0}_{1-1}(\tilde{B}^{1}_{-1-1}+\mathcal{I}^{0}_{-1-1})
+\tilde{B}^{0}_{-1-1}(\tilde{B}^{1}_{1-1}+\mathcal{J}^{0}_{1-1})\Big].
\end{align*}
The expression is simplified since $\omega''(0)=0$ and $\alpha_{\ell_{1}\ell_{2}}^{j}\Big|_{k=0}=\partial_{k}^{2}\alpha_{11}^{j}\Big|_{k=0}=0$ by the form \eqref{qua}-\eqref{ker} of the quadratic term $Q(U,U)$ in Fourier space. We can take the following choice such that the above equations are valid
\begin{equation}
\begin{split}\label{A01}
(c_{g}-&j\omega'(0))\partial_{X_{+}}\tilde{A}^{1}_{0j}
+\partial_{\theta}\tilde{A}^{0}_{0j}\\
&=2\partial_{k}\alpha_{11}^{j}(0,k_{0},-k_{0})\partial_{X}
\Big[\tilde{A}^{0}_{11}(\tilde{A}^{1}_{-11}+\mathcal{I}^{0}_{-11})
+\tilde{A}^{0}_{-11}(\tilde{A}^{1}_{11}+\mathcal{I}^{0}_{11})\Big],\\
(-c_{g}&-j\omega'(0))\partial_{X_{-}}\tilde{B}^{1}_{0j}
+\partial_{\theta}\tilde{B}^{0}_{0j}\\
&=2\partial_{k}\alpha_{-1-1}^{j}(0,k_{0},-k_{0})\partial_{X}
\Big[\tilde{B}^{0}_{1-1}(\tilde{B}^{1}_{-1-1}+\mathcal{J}^{0}_{-1-1})
+\tilde{B}^{0}_{-1-1}(\tilde{B}^{1}_{1-1}+\mathcal{J}^{0}_{1-1})\Big],
\end{split}
\end{equation}
then $\tilde{A}^{1}_{0j}$ and $\tilde{B}^{1}_{0j}$ can be determined since $ c_{g}\pm j\omega'(0)\neq0$. All other terms except for $\tilde{A}^{1}_{\pm11}$ and $\tilde{B}^{1}_{\pm1-1}$ in \eqref{A01} have been defined at previous steps in the iterative procedure. The resulting equation requires some integration w.r.t $X$ to give $\tilde{A}^{1}_{0j}$ and $\tilde{B}^{1}_{0j}$. In order to avoid this integration we have to show that the problematic terms $\partial_{\theta}\tilde{A}^{0}_{0j}$ and $\partial_{\theta}\tilde{B}^{0}_{0j}$ can be written as a derivative w.r.t $X$ in order to insure that $\tilde{A}^{1}_{0j}$ and $\tilde{B}^{1}_{0j}$ have the appropriate decay properties as $X\rightarrow\pm\infty$.

Recall that from \eqref{A0}, we have
\begin{equation}
\begin{split}\label{A02}
\partial_{\theta}\tilde{A}^{0}_{0j}&=\frac{2\partial_{k}\alpha_{11}^{j}(0,k_{0},k_{0})}
{c_{g}-j\omega'(0)}\Big((\partial_{\theta}\tilde{A}^{0}_{11})\tilde{A}^{0}_{-11}
+\tilde{A}^{0}_{11}(\partial_{\theta}\tilde{A}^{0}_{-11})\Big),\\
\partial_{\theta}\tilde{B}^{0}_{0j}&=\frac{2\partial_{k}\alpha_{-1-1}^{j}(0,k_{0},k_{0})}
{-c_{g}-j\omega'(0)}\Big((\partial_{\theta}\tilde{B}^{0}_{1-1})\tilde{B}^{0}_{-1-1}
+\tilde{B}^{0}_{1-1}(\partial_{\theta}\tilde{B}^{0}_{-1-1})\Big).
\end{split}
\end{equation}
Inserting the NLS equations \eqref{NLS11}-\eqref{NLS12} satisfied by $\tilde{A}^{0}_{11}$ and $\tilde{B}^{0}_{1-1}$ into \eqref{A02} and recalling that $\tilde{A}^{0}_{-11}=\overline{\tilde{A}^{0}_{11}}$ and $\tilde{B}^{0}_{-1-1}=\overline{\tilde{B}^{0}_{1-1}}$, we find that \eqref{A02} becomes
\begin{align*}
\partial_{\theta}\tilde{A}^{0}_{0j}&=-\frac{i\partial_{k}\alpha_{11}^{j}(0,k_{0},k_{0})}
{c_{g}-j\omega'(0)}\omega''(k_{0})
\partial_{X_{+}}\Big[(\partial_{X_{+}}\tilde{A}^{0}_{11})\tilde{A}^{0}_{-11}
-\tilde{A}^{0}_{11}(\partial_{X_{+}}\tilde{A}^{0}_{-11})\Big],\\
\partial_{\theta}\tilde{B}^{0}_{0j}&=\frac{i\partial_{k}\alpha_{-1-1}^{j}(0,k_{0},k_{0})}
{-c_{g}-j\omega'(0)}\omega''(k_{0})
\partial_{X_{-}}\Big[(\partial_{X_{-}}\tilde{B}^{0}_{1-1})\tilde{B}^{0}_{-1-1}
-\tilde{B}^{0}_{1-1}(\partial_{X_{-}}\tilde{B}^{0}_{-1-1})\Big].
\end{align*}
Inserting this expression into \eqref{A01}, we see that we can choose $\tilde{A}^{1}_{0j}$ and $\tilde{B}^{1}_{0j}$ to be
\begin{equation}
\begin{split}\label{A0111}
\tilde{A}^{1}_{0j}=\frac{1}{c_{g}-j\omega'(0)}
&\bigg\{2\partial_{k}\alpha_{11}^{j}(0,k_{0},-k_{0})
\Big[\tilde{A}^{0}_{11}(\tilde{A}^{1}_{-11}+\mathcal{I}^{0}_{-11})
+\tilde{A}^{0}_{-11}(\tilde{A}^{1}_{11}+\mathcal{I}^{0}_{11})\Big]\\
&+\frac{i\partial_{k}\alpha_{11}^{j}(0,k_{0},k_{0})}
{c_{g}-j\omega'(0)}\omega''(k_{0})
\Big[(\partial_{X_{+}}\tilde{A}^{0}_{11})\tilde{A}^{0}_{-11}
-\tilde{A}^{0}_{11}(\partial_{X_{+}}\tilde{A}^{0}_{-11})\Big]\bigg\},\\
\tilde{B}^{1}_{0j}=\frac{-1}{-c_{g}-j\omega'(0)}
&\bigg\{2\partial_{k}\alpha_{-1-1}^{j}(0,k_{0},-k_{0})
\Big[\tilde{B}^{0}_{1-1}(\tilde{B}^{1}_{-1-1}+\mathcal{J}^{0}_{-1-1})
+\tilde{B}^{0}_{-1-1}(\tilde{B}^{1}_{1-1}+\mathcal{J}^{0}_{1-1})\Big]\\
&+\frac{i\partial_{k}\alpha_{-1-1}^{j}(0,k_{0},k_{0})}
{-c_{g}-j\omega'(0)}\omega''(k_{0})
\Big[(\partial_{X_{-}}\tilde{B}^{0}_{1-1})\tilde{B}^{0}_{-1-1}
-\tilde{B}^{0}_{1-1}(\partial_{X_{-}}\tilde{B}^{0}_{-1-1})\Big]\bigg\}.
\end{split}
\end{equation}
In light of \eqref{Modify1}-\eqref{Modify2} and the NLS equations \eqref{NLS11}-\eqref{NLS12}, $\tilde{A}^{1}_{0j}$ and $\tilde{B}^{1}_{0j}$ can be determined by the above equations when $\tilde{A}^{1}_{\pm11}$ and $\tilde{B}^{1}_{\pm1-1}$ satisfy inhomogeneous linear NLS equations \eqref{linearNLS} which will be fixed at the order of $\mathcal{O}(\epsilon^{4}E^{1}F^{0})$ and $\mathcal{O}(\epsilon^{4}E^{0}F^{1})$.

For $\mathcal{O}(\epsilon^{4}E^{1}F^{0})$ and $\mathcal{O}(\epsilon^{4}E^{0}F^{1})$, we have
\begin{align*}
\partial_{\theta}\tilde{A}^{1}_{11}+\partial_{T}\tilde{\mathcal{I}}^{1}_{11}
+\partial_{\theta}\tilde{\mathcal{I}}^{0}_{11}
=&-\frac{i}{2}\omega''(k_{0})\partial^{2}_{X_{+}}\tilde{A}^{1}_{11}
-\frac{1}{3!}\omega'''(k_{0})\partial^{3}_{X_{+}}\tilde{A}^{0}_{11}\\
&+\omega'(k_{0})\partial_{X}\tilde{\mathcal{I}}^{1}_{11}
-\frac{i}{2}\omega''(k_{0})\partial^{2}_{X}\tilde{\mathcal{I}}^{0}_{11}\\
&+2i\sum_{j}\alpha_{1j}^{1}(k_{0},k_{0},0)\Big[\tilde{A}^{1}_{11}
(\tilde{A}^{0}_{0j}+\tilde{B}^{0}_{0j})+\tilde{A}^{0}_{11}(\tilde{A}^{1}_{0j}+\tilde{B}^{1}_{0j})\Big]\\
&+2i\sum_{j}\alpha_{1j}^{1}(k_{0},-k_{0},2k_{0})
\Big[\tilde{A}^{1}_{-11}\tilde{A}^{0}_{2j}
+\tilde{A}^{0}_{-11}\tilde{A}^{1}_{2j}\Big]\\
&+2\sum_{j}\Big[\partial_{k}\alpha_{1j}^{1}(k_{0},k_{0},0)
\partial_{X}(\tilde{A}^{0}_{11}(\tilde{A}^{0}_{0j}+\tilde{B}^{0}_{0j}))\\
& \ \ \ \ \ \ \ \ \ \ \ \ \ +\partial_{k}\alpha_{1j}^{1}(k_{0},-k_{0},2k_{0})
\partial_{X}(\tilde{A}^{0}_{-11}\tilde{A}^{0}_{2j})\Big]\\
&+2i\sum_{j}\alpha_{-1j}^{1}(k_{0},k_{0},0)
\Big[(\tilde{B}^{0}_{1-1}\tilde{f}^{1}_{1-1j}+\tilde{B}^{0}_{-1-1}\tilde{f}^{1}_{11j}\Big]\\
&+2\sum_{j}\partial_{k}\alpha_{-1j}^{1}(k_{0},k_{0},0)
\partial_{X}\Big[(\tilde{B}^{0}_{1-1}\tilde{f}^{0}_{1-1j}+\tilde{B}^{0}_{-1-1}\tilde{f}^{0}_{11j}\Big]\\
&+3\partial_{k}\beta_{111}^{1}(k_{0},k_{0},k_{0},-k_{0})
\partial_{X}\big[\tilde{A}^{0}_{11}
(|\tilde{A}^{0}_{11}|^{2}+|\tilde{B}^{0}_{1-1}|^{2})\big],
\end{align*}
and
\begin{align*}
\partial_{\theta}\tilde{B}^{1}_{1-1}+\partial_{T}\tilde{\mathcal{J}}^{1}_{1-1}
+\partial_{\theta}\tilde{\mathcal{J}}^{0}_{1-1}
=&\frac{i}{2}\omega''(k_{0})\partial^{2}_{X_{-}}\tilde{B}^{1}_{1-1}
+\frac{1}{3!}\omega'''(k_{0})\partial^{3}_{X_{-}}\tilde{B}^{0}_{1-1}\\
&-\omega'(k_{0})\partial_{X}\tilde{\mathcal{J}}^{1}_{1-1}
+\frac{i}{2}\omega''(k_{0})\partial^{2}_{X}\tilde{\mathcal{J}}^{0}_{1-1}\\
&+2i\sum_{j}\alpha_{-1j}^{-1}(k_{0},k_{0},0)\Big[\tilde{B}^{1}_{1-1}
(\tilde{A}^{0}_{0j}+\tilde{B}^{0}_{0j})+\tilde{B}^{0}_{1-1}(\tilde{A}^{1}_{0j}+\tilde{B}^{1}_{0j})\Big]\\
&+2i\sum_{j}\alpha_{-1j}^{-1}(k_{0},-k_{0},2k_{0})
\Big[\tilde{B}^{1}_{-1-1}\tilde{B}^{0}_{2j}
+\tilde{B}^{0}_{-1-1}\tilde{B}^{1}_{2j}\Big]\\
&+2\sum_{j}\Big[\partial_{k}\alpha_{-1j}^{-1}(k_{0},k_{0},0)
\partial_{X}(\tilde{B}^{0}_{1-1}(\tilde{A}^{0}_{0j}+\tilde{B}^{0}_{0j}))\\
& \ \ \ \ \ \ \ \ \ \ \ \ \ +\partial_{k}\alpha_{-1j}^{-1}(k_{0},-k_{0},2k_{0})
\partial_{X}(\tilde{B}^{0}_{-1-1}\tilde{B}^{0}_{2j})\Big]\\
&+2i\sum_{j}\alpha_{1j}^{-1}(k_{0},k_{0},0)
\Big[(\tilde{A}^{0}_{-11}\tilde{f}^{1}_{11j}+\tilde{A}^{0}_{11}\tilde{f}^{1}_{-11j}\Big]\\
&+2\sum_{j}\partial_{k}\alpha_{1j}^{-1}(k_{0},k_{0},0)
\partial_{X}\Big[(\tilde{A}^{0}_{-11}\tilde{f}^{0}_{11j}+\tilde{A}^{0}_{11}\tilde{f}^{0}_{-11j}\Big]\\
&+3\partial_{k}\beta_{-1-1-1}^{1}(k_{0},k_{0},k_{0},-k_{0})
\partial_{X}\big[\tilde{B}^{0}_{1-1}
(|\tilde{A}^{0}_{11}|^{2}+|\tilde{B}^{0}_{1-1}|^{2})\big].
\end{align*}
Inserting \eqref{A2}-\eqref{A0} and \eqref{A21}-\eqref{A0111} to above equations, we obtain
\begin{align*}
\partial_{\theta}\tilde{A}^{1}_{11}&+\partial_{T}\tilde{\mathcal{I}}^{1}_{11}
+\partial_{\theta}\tilde{\mathcal{I}}^{0}_{11}
=-\frac{i}{2}\omega''(k_{0})\partial^{2}_{X_{+}}\tilde{A}^{1}_{11}
-\frac{1}{3!}\omega'''(k_{0})\partial^{3}_{X_{+}}\tilde{A}^{0}_{11}
+\omega'(k_{0})\partial_{X}\tilde{\mathcal{I}}^{1}_{11}\\
&-\frac{i}{2}\omega''(k_{0})\partial^{2}_{X}\tilde{\mathcal{I}}^{0}_{11}
+i\gamma_{3}(k_{0})|\tilde{A}^{0}_{11}|^{2}\tilde{A}^{1}_{11}
+i\gamma_{4}(k_{0})(\tilde{A}^{0}_{11})^{2}\tilde{A}^{1}_{-11}
+\gamma_{5}(k_{0})|\tilde{A}^{0}_{11}|^{2}\partial_{X_{+}}\tilde{A}^{0}_{11}\\
&+\gamma_{6}(k_{0})(\tilde{A}^{0}_{11})^{2}\partial_{X_{+}}\tilde{A}^{0}_{-11}
+\mathcal{Q}_{1}(\tilde{A}^{0}_{\pm11}, \ \tilde{B}^{0}_{\pm1-1}, \ \tilde{\mathcal{I}}^{0}_{\pm11}, \ \tilde{\mathcal{J}}^{0}_{\pm1-1}, \ \tilde{A}^{1}_{11}, \ \tilde{B}^{1}_{\pm1-1}),\\
\partial_{\theta}\tilde{B}^{1}_{1-1}&+\partial_{T}\tilde{\mathcal{J}}^{1}_{1-1}
+\partial_{\theta}\tilde{\mathcal{J}}^{0}_{1-1}
=\frac{i}{2}\omega''(k_{0})\partial^{2}_{X_{-}}\tilde{B}^{1}_{1-1}
+\frac{1}{3!}\omega'''(k_{0})\partial^{3}_{X_{-}}\tilde{B}^{0}_{1-1}
-\omega'(k_{0})\partial_{X}\tilde{\mathcal{J}}^{1}_{1-1}\\
&+\frac{i}{2}\omega''(k_{0})\partial^{2}_{X}\tilde{\mathcal{J}}^{0}_{1-1}
+i\gamma_{7}(k_{0})|\tilde{B}^{0}_{1-1}|^{2}\tilde{B}^{1}_{1-1}
+i\gamma_{8}(k_{0})(\tilde{B}^{0}_{1-1})^{2}\tilde{B}^{1}_{-1-1}
+\gamma_{9}(k_{0})|\tilde{B}^{0}_{1-1}|^{2}\partial_{X_{-}}\tilde{B}^{0}_{1-1}\\
&+\gamma_{10}(k_{0})(\tilde{B}^{0}_{1-1})^{2}\partial_{X_{-}}\tilde{B}^{0}_{-1-1}
+\mathcal{Q}_{2}(\tilde{A}^{0}_{\pm11}, \ \tilde{B}^{0}_{\pm1-1}, \ \tilde{\mathcal{I}}^{0}_{\pm11}, \ \tilde{\mathcal{J}}^{0}_{\pm1-1}, \ \tilde{A}^{1}_{\pm11}, \ \tilde{B}^{1}_{1-1}),
\end{align*}
where $\gamma_{m}(k_{0})$ with $m\in\{3,\cdot\cdot\cdot,10\}$ are real functions only depending on basic spatial wave number $k_{0}>0$, and $\mathcal{Q}_{n}$ with $n\in\{1, 2\}$ are functions of $\tilde{A}^{0}_{\pm11}$, $\tilde{B}^{0}_{\pm1-1}$, $\tilde{\mathcal{I}}^{0}_{\pm11}$,  $\tilde{\mathcal{J}}^{0}_{\pm1-1}$, $\tilde{A}^{1}_{\pm11}$ and $\tilde{B}^{1}_{\pm1-1}$. Specifically, $\mathcal{Q}_{n}$ is linear polynomial for each component
\begin{align*}
\mathcal{Q}_{1}
=&\mathcal{Q}_{1}\Big(|\tilde{B}^{0}_{1-1}|^{2}\tilde{A}^{1}_{11}, \
\tilde{A}^{0}_{11}\tilde{B}^{0}_{1-1}\tilde{B}^{1}_{-1-1}, \
\tilde{A}^{0}_{11}\tilde{B}^{0}_{-1-1}\tilde{B}^{1}_{1-1}, \
|\tilde{A}^{0}_{11}|^{2}\tilde{\mathcal{I}}^{0}_{11}, \
|\tilde{B}^{0}_{1-1}|^{2}\tilde{\mathcal{I}}^{0}_{11},\\
& \ \ \ \ \ \ \ \ (\tilde{A}^{0}_{11})^{2}\tilde{\mathcal{I}}^{0}_{-11}, \
|\tilde{B}^{0}_{1-1}|^{2}\partial_{X_{+}}\tilde{A}^{0}_{11}, \
(\tilde{B}^{0}_{1-1})^{2}\partial_{X_{+}}\tilde{A}^{0}_{-11}, \
\tilde{A}^{0}_{11}\partial_{X_{-}}|\tilde{B}^{0}_{1-1}|^{2},\\
& \ \ \ \ \ \ \ \ |\tilde{B}^{0}_{1-1}|^{2}\partial_{T}\tilde{A}^{0}_{11}, \
\tilde{A}^{0}_{11}\partial_{T}|\tilde{B}^{0}_{1-1}|^{2}, \
\tilde{A}^{0}_{11}\tilde{B}^{0}_{1-1}\tilde{\mathcal{J}}^{0}_{-1-1}, \
\tilde{A}^{0}_{11}\tilde{B}^{0}_{-1-1}\tilde{\mathcal{J}}^{0}_{1-1}\Big),\\
\mathcal{Q}_{2}
=&\mathcal{Q}_{2}\Big(|\tilde{A}^{0}_{11}|^{2}\tilde{B}^{1}_{1-1}, \
\tilde{B}^{0}_{1-1}\tilde{A}^{0}_{11}\tilde{A}^{1}_{-11}, \
\tilde{B}^{0}_{1-1}\tilde{A}^{0}_{-11}\tilde{A}^{1}_{11}, \
|\tilde{B}^{0}_{1-1}|^{2}\tilde{\mathcal{J}}^{0}_{1-1}, \
|\tilde{A}^{0}_{11}|^{2}\tilde{\mathcal{J}}^{0}_{1-1},\\
& \ \ \ \ \ \ \ \ (\tilde{B}^{0}_{1-1})^{2}\tilde{\mathcal{J}}^{0}_{-1-1}, \
|\tilde{A}^{0}_{11}|^{2}\partial_{X_{-}}\tilde{B}^{0}_{1-1}, \
(\tilde{A}^{0}_{11})^{2}\partial_{X_{-}}\tilde{B}^{0}_{-1-1}, \
\tilde{B}^{0}_{1-1}\partial_{X_{+}}|\tilde{A}^{0}_{11}|^{2},\\
& \ \ \ \ \ \ \ \ |\tilde{A}^{0}_{11}|^{2}\partial_{T}\tilde{B}^{0}_{1-1}, \
\tilde{B}^{0}_{1-1}\partial_{T}|\tilde{A}^{0}_{11}|^{2}, \
\tilde{B}^{0}_{1-1}\tilde{A}^{0}_{11}\tilde{\mathcal{I}}^{0}_{-11}, \
\tilde{B}^{0}_{1-1}\tilde{A}^{0}_{-11}\tilde{\mathcal{I}}^{0}_{11}\Big).
\end{align*}
We can take the following form to make sure the above equalities valid and then obtain the linear but inhomogeneous Schr\"{o}dinger equations for $\tilde{A}^{1}_{11}$ and $\tilde{B}^{1}_{1-1}$
\begin{align}\label{linearNLS}
\partial_{\theta}\tilde{A}^{1}_{11}
=&-\frac{i}{2}\omega''(k_{0})\partial^{2}_{X_{+}}\tilde{A}^{1}_{11}
+i\gamma_{3}(k_{0})|\tilde{A}^{0}_{11}|^{2}\tilde{A}^{1}_{11}
+i\gamma_{4}(k_{0})(\tilde{A}^{0}_{11})^{2}\tilde{A}^{1}_{-11}
-\frac{1}{3!}\omega'''(k_{0})\partial^{3}_{X_{+}}\tilde{A}^{0}_{11}\nonumber\\
&+\gamma_{5}(k_{0})|\tilde{A}^{0}_{11}|^{2}\partial_{X_{+}}\tilde{A}^{0}_{11}
+\gamma_{6}(k_{0})(\tilde{A}^{0}_{11})^{2}\partial_{X_{+}}\tilde{A}^{0}_{-11},\nonumber\\
\partial_{\theta}\tilde{B}^{1}_{1-1}
=&\frac{i}{2}\omega''(k_{0})\partial^{2}_{X_{-}}\tilde{B}^{1}_{1-1}
+i\gamma_{7}(k_{0})|\tilde{B}^{0}_{1-1}|^{2}\tilde{B}^{1}_{1-1}
+i\gamma_{8}(k_{0})(\tilde{B}^{0}_{1-1})^{2}\tilde{B}^{1}_{-1-1}\\
&+\frac{1}{3!}\omega'''(k_{0})\partial^{3}_{X_{-}}\tilde{B}^{0}_{1-1}
+\gamma_{9}(k_{0})|\tilde{B}^{0}_{1-1}|^{2}\partial_{X_{-}}\tilde{B}^{0}_{1-1}
+\gamma_{10}(k_{0})(\tilde{B}^{0}_{1-1})^{2}\partial_{X_{-}}\tilde{B}^{0}_{-1-1}\nonumber,
\end{align}
where the inhomogeneous terms have been defined at prior steps in this process. In addition,
$\tilde{\mathcal{I}}^{1}_{11}$ and $\tilde{\mathcal{J}}^{1}_{1-1}$ satisfy the evolutionary equations as follows
\begin{align}\label{I111}
\partial_{T}\tilde{\mathcal{I}}^{1}_{11}
-\omega'(k_{0})\partial_{X}\tilde{\mathcal{I}}^{1}_{11}
=&-\partial_{\theta}\tilde{\mathcal{I}}^{0}_{11}
-\frac{i}{2}\omega''(k_{0})\partial^{2}_{X}\tilde{\mathcal{I}}^{0}_{11}\nonumber\\
&+\mathcal{Q}_{1}(\tilde{A}^{0}_{\pm11}, \ \tilde{B}^{0}_{\pm1-1}, \ \tilde{\mathcal{I}}^{0}_{\pm11}, \
\tilde{\mathcal{J}}^{0}_{\pm1-1}, \ \tilde{A}^{1}_{11}, \ \tilde{B}^{1}_{\pm1-1}),\nonumber\\
\partial_{T}\tilde{\mathcal{J}}^{1}_{1-1}
+\omega'(k_{0})\partial_{X}\tilde{\mathcal{J}}^{1}_{1-1}
=&-\partial_{\theta}\tilde{\mathcal{J}}^{0}_{1-1}
+\frac{i}{2}\omega''(k_{0})\partial^{2}_{X}\tilde{\mathcal{J}}^{0}_{1-1}\\
&+\mathcal{Q}_{2}(\tilde{A}^{0}_{\pm11}, \ \tilde{B}^{0}_{\pm1-1}, \ \tilde{\mathcal{I}}^{0}_{\pm11}, \
\tilde{\mathcal{J}}^{0}_{\pm1-1}, \ \tilde{A}^{1}_{\pm11}, \ \tilde{B}^{1}_{1-1})\nonumber.
\end{align}
Recall that $\omega'(k_{0})=c_{g}$, $\tilde{\mathcal{I}}^{0}_{11}=\tilde{\mathcal{I}}_{1}$ and $\tilde{\mathcal{J}}^{0}_{1-1}=\tilde{\mathcal{J}}_{1}$ satisfy the transport equations \eqref{Modify1}-\eqref{Modify2}, and $\tilde{A}^{0}_{\pm11}$ and $\tilde{B}^{0}_{\pm1-1}$ satisfy the NLS equations \eqref{NLS11}-\eqref{NLS12}. We have
\begin{align*}
(\partial_{T}&\partial_{\theta}\tilde{\mathcal{I}}^{0}_{11}
-c_{g}\partial_{X}\partial_{\theta}\tilde{\mathcal{I}}^{0}_{11})
=i\widetilde{\nu}_{1}(k_{0})\Big[(\partial_{\theta}\tilde{A}^{0}_{11})|\tilde{B}^{0}_{1-1}|^{2}
+\tilde{A}^{0}_{11}\partial_{\theta}|\tilde{B}^{0}_{1-1}|^{2}\Big]\\
=&\widetilde{\nu}_{1}(k_{0})\bigg\{\frac{1}{2}\omega''(k_{0})
\Big[(\partial_{X}^{2}\tilde{A}^{0}_{11})|\tilde{B}^{0}_{1-1}|^{2}
+2\tilde{A}^{0}_{11}(\tilde{B}^{0}_{1-1}\partial_{X}^{2}\tilde{B}^{0}_{-1-1}
-\tilde{B}^{0}_{-1-1}\partial_{X}^{2}\tilde{B}^{0}_{1-1})\Big]\\
& \ \ \ \ \ \ \ \ \ \ \ \ -\nu_{1}(k_{0})\tilde{A}^{0}_{11}|\tilde{A}^{0}_{11}|^{2}
|\tilde{B}^{0}_{1-1}|^{2}\bigg\},\\
(\partial_{T}&\partial_{\theta}\tilde{\mathcal{J}}^{0}_{1-1}
+c_{g}\partial_{X}\partial_{\theta}\tilde{\mathcal{J}}^{0}_{1-1})
=i\widetilde{\nu}_{2}(k_{0})\Big[(\partial_{\theta}\tilde{B}^{0}_{1-1})|\tilde{A}^{0}_{11}|^{2}
+\tilde{B}^{0}_{1-1}\partial_{\theta}|\tilde{A}^{0}_{11}|^{2}\Big]\\
=&-\widetilde{\nu}_{2}(k_{0})\bigg\{\frac{1}{2}\omega''(k_{0})
\Big[(\partial_{X}^{2}\tilde{B}^{0}_{1-1})|\tilde{A}^{0}_{11}|^{2}
+2\tilde{B}^{0}_{1-1}\partial_{X}(\tilde{A}^{0}_{11}\partial_{X}\tilde{A}^{0}_{-11}
-\tilde{A}^{0}_{-11}\partial_{X}\tilde{A}^{0}_{11})\Big]\\
& \ \ \ \ \ \ \ \ \ \ \ \ +\nu_{2}(k_{0})\tilde{B}^{0}_{1-1}|\tilde{B}^{0}_{1-1}|^{2}
|\tilde{A}^{0}_{11}|^{2}\bigg\}.
\end{align*}
By Lemma \ref{L3}, we have $\partial_{\theta}\tilde{\mathcal{I}}^{0}_{11}$ and $\partial_{\theta}\tilde{\mathcal{J}}^{0}_{1-1}\in H^{s-2}$ and hence $\tilde{\mathcal{I}}^{1}_{11}$ and $\tilde{\mathcal{J}}^{1}_{1-1}\in H^{s-2}$ when $\tilde{A}^{0}_{11}$ and $\tilde{B}^{0}_{1-1} \in H^{s}$.

For the terms of $\mathcal{O}(\epsilon^{5})$, we can handle the expansion in a similar fashion and we omit the details here. In order to close the system at $\mathcal{O}(\epsilon^{5})$ we have to compute the linear inhomogeneous Schr\"{o}dinger equations for $(\tilde{A}^{3}_{11}, \tilde{B}^{3}_{1-1})$ and the transport equations for the correctors $(\tilde{\mathcal{I}}^{3}_{11}, \tilde{\mathcal{J}}^{3}_{1-1})$ at $\mathcal{O}(\epsilon^{6})$. In fact, the above process implies that for all $\ell,\ell_{1},\ell_{2}, j, n$ the amplitudes $(\tilde{A}^{n}_{\ell j}, \ \tilde{B}^{n}_{\ell j})$ with $\ell\in\{0, 2, 3, 4, 5\}$, the correctors $\tilde{f}^{n}_{\ell_{1}\ell_{2} j}$ and $(\tilde{\mathcal{I}}^{n}_{\pm1j}, \tilde{\mathcal{J}}^{n}_{\pm1j})$ are uniquely determined if the amplitudes $(\tilde{A}^{n'}_{\pm1j'}, \tilde{B}^{n'}_{\pm1j'})$ are fixed with $n'\leq n$.

Specifically, we first consider $\tilde{A}^{n}_{\ell j}$ and $\tilde{B}^{n}_{\ell j}$ with $j\in\{\pm1\}$ and $\ell\in\{2, \ 3, \ 4, \ 5\}$. If we write out the expression for the coefficients of $E^{\ell}$ and $F^{\ell}$ of $\mathcal{O}(\epsilon^{\ell+n})$, we find that \eqref{equation7} leads to equations of the form
\begin{equation}
\begin{split}\label{ABjn}
(\ell \omega_{0}-j\omega(\ell k_{0}))\tilde{A}^{n}_{\ell j}=g^{n}_{\ell j},\\
(-\ell \omega_{0}-j\omega(\ell k_{0}))\tilde{B}^{n}_{\ell j}=h^{n}_{\ell j},
\end{split}
\end{equation}
where $g^{n}_{\ell j}$ depends polynomially on $\tilde{A}^{n'}_{\ell' j'}$ with $n'\leq n$ (in particular if $n'=n$, then $\ell'<\ell$), and $\tilde{\mathcal{B}}^{n''}_{\ell'' j''}$, $\tilde{\mathcal{I}}^{n''}_{\pm1 j''}$, $\tilde{\mathcal{J}}^{n''}_{\pm1 j''}$  and $\tilde{f}^{n''}_{\ell_{1}''\ell_{1}''j''}$ with $n''<n$. Similarly, $h^{n}_{\ell j}$ depends polynomially on $\tilde{B}^{n'}_{\ell' j'}$ with $n'\leq n$ (in particular if $n'=n$, then $\ell'<\ell$), and $\tilde{\mathcal{A}}^{n''}_{\ell'' j''}$, $\tilde{\mathcal{I}}^{n''}_{\pm1 j''}$, $\tilde{\mathcal{J}}^{n''}_{\pm1 j''}$ and $\tilde{f}_{\ell_{1}''\ell_{1}''j''}$ with $n''<n$.


Since $\ell \omega_{0}\pm j\omega(\ell k_{0})\neq0$ for $j\in\{\pm1\}$ and $\ell\in\{2, 3,4,5\}$, each $\tilde{A}^{n}_{\ell j}$ and $\tilde{B}^{n}_{\ell j}$ on the left hand side of \eqref{ABjn} can be uniquely determined. 

Next we consider $\tilde{A}^{n}_{\ell j}$ and $\tilde{B}^{n}_{\ell j}$ with $j\in\{\pm1\}$, $n\in\{2,3\}$ and $\ell=0$. In this case, $E^{0}=F^{0}=1$, so the derivatives that give rise to the left hand side of \eqref{ABjn} all vanish and the lowest order contributions to the equations for $\tilde{A}^{n}_{0j}$ and $\tilde{B}^{n}_{0j}$ will be $\mathcal{O}(\epsilon^{n+3})$. Writing out these equations we obtain
\begin{equation}
\begin{split}\label{AB0n}
(c_{g}-j\omega'(0))\partial_{X^{+}}\tilde{A}^{n}_{0j}+\partial_{\theta}\tilde{A}^{n-1}_{0j}=g^{n}_{0j},\\
(-c_{g}-j\omega'(0))\partial_{X^{-}}\tilde{B}^{n}_{0j}+\partial_{\theta}\tilde{B}^{n-1}_{0j}=h^{n}_{0j},
\end{split}
\end{equation}
where $g^{n}_{0j}$ and $h^{n}_{0j}$ depend polynomially on $\tilde{A}^{n'}_{\ell' j'}$ and $\tilde{B}^{n'}_{\ell' j'}$ with $n'\leq n$ (in particular if $n'=n$, then $\ell'=\pm1$), and  $\tilde{\mathcal{I}}^{n''}_{\pm1 j''}, \ \tilde{\mathcal{J}}^{n''}_{\pm1 j''}$ and $\tilde{f}^{n''}_{\ell''_{1}\ell''_{2} j}$ with $n''<n$.

Similar to $\tilde{A}^{1}_{0j}$ and $\tilde{B}^{1}_{0j}$ in \eqref{A01}, the nonlinear terms $g^{n}_{0j}$ and $h^{n}_{0j}$ are also of the form of an $X$-derivative of an expression involving the amplitudes $\tilde{A}^{n'}_{\ell' j'}, \ \tilde{\mathcal{I}}^{n''}_{\pm1 j'}$ and $\tilde{B}^{n'}_{\ell' j'}, \ \tilde{\mathcal{J}}^{n''}_{\pm1 j'}$, respectively. The $X$-derivative arises because all the nonlinear terms in the Euler-Poisson system contain a derivative. Since $c_{g}\pm j\omega'(0)\neq0$, $\tilde{A}^{n}_{0j}$ and $\tilde{B}^{n}_{0j}$ can be fixed 
by a straightforward integration.

For the corrector $\tilde{f}^{n}_{\ell_{1}\ell_{2} j}$, by requiring
that the coefficients of the $E^{\ell_{1}}F^{\ell_{2}}$ vanish up to $\epsilon^{|\ell_{1}|+|\ell_{2}|+n}$, we obtain
\begin{equation}
\begin{split}\label{f12n}
((\ell_{1}-\ell_{2})\omega_{0}-j\omega((\ell_{1}+\ell_{2})k_{0}))\tilde{f}^{n}_{\ell_{1}\ell_{2} j}=g^{n}_{\ell_{1}\ell_{2}j},
\end{split}
\end{equation}
where $g^{n}_{\ell_{1}\ell_{2}j}$ depends polynomially on $\tilde{A}^{n'}_{\ell' j'}$ and $\tilde{B}^{n'}_{\ell' j'}$ with $n'\leq n$, $\tilde{f}^{n''}_{\ell'_{1}\ell'_{2} j'}$ with $n''\leq n$ (in particular if $n''=n$, then $|\ell'_{1}|+|\ell'_{2}|<|\ell_{1}|+|\ell_{2}|$), and $\tilde{\mathcal{I}}^{n''}_{\pm1 j'}$ and $ \tilde{\mathcal{J}}^{n''}_{\pm1 j'}$ with $n''<n$. Then $\tilde{f}^{n}_{\ell_{1}\ell_{2} j}$ can be fixed because of $(\ell_{1}-\ell_{2})\omega_{0}-j\omega((\ell_{1}+\ell_{2})k_{0})\neq0$.

It remains to consider $\tilde{A}^{n}_{11}, \tilde{B}^{n}_{1-1}$ and $\tilde{\mathcal{I}}^{n}_{\pm11}, \tilde{\mathcal{J}}^{n}_{\pm1-1}$ with $n\in\{2, 3, 4\}$. We have
\begin{equation}
\begin{split}\label{ABIJ}
&\partial_{\theta}\tilde{A}^{n}_{11}=
-\frac{i}{2}\omega''(k_{0})\partial^{2}_{X_{+}}\tilde{A}^{n}_{11}
+g^{n}_{1},\\
&\partial_{\theta}\tilde{B}^{n}_{1-1}
=\frac{i}{2}\omega''(k_{0})\partial^{2}_{X_{-}}\tilde{B}^{n}_{1-1}
+g^{n}_{2},\\
&\partial_{T}\tilde{\mathcal{I}}^{n}_{11}
-\omega'(k_{0})\partial_{X}\tilde{\mathcal{I}}^{n}_{11}
=h^{n}_{1},\\
&\partial_{T}\tilde{\mathcal{J}}^{n}_{1-1}
+\omega'(k_{0})\partial_{X}\tilde{\mathcal{J}}^{n}_{1-1}
=h^{n}_{2}.
\end{split}
\end{equation}
Here $g^{n}_{1}$ and $g^{n}_{2}$ are affine in $\tilde{A}^{n}_{11}$ and $\tilde{B}^{n}_{1-1}$ and depend polynomial on $\tilde{A}^{n'}_{\ell j}$ and $\tilde{B}^{n'}_{\ell j}$ and their derivatives, respectively, with $n'<n$. Similarly, $h^{n}_{1}$ and $h^{n}_{2}$ are affine in $\tilde{\mathcal{I}}^{n}_{11}$ and $\tilde{\mathcal{J}}^{n}_{1-1}$ and depend polynomial on $\tilde{A}^{n'}_{\ell j}, \tilde{B}^{n'}_{\ell j}, \tilde{\mathcal{I}}^{n''}_{11}, \tilde{\mathcal{J}}^{n''}_{1-1}$ and their derivatives with $n'\leq n$ and $n''<n$. 
The amplitudes $\tilde{A}^{n}_{11}$ and $\tilde{B}^{n}_{1-1}$ satisfy for $n\in\{1,2,3\}$ linear inhomogeneous Schr\"{o}dinger equations and have solutions on a time interval $T\in[0,T_{0}]$ with $T_{0}\sim\mathcal{O}(1)$, provided that 
$\tilde{A}^{0}_{11}$ and $\tilde{B}^{0}_{1-1}$ are solutions of the NLS equations \eqref{NLS11} and \eqref{NLS12} on $T\in[0,T_{0}]$, respectively. Furthermore, since $\tilde{A}^{4}_{11}$, $\tilde{B}^{4}_{1-1}$, $\tilde{\mathcal{I}}^{4}_{11}$ and $\tilde{\mathcal{J}}^{4}_{11}$ do not appear in equations for other amplitudes, we can set
\begin{equation}
\begin{split}\label{A4}
\tilde{A}^{4}_{11}=0, \ \ \ \tilde{B}^{4}_{1-1}=0, \ \ \ \tilde{\mathcal{I}}^{4}_{11}=0, \ \ \ \tilde{\mathcal{J}}^{4}_{11}=0.
\end{split}
\end{equation}
Now we have fixed all amplitudes $\tilde{A}^{n}_{\ell j}$ and $\tilde{B}^{n}_{\ell j}$ and all correctors $\tilde{\mathcal{I}}^{n}_{\pm1j}, \tilde{\mathcal{J}}^{n}_{\pm1j}$ and $\tilde{f}^{n}_{\ell_{1}\ell_{2}j}$. Concerning their regularity we note that the derivatives that appear on the RHS of these equations come either from the derivatives in the nonlinear terms of \eqref{abstract} or from the derivatives that appear when we expand the dispersion relation. However, each such derivative contributes one extra power of $\epsilon$ thanks to the scaling of the amplitudes and correctors. As a consequence, one can check that for $n\in\{1,2,3\}$, the maximum number of derivatives that can appear in these equations is $n$ if $(\ell,j)\neq(\pm1,1),(\ell,j')\neq(\pm1,-1)$ and $n+2$ if $(\ell,j)=(\pm1,1), (\ell,j')=(\pm1,-1)$ for $\tilde{A}^{n}_{\ell j}, \tilde{B}^{n}_{\ell j'}$, respectively, $2n$ for $\tilde{\mathcal{I}}^{n}_{\pm11}$ and $\tilde{\mathcal{J}}^{n}_{\pm1-1}$ and $n$ for $\tilde{f}^{n}_{\ell_{1}\ell_{2}j}$. Therefore, we have
\begin{proposition}\label{reg}
Fix $s\geq6$. Assume $\tilde{A}_{1}$, $\tilde{B}_{1}\in C([0,T_{0}],H^{s})$ are solutions of the NLS equations \eqref{NLS1}-\eqref{NLS2}. Then $\tilde{A}^{n}_{\ell j}, \tilde{B}^{n}_{\ell j}, \tilde{\mathcal{I}}^{n}_{\pm1j}, \tilde{\mathcal{J}}^{n}_{\pm1j}$ and $\tilde{f}^{n}_{\ell_{1}\ell_{2}j}$ defined through \eqref{appext} exist for all $T\in[0,T_{0}]$ and satisfy
\begin{align*}
&\tilde{A}^{n}_{\ell j}, \ \tilde{B}^{n}_{\ell j'} \in C([0,T_{0}], \ H^{s-n}), \ \ \ \ \ \ \ \ \ \ \  \text{for} \ \ (\ell,j)\neq(\pm1,1),\ (\ell,j')\neq(\pm1,-1),\\
&\tilde{A}^{n}_{\ell j}, \ \tilde{B}^{n}_{\ell j'}\in C([0,T_{0}], \ H^{s-n-2}), \ \  \ \ \ \ \ \  \text{for} \ \ (\ell,j)=(\pm1,1),\ (\ell,j')=(\pm1,-1),\\
&\tilde{\mathcal{I}}^{n}_{\pm11}, \ \tilde{\mathcal{J}}^{n}_{\pm1-1}\in C([0,T_{0}], \ H^{s-2n}),\\
&\tilde{f}^{n}_{\ell_{1}\ell_{2}j}\in C([0,T_{0}], \ H^{s-n}),
\end{align*}
where $n\in\{1,2,3\}$, and the respective Sobolev norms are uniformly bounded by the $H^{s}$ norms of $\tilde{A}_{1}$ and $\tilde{B}_{1}$.
\end{proposition}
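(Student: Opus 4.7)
The plan is to establish the proposition by induction on $n$, cycling through the four families of relations \eqref{ABjn}, \eqref{AB0n}, \eqref{f12n}, and \eqref{ABIJ} at each level. For the base case $n=0$, the functions $\tilde{A}^0_{11}=\tilde{A}_1$ and $\tilde{B}^0_{1-1}=\tilde{B}_1$ satisfy the cubic NLS equations \eqref{NLS11}--\eqref{NLS12}, so standard NLS well-posedness in $H^s$ with $s\ge 6$ already places them in $C([0,T_0],H^s)$. All remaining zeroth-order amplitudes are determined algebraically from $\tilde{A}^0_{11}$ and $\tilde{B}^0_{1-1}$ by the relations \eqref{A2}, \eqref{A0}, \eqref{f11}, and \eqref{Modify22}, which are polynomial in these data and hence preserve the $H^s$ regularity. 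The zeroth-order correctors $\tilde{\mathcal{I}}^0_{\pm 11}$ and $\tilde{\mathcal{J}}^0_{\pm 1 -1}$ satisfy the transport equations \eqref{Modify1}--\eqref{Modify2} with cubic forcing, and Lemma \ref{L3} directly yields \eqref{IJ1}, placing them in $C([0,T_0],H^s)$ as well.

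For the inductive step at order $n\in\{1,2,3\}$, I would assume the claim for all $n'<n$ and treat each family separately. When $\ell\in\{\pm 2,\pm 3,\pm 4,\pm 5\}$, the relation \eqref{ABjn} is purely algebraic because the coefficient $\ell\omega_0\mp j\omega(\ell k_0)$ is a nonzero constant on the discrete dispersion lattice, and its right-hand side depends polynomially on strictly lower-order data already controlled by the inductive hypothesis; the crucial observation is that any derivative appearing on the right comes either from the Taylor expansion of $\omega$ around $k_0$ or from the derivative loss in the quadratic nonlinearity, and each such derivative is absorbed by one extra power of $\epsilon$, so at order $n$ at most $n$ spatial derivatives accumulate. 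When $\ell=0$ the operator on the left of \eqref{AB0n} is a scalar transport, and the right-hand side is an $X$-derivative of a decaying function because every nonlinear term in \eqref{abstract} carries an external $\partial_x$; this structural property, already observed at first order in \eqref{A0111}, propagates through the induction and allows a direct integration in $X$ that preserves Sobolev control, producing $\tilde{A}^n_{0 j}, \tilde{B}^n_{0 j}\in H^{s-n}$. The algebraic equation \eqref{f12n} for $\tilde{f}^n_{\ell_1\ell_2 j}$ is handled identically.

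The more analytic step is \eqref{ABIJ}: the amplitudes $\tilde{A}^n_{11}$ and $\tilde{B}^n_{1-1}$ solve linear inhomogeneous Schr\"odinger equations whose dispersive terms are $\mp\tfrac{i}{2}\omega''(k_0)\partial^2_{X_\pm}$, and standard linear energy estimates plus Gronwall yield existence on the entire interval $[0,T_0]$; the inhomogeneities involve up to second derivatives of lower-order amplitudes, which accounts for the extra loss of two derivatives and produces $H^{s-n-2}$ rather than $H^{s-n}$ in the special cases $(\ell,j)=(\pm 1,\pm 1)$. The correctors $\tilde{\mathcal{I}}^n_{\pm 1 1}$ and $\tilde{\mathcal{J}}^n_{\pm 1 -1}$ satisfy transport equations with sources that involve $\partial_\theta$ applied to lower-order correctors, and this is where the doubling in regularity loss originates: substituting the NLS evolution to eliminate $\partial_\theta$ replaces it with two $X_\pm$-derivatives of leading amplitudes, so iterating $n$ times consumes $2n$ spatial derivatives, matching the claimed $H^{s-2n}$.

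The principal obstacle is exactly this derivative accounting for $\tilde{\mathcal{I}}^n$ and $\tilde{\mathcal{J}}^n$: at each inductive step one must verify that the source of the transport equation can be written as a sum of products of a right-moving and left-moving wave compatible with the hypotheses of Lemma \ref{L3}, and the tally of spatial derivatives that arise after eliminating $\partial_\theta$ via the NLS must not exceed $2n$. The truncation \eqref{A4} sets $\tilde{A}^4_{11}$, $\tilde{B}^4_{1-1}$, $\tilde{\mathcal{I}}^4_{11}$, and $\tilde{\mathcal{J}}^4_{1-1}$ to zero, ensuring that the induction closes at $n=3$ without any circular dependence on amplitudes of order four, which completes the construction and yields the stated uniform Sobolev bounds in terms of the $H^s$ norms of $\tilde{A}_1$ and $\tilde{B}_1$.
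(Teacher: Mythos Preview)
Your proposal is correct and follows essentially the same approach as the paper: both arguments amount to an inductive sweep through the recursive relations \eqref{ABjn}, \eqref{AB0n}, \eqref{f12n}, and \eqref{ABIJ}, solving the algebraic equations via the non-resonance conditions, the linear inhomogeneous Schr\"odinger equations by standard well-posedness, and the transport equations by Lemma~\ref{L3}, followed by the derivative count that each extra power of $\epsilon$ absorbs one spatial derivative. Your identification of the ``principal obstacle''---verifying that the transport-equation sources at higher $n$ retain the product structure needed for Lemma~\ref{L3}, with $\partial_\theta$ converted to two $X$-derivatives via the NLS---is exactly the point the paper handles somewhat informally in the paragraph preceding the proposition, and your explanation of the $H^{s-2n}$ loss for the correctors is in fact more explicit than the paper's.
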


From the form of the ansatz for $\epsilon\widetilde{\Theta}^{ext}$ and the discussion above we see that the residual is of order $\mathcal{O}(\epsilon^{6})$. One could extend the approximation $\epsilon\widetilde{\Theta}^{ext}$ by terms of even higher order, resulting in a smaller residual of order $\mathcal{O}(\epsilon^{m})$ with $m>6$.

\subsection{\textbf{From $\epsilon\widetilde{\Theta}^{ext}$ to $\epsilon\Theta$}}
By multiplying the Fourier transform of each function by a suitable cut-off function, we obtain our final approximation $\epsilon\Theta$ below. 
Specifically, we introduce the scaling operator $(S_{\epsilon}u)(x)=u(\epsilon x)$ and the translation operator $(\tau_{\alpha}u)(x)=u(x+\alpha)$ in space variable.  Then we rewrite terms like $S_{\epsilon}\tau_{c_{g}t}\tilde{A}_{\ell j}^{n}$ as $\tilde{A}_{\ell j}^{n}(\epsilon(x+c_{g}t),\epsilon^{2}t)$ and terms like $S_{\epsilon}\tilde{\mathcal{I}}^{n}_{\ell j}$ as $\tilde{\mathcal{I}}^{n}_{\ell j}(\epsilon x,\epsilon t,\epsilon^{2}t)$. Setting the characteristic function
\begin{equation*}
\begin{split}
\chi_{[-\delta,\delta]}(k)=\Big\{\begin{matrix} 1, \ |k|\leq\delta, \\ 0, \ |k|>\delta,\end{matrix}
\end{split}
\end{equation*}
we introduce the mode filter $(E_{\delta}u)(x)=\mathcal{F}^{-1}(\chi_{[-\delta, \delta]}\mathcal{F}u)(x)$. 
It is easy to show that
\begin{equation}
\begin{split}\label{1/2}
\|E_{\delta}S_{\epsilon}u-S_{\epsilon}u\|_{H^{m}}
&\leq C\|(\chi_{[-\delta,\delta]}-1)\epsilon^{-1}S_{1/\epsilon}\|_{H^{0}(m)}\\
&\leq\sup_{k\in\mathbb{R}}\bigg|\frac{(\chi_{[-\delta,\delta]}-1)(1+k^{2})^{m/2}}
{(1+|k/\epsilon|^{2})^{(m+M)/2}}\bigg|\epsilon^{-1/2}\|u\|_{H^{m+M}}\\
&\leq C\epsilon^{m+M-1/2}\|u\|_{H^{m+M}},
\end{split}
\end{equation}
for all $m, M\geq0$.

Thus, we can modify our approximation by applying the mode filter to all terms of the form $S_{\epsilon}\tau_{c_{g}t}\tilde{A}_{\ell j}^{n}$, $S_{\epsilon}\tau_{-c_{g}t}\tilde{B}_{\ell j}^{n}, \ S_{\epsilon}\tilde{\mathcal{I}}^{n}_{\ell j}, \ S_{\epsilon}\tilde{\mathcal{J}}^{n}_{\ell j}$ and $S_{\epsilon}\tilde{f}_{\ell_{1}\ell_{2}j}^{n}$ in the extended approximation $\epsilon\widetilde{\Theta}_{ext}$. In particular, we set
\begin{equation}
\begin{split}\label{SE}
&S_{\epsilon}\tau_{c_{g}t}A_{\ell j}^{n}:=E_{\delta}S_{\epsilon}\tau_{c_{g}t}\tilde{A}_{\ell j}^{n},\\
&S_{\epsilon}\tau_{-c_{g}t}B_{\ell j}^{n}:=E_{\delta}S_{\epsilon}\tau_{-c_{g}t}\tilde{B}_{\ell j}^{n},\\
&S_{\epsilon}\mathcal{I}^{n}_{\ell j}:=E_{\delta}S_{\epsilon}\tilde{\mathcal{I}}^{n}_{\ell j},\\
&S_{\epsilon}\mathcal{J}^{n}_{\ell j}:=E_{\delta}S_{\epsilon}\tilde{\mathcal{J}}^{n}_{\ell j},\\
&S_{\epsilon}f_{\ell_{1}\ell_{2}j}^{n}:
=E_{\delta}S_{\epsilon}\tilde{f}_{\ell_{1}\ell_{2}j}^{n},
\end{split}
\end{equation}
and define a new approximation $\epsilon\Theta$ with its $j^{th}$ component $\epsilon\Theta_{j}$ for $j\in\{\pm1\}$ from \eqref{appext} and \eqref{SE} as
\begin{align}\label{new}
\epsilon\Theta_{j}=\epsilon\Psi^{ext}_{j}
+\epsilon\Phi^{ext}_{j}
+\epsilon^{2}\Upsilon^{ext}_{j},
\end{align}
with
\begin{align*}
\epsilon\Psi^{ext}_{j}
:=&\sum_{|\ell|\leq5}\sum_{\lambda_{j}(\ell,n)\leq5}\epsilon^{\lambda_{j}(\ell,n)}
 A_{\ell j}^{n}(X+c_{g}T,\epsilon T)E^{\ell},\nonumber\\
\epsilon\Phi^{ext}_{j}
:=&\sum_{|\ell|\leq5}\sum_{\eta_{j}(\ell,n)\leq5}\epsilon^{\eta_{j}(\ell,n)} B_{\ell j}^{n}(X-c_{g}T,\epsilon T)F^{\ell},\nonumber\\
\epsilon^{2}\Upsilon^{ext}_{j}:=&\sum_{\ell\in\{\pm1\}}\sum_{2\leq\zeta_{j}(n)\leq5}
\epsilon^{\zeta_{j}(n)}\mathcal{I}_{\ell j}^{n}(X,T,\epsilon T)E^{\ell}+\sum_{\ell\in\{\pm1\}}\sum_{2\leq\mu_{j}(n)\leq5}
\epsilon^{\mu_{j}(n)}\mathcal{J}_{\ell j}^{n}(X,T,\epsilon T)F^{\ell}\nonumber\\
&+\sum_{\substack{2\leq|\ell_{1}|+|\ell_{2}|\leq5\\ \ell_{1}\ell_{2}\neq0}}
\sum_{2\leq\chi(\ell_{1},\ell_{2},n)\leq5}\epsilon^{\chi(\ell_{1},\ell_{2},n)}
f_{\ell_{1}\ell_{2}j}^{n}(X,T,\epsilon T)E^{\ell_{1}}F^{\ell_{2}}.
\end{align*}

Note that $A_{\ell j}^{n}$, $B_{\ell j}^{n}$, $\mathcal{I}_{\ell j}^{n}$ and $\mathcal{J}_{\ell j}^{n}$ have compact support around $\ell k_{0}$, and $f_{\ell_{1}\ell_{2}j}^{n}$ has compact support around $(\ell_{1}+\ell_{2})k_{0}$. For convenience, we extract the terms of order $\mathcal{O}(\epsilon)$ from the final approximation $\epsilon\Theta$ from \eqref{new} and denote
\begin{align*}
\epsilon\Psi_{\pm1}&:=\epsilon\psi_{\pm1}\begin{pmatrix}1\\0\end{pmatrix}:=\epsilon A_{\pm1}(\epsilon(x+c_{g}t),\epsilon^{2}t)E^{\pm1}\begin{pmatrix}1\\0\end{pmatrix},\\
\epsilon\Phi_{\pm1}&:=\epsilon\phi_{\pm1}\begin{pmatrix}0\\1\end{pmatrix}
:=\epsilon B_{\pm1}(\epsilon(x-c_{g}t),\epsilon^{2}t)F^{\pm1}\begin{pmatrix}0\\1\end{pmatrix},\\
\end{align*}
We then denote
\begin{align*}
\epsilon\Psi^{ext}&=:\epsilon\Psi_{1}+\epsilon\Psi_{-1}+\epsilon^{2}\Psi_{d},\\
\epsilon\Phi^{ext}&=:\epsilon\Phi_{1}+\epsilon\Phi_{-1}+\epsilon^{2}\Phi_{e},\\
\epsilon^{2}\Theta_{r}&=:\epsilon^{2}\Psi_{d} +\epsilon^{2}\Phi_{e}+\epsilon^{2}\Upsilon^{ext},
\end{align*}
and finally, we have
\begin{equation}
\begin{split}\label{ab}
\epsilon\Theta=\epsilon\Psi_{1}+\epsilon\Psi_{-1}+\epsilon\Phi_{1}+\epsilon\Phi_{-1}
+\epsilon^{2}\Theta_{r}
=\epsilon\begin{pmatrix}\psi_{1}+\psi_{-1}\\ \phi_{1}+\phi_{-1}\end{pmatrix}+\epsilon^{2}\begin{pmatrix}\Theta_{r1}\\ \Theta_{r-1}\end{pmatrix}.
\end{split}
\end{equation}

Then, for the residual the following estimates hold.
\begin{lemma}\label{L2}
Let $s_{N}\geq6$ and $\tilde{A}_{1}, \tilde{B}_{1}\in C([0,T_{0}], \ H^{s_{N}}(\mathbb{R},\mathbb{C}))$ be a solution of the NLS equations \eqref{NLS1} and \eqref{NLS2} with
\begin{equation*}
\begin{split}
\sup_{T\in[0,T_{0}]}\|\tilde{A}_{1}\|_{H^{s_{N}}}\leq C_{A},\\
\sup_{T\in[0,T_{0}]}\|\tilde{B}_{1}\|_{H^{s_{N}}}\leq C_{B}.
\end{split}
\end{equation*}
Assume that the $A_{\ell j}^{n}$, $B_{\ell j}^{n}$, $\mathcal{I}^{n}_{\pm1j}$, $\mathcal{J}^{n}_{\pm1j}$ and $f_{\ell_{1}\ell_{2}j}^{n}$ satisfy \eqref{new} and $\tilde{A}_{\ell j}^{n}$, $\tilde{B}_{\ell j}^{n}$, $\widetilde{\mathcal{I}}^{n}_{\pm1j}$, $\widetilde{\mathcal{J}}^{n}_{\pm1j}$ and $\tilde{f}_{\ell_{1}\ell_{2}j}^{n}$ satisfy \eqref{A2}-\eqref{Modify2} and \eqref{Modify22}-\eqref{A4}. Then for all $s\geq0$ there exist $C_{Res}, C_{\Theta},\epsilon_{0}>0$ depending on $C_{A}$ and $C_{B}$ such that for all $\epsilon\in(0,\epsilon_{0})$ the corresponding approximation $\epsilon\Theta$ satisfies
\begin{equation}
\begin{split}\label{Aesti-1}
\sup_{t\in[0,T_{0}/\epsilon^{2}]}\|Res(\epsilon\Theta)\|_{H^{s}}\leq C_{Res} \epsilon^{11/2},
\end{split}
\end{equation}
\begin{equation}
\begin{split}\label{Aesti-2}
\sup_{t\in[0,T_{0}/\epsilon^{2}]}\|\epsilon\Theta-(\epsilon\widetilde{\Psi}_{\pm1}
+\epsilon\widetilde{\Phi}_{\pm1})\|_{H^{s_{N}}}\leq C_{\Theta} \epsilon^{3/2},
\end{split}
\end{equation}
\begin{equation}
\begin{split}\label{Aesti-3}
\sup_{t\in[0,T_{0}/\epsilon^{2}]}(\|\widehat{\Psi}_{\pm1}\|_{L^{1}(s+1)}+\|\widehat{\Phi}_{\pm1}\|_{L^{1}(s+1)}
+\|\widehat{\Theta}_{r}\|_{L^{1}(s+1)})\leq C_{\Theta}.
\end{split}
\end{equation}
\end{lemma}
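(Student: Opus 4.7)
The plan rests on three principles: (a) the scaling law $\|A(\epsilon\cdot)E^{\ell}\|_{H^{s}}\lesssim_{s,k_{0}}\epsilon^{-1/2}\|A\|_{H^{s}}$ for modulated wave trains, obtained by unfolding $\partial_{x}(A(\epsilon x)E^{\ell})=\epsilon A'(\epsilon x)E^{\ell}+i\ell k_{0}A(\epsilon x)E^{\ell}$ and iterating; (b) the mode-filter estimate \eqref{1/2}, which makes $\epsilon\Theta-\epsilon\widetilde\Theta^{ext}$ negligible in any Sobolev norm to any prescribed power of $\epsilon$; and (c) the uniform regularity of all amplitudes and correctors supplied by Proposition \ref{reg}. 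I would handle \eqref{Aesti-3}, \eqref{Aesti-2}, and \eqref{Aesti-1} in this order.

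For \eqref{Aesti-3}, the mode filter guarantees that $\widehat{\Psi}_{\pm 1}$, $\widehat{\Phi}_{\pm 1}$, and $\widehat{\Theta}_{r}$ are supported in fixed $\delta$-neighborhoods of integer multiples of $k_{0}$, so the polynomial weight $\sigma^{s+1}$ is uniformly bounded on the support. Using the $L^{1}$-invariance of the rescaling $\widehat{A(\epsilon\cdot)}(k)=\epsilon^{-1}\widehat{A}(k/\epsilon)$ together with the elementary bound $\|\widehat{A}\|_{L^{1}}\lesssim\|A\|_{H^{1}}$ (Cauchy--Schwarz against $(1+k^{2})^{-1/2}$), one obtains
\begin{equation*}
\|\widehat{\Psi}_{\pm 1}\|_{L^{1}(s+1)}\lesssim \|\widehat{A}_{\pm 1}\|_{L^{1}}\lesssim \|A_{\pm 1}\|_{H^{1}}\lesssim \|\tilde A_{1}\|_{H^{s_{N}}},
\end{equation*}
and analogous bounds for $\widehat{\Phi}_{\pm 1}$ and $\widehat{\Theta}_{r}$, the latter using Proposition \ref{reg} to control each constituent amplitude uniformly on $t\in[0,T_{0}/\epsilon^{2}]$.

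For \eqref{Aesti-2}, I would split $\epsilon\Theta-(\epsilon\widetilde\Psi_{\pm 1}+\epsilon\widetilde\Phi_{\pm 1})$ into the mode-filter error, which is $\mathcal{O}(\epsilon^{M})$ in $H^{s_{N}}$ for arbitrary $M$ by \eqref{1/2}, and the genuinely higher-order correctors $\widetilde\Psi_{0},\widetilde\Psi_{\pm 2},\widetilde\Phi_{0},\widetilde\Phi_{\pm 2},\widetilde\Upsilon,\widetilde\Psi_{p},\widetilde\Phi_{q},\widetilde\Upsilon_{r}$. Each such corrector is $\epsilon^{n}$ (with $n\geq 2$) times an amplitude bounded in $H^{s_{N}-n'}$ by Proposition \ref{reg}, times an oscillatory factor. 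The scaling law (a) converts this into an $H^{s_{N}}$-norm of size $\mathcal{O}(\epsilon^{n-1/2})$, and the worst case $n=2$ yields the claimed $\mathcal{O}(\epsilon^{3/2})$.

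The main estimate is \eqref{Aesti-1}. The plan is to write
\begin{equation*}
\|Res(\epsilon\Theta)\|_{H^{s}}\leq \|Res(\epsilon\widetilde\Theta^{ext})\|_{H^{s}}+\|Res(\epsilon\Theta)-Res(\epsilon\widetilde\Theta^{ext})\|_{H^{s}}.
\end{equation*}
By the construction in Section 4.1, every coefficient of $\epsilon^{m}E^{\ell_{1}}F^{\ell_{2}}$ with $m\leq 5$ in $Res(\epsilon\widetilde\Theta^{ext})$ vanishes after inserting $\epsilon\widetilde\Theta^{ext}$ into \eqref{abstract}, leaving a finite sum of pointwise $\mathcal{O}(\epsilon^{6})$ terms whose slow-variable amplitudes are controlled by Proposition \ref{reg}; the scaling law (a) then upgrades this to an $H^{s}$-norm of $\mathcal{O}(\epsilon^{11/2})$. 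For the second piece, the bilinear $Q$ has symbol growing at most linearly in $k$ (cf.\ \eqref{ker}) and $N$ is cubic, so tame product estimates combined with \eqref{Aesti-3} reduce the difference to $\|\epsilon\Theta-\epsilon\widetilde\Theta^{ext}\|_{H^{s+1}}$, which by \eqref{1/2} is $\mathcal{O}(\epsilon^{M})$ for any $M\geq 11/2$. The principal obstacle is the bookkeeping of derivative losses through the residual--- one must verify that the Sobolev regularities of amplitudes in Proposition \ref{reg} suffice after all the derivatives generated by $Q$, $N$, and the dispersion expansion, which is precisely why $s_{N}\geq 6$ and why the extended ansatz carries correctors all the way up to order $\epsilon^{5}$.
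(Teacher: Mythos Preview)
Your proposal is correct and follows essentially the same approach as the paper: both rely on the $L^{2}$-scaling $\|S_{\epsilon}A\|_{L^{2}}=\epsilon^{-1/2}\|A\|_{L^{2}}$ for \eqref{Aesti-1}--\eqref{Aesti-2}, the $L^{1}$-invariance of the Fourier rescaling for \eqref{Aesti-3}, the compact Fourier support of $\epsilon\Theta$ to pass freely between $H^{s}$ and $H^{0}$ (and between $L^{1}(s)$ and $L^{1}$), the mode-filter estimate \eqref{1/2}, and Proposition \ref{reg} for the regularity budget. The only cosmetic difference is that you spell out the splitting $Res(\epsilon\Theta)=Res(\epsilon\widetilde\Theta^{ext})+\bigl(Res(\epsilon\Theta)-Res(\epsilon\widetilde\Theta^{ext})\bigr)$ explicitly and control the second piece via tame bilinear estimates, whereas the paper absorbs this step into a single sentence invoking \eqref{1/2} on each amplitude; the content is the same.
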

\begin{proof}
As explained above, the previous extended approximation $\epsilon\widetilde{\Theta}^{ext}$ has been constructed in a way such that we have $Res(\epsilon\widetilde{\Theta}^{ext})=\mathcal{O}(\epsilon^{6})$ and $\epsilon\widetilde{\Theta}^{ext}-(\epsilon\widetilde{\Psi}_{\pm1}
+\epsilon\widetilde{\Phi}_{\pm1})=\mathcal{O}(\epsilon^{2})$ on the time interval $[0,T_{0}/\epsilon^{2}]$. However, because of the way the $L^{2}$-norm scales, i.e., $\|A\|_{L^{2}}=\epsilon^{1/2}\|S_{\epsilon}A\|_{L^{2}}$ if $A\in L^{2}$ and $(S_{\epsilon}A)(x)=A(\epsilon x)$, we lose a factor $\epsilon^{1/2}$ in \eqref{Aesti-1} and \eqref{Aesti-2}.

Due to our cut-off procedure our final approximation $\epsilon\Theta$ can be written as
\begin{equation*}
\begin{split}
\epsilon\Theta=\sum_{\ell=-5}^{5}u_{\ell} \ \text{with} \ \text{supp}\widehat{u}_{\ell}\subset[\ell k_{0}-\delta,\ell k_{0}+\delta].
\end{split}
\end{equation*}
Hence, there exists a $C=C(k_{0})>0$ such that $\|\Theta\|_{H^{s}}\leq C\|\Theta\|_{H^{0}}$ and $\|\widehat{\Theta}\|_{L^{1}(s)}\leq C\|\widehat{\Theta}\|_{L^{1}}$ for all $s\geq0$.

Now, applying estimate \eqref{1/2} to $u=A_{\ell j}^{n},  B_{\ell j}^{n},  \mathcal{I}^{n}_{\pm1j}, \mathcal{J}^{n}_{\pm1j},  f_{\ell_{1}\ell_{2}j}^{n}$ with $m=0$, where $M$ is determined by the maximal regularity of $u$ (see Proposition \ref{reg}), we obtain
\eqref{Aesti-1}-\eqref{Aesti-2} by construction of $\epsilon\Theta$ if we have $s_{N}\geq6$.

In contrast to the $L^{2}$-norm, we have $\|u\|_{C_{b}^{0}}=\|S_{\epsilon}u\|_{C_{b}^{0}}$ and since $\mathcal{F}(S_{\epsilon}u)=\epsilon^{-1}S_{1/\epsilon}(\mathcal{F}u)$ we also have $\|\widehat{u}\|_{L^{1}}=\|\epsilon^{-1}S_{1/\epsilon}\widehat{u}\|_{L^{1}}$. Consequently, \eqref{Aesti-3} follows by construction of $\Theta$.
\end{proof}

We remark that since the Fourier transform of the functions ${\Theta}$ is strongly concentrated around the wave number $\ell k_{0}$ with $\ell\in\{0, \pm1, \pm2, \pm3, \pm4, \pm5\}$, the approximation changes slightly by the second modification if ${A}_{\ell j}^{n}$, ${B}_{\ell j}^{n}$, ${\mathcal{I}}^{n}_{\pm1j}$, ${\mathcal{J}}^{n}_{\pm1j}$ and ${f}_{\ell_{1}\ell_{2}j}^{n}$ are sufficiently regular. This operation will give us a simpler control of the error and make the approximation an analytic function.

\begin{remark}\label{R5}
The bound \eqref{Aesti-3} will be used for instance to estimate
\begin{equation*}
\begin{split}
\|\Theta R\|_{H^{s}}\leq C\|\Theta\|_{C_{b}^{s}}\|R\|_{H^{s}}\leq C\|\widehat{\Theta}\|_{L^{1}(s)}\|R\|_{H^{s}}
\end{split}
\end{equation*}
without loss of powers in $\epsilon$.
\end{remark}
Moreover, by an analogous argument as in the proof of Lemma 3.3 in \cite{D} we obtain the fact that $\partial_{t}\Psi_{\pm1}$ and $\partial_{t}\Phi_{\pm1}$ can be approximated by $\Omega\Psi_{\pm1}$ and $-\Omega\Psi_{\pm1}$ respectively. More precisely, we get
\begin{lemma}\label{LO}
For all $s>0$ there exist constants $C_{A}>0$ and $C_{B}>0$ depending on $\|\tilde{A}_{1}\|_{H^{s}}$ and $\|\tilde{B}_{1}\|_{H^{s}}$ respectively and $k_{0}$ such that
\begin{align}\label{l1s}
\|\partial_{t}\widehat{\psi}_{\pm1}-i\omega\widehat{\psi}_{\pm1}\|_{L^{1}(s)}\leq C_{A}\epsilon^{2},\\
\|\partial_{t}\widehat{\phi}_{\pm1}+i\omega\widehat{\phi}_{\pm1}\|_{L^{1}(s)}\leq C_{B}\epsilon^{2}.
\end{align}
\end{lemma}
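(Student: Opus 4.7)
The plan is to reduce the estimates to direct Fourier-side computations using the explicit structure of $\psi_{\pm 1}$ and $\phi_{\pm 1}$, together with the Taylor expansion of the dispersion relation $\omega$ around $\pm k_0$ and the NLS equations \eqref{NLS1}--\eqref{NLS2} governing $\tilde A_1$ and $\tilde B_1$. First I would unwind the definitions: after the cut-off $E_\delta$, the function $\widehat{\psi}_{1}(k,t)$ is supported in $\{|k-k_0|\le\delta\}$ and has the closed form
\begin{equation*}
\widehat{\psi}_{1}(k,t)=\chi_{[-\delta,\delta]}(k-k_0)\,\frac{1}{\epsilon}\,e^{i\omega_0 t}\,e^{i(k-k_0)c_g t}\,\widehat{\tilde A}_1\!\left(\frac{k-k_0}{\epsilon},\,\epsilon^{2}t\right),
\end{equation*}
with an analogous formula for $\widehat{\psi}_{-1}$ supported near $-k_0$, and for $\widehat{\phi}_{\pm1}$ near $\pm k_0$ with $c_g\mapsto -c_g$, $\omega_0\mapsto -\omega_0$.

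Differentiating in $t$ produces three kinds of contributions: (i) the carrier phase factor gives $i\omega_0\widehat{\psi}_1$; (ii) the translation $\tau_{c_gt}$ in $\tilde A_1$ gives $i(k-k_0)c_g\widehat{\psi}_1$; and (iii) the slow time $\theta=\epsilon^{2}t$ in $\tilde A_1$ gives an $\epsilon^2$-prefactor times $\partial_\theta \tilde A_1$. Grouping (i)+(ii) and subtracting $i\omega(k)\widehat{\psi}_1$ yields
\begin{equation*}
\partial_t\widehat{\psi}_1-i\omega(k)\widehat{\psi}_1=-i\bigl[\omega(k)-\omega_0-c_g(k-k_0)\bigr]\widehat{\psi}_1+\epsilon^{2}\,\chi_{[-\delta,\delta]}(k-k_0)\,e^{i\omega_0 t+i(k-k_0)c_gt}\,\tfrac{1}{\epsilon}\widehat{\partial_\theta\tilde A_1}\!\left(\tfrac{k-k_0}{\epsilon}\right).
\end{equation*}
Since $\omega\in C^\infty$ near $k_0$, Taylor's theorem gives $|\omega(k)-\omega_0-c_g(k-k_0)|\le C(k-k_0)^2$ on $|k-k_0|\le\delta$, so the first term carries a prefactor of size $(k-k_0)^2$.

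Now I would estimate the $L^1(s)$ norm by the substitution $\ell=(k-k_0)/\epsilon$, $dk=\epsilon\,d\ell$, which absorbs the $1/\epsilon$ in $\widehat{\psi}_1$. The first term becomes
\begin{equation*}
\epsilon^{2}\int_{|\ell|\le\delta/\epsilon}\bigl(1+(k_0+\epsilon\ell)^{2}\bigr)^{s/2}\,\ell^{2}\,\bigl|\widehat{\tilde A}_1(\ell)\bigr|\,d\ell\;\lesssim\;\epsilon^{2}\,\|\widehat{\tilde A}_1\|_{L^{1}(s+2)}\;\lesssim\;\epsilon^{2}\,\|\tilde A_1\|_{H^{s+3}},
\end{equation*}
where the last inequality uses Cauchy--Schwarz with a decaying weight; the second term is bounded by $\epsilon^{2}\|\partial_\theta\tilde A_1\|_{H^{s+1}}\lesssim \epsilon^{2}(\|\tilde A_1\|_{H^{s+3}}+\|\tilde A_1\|_{H^{s+1}}^{3})$ after invoking the NLS equation \eqref{NLS1} to convert $\partial_\theta \tilde A_1$ into two derivatives in $X_+$ plus the cubic term. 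Both contributions are $\mathcal{O}(\epsilon^{2})$ with constants depending only on $k_0$ and $\|\tilde A_1\|_{H^{s_N}}$ (since $s_N\ge 6$ easily dominates $s+3$ in the regime of interest).

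The estimate for $\widehat{\psi}_{-1}$ is obtained identically, expanding $\omega$ around $-k_0$ and using $\omega'(-k_0)=c_g$ (by oddness of $\omega$). For $\widehat{\phi}_{\pm 1}$ the argument is parallel, with carrier $\mp\omega_0$ and translation $-c_g t$, yielding the expansion of $\omega(k)$ around $\pm k_0$ but combined into the form $\partial_t\widehat{\phi}_{\pm 1}+i\omega\widehat{\phi}_{\pm 1}$; the NLS \eqref{NLS2} for $\tilde B_1$ then closes the bound for the slow-time term. The only place that requires care is keeping track of the $\epsilon^{-1}$ from $\widehat{\psi}_{\pm 1}$ versus the $(k-k_0)^2 = \epsilon^2\ell^2$ gain and the Jacobian $\epsilon\,d\ell$; once these powers of $\epsilon$ are lined up, the $\mathcal{O}(\epsilon^{2})$ bound falls out without any genuine obstacle.
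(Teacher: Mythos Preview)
Your proposal is correct and is precisely the standard argument for this type of estimate. The paper itself does not supply a proof of Lemma~\ref{LO}; it simply refers to ``an analogous argument as in the proof of Lemma~3.3 in \cite{D}'', whose content is exactly what you have written out: compute $\widehat{\psi}_{\pm1}$ explicitly, differentiate in $t$, group the carrier and group-velocity phases, subtract $i\omega(k)$, apply the second-order Taylor remainder of $\omega$ at $\pm k_0$ on the compact Fourier support, substitute $\ell=(k\mp k_0)/\epsilon$, and close the slow-time contribution via the NLS equations. One cosmetic remark: the weight $(1+(k_0+\epsilon\ell)^2)^{s/2}$ is uniformly bounded on $|k\mp k_0|\le\delta$ by a constant depending only on $k_0,\delta,s$, so the required regularity of $\tilde A_1$ is really just enough to control $\int \ell^2|\widehat{\tilde A}_1(\ell)|\,d\ell$ and $\|\partial_\theta\tilde A_1\|$, independent of $s$; the statement's phrasing ``depending on $\|\tilde A_1\|_{H^s}$'' is slightly informal (the dependence is on a fixed higher norm, covered by $s_N\ge 6$), as you noted.
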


\section{\textbf{Evolutionary equation for the error $R$ and restatement of Theorem \ref{Thm1}}}
To prove Theorem \ref{Thm1}, we need to show uniform estimate for the error $R$ between the real solutions and the approximation to justify the bidirectional NLS approximation in the desired long time intervals. Below we first deduce the evolutionary equation for the error $R$, and then restate the main Theorem \ref{Thm1} in terms of $R$ in Theorem \ref{Thm2} in this Section.
\subsection{\textbf{The error equation}}
To justify the NLS approximation, we need to show that the error
\begin{equation}
\begin{split}\label{err}
\epsilon^{\beta}R:=U-\epsilon\Theta
\end{split}
\end{equation}
is of order $\mathcal{O}(\epsilon^{\beta})$ for all $t\in[0,T_{0}/\epsilon^{2}]$ and some $\beta>3/2$. This means that $R$ has to be of order $\mathcal{O}(1)$ for all $t\in[0,T_{0}/\epsilon^{2}]$. Recall the form of the approximation solution \eqref{ab} and we denote
\begin{equation}
\begin{split}\label{pc}
\psi_{c}:=\psi_{1}+\psi_{-1}, \ \phi_{c}:=\phi_{1}+\phi_{-1}, \
\varphi_{s_{1}}:=\Theta_{r1}+\Theta_{r-1}, \ \varphi_{s_{2}}:=\Theta_{r1}-\Theta_{r-1}.
\end{split}
\end{equation}
Inserting \eqref{err} into \eqref{equation7}, we obtain the evolutionary equation for the error $R$,
\begin{align}\label{Rj}
\partial_{t}R_{j}=&j\Omega R_{j}+\frac{\epsilon}{2}\partial_{x}\bigg[(\psi_{c}+\phi_{c})q(R_{1}-R_{-1})\bigg]
+\frac{\epsilon}{2}\partial_{x}\bigg[q(\psi_{c}-\phi_{c})(R_{1}+R_{-1})\bigg]\nonumber\\
&+\frac{j\epsilon}{2q}\partial_{x}\bigg[q(\psi_{c}-\phi_{c})q(R_{1}-R_{-1})\bigg]
-\frac{j\epsilon}{2q}\partial_{x}\bigg[(\psi_{c}+\phi_{c})(R_{1}+R_{-1})\bigg]\nonumber\\
&-\frac{j\epsilon}{2q}\frac{\partial_{x}}{\langle\partial_{x}\rangle^{2}}
\bigg[\frac{1}{\langle\partial_{x}\rangle^{2}}(\psi_{c}+\phi_{c})
\frac{1}{\langle\partial_{x}\rangle^{2}}(R_{1}+R_{-1})\bigg]\nonumber\\
&+\frac{\epsilon^{2}}{2}\partial_{x}\bigg[\varphi_{s_{1}}q(R_{1}-R_{-1})\bigg]
+\frac{\epsilon^{2}}{2}\partial_{x}\bigg[q\varphi_{s_{2}}(R_{1}+R_{-1})\bigg]\nonumber\\
&+\frac{j\epsilon^{2}}{2q}\partial_{x}
\bigg[q\varphi_{s_{2}}(R_{1}-R_{-1})\bigg]
-\frac{j\epsilon^{2}}{2q}
\bigg[\varphi_{s_{1}}(R_{1}+R_{-1})\bigg]\nonumber\\
&-\frac{j\epsilon^{2}}{2q}\frac{\partial_{x}}{\langle\partial_{x}\rangle^{2}}
\bigg[\frac{1}{\langle\partial_{x}\rangle^{2}}\varphi_{s_{1}}
\frac{1}{\langle\partial_{x}\rangle^{2}}(R_{1}+R_{-1})\bigg]\nonumber\\
&+\frac{\epsilon^{\beta}}{2}\partial_{x}\bigg[(R_{1}+R_{-1})q(R_{1}-R_{-1})\bigg]
+\frac{j\epsilon^{\beta}}{4q}\bigg[q(R_{1}-R_{-1})\bigg]^{2}\nonumber\\
&-\frac{j\epsilon^{\beta}}{4q}\partial_{x}\bigg[(R_{1}+R_{-1})\bigg]^{2}
-\frac{j\epsilon^{\beta}}{4q}\frac{\partial_{x}}{\langle\partial_{x}\rangle^{2}}
\bigg[\frac{1}{\langle\partial_{x}\rangle^{2}}(R_{1}+R_{-1})\bigg]^{2}\nonumber\\
&+\frac{j\epsilon^{2}}{2q}\partial_{x}\mathcal{H}_{j}
\big(R_{1}+R_{-1})
+\epsilon^{-\beta}Res_{j}(\epsilon\Theta),
\end{align}
where $j\in\{\pm1\}$ and $\beta>2$.

In the above error equation \eqref{Rj}, the terms of order $\mathcal{O}(\epsilon^{2})$ and $\mathcal{O}(\epsilon^{\beta})$ can be controlled over the relevant time interval. Also, the residual term $\epsilon^{-\beta}Res(\epsilon\Psi)$ can be made of $\mathcal{O}(\epsilon^{2})$ by Lemma \ref{L2}. However, the terms of order $\mathcal{O}(\epsilon)$ can perturb the linear evolution in such a way that the solutions begin to grow on time scales $\mathcal{O}(\epsilon^{-1})$ and hence we would lose all control over the size of $R$ on the desired time scale $\mathcal{O}(\epsilon^{-2})$.
To show that the error remains small over the desired time intervals $(t\sim\mathcal{O}(1/\epsilon^{2}))$, we need to eliminate all $\mathcal{O}(\epsilon)$ terms from \eqref{Rj} via a normal-form transformation.

After careful calculations, we find that the kernel function of the normal-form transformation can be written as a quotient whose denominator is
\begin{equation*}
\begin{split}
j\omega(k)\mp\omega(k-\ell)-p\omega(\ell).
\end{split}
\end{equation*}
Such a normal form transformation is well defined when the denominator remains away from zero, or equivalently when a non-resonance condition is satisfied
\begin{equation}
\begin{split}\label{equation11}
|j\omega(k)\mp\omega(k_{0})-p\omega(k-k_{0})|>0
\end{split}
\end{equation}
for $j,p\in\{\pm1\}$ and for all $k\in \mathbb{R}$ uniformly. It is easy to see that $\omega(k)=k\widehat{q}(k)$ in \eqref{equation3} for the ion Euler-Poisson system does not satisfy \eqref{equation11}. For example, there is a resonance at the wave number $k=0$ (whenever $p=\pm1$). Luckily, $k=0$ is a trivial resonance which will not cause essential difficulty for the definition of the normal-form transformation because the nonlinear term of the Euler-Poisson system also vanishes linearly at $k=0$. However, there is always another nontrivial resonance $k=k_{0}$ (also for $j=\pm1$). Therefore, we cannot apply the normal-form method directly and an improved method has to be applied (refer to \cite{S,S98}). The method is to construct a modified normal-form transformation by introducing a suitable rescaling of the error function $R$ dependent on the wave number.

More precisely, we scale the error $R$ to reflect the fact that the nonlinearity vanishes at $k=0$. For some $\delta>0$ sufficiently small, but independent of $0<\epsilon\ll1$, we define a weight function $\vartheta$ by its Fourier transform
\begin{equation*}
\begin{split}
\widehat{\vartheta}(k)=\Big\{\begin{matrix} 1\ \ \ \ \ \ \ \ \ \ \ \ \ \ \ \ \ \ \ \ \ \ \ \ \ \ \ \ \text{for} \ \ |k|>\delta, \\ \epsilon+(1-\epsilon)| k|/\delta \ \ \ \ \ \ \ \ \ \ \text{for} \ \ |k|\leq\delta.\end{matrix}
\end{split}
\end{equation*}
Rewrite the solution $U$ of \eqref{equation7} as a sum of the approximation $\epsilon\Theta$ and the scaled error $\vartheta R$:
\begin{equation*}
\begin{split}
U=\epsilon\Theta+\epsilon^{\frac{5}{2}}\vartheta R.
\end{split}
\end{equation*}
As mentioned above, whether $k$ is close to zero or not will directly influence the size of the nonlinear term in Fourier space. Therefore, we define two projection operators $P^{0}$ and $P^{1}$ by the Fourier multiplier to separate the behavior in these two regions:
\begin{equation*}
\begin{split}
\widehat{P}^{0}(k)=\chi_{\mid k\mid\leq\delta}(k)\ \ \ \text{and} \ \ \ \widehat{P}^{1}(k)=\mathbf{1}-\widehat{P}^{0}(k),
\end{split}
\end{equation*}
for a $\delta>0$ sufficiently small, but independent of $0<\epsilon\ll1$. We also write $R=R^{0}+R^{1}$ with $R^{j}=P^{j}R$, for $j=0,1$.

Applying these two projection operators $P^{0}$ and $P^{1}$ to the error equation \eqref{Rj}, we obtain two evolutionary equations for $R^{0}$ and $R^{1}$,
\begin{align}\label{Rj0}
\partial_{t}R_{j}^{0}=&j\Omega R_{j}^{0}
+P^{0}\frac{\epsilon}{2\vartheta}\partial_{x}
\bigg[(\psi_{c}+\phi_{c})q\vartheta(R_{1}^{1}-R_{-1}^{1})\bigg]
+P^{0}\frac{\epsilon}{2\vartheta}\partial_{x}
\bigg[q(\psi_{c}-\phi_{c})\vartheta(R_{1}^{1}+R_{-1}^{1})\bigg]\nonumber\\
&+P^{0}\frac{j\epsilon}{2q\vartheta}\partial_{x}
\bigg[q(\psi_{c}-\phi_{c})q\vartheta(R_{1}^{1}-R_{-1}^{1})\bigg]
-P^{0}\frac{j\epsilon}{2q\vartheta}\partial_{x}
\bigg[(\psi_{c}+\phi_{c})\vartheta(R_{1}^{1}+R_{-1}^{1})\bigg]\nonumber\\
&-P^{0}\frac{j\epsilon}
{2q\vartheta}\frac{\partial_{x}}{\langle\partial_{x}\rangle^{2}}
\bigg[\frac{1}{\langle\partial_{x}\rangle^{2}}(\psi_{c}+\phi_{c})
\frac{1}{\langle\partial_{x}\rangle^{2}}\vartheta(R_{1}^{1}+ R_{-1}^{1})\bigg]\nonumber\\
&+\epsilon^{2}P^{0}\frac{\partial_{x}}{2\vartheta}\mathcal{F}_{j}^{1}
+P^{0}\frac{\epsilon^{-\frac{5}{2}}}{\vartheta}Res_{j}(\epsilon\Theta)\nonumber\\
=&:j\Omega R_{j}^{0}
+ P^{0}\frac{\epsilon}{2\vartheta}\partial_{x}
\sum_{p=\pm 1}\big[N^{+}_{j,p}(\psi_{c},\vartheta R_{p}^{1})
+ N^{-}_{j,p}(\phi_{c},\vartheta R_{p}^{1})\big]\nonumber\\
&+\epsilon^{2}P^{0}\frac{\partial_{x}}{2\vartheta}\mathcal{F}_{j}^{1}
+P^{0}\frac{\epsilon^{-\frac{5}{2}}}{\vartheta}Res_{j}(\epsilon\Theta),
\end{align}
and
\begin{align}\label{Rj1}
\partial_{t}R_{j}^{1}=&j\Omega R_{j}^{1}
+P^{1}\frac{\epsilon}{2\vartheta}\partial_{x}
\bigg[(\psi_{c}+\phi_{c})q\vartheta(R_{1}^{0}-R_{-1}^{0})\bigg]
+P^{1}\frac{\epsilon}{2\vartheta} \partial_{x}
\bigg[q(\psi_{c}-\phi_{c})\vartheta(R_{1}^{0}+R_{-1}^{0})\bigg]\nonumber\\
&+P^{1}\frac{j\epsilon}{2q\vartheta}\partial_{x}
\bigg[q(\psi_{c}-\phi_{c})q\vartheta(R_{1}^{0}-R_{-1}^{0})\bigg]
-P^{1}\frac{j\epsilon}{2q\vartheta}\partial_{x}
\bigg[(\psi_{c}+\phi_{c})\vartheta(R_{1}^{0}+R_{-1}^{0})\bigg]\nonumber\\
&-P^{1}\frac{j\epsilon}{2q\vartheta}\frac{\partial_{x}}{\langle\partial_{x}\rangle^{2}}
\bigg[\frac{1}{\langle\partial_{x}\rangle^{2}}(\psi_{c}+\phi_{c})
\frac{1}{\langle\partial_{x}\rangle^{2}}\vartheta(R_{1}^{0}+R_{-1}^{0})\bigg]\nonumber\\
&+P^{1}\frac{\epsilon}{2\vartheta}\partial_{x}
\bigg[(\psi_{c}+\phi_{c})q\vartheta(R_{1}^{1}-R_{-1}^{1})\bigg]
+P^{1}\frac{\epsilon}{2\vartheta}\partial_{x}
\bigg[q(\psi_{c}-\phi_{c})\vartheta(R_{1}^{1}+R_{-1}^{1})\bigg]\nonumber\\
&+ P^{1}\frac{j\epsilon}{2q\vartheta}\partial_{x}
\bigg[q(\psi_{c}-\phi_{c})q\vartheta(R_{1}^{1}-R_{-1}^{1})\bigg]
-P^{1}\frac{j\epsilon}{2q\vartheta}\partial_{x}
\bigg[(\psi_{c}+\phi_{c})\vartheta(R_{1}^{1}+R_{-1}^{1})\bigg]\nonumber\\
&-P^{1}\frac{j\epsilon}{2q\vartheta}\frac{\partial_{x}}{\langle\partial_{x}\rangle^{2}}
\bigg[\frac{1}{\langle\partial_{x}\rangle^{2}}(\psi_{c}+\phi_{c})
\frac{1}{\langle\partial_{x}\rangle^{2}}\vartheta(R_{1}^{1}+R_{-1}^{1})\bigg]\nonumber\\
&+\epsilon^{2}P^{1}\frac{\partial_{x}}{2\vartheta}\mathcal{F}_{j}^{1}
+\epsilon^{-\frac{5}{2}}\frac{P^{1}}{\vartheta}Res_{j}(\epsilon\Theta)\nonumber\\
=&:j\Omega R_{j}^{1}
+ P^{1}\frac{\epsilon}{2\vartheta}\partial_{x}
\sum_{p=\pm 1}\big[N^{+}_{j,p}(\psi_{c},\vartheta R_{p}^{0})
+ N^{-}_{j,p}(\phi_{c},\vartheta R_{p}^{0})+N^{+}_{j,p}(\psi_{c},\vartheta R_{p}^{1})\nonumber\\
&+ N^{-}_{j,p}(\phi_{c},\vartheta R_{p}^{1})\big]
+\epsilon^{2}P^{1}\frac{\partial_{x}}{2\vartheta}\mathcal{F}_{j}^{1}
+\epsilon^{-\frac{5}{2}}\frac{P^{1}}{\vartheta}Res_{j}(\epsilon\Theta),
\end{align}
with $j=\pm1$. The quasilinear terms of order $\mathcal{O}(\epsilon)$ satisfy
\begin{equation}
\begin{split}\label{alpha}
\widehat{N}^{+}_{j,p}(\psi_{c},\vartheta R_{p}^{0,1})(k)
=&\int\zeta^{+}_{j,p}(k,k-m,m)\widehat{\psi}_{c}(k-m)\widehat{\vartheta}(m)\widehat{R}_{p}^{0,1}(m)dm,\\
\widehat{N}^{-}_{j,p}(\phi_{c},\vartheta R_{p}^{0,1})(k)
=&\int\zeta^{-}_{j,p}(k,k-m,m)\widehat{\phi}_{c}(k-m)\widehat{\vartheta}(m)\widehat{R}_{p}^{0,1}(m)dm,\\
\zeta^{\pm}_{j,p}(k,k-m,m)=&p\hat{q}(m)\pm\hat{q}(k-m)\pm\frac{jp}{\hat{q}(k)}\hat{q}(k-m)\hat{q}(m)\\
&-\frac{j}{\hat{q}(k)}-\frac{j}{\hat{q}(k)}\frac{1}{\langle k\rangle^{2}}
\frac{1}{\langle k-m\rangle^{2}}\frac{1}{\langle m\rangle^{2}}.
\end{split}
\end{equation}
In addition,
\begin{align}\label{F1}
\mathcal{F}_{j}^{1}=&
\varphi_{s_{1}}q\vartheta(R_{1}^{0}-R_{-1}^{0}+R_{1}^{1}-R_{-1}^{1})
+q\varphi_{s_{2}}
\vartheta(R_{1}^{0}+R_{-1}^{0}+R_{1}^{1}+R_{-1}^{1})\nonumber\\
&+\frac{j}{q}
\big(q\varphi_{s_{2}}q\vartheta(R_{1}^{0}-R_{-1}^{0}+R_{1}^{1}-R_{-1}^{1})\big)
-\frac{j}{q}
\big(\varphi_{s_{1}}\vartheta(R_{1}^{0}+R_{-1}^{0}+R_{1}^{1}+R_{-1}^{1})\big)
\nonumber\\
&-\frac{j}{q}\frac{1}{\langle\partial_{x}\rangle^{2}}
(\frac{1}{\langle\partial_{x}\rangle^{2}}\varphi_{s_{1}})
(\frac{1}{\langle\partial_{x}\rangle^{2}}\vartheta(R_{1}^{0}+ R_{-1}^{0}+R_{1}^{1}+R_{-1}^{1}))\nonumber\\
&+\epsilon^{\frac{1}{2}}
\vartheta(R_{1}^{0}+R_{-1}^{0}+R_{1}^{1}+R_{-1}^{1}))
q\vartheta(R_{1}^{0}-R_{-1}^{0}+R_{1}^{1}-R_{-1}^{1})\\
&+\frac{j\epsilon^{\frac{1}{2}}}{2 q}
\big(q\vartheta(R_{1}^{0}-R_{-1}^{0}+R_{1}^{1}-R_{-1}^{1})\big)^{2}
-\frac{j\epsilon^{\frac{1}{2}}}{2q}
\big(\vartheta(R_{1}^{0}+R_{-1}^{0}+R_{1}^{1}+R_{-1}^{1})\big)^{2}\nonumber\\
&-\frac{j\epsilon^{\frac{1}{2}}}{2q}\frac{1}{\langle\partial_{x}\rangle^{2}}
\big(\frac{1}{\langle\partial_{x}\rangle^{2}}\vartheta(R_{1}^{0}+ R_{-1}^{0}+R_{1}^{1}+R_{-1}^{1})\big)^{2}\nonumber\\
&+\frac{j}{q}
\mathcal{H} (\vartheta(R_{1}^{0}+R_{-1}^{0}+R_{1}^{1}+R_{-1}^{1}))\nonumber.
\end{align}
Note that we need to obtain a uniform estimate for the error $R$ on a time scale $\mathcal{O}(\epsilon^{-2})$ and $\vartheta^{-1}$ is of order $\mathcal{O}(\epsilon^{-1})$ when $|k|<\delta$. Then terms $\epsilon^{2}P^{j}\frac{\partial_{x}}{2\vartheta}\mathcal{F}^{1}
$ and $\epsilon^{-\frac{5}{2}}\frac{P^{j}}{\vartheta}Res_{j}(\epsilon\Theta)$ with $j=0, 1$ on the RHS of \eqref{Rj0} and \eqref{Rj1} are at least of order $\mathcal{O}(\epsilon^{2})$ with the aid of Lemma \ref{L2} and the fact that $\left|\epsilon^{2}\frac{k}{\hat{\vartheta}(k)}\right|\leq C\epsilon^{2}$ for $|k|<\delta$. In addition, the term $\Omega R$ can be explicitly computed and causes no growth in $R$. Hence we need only transform
the remaining linear terms of $\mathcal{O}(\epsilon)$ to the terms of order $\mathcal{O}(\epsilon^{2})$ by normal-form transformations. All the linear terms can be classified into the following two categories:
\begin{description}
  \item[low frequency terms]
  \begin{equation*}
  \begin{split}
  &P^{0}\displaystyle\frac{\epsilon}{2\vartheta}\partial_{x}
\sum_{p=\pm 1}\big[N^{+}_{j,p}(\psi_{c},\vartheta R_{p}^{1})
+ N^{-}_{j,p}(\phi_{c},\vartheta R_{p}^{1})\big] \text{\ \ \ from \eqref{Rj0} and} \\ &P^{1}\displaystyle\frac{\epsilon}{2\vartheta}\partial_{x}
\sum_{p=\pm 1}\big[N^{+}_{j,p}(\psi_{c},\vartheta R_{p}^{0})
+ N^{-}_{j,p}(\phi_{c},\vartheta R_{p}^{0})\big] \text{ \ \ \ from \eqref{Rj1}},
\end{split}
\end{equation*}
  \item[high frequency terms]
  \begin{equation*}
  \begin{split}
  P^{1}\displaystyle\frac{\epsilon}{2\vartheta}\partial_{x}
\sum_{p=\pm 1}\big[N^{+}_{j,p}(\psi_{c},\vartheta R_{p}^{1})
+ N^{-}_{j,p}(\phi_{c},\vartheta R_{p}^{1})\big] \ \text{\ \ \  from \eqref{Rj1}.}\ \ \ \
\end{split}
\end{equation*}
\end{description}

\subsection{\textbf{Restatement of Theorem \ref{Thm1}}}
In terms of $R$, we can restate our main Theorem \ref{Thm1} for the diagonalized system \eqref{equation7} in the following theorem.
\begin{theorem}\label{Thm2}
Fix $s_{N}\geq6, \ s>0$. For all $C_{N},T_{0}>0$ there exist $C_{0},C_{R}$ and $\epsilon_{0}$ such that for all $\epsilon\in(0,\epsilon_{0})$ the following is true. Let $\tilde{A}_{1}\in C([0,T_{0}],H^{s_{N}})\in C([0,T_{0}],H^{s_{N}})$ be a solution of \eqref{NLS1} and $\tilde{B}_{1}\in C([0,T_{0}],H^{s_{N}})$ be a solutions of \eqref{NLS2} with
\begin{equation*}
\begin{split}
\sup_{T\in[0,T_{0}]}\|\tilde{A}_{1}\|_{H^{s_{N}}}\leq C_{N},\\
\sup_{T\in[0,T_{0}]}\|\tilde{B}_{1}\|_{H^{s_{N}}}\leq C_{N},
\end{split}
\end{equation*}
and $U_{0}=\epsilon\Theta|_{t=0}+\epsilon^{5/2}R_{0}$ with $\|R_{0}\|_{H^{s}}\leq C_{0}$. Then there is a unique solution $U=\epsilon\Theta+\epsilon^{5/2}R\in C([0,T_{0}/\epsilon^{2}],(H^{s})^{2})$ with $U|_{t=0}=U_{0}$ and $R|_{t=0}=R_{0}$ of \eqref{equation7} that satisfies
\begin{equation*}
\begin{split}
\sup_{t\in[0,T_{0}/\epsilon^{2}]}\|R(t)\|_{(H^{s})^{2}}\leq C_{R}.
\end{split}
\end{equation*}
\end{theorem}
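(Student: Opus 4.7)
The plan is to control the error $R=(R^{0},R^{1})$ governed by the split system \eqref{Rj0}-\eqref{Rj1} on the time scale $\mathcal{O}(\epsilon^{-2})$ by eliminating all the genuinely dangerous $\mathcal{O}(\epsilon)$ linear terms and then running a Gronwall argument on a modified energy. The exogenous pieces are already small: the residual contribution $\epsilon^{-5/2}\vartheta^{-1}\mathrm{Res}(\epsilon\Theta)$ is $\mathcal{O}(\epsilon^{2})$ by Lemma \ref{L2} together with the bound $|k/\widehat{\vartheta}(k)|\leq C$ on $|k|\leq\delta$, the terms involving $\varphi_{s_{1}},\varphi_{s_{2}}$ carry an explicit $\epsilon^{2}$, and the genuinely nonlinear pieces in $R$ come with a prefactor $\epsilon^{\beta}=\epsilon^{5/2}$, so they are $\mathcal{O}(\epsilon^{5/2}\|R\|_{H^{s}}^{2})$. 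Thus only the linear $\mathcal{O}(\epsilon)$ quasilinear couplings $N^{\pm}_{j,p}(\psi_{c}+\phi_{c},\vartheta R)$ threaten the estimate, and they must be removed in two stages corresponding to the low/high frequency splitting.

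First I would treat the low frequency terms appearing in the $R^{0}$ equation and the cross term $2\epsilon P^{1}\vartheta^{-1}Q(\psi_{c}+\phi_{c},\vartheta_{0}R^{0})$ in the $R^{1}$ equation. The natural normal-form transformation
\[
\widetilde{R}_{j_{1}}=R_{j_{1}}+\epsilon B_{j_{1}}^{+}(\psi_{c},R)+\epsilon B_{j_{1}}^{-}(\phi_{c},R),
\]
with kernels whose denominator is $-j_{1}\omega(k)\mp\omega(k-\ell)+j_{2}\omega(\ell)$, is controlled at the trivial resonance $k=0$ because the symbols $\zeta^{\pm}_{j,p}$ in \eqref{alpha} themselves vanish linearly there, and at the nontrivial resonance $k=\pm k_{0}$ (arising when $j_{2}=j_{1}$ and $k-\ell\in\mathrm{supp}\,\widehat{\psi}_{c}\cup\mathrm{supp}\,\widehat{\phi}_{c}$) because the input is precisely $\widehat{\vartheta}_{0}\widehat{R}^{0}$ with $\widehat{\vartheta}_{0}(\ell)=\widehat{\vartheta}(\ell)-\epsilon=\mathcal{O}(|\ell|)$ vanishing where the denominator does. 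The transformation produces $\mathcal{O}(\epsilon^{2})$ remainders except for one residual $\mathcal{O}(\epsilon)$ piece that arises on $|k|\leq\delta$ from the factor $\widehat{\vartheta}^{-1}=\mathcal{O}(\epsilon^{-1})$; a second normal-form transformation of the same form, applied only to this remainder, reduces it to $\mathcal{O}(\epsilon^{2})$, and yields the transformed system to be analyzed in Section 6.

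Second, I would deal with the high frequency term $2\epsilon P^{1}\vartheta^{-1}Q(\psi_{c}+\phi_{c},\vartheta R^{1})$, for which no resonance appears since $|k|>\delta$ forces $\widehat{\vartheta}\equiv 1$ and the compact support of $\widehat{\psi_{c}+\phi_{c}}$ keeps the denominators bounded away from zero. The obstruction here is not resonance but regularity: the quasilinear $\partial_{x}$ in \eqref{Rj1} means that a direct substitution of the normal-form variable would lose two derivatives in the resulting equation. Instead I would use the normal-form kernel $\widehat{b}^{\pm}_{j_{1},j_{2}}(k,k-\ell,\ell)$ only to define a quadratic corrector $\mathcal{B}_{s}(\psi_{c},\phi_{c};R^{1})$ and work with the modified energy
\[
\mathcal{E}_{s}(R^{1})=\|R^{1}\|_{H^{s}}^{2}+\epsilon\,\mathcal{B}_{s}(\psi_{c},\phi_{c};R^{1}),
\]
chosen so that when one differentiates $\mathcal{E}_{s}$ in $t$ and uses Lemma \ref{LO} to replace $\partial_{t}\psi_{c},\partial_{t}\phi_{c}$ by $\pm\Omega\psi_{c},\mp\Omega\phi_{c}$ up to $\mathcal{O}(\epsilon^{2})$, the cubic $\mathcal{O}(\epsilon)$ term $\langle R^{1},Q(\psi_{c}+\phi_{c},R^{1})\rangle_{H^{s}}$ is cancelled by $\partial_{t}\mathcal{B}_{s}$. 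The crucial estimates to establish in Section 7 are the equivalence $|\mathcal{B}_{s}|\lesssim\|\widehat{\psi}_{c}+\widehat{\phi}_{c}\|_{L^{1}(s+1)}\|R^{1}\|_{H^{s}}^{2}$, so $\mathcal{E}_{s}\sim\|R^{1}\|_{H^{s}}^{2}$ for $\epsilon$ small, and a symmetrization identity in the kernel that ensures the derivative lost by the transformation is absorbed onto the (smooth, compactly Fourier-supported) approximation rather than onto $R^{1}$.

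Putting both steps together and forming the total energy $\mathcal{E}(R)=\|R^{0}\|_{H^{s}}^{2}+\mathcal{E}_{s}(R^{1})$ in the transformed variables, all remaining terms in $\frac{d}{dt}\mathcal{E}(R)$ are either $\mathcal{O}(\epsilon^{2})$ or cubic $\mathcal{O}(\epsilon^{5/2}\mathcal{E}^{3/2})$, so Gronwall's lemma yields $\mathcal{E}(R)(t)\leq C$ on $[0,T_{0}/\epsilon^{2}]$ by a standard bootstrap, and a local well-posedness/continuation argument for the quasilinear diagonalized system \eqref{equation7} in $H^{s}$ extends the solution to this interval. The main obstacle is the closure of the high-frequency modified energy: because $Q$ loses a full derivative and the weight $\vartheta^{-1}$ is $\mathcal{O}(\epsilon^{-1})$ near $k=0$ (entering the computation of $\partial_{t}\mathcal{B}_{s}$ through cross-frequency interactions), one must simultaneously track the kernel symmetries, the support properties of $\widehat{\psi}_{c}$ and $\widehat{\phi}_{c}$, and the low/high frequency cross terms produced by the first-stage transformation, so that every energy commutator remains uniformly $\mathcal{O}(\epsilon^{2})$ in $\epsilon$ and yields no loss of derivatives.
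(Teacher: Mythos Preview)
Your proposal is correct and follows essentially the same architecture as the paper: two successive low-frequency normal forms (Section~6), a high-frequency normal form used only to build a modified energy (Sections~7--8), and a Gronwall closure. One point you understate: after the modified energy $\mathcal{E}_{s}$ kills the $\mathcal{O}(\epsilon)$ contributions, the surviving $\mathcal{O}(\epsilon^{2})$ terms (the paper's $\mathcal{P}$ and $\mathcal{Q}$ in \eqref{EE}) still carry a factor $\partial_{x}^{\ell+1}\mathcal{R}^{1}$ and are not directly bounded by $\mathcal{E}_{s}$; the paper closes this by substituting the equation \eqref{equu} for $\partial_{x}^{\ell+1}(\mathcal{R}_{1}^{1}-\mathcal{R}_{-1}^{1})$, recognizing the result as a total time derivative, and absorbing it into a further-modified energy $\widetilde{\mathcal{E}}_{s}=\mathcal{E}_{s}+\epsilon^{2}\sum_{\ell}h_{\ell}$ rather than by a pure kernel symmetrization.
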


We note that Theorem \ref{Thm1} is a consequence of Theorem \ref{Thm2} by combining the relation \eqref{rho} between the original variables $(\rho,v)$ of \eqref{EP} and diagonilized variable $U$ of \eqref{equation7} and the inequality \eqref{Aesti-2} in Lemma \ref{L2}.

The rest of this paper is devoted to the proof of Theorem \ref{Thm2}. The proof consists of an estimate showing that the error function $R$ stays $\mathcal{O}(1)$ bounded on the long time interval of length $\mathcal{O}(\epsilon^{-2})$. In particular, in order to do so, the terms of $\mathcal{O}(\epsilon)$ from the error equations \eqref{Rj0}-\eqref{Rj1} have to be eliminated by normal-form transformations.

\section{\textbf{The normal-form transformation for the low frequency terms}}
In this section, we will attempt to transform the low frequency terms of $\mathcal{O}(\epsilon)$ from the evolutionary equations \eqref{Rj0} and \eqref{Rj1} to terms of $\mathcal{O}(\epsilon^{2})$ by constructing appropriate normal-form transformations. Particularly, we need to make twice normal-form transformations for the low frequency terms in \eqref{Rj0} since $\hat{\vartheta}^{-1}(k)=\epsilon^{-1}$ for $|k|<\delta$.

Before constructing the first normal-form transformation we show a Lemma to simplify the subsequent discussions and help us extract the real dangerous terms from the quasilinear terms of $\mathcal{O}(\epsilon)$. This lemma mainly uses the strong localization of $\psi_{c}$ and $\phi_{c}$ near the wave numbers $k=\pm k_{0}$ in Fourier space.
\begin{lemma}\label{L6}
Fix $p\in\mathbb{R}$ and assume that $\kappa=\kappa(k,k-m,m)\in C(\mathbb{R},\mathbb{C})$. Assume further that the real-value function $\psi$ has a finitely supported Fourier transform and that $R\in H^{s}$. Then, \\
(i)\  if $\kappa$ is Lipschitz with respect to its second argument in some neighborhood of $p\in\mathbb{R}$,  there exists $C_{\psi,\kappa,p}>0$ such that
\begin{equation*}
\begin{split}
\|\int&\kappa(\cdot,\cdot-m,m)\widehat{\psi}(\frac{\cdot-m-p}{\epsilon})\widehat{R}(m)dm
-\int\kappa(\cdot,p,m)\widehat{\psi}(\frac{\cdot-m-p}{\epsilon})\widehat{R}(m)dm\|_{H^{s}}\\
&\leq C_{\psi,\kappa,p}\epsilon\|R\|_{H^{s}},
\end{split}
\end{equation*}
(ii) if $\kappa$ is globally Lipschitz with respect to its third argument, there exists $D_{\psi,\kappa}>0$ such that
\begin{equation*}
\begin{split}
\|\int&\kappa(\cdot,\cdot-m,m)\widehat{\psi}(\frac{\cdot-m-p}{\epsilon})\widehat{R}(m)dm
-\int\kappa(\cdot,\cdot-m,\cdot-p)\widehat{\psi}(\frac{\cdot-m-p}{\epsilon})\widehat{R}(m)dm\|_{H^{s}}\\
&\leq D_{\psi,\kappa}\epsilon\|R\|_{H^{s}}.
\end{split}
\end{equation*}
\end{lemma}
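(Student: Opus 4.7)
\medskip

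\noindent\textbf{Proof plan for Lemma~\ref{L6}.} The mechanism behind both estimates is the localization produced by $\widehat{\psi}$. Since $\widehat{\psi}$ has compact support, say in $[-M,M]$, the factor $\widehat{\psi}(\tfrac{k-m-p}{\epsilon})$ vanishes unless $|k-m-p|\leq M\epsilon$. Thus on the integration domain $k-m$ stays within $M\epsilon$ of $p$, and equivalently $m$ stays within $M\epsilon$ of $k-p$. This is exactly the localization needed to apply the Lipschitz hypothesis in each case, and it immediately produces the factor of $\epsilon$ in the bound.

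For part (i), choose $\epsilon_{0}$ small enough that $M\epsilon_{0}$ lies inside the neighborhood of $p$ on which $\kappa$ is Lipschitz in its second argument. With Lipschitz constant $L$, on the support of the integrand,
\[
|\kappa(k,k-m,m)-\kappa(k,p,m)|\leq L|k-m-p|\leq LM\epsilon.
\]
For part (ii), writing $m-(k-p)=-(k-m-p)$ and using the global Lipschitz bound in the third argument gives the same pointwise estimate $|\kappa(k,k-m,m)-\kappa(k,k-m,k-p)|\leq L|k-m-p|\leq LM\epsilon$ on the support. In both cases the difference to be estimated is pointwise dominated by
\[
LM\epsilon\int \bigl|\widehat{\psi}\bigl(\tfrac{k-m-p}{\epsilon}\bigr)\bigr|\,|\widehat{R}(m)|\,dm.
\]

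It remains to promote this pointwise estimate to an $H^{s}$ bound. The integral above is a convolution $(|\widehat{\psi}(\tfrac{\cdot -p}{\epsilon})|\ast |\widehat{R}|)(k)$, so by Young's inequality the $L^{2}$ norm in $k$ is controlled by $\|\widehat{\psi}(\tfrac{\cdot -p}{\epsilon})\|_{L^{1}}\|\widehat{R}\|_{L^{2}}$, with $\|\widehat{\psi}(\tfrac{\cdot -p}{\epsilon})\|_{L^{1}}$ uniformly bounded by $\epsilon\|\widehat{\psi}\|_{L^{1}}$. To insert the Sobolev weight, I use Peetre's inequality $(1+|k|^{2})^{s/2}\leq C(1+|k-m|^{2})^{|s|/2}(1+|m|^{2})^{s/2}$ to transfer the weight from $k$ to $m$ at the cost of the factor $(1+|k-m|^{2})^{|s|/2}$, which is uniformly $\mathcal{O}(1)$ on the support of the kernel since that support has diameter $2M\epsilon$. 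Combining these ingredients yields the desired $H^{s}\to H^{s}$ bound for the difference with operator norm $\mathcal{O}(\epsilon)$ and an absolute constant depending on $\widehat{\psi}$, $\kappa$, and (for part (i)) $p$. The only subtle point is quantifying the smallness of $\epsilon$ in part (i): $\epsilon_{0}$ must be small enough that $M\epsilon_{0}$ fits inside the Lipschitz neighborhood of $p$, which is why the constant $C_{\psi,\kappa,p}$ in (i) depends on $p$ whereas $D_{\psi,\kappa}$ in (ii) does not.
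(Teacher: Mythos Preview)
Your proposal is correct and follows essentially the same approach as the paper: use the Lipschitz hypothesis together with the compact support of $\widehat{\psi}$ to bound the pointwise difference by $C|k-m-p|$ (or $CM\epsilon$), reduce to a convolution, and close with Young's inequality plus a Peetre-type weight transfer. The paper's write-up keeps the factor $|k-m-p|$ inside the kernel and computes $\int(1+\ell^{2})^{s/2}|\ell|\,|\widehat{\psi}(\ell/\epsilon)|\,d\ell$ directly rather than first bounding $|k-m-p|\leq M\epsilon$, but this is only a cosmetic reorganization of the same estimate.
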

\begin{proof}
For (i), we have
 \begin{equation*}
\begin{split}
\Big\|\int&\kappa(\cdot,\cdot-m,m)\widehat{\psi}\big(\frac{\cdot-m-p}{\epsilon}\big)\widehat{R}(m)dm
-\int\kappa(\cdot,\cdot-m,\cdot-p)\widehat{\psi}\big(\frac{\cdot-m-p}{\epsilon}\big)\widehat{R}(m)dm\Big\|^{2}_{H^{s}}\\
&=\int\Big(\int(\kappa(k,k-m,m)-\kappa(k,p,m))\widehat{\psi}(\frac{k-m-p}{\epsilon})\widehat{R}(m)dm\Big)^{2}(1+k^{2})^{s}dk\\
&\leq\int\Big(C_{\kappa,p}\int|k-m-p|\widehat{\psi}(\frac{k-m-p}{\epsilon})\widehat{R}(m)dm\Big)^{2}(1+k^{2})^{s}dk\\
&\leq C_{\kappa,p}^{2}\Big(\int(1+\ell)^{s/2}|\ell||\widehat{\psi}(\frac{\ell}{\epsilon})|d\ell\Big)^{2}\|R\|^{2}_{H^{s}}\\
&\leq C_{\psi,\kappa,p}\epsilon^{2}\|R\|^{2}_{H^{s}},
\end{split}
\end{equation*}
thanks to the Young's inequality and the fact that $\widehat{\psi}$ has compact support. The second one (ii) is similar.
\end{proof}

In this Section, our target is to eliminate the low frequency terms of $\mathcal{O}(\epsilon)$ from \eqref{Rj0} and \eqref{Rj1} by constructing normal-form transformations, and then transform the system \eqref{Rj0}-\eqref{Rj1} for the error $(R^{0}, R^{1})$ to the transformed system for $(\mathcal{R}^{0}, \mathcal{R}^{1})$, i.e.
 \begin{equation*}
\begin{split}
 (R^{0}, R^{1})\rightarrow(\mathcal{R}^{0}, \mathcal{R}^{1}).
 \end{split}
\end{equation*}
To describe clearly the difficulties that arise from the normal-form transformation, we divide this process into two steps
\begin{equation*}
\begin{split}
(R^{0}, R^{1})\xrightarrow[{\eqref{equ3}} \ in\ \S6.1]{Step 1}(\widetilde{R}^{0}, \widetilde{R}^{1})\xrightarrow[{\eqref{equation88}} \ in\ \S6.2]{Step 2}(\mathcal{R}^{0}, \mathcal{R}^{1}).
\end{split}
\end{equation*}
After these two steps, we finally state the main result of this section in Proposition \ref{P3}.

We make the first normal-form transformation in Section 6.1 and then we examine the effects that the normal-form transformation causes on the full equations.

\subsection{\textbf{The first normal-form transformation}}
According to the form of the low frequency terms of $\mathcal{O}(\epsilon)$ from \eqref{Rj0} and \eqref{Rj1} we construct the following transformations
\begin{equation}
\begin{split}\label{equ3}
\widetilde{R}_{j}^{0}=R_{j}^{0}+\epsilon \sum_{p\in\{\pm1\}}\big[B_{j,p}^{0,1,+}(\psi_{c},R_{p}^{1})
+B_{j,p}^{0,1,-}(\phi_{c},R_{p}^{1})\big],\\
\widetilde{R}_{j}^{1}=R_{j}^{1}+\epsilon \sum_{p\in\{\pm1\}}\big[B_{j,p}^{1,0,+}(\psi_{c},R_{p}^{0})
+B_{j,p}^{1,0,-}(\phi_{c},R_{p}^{0})\big],
\end{split}
\end{equation}
where
\begin{equation}
\begin{split}\label{equ4}
\widehat{B}_{j,p}^{0,1,+}(\psi_{c},R_{p}^{1})=\int b_{j,p}^{0,1,+}(k,k-m,m)\widehat{\psi}_{c}(k-m)\widehat{R}_{p}^{1}(m)dm,\\
\widehat{B}_{j,p}^{0,1,-}(\phi_{c},R_{p}^{1})=\int b_{j,p}^{0,1,-}(k,k-m,m)\widehat{\phi}_{c}(k-m)\widehat{R}_{p}^{1}(m)dm,\\
\widehat{B}_{j,p}^{1,0,+}(\psi_{c},R_{p}^{0})=\int b_{j,p}^{1,0,+}(k,k-m,m)\widehat{\psi}_{c}(k-m)\widehat{R}_{p}^{0}(m)dm,\\
\widehat{B}_{j,p}^{1,0,-}(\phi_{c},R_{p}^{0})=\int b_{j,p}^{1,0,-}(k,k-m,m)\widehat{\phi}_{c}(k-m) \widehat{R}_{p}^{0}(m)dm.
\end{split}
\end{equation}
Note that $\psi_{c}=\psi_{1}+\psi_{-1}$ and $\phi_{c}=\phi_{1}+\phi_{-1}$, where $\psi_{\pm1}$ and $\phi_{\pm1}$ are located near $k=\pm k_{0}$. In the following discussion we will focus on the terms containing $\psi_{1}$ or $\phi_{1}$, those containing $\psi_{-1}$ or $\phi_{-1}$ are treated in an almost identical fashion.

\emph{\bf{Construction of $B_{j,p}^{0,1,\pm}$}.} Differentiating the $\widetilde{R}_{j}^{0}$ in \eqref{equ3} w.r.t. $t$, we obtain
\begin{equation}
\begin{split}\label{equ5}
\partial_{t}\widetilde{R}_{j}^{0}=\partial_{t}R_{j}^{0}
+\epsilon\sum_{p\in\{\pm1\}}B_{j,p}^{0,1,+}(\partial_{t}\psi_{c},R_{p}^{1})
+\epsilon\sum_{p\in\{\pm1\}}B_{j,p}^{0,1,+}(\psi_{c},\partial_{t}R_{p}^{1})\\
+\epsilon\sum_{p\in\{\pm1\}}B_{j,p}^{0,1,-}(\partial_{t}\phi_{c},R_{p}^{1})
+\epsilon\sum_{p\in\{\pm1\}}B_{j,p}^{0,1,-}(\phi_{c},\partial_{t}R_{p}^{1}).
\end{split}
\end{equation}
Recall that $\|\partial_{t}\widehat{\psi}_{c}-i\omega\widehat{\psi}_{c}\|_{L^{1}(s)}\leq C_{A}\epsilon^{2}$ and $\|\partial_{t}\widehat{\phi}_{c}+i\omega\widehat{\phi}_{c}\|_{L^{1}(s)}\leq C_{B}\epsilon^{2}$ in Lemma \ref{LO}. Then provided the transformation $B_{j,p}^{0,1,\pm}$ is well-defined and bounded we have
\begin{equation}
\begin{split}\label{equ6}
\partial_{t}\widetilde{R}_{j}^{0}=&j\Omega\widetilde{R}_{j}^{0}
+\epsilon \sum_{p\in\{\pm1\}}\Big[-j\Omega B_{j,p}^{0,1,+}(\psi_{c},R_{p}^{1})
+P^{0}\displaystyle\frac{1}{2\vartheta}\partial_{x}
N^{+}_{j,p}(\psi_{c},\vartheta R_{p}^{1})\\
&\ \ \ \ \ \ \ \ \ \ \ \ \ \ \ \ \ \ \ \ \ \ \ \ +B_{j,p}^{0,1,+}(\Omega\psi_{c},R_{p}^{1})
+B_{j,p}^{0,1,+}(\psi_{c},p\Omega R_{p}^{1})\Big]\\
&+\epsilon \sum_{p\in\{\pm1\}}\Big[-j\Omega B_{j,p}^{0,1,-}(\phi_{c},R_{p}^{1})
+P^{0}\displaystyle\frac{1}{2\vartheta}\partial_{x}
N^{-}_{j,p}(\phi_{c},\vartheta R_{p}^{1})\\
&\ \ \ \ \ \ \ \ \ \ \ \ \ \ \ \ -B_{j,p}^{0,1,-}(\Omega\phi_{c},R_{p}^{1})
+B_{j,p}^{0,1,-}(\phi_{c},p\Omega R_{p}^{1})\Big]\\
&+\epsilon^{2}\sum_{p,q\in\{\pm1\}}B_{j,p}^{0,1,+}
\Big(\psi_{c},P^{1}\displaystyle\frac{1}{2\vartheta}\partial_{x}
\big(N^{+}_{p,q}(\psi_{c},\vartheta R_{q}^{1})
+N^{-}_{p,q}(\phi_{c},\vartheta R_{q}^{1})\big)\Big)\\
&+\epsilon^{2}\sum_{p,q\in\{\pm1\}}B_{j,p}^{0,1,-}
\Big(\phi_{c},P^{1}\displaystyle\frac{1}{2\vartheta}\partial_{x}
\big(N^{+}_{p,q}(\psi_{c},\vartheta R_{q}^{1})
+N^{-}_{p,q}(\phi_{c},\vartheta R_{q}^{1})\big)\Big)
+\epsilon^{2}\mathcal{F}_{j}^{2},
\end{split}
\end{equation}
where $\epsilon^{2}\mathcal{F}_{j}^{2}$ satisfies
\begin{equation}
\begin{split}\label{j2}
\|\epsilon^{2}\mathcal{F}_{j}^{2}\|_{H^{s'}}\leq C \epsilon^{2}(1+\|R\|_{H^{s}}+\epsilon^{3/2}\|R\|^{2}_{H^{s}}
+\epsilon^{3}\|R\|^{3}_{H^{s}}),
\end{split}
\end{equation}
for any $s'$ and $s\geq6$. Note that $\mathcal{F}_{j}^{2}$ does not lose any derivatives due to the factor of $P^{0}$.
To eliminate all terms of order $\mathcal{O}(\epsilon)$, we choose $B_{j,p}^{0,1,\pm}$ to satisfy the following equalities,
\begin{equation*}
\begin{split}
-j\Omega B_{j,p}^{0,1,+}(\psi_{c},R_{p}^{1})
+P^{0}\displaystyle\frac{1}{2\vartheta}\partial_{x}
N^{+}_{j,p}(\psi_{c}\vartheta R_{p}^{1})+B_{j,p}^{0,1,+}(\Omega\psi_{c},R_{p}^{1})
+B_{j,p}^{0,1,+}(\psi_{c},p\Omega R_{p}^{1})=0,\\
-j\Omega B_{j,p}^{0,1,-}(\phi_{c},R_{p}^{1})
+P^{0}\displaystyle\frac{1}{2\vartheta}\partial_{x}
N^{-}_{j,p}(\phi_{c}\vartheta R_{p}^{1})-B_{j,p}^{0,1,-}(\Omega\phi_{c},R_{p}^{1})
+B_{j,p}^{0,1,-}(\phi_{c},p\Omega R_{p}^{1})=0,
\end{split}
\end{equation*}
which means that the kernel of $B^{0,1,\pm}$ should be of the form
\begin{equation}
\begin{split}\label{equ7}
b_{j,p}^{0,1,\pm}(k,k-m,m)=\frac{ \widehat{P}^{0}(k)k\zeta_{j,p}^{\pm}(k,k-m,m)}
{j\omega(k)\mp\omega(k-m)-p\omega(m)}\frac{\widehat{\vartheta}(m)}{2\widehat{\vartheta}(k)}.
\end{split}
\end{equation}
Because of the fact that the $\widehat{P}^{0}$ and $\widehat{\psi}_{c}, \widehat{\phi}_{c}$ have supports located near $k=0$ and $k-m=\pm k_{0}$, the kernel function in \eqref{equ7} must be considered in the region $|k|<\delta$ and $|k-m\pm k_{0}|<\delta$ for a sufficiently small $\delta>0$ but independent of $0<\epsilon\ll1$. As a result, the wave number $m$ needs to be restricted to bound away from $0$, i.e. $m\approx\pm k_{0}$. Therefore, only the trivial resonance at $k=0$ plays a role for $B_{j,p}^{0,1,\pm}$. The kernel function $b_{j,p}^{0,1,\pm}$ can be estimated as follows. First considering the denominator of the kernel function in \eqref{equ7} near $k=0$, we have
\begin{equation*}
\begin{split}
j\omega(k)\mp\omega(k-m)-p\omega(m)
=j\omega'(0)k\mp(\omega(-m)+\omega'(-m)k)-p\omega(m)+\mathcal{O}(k^{2}).
\end{split}
\end{equation*}
When $p=\mp1$, we have $\mp\omega(-m)-p\omega(m)=\pm 2\omega(m)\approx\pm2\omega(k_{0})\neq0$. This means that this quantity is bounded from below by some $\mathcal{O}(1)$ term. When $p=\pm1$, we have $\mp\omega(-m)-p\omega(m)=0$ and $\omega'(\pm k_{0})\neq \pm \omega'(0)$, then there exists a positive constant $C$ such that
\begin{equation}
\begin{split}\label{equ8}
|j\omega(k)\mp\omega(k-m)-p\omega(m)|\geq C|k|.
\end{split}
\end{equation}
On the other hand, the numerator of the kernel function in \eqref{equ7} satisfies $|\widehat{P}_{0}(k)k\zeta_{j,p}^{\pm}(k,k-m,m)|\leq C|k|$, and thus there exist $C\geq0$ such that
\begin{equation}
\begin{split}\label{equ9}
|\widehat{\vartheta}(k)b_{j,p}^{0,1,\pm}(k,k-m,m)|\leq C,
\end{split}
\end{equation}
for all $|k|<\delta$.

Since $\widehat{P}^{0}(k)$ has supports located near $k=0$, we have $b_{j,p}^{0,1,\pm}=0$ for $|k|\geq\delta$. That means $B^{0,1,\pm}$ does not lose any derivatives. Thus given any $s'$, there exists $C_{s'}$ independent of $\epsilon$ such that
\begin{equation}
\begin{split}\label{b01}
\|\epsilon B_{j,p}^{0,1,+}(\psi_{c},R_{p}^{1})\|_{H^{s'}}\leq C_{s'}\|R^{1}\|_{H^{s}}
\end{split}
\end{equation}
for $R^{1}\in H^{s}$ for some $s>1$. Similar estimate holds for $\epsilon B_{j,p}^{0,1,-}$. In particular, this estimate holds when $s'=s$. Generally, we cannot have $C_{s'}\sim\mathcal{O}(\epsilon)$ because of $\widehat{\vartheta}^{-1}(k)\sim\epsilon^{-1}$ for $|k|<\delta$ in \eqref{equ7}. That is why we leave the following two terms which seems to be $\mathcal{O}(\epsilon^{2})$ in the equation \eqref{equ6}:
\begin{equation}
\begin{split}\label{leave}
&\epsilon^{2}\sum_{p,q\in\{\pm1\}}B_{j,p}^{0,1,+}
\Big(\psi_{c},P^{1}\displaystyle\frac{1}{2\vartheta}\partial_{x}
\big(N^{+}_{p,q}(\psi_{c},\vartheta R_{q}^{1})
+N^{-}_{p,q}(\phi_{c},\vartheta R_{q}^{1})\big)\Big)\\
&+\epsilon^{2}\sum_{p,q\in\{\pm1\}}B_{j,p}^{0,1,-}
\Big(\phi_{c},P^{1}\displaystyle\frac{1}{2\vartheta}\partial_{x}
\big(N^{+}_{p,q}(\psi_{c},\vartheta R_{q}^{1})
+N^{-}_{p,q}(\phi_{c},\vartheta R_{q}^{1})\big)\Big).
\end{split}
\end{equation}
In fact, since the kernel function of the transformation $B^{0,1}$ is $\mathcal{O}(\epsilon^{-1})$ for certain wave numbers, these two terms are only $\mathcal{O}(\epsilon)$ and need to be eliminated further by a second normal-form transformations.

\emph{\bf{Construction of $B_{j,p}^{1,0,\pm}$.}} Differentiating the $\widetilde{R}_{j}^{1}$ in \eqref{equ3} w.r.t. $t$, we obtain
\begin{equation}
\begin{split}\label{equ10}
\partial_{t}\widetilde{R}_{j}^{1}=&\partial_{t}R_{j}^{1} +\epsilon\sum_{p\in\{\pm1\}}B_{j,p}^{1,0,+} (\partial_{t}\psi_{c},R_{p}^{0}) +\epsilon\sum_{p\in\{\pm1\}}B_{j,p}^{1,0,+} (\psi_{c},\partial_{t}R_{p}^{0})\\
&+\epsilon\sum_{p\in\{\pm1\}}B_{j,p}^{1,0,-}(\partial_{t}\phi_{c},R_{p}^{0})
+\epsilon\sum_{p\in\{\pm1\}}B_{j,p}^{1,0,-}(\phi_{c},\partial_{t}R_{p}^{0}).
\end{split}
\end{equation}
Applying Lemma \ref{LO} and the evolutionary equation \eqref{Rj0}-\eqref{Rj1} for $R$ as above, we have
\begin{equation}
\begin{split}\label{equ11}
\partial_{t}\widetilde{R}_{j}^{1}=&j\Omega\widetilde{R}_{j}^{1}
+\epsilon \sum_{p\in\{\pm1\}}\Big[-j\Omega B_{j,p}^{1,0,+}(\psi_{c},R_{p}^{0})
+P^{1}\displaystyle\frac{1}{2\vartheta}\partial_{x}
N^{+}_{j,p}(\psi_{c},\vartheta_{0} R_{p}^{0})\\
&\ \ \ \ \ \ \ \ \ \ \ \ \ \ \ \ \ \ \ \ \ \ \ \ +B_{j,p}^{1,0,+}(\Omega\psi_{c},R_{p}^{0})
+B_{j,p}^{1,0,+}(\psi_{c},p\Omega R_{p}^{0})\Big]\\
&+\epsilon \sum_{p\in\{\pm1\}}\Big[-j\Omega B_{j,p}^{1,0,-}(\phi_{c},R_{p}^{0})
+P^{1}\displaystyle\frac{1}{2\vartheta}\partial_{x}
N^{-}_{j,p}(\phi_{c},\vartheta_{0} R_{p}^{0})\\
&\ \ \ \ \ \ \ \ \ \ \ \ \ \ \ \ -B_{j,p}^{1,0,-}(\Omega\phi_{c},R_{p}^{0})
+B_{j,p}^{1,0,-}(\phi_{c},p\Omega R_{p}^{0})\Big]\\
&+\epsilon\sum_{p\in\{\pm1\}}P^{1}\displaystyle\frac{1}{2\vartheta}\partial_{x}
(N^{+}_{j,p}(\psi_{c},\vartheta R_{p}^{1})+
N^{-}_{j,p}(\phi_{c},\vartheta R_{p}^{1}))
+\epsilon^{2}\mathcal{F}_{j}^{3},
\end{split}
\end{equation}
where $\epsilon^{2}\mathcal{F}_{j}^{3}$ satisfies that
\begin{equation}
\begin{split}\label{j3}
\|\epsilon^{2}\mathcal{F}_{j}^{3}\|_{H^{s-1}}\leq C \epsilon^{2}(1+\|R\|_{H^{s}}+\epsilon^{3/2}\|R\|^{2}_{H^{s}}
+\epsilon^{3}\|R\|^{3}_{H^{s}}),
\end{split}
\end{equation}
for any $s\geq6$. Note that $\mathcal{F}_{j}^{3}$ loses one derivative due to the quasilinear term in the ion Euler-Poisson system \eqref{EP}.
Here we have used $\widehat{\vartheta}_{0}(0)=0$ with $\widehat{\vartheta}_{0}=\widehat{\vartheta}-\epsilon$ to solve a resonance problem at $k=\pm k_{0}$ in \eqref{equ11}. In fact, we have used the equality:
\begin{equation*}
\begin{split}
\epsilon P^{1}\displaystyle\frac{1}{2\vartheta}\partial_{x}
(N^{+}_{j,p}(\psi_{c},\vartheta R_{p}^{0})+
N^{-}_{j,p}(\phi_{c},\vartheta R_{p}^{0}))
= &
\epsilon P^{1}\displaystyle\frac{1}{2\vartheta}\partial_{x}
\big(N^{+}_{j,p}(\psi_{c},\vartheta_{0} R_{p}^{0})+
N^{-}_{j,p}(\phi_{c},\vartheta_{0} R_{p}^{0})\big)\\
&+\epsilon^{2}P^{1}\displaystyle\frac{1}{2\vartheta}\partial_{x}
\big(N^{+}_{j,p}(\psi_{c},R_{p}^{0})+
N^{-}_{j,p}(\phi_{c},R_{p}^{0})\big),
\end{split}
\end{equation*}
where the additional term $\epsilon^{2}P^{1}\displaystyle\frac{1}{2\vartheta}\partial_{x}
\big(N^{+}_{j,p}(\psi_{c},R_{p}^{0})+
N^{-}_{j,p}(\phi_{c},R_{p}^{0})\big)$ is included in last term of the form $\mathcal{O}(\epsilon^{2})$ in \eqref{equ11} since $\widehat{\vartheta}^{-1}(k)$ is $\mathcal{O}(1)$ on the support of $\widehat{P}^{1}$.

To eliminate all terms of $\mathcal{O}(\epsilon)$, we choose $B^{1,0,\pm}$ in \eqref{equ11} such that
\begin{equation}
\begin{split}\label{equ12}
-j\Omega B_{j,p}^{1,0,+}(\psi_{c},R_{p}^{0})
+P^{1}\displaystyle\frac{1}{2\vartheta}\partial_{x}
N^{+}_{j,p}(\psi_{c},\vartheta_{0} R_{p}^{0})
+B_{j,p}^{1,0,+}(\Omega\psi_{c},R_{p}^{0})
+B_{j,p}^{1,0,+}(\psi_{c},p\Omega R_{p}^{0})=0,\\
-j\Omega B_{j,p}^{1,0,-}(\phi_{c},R_{p}^{0})
+P^{1}\displaystyle\frac{1}{2\vartheta}\partial_{x}
N^{-}_{j,p}(\phi_{c},\vartheta_{0} R_{p}^{0})-B_{j,p}^{1,0,-}(\Omega\phi_{c},R_{p}^{0})
+B_{j,p}^{1,0,-}(\phi_{c},p\Omega R_{p}^{0})=0,
\end{split}
\end{equation}
for $j,p\in\{\pm1\}$. That means the kernel of $B^{1,0,\pm}$ should be of the form
\begin{equation}
\begin{split}\label{equa7}
b_{j,p}^{1,0,\pm}(k,k-m,m)=\frac{ \widehat{P}^{1}(k)k\zeta_{j,p}^{\pm}(k,k-m,m)}
{j\omega(k)\mp\omega(k-m)-p\omega(m)}\frac{\widehat{\vartheta}_{0}(m)}{2\widehat{\vartheta}(k)}.
\end{split}
\end{equation}
However, in order to explain the nontrivial resonance in the wave numbers $k=\pm k_{0}$ clearly, we will not use the kernel function \eqref{equa7} by computing directly from \eqref{equ12} but rather use Lemma \ref{L6} to obtain a modified equation for $B_{j,p}^{1,0,\pm}$. This will make the normal-form transformation easier to be bounded, only at the expense of adding some additional terms of $\mathcal{O}(\epsilon^{2})$ in \eqref{equ11}. Specifically, we apply Lemma \ref{L6} to make the following changes.
\begin{description}
  \item[(a)]  Replace $P^{1}\frac{1}{2\vartheta}\partial_{x}
N^{+}_{j,p}(\psi_{c},\vartheta_{0} R_{p}^{0})$ by $P^{1}\frac{1}{2\vartheta}\partial_{x}
(N^{+,1}_{j,p}(\psi_{1},\vartheta_{0} R_{p}^{0})+N^{+,-1}_{j,p}(\psi_{-1},\vartheta_{0} R_{p}^{0}))$, where
\begin{equation*}
\begin{split}
\mathcal{F}(P^{1}&\frac{1}{2\vartheta}\partial_{x}
N^{+,1}_{j,p}(\psi_{1},\vartheta_{0} R_{p}^{0}))(k)
+\mathcal{F}(P^{1}\frac{1}{2\vartheta}\partial_{x}
N^{+,-1}_{j,p}(\psi_{-1},\vartheta_{0} R_{p}^{0}))(k)\\
=&\widehat{P}^{1}(k)\frac{ik}{2\widehat{\vartheta}(k)}\int\zeta^{+}_{j,p}(k)
\widehat{\vartheta}_{0}(k-k_{0})\widehat{\psi}_{1}(k-m)\widehat{R}_{p}^{0}(m)dm\\
&+\widehat{P}^{1}(k)\frac{ik}{2\widehat{\vartheta}(k)}\int\zeta^{+}_{j,p}(k)
\widehat{\vartheta}_{0}(k+k_{0})\widehat{\psi}_{-1}(k-m) \widehat{R}_{p}^{0}(m)dm.
\end{split}
\end{equation*}
  \item[(b)] Replace $B_{j,p}^{1,0,+}(\Omega\psi_{c},R_{p}^{0})$ by $B_{j,p}^{1,0,+}(\Omega_{c}\psi_{c},R_{p}^{0})$, where
\begin{equation*}
\begin{split}
\mathcal{F}((B_{j,p}^{1,0,+}(\Omega_{c}\psi_{c},R_{p}^{0}))(k)
=&\int b_{j,p}^{1,0,+,1}(k)i\omega(k_{0})\widehat{\psi}_{1}(k-m)\widehat{R}_{p}^{0}(m)dm\\
&+\int b_{j,p}^{1,0,+,-1}(k)i\omega(-k_{0})\widehat{\psi}_{-1}(k-m) \widehat{R}_{p}^{0}(m)dm.
\end{split}
\end{equation*}
  \item[(c)] Replace $B_{j,p}^{1,0,+}(\psi_{c},p\Omega R_{p}^{0})$ by $B_{j,p}^{1,0,+}(\psi_{c},p\Omega_{c} R_{p}^{0})$, where
\begin{equation*}
\begin{split}
\mathcal{F}(B_{j,p}^{1,0,+}(\psi_{c},p\Omega_{c} R_{p}^{0}))(k)
=&\int b_{j,p}^{1,0,+,1}(k)ip\omega(k-k_{0})\widehat{\psi}_{1}(k-m)\widehat{R}_{p}^{0}(m)dm\\
&+\int b_{j,p}^{1,0,+,-1}(k)ip\omega(k+k_{0})\widehat{\psi}_{-1}(k-m) \widehat{R}_{p}^{0}(m)dm,
\end{split}
\end{equation*}
with a similar form for $B^{1,0,-}$.
\end{description}
Inserting these changes into \eqref{equ12} we obtain that the kernel function of $B^{1,0,+}$ satisfies
\begin{equation}
\begin{split}\label{b10}
b_{j,p}^{1,0,\pm,1}=\frac{\widehat{P}^{1}(k)k\zeta_{j,p}^{\pm}(k)}
{j\omega(k)\mp\omega(k_{0})-p\omega(k-k_{0})}
\frac{\widehat{\vartheta}_{0}(k-k_{0})}{2\widehat{\vartheta}(k)},
\end{split}
\end{equation}
with a similar form for $b_{j,p}^{1,0,\pm,-1}$.

Due to the supports of $\widehat{P}^{1}$, $\widehat{\psi}_{c}$ and $\widehat{\phi}_{c}$, the kernel function in \eqref{b10} must be considered in the region $|k|>\delta$ and $|k-m-k_{0}|<\delta$.
Therefore, only the nontrivial resonance at $k=k_{0}$ whenever $j=\pm1$ will play a role for $B_{j,p}^{1,0,\pm,1}$.
However, since we have a bound on the denominator
\begin{equation*}
\begin{split}
|j\omega(k)\mp\omega(k_{0})-p\omega(k-k_{0})|\geq C|k-k_{0}|,
\end{split}
\end{equation*}
this singularity is offset since $|\widehat{\vartheta}_{0}(k-k_{0})|\leq C|k-k_{0}|$. Thus, the normal-form transformation $B^{1,0,\pm}$ is well-defined and $\mathcal{O}(1)$ bounded. Thanks to the compact support of $\widehat{R}^{0}$, we have
\begin{equation}
\begin{split}\label{B10}
\|\epsilon B_{j,p}^{1,0,+}(\psi_{c},R_{p}^{0})\|_{H^{s'}}\leq C\epsilon\|R^{0}\|_{H^{s}},
\end{split}
\end{equation}
for any $s'>0$ and $R^{0}\in H^{s}$. Similar estimate for $\epsilon B_{j,p}^{1,0,-}$ holds.

We sum up the results for the first normal-form transformation above in the following Proposition.
\begin{proposition}\label{P1}
Define $(\widetilde{R}_{j}^{0},\widetilde{R}_{j}^{1})$ with $j=\pm1$ via the transformation \eqref{equ3} with the kernel functions \eqref{equ7} and \eqref{equa7}. This transformation maps $(R_{j}^{0},R_{j}^{1})\in H^{s}\times H^{s}$ into $(\widetilde{R}_{j}^{0},\widetilde{R}_{j}^{1})\in H^{s}\times H^{s}$ for all $s>0$ and is invertible on its range. Furthermore, if we write the inverse transformations as
\begin{equation*}
\begin{split}
R_{j}^{0}=\widetilde{R}_{j}^{0}+\epsilon\mathcal{B}^{-1}_{0,1}
(\widetilde{R}_{j}^{0},\widetilde{R}_{j}^{1}), \ \ \ \
R_{j}^{1}=\widetilde{R}_{j}^{1}+\epsilon\mathcal{B}^{-1}_{1,0}
(\widetilde{R}_{j}^{0},\widetilde{R}_{j}^{1}),
\end{split}
\end{equation*}
then there exist constants $C_{0},C_{1}$ such that the inverse transformations satisfy the estimates
\begin{equation*}
\begin{split}
\left\|\epsilon\mathcal{B}^{-1}_{0,1}(\widetilde{R}_{j}^{0},\widetilde{R}_{j}^{1})\right\|_{H^{s}}\leq C_{0}\left(\|\widetilde{R}_{j}^{0}\|_{H^{s}}+\|\widetilde{R}_{j}^{1}\|_{H^{s}}\right),\\
\left\|\epsilon\mathcal{B}^{-1}_{1,0}(\widetilde{R}_{j}^{0},\widetilde{R}_{j}^{1})\right\|_{H^{s}}\leq C_{1}\epsilon\left(\|\widetilde{R}_{j}^{0}\|_{H^{s}}+\|\widetilde{R}_{j}^{1}\|_{H^{s}}\right).
\end{split}
\end{equation*}
\end{proposition}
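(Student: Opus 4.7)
My plan is to prove Proposition \ref{P1} by treating the transformation \eqref{equ3} as a block $2\times 2$ system on $H^s\times H^s$ and inverting it via a Neumann series that exploits the crucial asymmetry in size between the two off-diagonal blocks. Forward boundedness is immediate from the kernel bounds established above: collecting the low-frequency block into $\epsilon\mathcal{B}_{0,1}(R^1):=\epsilon\sum_{p}\bigl[B_{j,p}^{0,1,+}(\psi_c,R_p^1)+B_{j,p}^{0,1,-}(\phi_c,R_p^1)\bigr]$ and the high-frequency block into $\epsilon\mathcal{B}_{1,0}(R^0):=\epsilon\sum_{p}\bigl[B_{j,p}^{1,0,+}(\psi_c,R_p^0)+B_{j,p}^{1,0,-}(\phi_c,R_p^0)\bigr]$, estimate \eqref{b01} yields $\|\epsilon\mathcal{B}_{0,1}(R^1)\|_{H^{s'}}\lesssim\|R^1\|_{H^s}$ with a constant independent of $\epsilon$, while \eqref{B10} gives $\|\epsilon\mathcal{B}_{1,0}(R^0)\|_{H^{s'}}\lesssim\epsilon\|R^0\|_{H^s}$. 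The arbitrarily high target regularity $s'$ is available in both cases because of the compact Fourier support enforced by $\widehat P^0$ and by the localization of $\widehat\psi_{\pm 1},\widehat\phi_{\pm 1}$ near $\pm k_0$; no derivatives are lost at this stage.

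For the inversion, I rewrite \eqref{equ3} as
\begin{equation*}
\widetilde R^0=R^0+\epsilon\mathcal{B}_{0,1}(R^1),\qquad \widetilde R^1=R^1+\epsilon\mathcal{B}_{1,0}(R^0),
\end{equation*}
and eliminate $R^1$ by substituting the second equation into the first to obtain $(I-\epsilon^2\mathcal{B}_{0,1}\mathcal{B}_{1,0})R^0=\widetilde R^0-\epsilon\mathcal{B}_{0,1}(\widetilde R^1)$. The decisive point is that although $\epsilon\mathcal{B}_{0,1}$ is merely $\mathcal{O}(1)$ on $H^s$ (reflecting the singular factor $\widehat\vartheta^{-1}(k)=\epsilon^{-1}$ for $|k|\le\delta$), the composition $\epsilon^2\mathcal{B}_{0,1}\mathcal{B}_{1,0}=(\epsilon\mathcal{B}_{0,1})\circ(\epsilon\mathcal{B}_{1,0})$ inherits the $\mathcal{O}(\epsilon)$ smallness from the $\epsilon\mathcal{B}_{1,0}$ factor. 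Consequently, for $\epsilon$ sufficiently small a standard Neumann series yields $(I-\epsilon^2\mathcal{B}_{0,1}\mathcal{B}_{1,0})^{-1}=I+\mathcal{O}(\epsilon)$ on $H^s$, and hence $R^0=\widetilde R^0-\epsilon\mathcal{B}_{0,1}(\widetilde R^1)+\mathcal{O}(\epsilon)(\widetilde R^0+\widetilde R^1)$, while $R^1=\widetilde R^1-\epsilon\mathcal{B}_{1,0}(R^0)$. Reading off the perturbations identifies $\epsilon\mathcal{B}^{-1}_{0,1}$ and $\epsilon\mathcal{B}^{-1}_{1,0}$, giving the advertised bounds by constants $C_0$ and $C_1\epsilon$ respectively, with the $\epsilon$-gain on the second component coming entirely from the scaling of $\mathcal{B}_{1,0}$.

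No real obstacle arises at this stage: the delicate analytic input---well-definedness of the kernels \eqref{equ7} and \eqref{equa7} in the presence of the trivial resonance at $k=0$ (absorbed by the vanishing of the numerator $k\zeta^{\pm}_{j,p}$ as in \eqref{equ8}--\eqref{equ9}) and the nontrivial resonance at $k=\pm k_0$ (absorbed by the factor $\widehat\vartheta_0(k-k_0)$ in \eqref{b10})---has already been carried out above, so what remains is essentially linear algebra in Banach spaces. The only conceptual point worth flagging is precisely this structural asymmetry: the block $\mathcal{B}_{0,1}$ cannot be made small in $\epsilon$, which is exactly the obstruction that will force the \emph{second} normal-form transformation of Section 6.2 to deal with the leftover $\mathcal{O}(\epsilon)$ terms \eqref{leave}; here, however, it is not an obstacle because only the product $\epsilon^2\mathcal{B}_{0,1}\mathcal{B}_{1,0}$ needs to be small for the Neumann inversion.
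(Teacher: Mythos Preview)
Your proposal is correct and follows essentially the same approach as the paper: the paper's proof is a one-line appeal to the bounds \eqref{b01} and \eqref{B10} together with a Neumann series, and you have simply spelled out explicitly how the block substitution and the asymmetric $\mathcal{O}(1)$ versus $\mathcal{O}(\epsilon)$ scaling of the two off-diagonal blocks make that Neumann series converge and yield the stated estimates on the inverse.
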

\begin{proof}
According to \eqref{b01} and \eqref{B10}, there is no loss of regularities for the transformation, thus this transformation is invertible by a simple application of the Neumann series.
\end{proof}
Finally, we can rewrite the system \eqref{Rj0}-\eqref{Rj1} based on the above transformations as
\begin{equation}
\begin{split}\label{Rj00}
\partial_{t}\widetilde{R}_{j}^{0}=&j\Omega\widetilde{R}_{j}^{0}+\epsilon^{2}\sum_{p,q\in\{\pm1\}}
B_{j,p}^{0,1,+}
\Big(\psi_{c},P_{1}\displaystyle\frac{1}{2\vartheta}\partial_{x}
\big(N^{+}_{p,q}(\psi_{c},\vartheta \widetilde{R}_{q}^{1})
+N^{-}_{p,q}(\phi_{c},\vartheta \widetilde{R}_{q}^{1})\big)\Big)\\
&+\epsilon^{2}\sum_{p,q\in\{\pm1\}}B_{j,p}^{0,1,-}
\Big(\phi_{c},P_{1}\displaystyle\frac{1}{2\vartheta}\partial_{x}
\big(N^{+}_{p,q}(\psi_{c},\vartheta \widetilde{R}_{q}^{1})
+N^{-}_{p,q}(\phi_{c},\vartheta \widetilde{R}_{q}^{1})\big)\Big)+\epsilon^{2}\mathcal{F}_{j}^{4},
\end{split}
\end{equation}
\begin{equation}
\begin{split}\label{Rj01}
\partial_{t}\widetilde{R}_{j}^{1}=j\Omega\widetilde{R}_{j}^{1}
+\epsilon\sum_{p\in\{\pm1\}}P^{1}\displaystyle\frac{1}{2\vartheta}\partial_{x}
(N^{+}_{j,p}(\psi_{c},\vartheta \widetilde{R}_{p}^{1})+
N^{-}_{j,p}(\phi_{c},\vartheta \widetilde{R}_{p}^{1}))+\epsilon^{2}\mathcal{F}_{j}^{5},\ \ \ \ \ \ \ \ \ \ \ \
\end{split}
\end{equation}
where $\epsilon^{2}\mathcal{F}_{j}^{4}$ and $\epsilon^{2}\mathcal{F}_{j}^{5}$ have similar estimates \eqref{j2} and \eqref{j3} like the terms of $\epsilon^{2}\mathcal{F}_{j}^{2}$ and $\epsilon^{2}\mathcal{F}_{j}^{3}$ respectively.

\subsection{\textbf{The second normal-form transformation}}
Now we construct a second normal-form transformation to eliminate the terms of $\mathcal{O}(\epsilon)$ from the equation of \eqref{Rj00}. Before doing this we first apply Lemma \ref{L6} to extract the real dangerous $\mathcal{O}(\epsilon)$ terms.

Recall that $\psi_{c}=\psi_{1}+\psi_{-1}$ and $\phi_{c}=\phi_{1}+\phi_{-1}$ with $\psi_{\pm1}$ and $\phi_{\pm 1}$ supported in a neighborhood of size $\delta$ of $\pm k_{0}$ in Fourier space. Then we can write
\begin{equation}
\begin{split}\label{ee}
&\epsilon^{2}B_{j,p}^{0,1,+}
\left(\psi_{c},P_{1}\displaystyle\frac{1}{2\vartheta}\partial_{x}
\left(N^{+}_{p,q}(\psi_{c},\vartheta \widetilde{R}_{q}^{1})
+N^{-}_{p,q}(\phi_{c},\vartheta \widetilde{R}_{q}^{1})\right)\right)\\
=&\epsilon^{2}B_{j,p}^{0,1,+}
\left(\psi_{1},P_{1}\displaystyle\frac{1}{2\vartheta}\partial_{x}
\left(N^{+}_{p,q}(\psi_{-1},\vartheta \widetilde{R}_{q}^{1})
+N^{-}_{p,q}(\phi_{-1},\vartheta \widetilde{R}_{q}^{1})\right)\right)\\
&+\epsilon^{2}B_{j,p}^{0,1,+}
\left(\psi_{-1},P_{1}\displaystyle\frac{1}{2\vartheta}\partial_{x}
\left(N^{+}_{p,q}(\psi_{1},\vartheta \widetilde{R}_{q}^{1})
+N^{-}_{p,q}(\phi_{1},\vartheta \widetilde{R}_{q}^{1})\right)\right)\\
&+\epsilon^{2}B_{j,p}^{0,1,+}
\left(\psi_{1},P_{1}\displaystyle\frac{1}{2\vartheta}\partial_{x}
\left(N^{+}_{p,q}(\psi_{1},\vartheta \widetilde{R}_{q}^{1})
+N^{-}_{p,q}(\phi_{1},\vartheta \widetilde{R}_{q}^{1})\right)\right)\\
&+\epsilon^{2}B_{j,p}^{0,1,+}
\left(\psi_{-1},P_{1}\displaystyle\frac{1}{2\vartheta}\partial_{x}
\left(N^{+}_{p,q}(\psi_{-1},\vartheta \widetilde{R}_{q}^{1})
+N^{-}_{p,q}(\phi_{-1},\vartheta \widetilde{R}_{q}^{1})\right)\right),
\end{split}
\end{equation}
and
\begin{equation}
\begin{split}\label{eee}
&\epsilon^{2}B_{j,p}^{0,1,-}
\left(\phi_{c},P_{1}\displaystyle\frac{1}{2\vartheta}\partial_{x}
\left(N^{+}_{p,q}(\psi_{c},\vartheta \widetilde{R}_{q}^{1})
+N^{-}_{p,q}(\phi_{c},\vartheta \widetilde{R}_{q}^{1})\right)\right)\\
=&\epsilon^{2}B_{j,p}^{0,1,-}
\left(\phi_{1},P_{1}\displaystyle\frac{1}{2\vartheta}\partial_{x}
\left(N^{+}_{p,q}(\psi_{-1},\vartheta \widetilde{R}_{q}^{1})
+N^{-}_{p,q}(\phi_{-1},\vartheta \widetilde{R}_{q}^{1})\right)\right)\\
&+\epsilon^{2}B_{j,p}^{0,1,-}
\left(\phi_{-1},P_{1}\displaystyle\frac{1}{2\vartheta}\partial_{x}
\left(N^{+}_{p,q}(\psi_{1},\vartheta \widetilde{R}_{q}^{1})
+N^{-}_{p,q}(\phi_{1},\vartheta \widetilde{R}_{q}^{1})\right)\right)\\
&+\epsilon^{2}B_{j,p}^{0,1,-}
\left(\phi_{1},P_{1}\displaystyle\frac{1}{2\vartheta}\partial_{x}
\left(N^{+}_{p,q}(\psi_{1},\vartheta \widetilde{R}_{q}^{1})
+N^{-}_{p,q}(\phi_{1},\vartheta \widetilde{R}_{q}^{1})\right)\right)\\
&+\epsilon^{2}B_{j,p}^{0,1,-}
\left(\phi_{-1},P_{1}\displaystyle\frac{1}{2\vartheta}\partial_{x}
\left(N^{+}_{p,q}(\psi_{-1},\vartheta \widetilde{R}_{q}^{1})
+N^{-}_{p,q}(\phi_{-1},\vartheta \widetilde{R}_{q}^{1})\right)\right).
\end{split}
\end{equation}
Each term on the RHS of \eqref{ee} can be rewritten as
\begin{equation}
\begin{split}\label{nv}
&\epsilon^{2}\mathcal{F}\left(B_{j,p}^{0,1,+}
\left(\psi_{\mu},P_{1}\displaystyle\frac{1}{2\vartheta}\partial_{x}
\left(N^{+}_{p,q}(\psi_{\nu},\vartheta \widetilde{R}_{q}^{1})
+N^{-}_{p,q}(\phi_{\nu},\vartheta \widetilde{R}_{q}^{1})\right)\right)\right)(k)\\
&=\frac{\epsilon^{2}}{2}\int b_{j,p}^{0,1,+}(k,k-\ell,\ell)
\widehat{\psi}_{\mu}(k-\ell)\widehat{P}_{1}(\ell)\widehat{\vartheta}^{-1}(\ell)i\ell\\
&\times\left(
\int(\zeta_{p,q}^{+}(\ell,\ell-m,m)\widehat{\psi}_{\nu}(\ell-m)
+\zeta_{p,q}^{-}(\ell,\ell-m,m)\widehat{\phi}_{\nu}(\ell-m))
\widehat{\vartheta}(m)\widehat{\widetilde{R_{q}^{1}}}(m)dm\right)d\ell,
\end{split}
\end{equation}
where $\mu,\nu\in\{1,-1\}$. We can further apply Lemma \ref{L6} to rewrite \eqref{nv} as
\begin{equation*}
\begin{split}
\frac{\epsilon^{2}}{2}&\mathcal{F}\left(B_{j,p}^{0,1,+}\left(\psi_{\mu}, P_{1}(\cdot-\mu k_{0})i(\cdot-\mu k_{0})\vartheta^{-1}(\cdot-\mu k_{0})N^{+}_{p,q}(
\psi_{\nu},\vartheta(\cdot-\nu k_{0})\widetilde{R}_{q}^{1}\right)\right)(k)\\
+&\frac{\epsilon^{2}}{2}\mathcal{F}\left(B_{j,p}^{0,1,+}\left(\psi_{\mu}, P_{1}(\cdot-\mu k_{0})i(\cdot-\mu k_{0})\vartheta^{-1}(\cdot-\mu k_{0})N^{-}_{p,q}(
\phi_{\nu},\vartheta(\cdot-\nu k_{0})\widetilde{R}_{q}^{1}\right)\right)(k)
+\mathcal{O}(\epsilon^{2})\\
=&\frac{\epsilon^{2}}{2}\int b_{j,p}^{0,1,+}(k,\mu k_{0},k-\mu k_{0})\widehat{\psi}_{\mu}(k-\ell)
\widehat{P}_{1}(k-\mu k_{0})i(k-\mu k_{0})\widehat{\vartheta}^{-1}(k-\mu k_{0})\\
&\times\bigg(
\int\Big(\zeta_{p,q}^{+}(k-\mu k_{0},\nu k_{0},k-(\mu+\nu)k_{0})\widehat{\psi}_{\nu}(\ell-m)\\
& \ \ \ \ \ \ \ \ \ +\zeta_{p,q}^{-}(k-\mu k_{0},\nu k_{0},k-(\mu+\nu)k_{0})\widehat{\phi}_{\nu}(\ell-m)\Big)
\widehat{\vartheta}(k-(\mu+\nu)k_{0})\widehat{\widetilde{R_{q}^{1}}}(m)dm\bigg)d\ell\\
&+\mathcal{O}(\epsilon^{2}).
\end{split}
\end{equation*}
We have the abbreviated kernel function for each term on the RHS of \eqref{ee} and \eqref{eee}
\begin{equation}
\begin{split}\label{kernel}
\epsilon^{2}\tilde{b}_{j,p}^{0,1,\pm,\mu,\nu}(k)
&=\epsilon^{2}b_{j,p}^{0,1,\pm}(k)\widehat{P}_{1}(k-\mu k_{0})i(k-\mu k_{0})\widehat{\vartheta}^{-1}(k-\mu k_{0})\zeta_{p,q}^{\mp}(k-\mu k_{0})
\widehat{\vartheta}(k-(\mu+\nu)k_{0})\\
&=\epsilon^{2}\frac{\widehat{P}_{0}(k)k\widehat{P}_{1}(k-\mu k_{0})i(k-\mu k_{0})\zeta_{j,p}^{\pm}(k)\zeta_{p,q}^{\mp}(k-\mu k_{0})}{j\omega(k)
\mp\omega(\mu k_{0})-p\omega(k-\mu k_{0})}\frac{\widehat{\vartheta}(k-(\mu+\nu) k_{0})}{2\widehat{\vartheta}(k)}.
\end{split}
\end{equation}
Note that $\tilde{b}_{j,p}^{0,1,\pm,\mu,\nu}(k)$ is $\mathcal{O}(1)$ bounded when $\mu+\nu=0$ since the factor $\widehat{\vartheta}(k)^{-1}$ cancels with the factor $\widehat{\vartheta}(k)$ in the kernel \eqref{kernel}. Then the first two of the terms in \eqref{ee} and \eqref{eee} are $\mathcal{O}(\epsilon^{2})$. We have the following lemma.
\begin{lemma}\label{L7}
There exists $C>0$ such that
\begin{align*}
&\left\|\epsilon^{2}B_{j,p}^{0,1,+}
\left(\psi_{1},P_{1}\displaystyle\frac{1}{2\vartheta}\partial_{x}
\left(N^{+}_{p,q}(\psi_{-1},\vartheta \widetilde{R}_{q}^{1})
+N^{-}_{p,q}(\phi_{-1},\vartheta \widetilde{R}_{q}^{1})\right)\right)
\right\|_{H^{s}}
\leq C\epsilon^{2}\left\|\widetilde{R}_{q}^{1}\right\|_{H^{s}},\\
&\left\|\epsilon^{2}B_{j,p}^{0,1,+}
\left(\psi_{-1},P_{1}\displaystyle\frac{1}{2\vartheta}\partial_{x}
\left(N^{+}_{p,q}(\psi_{1},\vartheta \widetilde{R}_{q}^{1})
+N^{-}_{p,q}(\phi_{1},\vartheta \widetilde{R}_{q}^{1})\right)\right)\right\|_{H^{s}}
\leq C\epsilon^{2}\left\|\widetilde{R}_{q}^{1}\right\|_{H^{s}},\\
&\left\|\epsilon^{2}B_{j,p}^{0,1,-}
\left(\phi_{1},P_{1}\displaystyle\frac{1}{2\vartheta}\partial_{x}
\left(N^{+}_{p,q}(\psi_{-1},\vartheta \widetilde{R}_{q}^{1})
+N^{-}_{p,q}(\phi_{-1},\vartheta \widetilde{R}_{q}^{1})\right)\right)
\right\|_{H^{s}}
\leq C\epsilon^{2}\left\|\widetilde{R}_{q}^{1}\right\|_{H^{s}},\\
&\left\|\epsilon^{2}B_{j,p}^{0,1,-}
\left(\phi_{-1},P_{1}\displaystyle\frac{1}{2\vartheta}\partial_{x}
\left(N^{+}_{p,q}(\psi_{1},\vartheta \widetilde{R}_{q}^{1})
+N^{-}_{p,q}(\phi_{1},\vartheta \widetilde{R}_{q}^{1})\right)\right)\right\|_{H^{s}}
\leq C\epsilon^{2}\left\|\widetilde{R}_{q}^{1}\right\|_{H^{s}}.
\end{align*}
\end{lemma}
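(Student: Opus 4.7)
The plan is to exploit the crucial cancellation, already flagged in the paragraph following \eqref{kernel}, that occurs precisely when $\mu+\nu=0$. All four expressions in the statement correspond to mixed pairings $(\psi_{1},\psi_{-1})$, $(\psi_{-1},\psi_{1})$, $(\phi_{1},\psi_{-1})$, $(\phi_{-1},\psi_{1})$ for the outer and inner envelopes, so in every case the outer index $\mu$ and the inner index $\nu$ satisfy $\mu+\nu=0$. This is exactly the regime in which the abbreviated kernel $\tilde b_{j,p}^{0,1,\pm,\mu,\nu}(k)$ in \eqref{kernel} carries the factor $\widehat{\vartheta}(k-(\mu+\nu)k_{0})/\widehat{\vartheta}(k) = 1$, so the dangerous factor $\widehat{\vartheta}^{-1}(k)\sim\epsilon^{-1}$ on $|k|\le\delta$ is neutralized and we are left with an honest $\mathcal{O}(\epsilon^{2})$ contribution.

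First I would apply Lemma \ref{L6}, once in its second-argument form and once in its third-argument form, to each of the four compositions in order to replace the arguments $k-\ell$, $\ell$, $\ell-m$ and $m$ in the kernels $b_{j,p}^{0,1,\pm}$ and $\zeta_{p,q}^{\mp}$ by their values frozen at the wave numbers $\mu k_{0}$, $k-\mu k_{0}$, $\nu k_{0}$ and $k-(\mu+\nu)k_{0}$ that the Fourier supports of $\widehat{\psi}_{\pm1}$ and $\widehat{\phi}_{\pm1}$ dictate. Each such substitution produces a remainder of order $\epsilon\|\widetilde{R}_{q}^{1}\|_{H^{s}}$, and since we carry a prefactor $\epsilon^{2}$ these remainders are already of the claimed size. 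After these substitutions the four operators reduce, modulo acceptable remainders, to Fourier multipliers of the form $\tilde b_{j,p}^{0,1,\pm,\mu,-\mu}(k)$ acting on the double convolution $(\widehat{\psi}_{\mu}\text{ or }\widehat{\phi}_{\mu})\ast(\widehat{\psi}_{-\mu}\text{ or }\widehat{\phi}_{-\mu})\ast\widehat{\widetilde R_{q}^{1}}$.

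Next I would verify that on the joint support of $\widehat{P}_{0}(k)$, $\widehat{P}_{1}(k-\mu k_{0})$ and the envelopes, the multiplier
\begin{equation*}
\tilde b_{j,p}^{0,1,\pm,\mu,-\mu}(k)=\frac{\widehat{P}_{0}(k)\,k\,\widehat{P}_{1}(k-\mu k_{0})\,i(k-\mu k_{0})\,\zeta_{j,p}^{\pm}(k)\,\zeta_{p,q}^{\mp}(k-\mu k_{0})}{2\bigl(j\omega(k)\mp\omega(\mu k_{0})-p\omega(k-\mu k_{0})\bigr)}
\end{equation*}
is uniformly bounded. The factor $\widehat{P}_{0}(k)\,k$ provides the needed vanishing that, through \eqref{equ8}, compensates the zero of the denominator at $k=0$ (the only resonance in the region $|k|\le\delta$ relevant here, since $m\approx\pm k_{0}$ on the support of $\widehat{\psi}_{c},\widehat{\phi}_{c}$); the factors $\widehat{P}_{1}(k-\mu k_{0})$ and $i(k-\mu k_{0})$ are $\mathcal{O}(1)$ on the compact support forced by the outer envelope, and the $\zeta^{\pm}$ coefficients are smooth and uniformly bounded there. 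Once $L^{\infty}$-boundedness of $\tilde b_{j,p}^{0,1,\pm,\mu,-\mu}$ is established, a standard Young-type convolution estimate, together with $\|\widehat{\psi}_{\pm1}\|_{L^{1}(s+1)}+\|\widehat{\phi}_{\pm1}\|_{L^{1}(s+1)}\le C$ from \eqref{Aesti-3}, yields the bound by $C\epsilon^{2}\|\widetilde R_{q}^{1}\|_{H^{s}}$.

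The main obstacle is a careful bookkeeping of the cancellation: one must check that once the denominator singularity at $k=0$ is absorbed by the numerator factor $\widehat{P}_{0}(k)\,k$, and the would-be $\widehat{\vartheta}^{-1}(k)$ blow-up is absorbed by the numerator factor $\widehat{\vartheta}(k-(\mu+\nu)k_{0})=\widehat{\vartheta}(k)$ available precisely when $\mu+\nu=0$, no hidden derivative loss remains; this is where the structural hypothesis $\mu+\nu=0$ is indispensable, and it is the reason the analogous terms with $\mu+\nu=\pm2$ in \eqref{ee}--\eqref{eee} must instead be handled by the second normal-form transformation constructed in the rest of Section~6.
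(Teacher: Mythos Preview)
Your proposal is correct and follows essentially the same approach as the paper: the paper's ``proof'' is the sentence immediately preceding the lemma, which observes that when $\mu+\nu=0$ the factor $\widehat\vartheta(k-(\mu+\nu)k_0)/\widehat\vartheta(k)$ in the abbreviated kernel \eqref{kernel} reduces to $1$, so $\tilde b_{j,p}^{0,1,\pm,\mu,\nu}(k)$ is $\mathcal O(1)$ bounded and the four mixed terms are genuinely $\mathcal O(\epsilon^2)$. You have simply spelled out the remaining routine steps (freezing via Lemma~\ref{L6}, the lower bound \eqref{equ8} for the denominator, and the Young/$L^1$ estimate using \eqref{Aesti-3}) that the paper leaves implicit.
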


Lemma \ref{L7} does not imply that the last two terms on the RHS of \eqref{ee} and \eqref{eee} are of order $\mathcal{O}(\epsilon^2)$. In fact, we can simplify the kernel of these four terms with the help of Lemma \ref{L6}. For example, the kernel of the third term from \eqref{ee} has the form
 \begin{equation}
\begin{split}
\epsilon^{2}b_{j,p}^{0,1,+}(k)\widehat{P}_{1}(k-k_{0})i(k- k_{0})\vartheta^{-1}(k-k_{0})\left(\zeta_{p,q}^{+}(k-k_{0})
+\zeta_{p,q}^{-}(k-k_{0})\right)
\widehat{\vartheta}(k-2k_{0})
\end{split}
\end{equation}
plus errors that are of size $\mathcal{O}(\epsilon^{2})$. Different from the terms considered in Lemma \ref{L7}, the factor of $\widehat{\vartheta}(k)$ is not contained in the numerator of this expression. Thus the $\widehat{\vartheta}(k)$ in the denominator of $b_{j,p}^{0,1,\pm}(k)$ cannot be offset. Therefore, we need to use a second normal-form transformations to eliminate the last two terms on the RHS of \eqref{ee} and \eqref{eee}.

Let
\begin{equation}
\begin{split}\label{equation88}
\mathcal{R}_{j}^{0}=&\widetilde{R}_{j}^{0}+\epsilon\sum_{p,q\in\{\pm1\}} \Big(D_{j,p,q}^{1,+}(\psi_{1},\psi_{1},\widetilde{R}_{q}^{1})
+T_{j,p,q}^{1,+}(\psi_{1},\phi_{1},\widetilde{R}_{q}^{1}) +D_{j,p,q}^{1,-}(\psi_{-1},\psi_{-1},\widetilde{R}_{q}^{1})\\
& \ \ \ \ \ \ \ \ \ \ \ \ \ \ \ \ +T_{j,p,q}^{1,-}(\psi_{-1},\phi_{-1},\widetilde{R}_{q}^{1})
+D_{j,p,q}^{2,+}(\phi_{1},\phi_{1},\widetilde{R}_{q}^{1})
+T_{j,p,q}^{2,+}(\phi_{1},\psi_{1},\widetilde{R}_{q}^{1})\\
& \ \ \ \ \ \ \ \ \ \ \ \ \ \ \ \ +D_{j,p,q}^{2,-}(\phi_{-1},\phi_{-1},\widetilde{R}_{q}^{1})
+T_{j,p,q}^{2,-}(\phi_{-1},\psi_{-1},\widetilde{R}_{q}^{1})
\Big),\\
\mathcal{R}_{j}^{1}=&\widetilde{R}_{j}^{1}.
\end{split}
\end{equation}
Differentiating the expression for $\mathcal{R}_{j}^{0}$, we find that the terms of $\mathcal{\mathcal{O}(\epsilon)}$ in \eqref{ee} will be eliminated if $D_{j,p,q}^{1,+}(\psi_{1},\psi_{1},\widetilde{R}_{q}^{1})$ satisfies
\begin{equation}
\begin{split}\label{equation89}
j\Omega D_{j,p,q}^{1,+}&(\psi_{1},\psi_{1},\widetilde{R}_{q}^{1})
-D_{j,p,q}^{1,+}(\Omega\psi_{1},\psi_{1},\widetilde{R}_{q}^{1})
-D_{j,p,q}^{1,+}(\psi_{1},\Omega\psi_{1},\widetilde{R}_{q}^{1})\\
&-D_{j,p,q}^{1,+}(\psi_{1},\psi_{1},q\Omega \widetilde{R}_{q}^{1})
=\frac{\epsilon}{2}B_{j,p}^{0,1,+}\left(\psi_{1},P_{1}\displaystyle\frac{1}{2\vartheta}\partial_{x}
N^{+}_{p,q}(\psi_{1},\vartheta \widetilde{R}_{q}^{1})\right),
\end{split}
\end{equation}
then we need to choose
\begin{equation*}
\begin{split}
\epsilon \widehat{D}_{j,p,q}^{1,+}(\psi_{1},\psi_{1},\widetilde{R}_{q}^{1})(k)
=\frac{\epsilon^{2}}{4}\int b_{j,p}^{0,1,+}(k)\widehat{\psi}_{1}(k-\ell)\widehat{P}^{1}(k-k_{0})
\widehat{\vartheta}^{-1}(k-k_{0})(k-k_{0})\\
\times\int\frac{\zeta_{p,q}^{+}(k-k_{0})\widehat{\vartheta}(k-2k_{0})}
{j\omega(k)-\omega(k_{0})-\omega(k_{0})-q\omega(k-2k_{0})}
\widehat{\psi}_{1}(\ell-m) \widehat{\widetilde{R}}_{q}^{1}(m)dmd\ell,
\end{split}
\end{equation*}
with similar expressions for $D_{j,p,q}^{n,\pm}$ and $T_{j,p,q}^{n,\pm}$ with $n=1, \ 2$. Thanks to the compact support of $\widehat{P}^{0}$ in $b_{j,p}^{0,1,+}(k)$ and $\widehat{\psi}_{1}$, we have used $k\approx0, k-\ell\approx k_{0}$ and $\ell-m\approx k_{0}$, so we have $m\approx 2k_{0}$. Note that the numerator is of $\mathcal{O}(\epsilon)$ in the above expression. The denominator satisfies
 \begin{equation*}
\begin{split}
j\omega(k)-\omega(k_{0})-\omega(k_{0})-q\omega(k-2k_{0})\approx -2\omega(k_{0})-q\omega(2k_{0})\neq0 \ \ \text{for} q=\pm1.
\end{split}
\end{equation*}
Therefore, $\epsilon D_{j,p,q}^{1,+}$ is $\mathcal{O}(\epsilon)$-bounded and certainly this normal-form transformation is well-defined and invertible. For any $s>0$, there exists $C$ such that
\begin{equation}
\begin{split}\label{DT}
\left\|\epsilon D_{j,p,q}^{1,+}(\psi_{1},\psi_{1},\widetilde{R}_{q}^{1})\|_{H^{s}}
\leq C\epsilon\|\widetilde{R}^{1}\right\|_{H^{s}}.
\end{split}
\end{equation}
We also use similar procedures to construct and estimate the expressions for $D_{j,p,q}^{n,\pm}$ and $T_{j,p,q}^{n,\pm}$ with $n=1, 2$. Summarizing, we have
\begin{proposition}\label{P2}
Assume $(\widetilde{R}_{j}^{0}, \widetilde{R}_{j}^{1})$ with $j=\pm1$ satisfies \eqref{Rj00}-\eqref{Rj01} and define $(\mathcal{R}_{j}^{0}, \mathcal{R}_{j}^{1})$ via the transformation \eqref{equation88}. Then this transformation maps $(\widetilde{R}_{j}^{0},\widetilde{R}_{j}^{1})\in H^{s}\times H^{s}$ into $(\mathcal{R}_{j}^{0},\mathcal{R}_{j}^{1})\in H^{s}\times H^{s}$ for all $s>0$ and is invertible on its range. Furthermore, if we write the inverse transformations as
\begin{equation*}
\begin{split}
\widetilde{R}_{j}^{0}=\mathcal{R}_{j}^{0}+\epsilon\mathcal{D}^{-1}_{0,1}
(\mathcal{R}_{j}^{0},\mathcal{R}_{j}^{1}), \
\widetilde{R}_{j}^{1}=\mathcal{R}_{j}^{1},
\end{split}
\end{equation*}
then there exist constant $C_{2}$ such that the inverse transformations satisfy the estimates
\begin{equation*}
\begin{split}
\left\|\epsilon\mathcal{D}^{-1}_{0,1}
(\mathcal{R}_{j}^{0},\mathcal{R}_{j}^{1})\right\|_{H^{s}}\leq C_{2}\left(\|\mathcal{R}_{j}^{0}\|_{H^{s}}+\|\mathcal{R}_{j}^{1}\|_{H^{s}}\right).
\end{split}
\end{equation*}
\end{proposition}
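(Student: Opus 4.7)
The plan is to establish Proposition~\ref{P2} in three steps: bound the forward transformation \eqref{equation88} on $H^s\times H^s$, invert it explicitly using the block-triangular structure of \eqref{equation88}, and then read off the estimate on $\mathcal{D}^{-1}_{0,1}$ from the forward bound.

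First I would verify that each of the eight trilinear operators $\epsilon D_{j,p,q}^{n,\pm}$ and $\epsilon T_{j,p,q}^{n,\pm}$ appearing in \eqref{equation88} maps $H^s\times H^s\times H^s$ boundedly into $H^s$ with norm $\mathcal{O}(\epsilon)$, generalizing \eqref{DT}. The representative calculation for $D_{j,p,q}^{1,+}$ is sketched in the excerpt and the others are analogous: the $\widehat{P}^{0}$ factor in $b_{j,p}^{0,1,+}$ forces $|k|<\delta$, and each of $\widehat{\psi}_{\pm 1},\widehat{\phi}_{\pm 1}$ is concentrated near $\pm k_0$, so on the kernel support the intermediate frequencies satisfy $k-\ell\approx \mu k_0$ and $\ell-m\approx \nu k_0$ with $\mu+\nu=\pm 2$ (the mixed-sign cases $\mu+\nu=0$ were already absorbed into the $\mathcal{O}(\epsilon^{2})$ remainder by Lemma~\ref{L7}). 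Hence $m$ localizes near $\pm 2k_0$ and the relevant denominator tends to $\mp 2\omega(k_0)-q\omega(\pm 2k_0)$, which is nonzero for every $q\in\{\pm 1\}$ by the dispersion relation \eqref{equation3}, so no resonance is triggered. The factor $(k-\mu k_0)\widehat{\vartheta}^{-1}(k-\mu k_0)$ is $\mathcal{O}(1)$ since $\widehat{P}^{1}(k-\mu k_0)$ excises the singular zone where $\widehat{\vartheta}^{-1}=\epsilon^{-1}$, while the $\widehat{\vartheta}(k)$ appearing in the denominator of $b_{j,p}^{0,1,+}$ contributes $\epsilon^{-1}$ only for $|k|<\delta$, which is exactly compensated by the explicit factor $k$ in the numerator (compare \eqref{equ9}). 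The kernel is therefore uniformly bounded, the output has compact Fourier support so no derivative is lost, and Young's inequality combined with \eqref{Aesti-3} yields the analogue of \eqref{DT} for each of the eight operators.

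Next, inversion is immediate rather than perturbative because the second line of \eqref{equation88} reads $\mathcal{R}_j^1=\widetilde{R}_j^1$. Setting $\widetilde{R}_j^1=\mathcal{R}_j^1$ and substituting into the first line yields
\[
\widetilde{R}_j^0=\mathcal{R}_j^0-\epsilon\!\!\sum_{p,q\in\{\pm 1\}}\!\!\bigl(D_{j,p,q}^{1,+}(\psi_1,\psi_1,\mathcal{R}_q^1)+T_{j,p,q}^{1,+}(\psi_1,\phi_1,\mathcal{R}_q^1)+\cdots\bigr),
\]
with the ellipsis running over the remaining six terms of \eqref{equation88}; no Neumann series is required because the system is block-triangular in the $(R^0,R^1)$-splitting. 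Feeding the $\mathcal{O}(\epsilon)$ bound from the previous step into this explicit formula produces
\[
\|\epsilon\mathcal{D}^{-1}_{0,1}(\mathcal{R}_j^0,\mathcal{R}_j^1)\|_{H^{s}}\leq C\epsilon\|\mathcal{R}^{1}\|_{H^{s}}\leq C_2\bigl(\|\mathcal{R}_j^0\|_{H^{s}}+\|\mathcal{R}_j^1\|_{H^{s}}\bigr),
\]
which is exactly the estimate asserted.

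The only genuinely delicate point is the nonresonance check in the first step: one must verify the short finite list of denominators coming from the eight trilinear operators in \eqref{equation88} and confirm that each stays away from zero for every $j,p,q\in\{\pm 1\}$. This is where I expect the main obstacle to lie if the argument is to be written out in full, and it is also the structural reason why only the same-sign interactions $(\mu,\nu)=(\pm 1,\pm 1)$ require the second normal-form transformation in the first place.
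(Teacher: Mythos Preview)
Your approach is essentially the paper's, and your observation that the block-triangular structure of \eqref{equation88} (namely $\mathcal{R}_j^{1}=\widetilde{R}_j^{1}$) makes the inverse explicit is cleaner than the paper's one-line appeal to ``no loss of derivatives''. The final estimate and the nonresonance check at the denominator $\mp 2\omega(k_0)-q\omega(\pm 2k_0)$ are both correct.

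One inaccuracy in your first step: you write that the factor $k$ in the numerator of $b_{j,p}^{0,1,+}$ compensates the $\widehat{\vartheta}(k)^{-1}$ in its denominator. It does not. That factor of $k$ is entirely consumed cancelling the vanishing of the \emph{frequency} denominator $j\omega(k)\mp\omega(k-m)-p\omega(m)$ at $k=0$ (this is exactly the content of \eqref{equ8}--\eqref{equ9}, which only give $|\widehat{\vartheta}(k)\,b_{j,p}^{0,1,+}|\leq C$, i.e.\ $b_{j,p}^{0,1,+}=\mathcal{O}(\widehat{\vartheta}(k)^{-1})=\mathcal{O}(\epsilon^{-1})$). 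What actually rescues you is the explicit $\epsilon^{2}$ prefactor carried by the trilinear operators: $\epsilon^{2}\cdot \mathcal{O}(\epsilon^{-1})=\mathcal{O}(\epsilon)$, which is how the paper obtains \eqref{DT}. Your conclusion that each $\epsilon D_{j,p,q}^{n,\pm}$, $\epsilon T_{j,p,q}^{n,\pm}$ is an $\mathcal{O}(\epsilon)$-bounded map on $H^{s}$ stands, but the mechanism should be stated correctly.
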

\begin{proof}
The transformation \eqref{equation88} is invertible since there are also no loss of derivatives in the transformation \eqref{equation88} thanks to the factor of $P^{0}$.
\end{proof}

Combining these two normal-form transformations \eqref{equ3} with \eqref{equation88} together, we can obtain
\begin{equation}
\begin{split}\label{equation92}
\mathcal{R}_{j}^{0}=&R_{j}^{0}+\epsilon Z^{0}_{j}(R^{0},R^{1}),\\
\mathcal{R}_{j}^{1}=&R_{j}^{1}+\epsilon Z^{1}_{j}(R^{0}),
\end{split}
\end{equation}
where
\begin{align*}
Z^{0}_{j}(R^{1})&=\sum_{p\in\{\pm1\}}\left[B_{j,p}^{0,1,+}(\psi_{c},R_{p}^{1})
+B_{j,p}^{0,1,-}(\phi_{c},R_{p}^{1})\right]\\
&+\sum_{p,q\in\{\pm1\}}\bigg(D_{j,p,q}^{1,+}\Big(\psi_{1},\psi_{1},
R_{q}^{1}+\epsilon\sum_{l\in\{\pm1\}}(B_{q,l}^{1,0,+}(\psi_{c},R_{l}^{0})
+B_{q,l}^{1,0,-}(\phi_{c},R_{l}^{0}))\Big)\\
& \ \ \ \ \ \ \ \ \ \ \ \ \ \ \ \ \ \ +T_{j,p,q}^{1,+}(\psi_{1},\phi_{1},R_{q}^{1}+\epsilon\sum_{l\in\{\pm1\}}(B_{q,l}^{1,0,+}(\psi_{c},R_{l}^{0})
+B_{q,l}^{1,0,-}(\phi_{c},R_{l}^{0}))\Big)\\ & \ \ \ \ \ \ \ \ \ \ \ \ \ \ \ \ \ \ +D_{j,p,q}^{1,-}(\psi_{-1},\psi_{-1},R_{q}^{1}+\epsilon\sum_{l\in\{\pm1\}}(B_{q,l}^{1,0,+}(\psi_{c},R_{l}^{0})
+B_{q,l}^{1,0,-}(\phi_{c},R_{l}^{0}))\Big)\\
& \ \ \ \ \ \ \ \ \ \ \ \ \ \ \ \ \ \ +T_{j,p,q}^{1,-}(\psi_{-1},\phi_{-1},R_{q}^{1}+\epsilon\sum_{l\in\{\pm1\}}(B_{q,l}^{1,0,+}(\psi_{c},R_{l}^{0})
+B_{q,l}^{1,0,-}(\phi_{c},R_{l}^{0}))\Big)\\
& \ \ \ \ \ \ \ \ \ \ \ \ \ \ \ \ \ \ +D_{j,p,q}^{2,+}(\phi_{1},\phi_{1},R_{q}^{1}+\epsilon\sum_{l\in\{\pm1\}}(B_{q,l}^{1,0,+}(\psi_{c},R_{l}^{0})
+B_{q,l}^{1,0,-}(\phi_{c},R_{l}^{0}))\Big)\\
& \ \ \ \ \ \ \ \ \ \ \ \ \ \ \ \ \ \ +T_{j,p,q}^{2,+}(\phi_{1},\psi_{1},R_{q}^{1}+\epsilon\sum_{l\in\{\pm1\}}(B_{q,l}^{1,0,+}(\psi_{c},R_{l}^{0})
+B_{q,l}^{1,0,-}(\phi_{c},R_{l}^{0}))\Big)\\ & \ \ \ \ \ \ \ \ \ \ \ \ \ \ \ \ \ \ +D_{j,p,q}^{2,-}(\phi_{-1},\phi_{-1},R_{q}^{1}+\epsilon\sum_{l\in\{\pm1\}}(B_{q,l}^{1,0,+}(\psi_{c},R_{l}^{0})
+B_{q,l}^{1,0,-}(\phi_{c},R_{l}^{0}))\Big)\\
& \ \ \ \ \ \ \ \ \ \ \ \ \ \ \ \ \ \ +T_{j,p,q}^{2,-}(\phi_{-1},\psi_{-1},R_{q}^{1}+\epsilon\sum_{l\in\{\pm1\}}(B_{q,l}^{1,0,+}(\psi_{c},R_{l}^{0})
+B_{q,l}^{1,0,-}(\phi_{c},R_{l}^{0}))\Big)
\bigg),\\
Z^{1}_{j}(R^{0})=&\sum_{p\in\{\pm1\}}\Big[B_{j,p}^{1,0,+}(\psi_{c},R_{p}^{0})
+B_{j,p}^{1,0,-}(\phi_{c},R_{p}^{0})\Big].
\end{align*}
Using Lemma \ref{L7}, \eqref{equ9},  \eqref{DT} and \eqref{B10}, we have
\begin{equation}
\begin{split}\label{RR}
\left\|\epsilon Z^{0}(R^{0},R^{1})\right\|_{H^{s'}}\lesssim\left\|R\right\|_{H^{s}}, \ \ \
\left\|\epsilon Z^{1}(R^{0})\right\|_{H^{s'}}\lesssim\left\|\epsilon R^{0}\right\|_{H^{s}},\ \ \ \ \ \forall s',s\geq6.
\end{split}
\end{equation}

From Proposition \ref{P1} and \ref{P2}, we have the following proposition
\begin{proposition}\label{P3}
Assume $(R_{j}^{0}, R_{j}^{1})$ with $j=\pm1$ satisfies \eqref{Rj0}-\eqref{Rj1} and define $(\mathcal{R}_{j}^{0}, \mathcal{R}_{j}^{1})$ via the transformation \eqref{equation92}. Then this transformation maps $(R_{j}^{0},R_{j}^{1})\in H^{s}\times H^{s}$ into $(\mathcal{R}_{j}^{0},\mathcal{R}_{j}^{1})\in H^{s}\times H^{s}$ for all $s>6$ and is invertible on its range. Furthermore, if we write the inverse transformations as
\begin{equation*}
\begin{split}
R_{j}^{0}=\mathcal{R}_{j}^{0}+\epsilon\mathcal{Z}^{-1}_{0}
(\mathcal{R}_{j}^{0},\mathcal{R}_{j}^{1}), \
R_{j}^{1}=\mathcal{R}_{j}^{1}+\epsilon\mathcal{Z}^{-1}_{1}
(\mathcal{R}_{j}^{0},\mathcal{R}_{j}^{1}),
\end{split}
\end{equation*}
then there exist constants $C_{3}, \ C_{4}$ such that the inverse transformations satisfy the estimates
\begin{equation*}
\begin{split}
\left\|\epsilon\mathcal{Z}^{-1}_{0}
(\mathcal{R}_{j}^{0},\mathcal{R}_{j}^{1})\right\|_{H^{s}}\leq C_{3}\left(\|\mathcal{R}_{j}^{0}\|_{H^{s}}+\|\mathcal{R}_{j}^{1}\|_{H^{s}}\right),\\
\left\|\epsilon\mathcal{Z}^{-1}_{1}
(\mathcal{R}_{j}^{0},\mathcal{R}_{j}^{1})\right\|_{H^{s}}\leq C_{4}\epsilon\left(\|\mathcal{R}_{j}^{0}\|_{H^{s}}+\|\mathcal{R}_{j}^{1}\|_{H^{s}}\right).
\end{split}
\end{equation*}
\end{proposition}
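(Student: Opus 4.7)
The plan is to observe that the combined transformation \eqref{equation92} is precisely the composition of the two normal-form transformations already analyzed in Propositions \ref{P1} and \ref{P2}: one first sends $(R^{0}, R^{1}) \mapsto (\widetilde{R}^{0}, \widetilde{R}^{1})$ via \eqref{equ3}, and then sends $(\widetilde{R}^{0}, \widetilde{R}^{1}) \mapsto (\mathcal{R}^{0}, \mathcal{R}^{1})$ via \eqref{equation88}. A direct substitution, using $\mathcal{R}^{1} = \widetilde{R}^{1}$ together with the explicit formulas for $(\widetilde{R}^{0}, \widetilde{R}^{1})$ from \eqref{equ3}, reproduces the expressions for $Z^{0}$ and $Z^{1}$ displayed after \eqref{equation92}. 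Since the composition of invertible maps is invertible with inverse equal to the composition of inverses in reverse order, invertibility of \eqref{equation92} on $H^{s}\times H^{s}$ follows at once from invertibility of each component.

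To obtain the quantitative estimates, I would start from data $(\mathcal{R}^{0}, \mathcal{R}^{1})\in H^{s}\times H^{s}$ and first invert the transformation of Proposition \ref{P2}, which produces $\widetilde{R}^{1} = \mathcal{R}^{1}$ and $\widetilde{R}^{0} = \mathcal{R}^{0} + \epsilon\mathcal{D}^{-1}_{0,1}(\mathcal{R}^{0}, \mathcal{R}^{1})$ with $\|\epsilon\mathcal{D}^{-1}_{0,1}\|_{H^{s}} \leq C_{2}(\|\mathcal{R}^{0}\|_{H^{s}} + \|\mathcal{R}^{1}\|_{H^{s}})$. In particular $\|\widetilde{R}^{0}\|_{H^{s}} + \|\widetilde{R}^{1}\|_{H^{s}}$ is controlled by $\|\mathcal{R}^{0}\|_{H^{s}} + \|\mathcal{R}^{1}\|_{H^{s}}$ up to a harmless constant. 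I then apply the inverse of Proposition \ref{P1} to $(\widetilde{R}^{0}, \widetilde{R}^{1})$: since $R^{1} - \mathcal{R}^{1} = \epsilon\mathcal{B}^{-1}_{1,0}(\widetilde{R}^{0}, \widetilde{R}^{1})$ carries the prefactor $C_{1}\epsilon$ supplied by Proposition \ref{P1}, this yields the desired $C_{4}\epsilon$-bound on the second component; while $R^{0} - \mathcal{R}^{0} = \epsilon\mathcal{D}^{-1}_{0,1}(\mathcal{R}^{0},\mathcal{R}^{1}) + \epsilon\mathcal{B}^{-1}_{0,1}(\widetilde{R}^{0}, \widetilde{R}^{1})$ is only $\mathcal{O}(1)$, yielding the $C_{3}$-bound on the first component. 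Chaining the two estimates together determines $C_{3}$ and $C_{4}$ explicitly in terms of $C_{0}, C_{1}, C_{2}$.

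The reason no serious obstacle arises is that neither normal-form transformation loses derivatives, a fact already built into the proofs of Propositions \ref{P1} and \ref{P2}: the projector $P^{0}$ restricts the kernel $b^{0,1,\pm}_{j,p}$ to $|k|\leq\delta$, the factor $\widehat{\vartheta}_{0}(m)$ vanishing at $m = \pm k_{0}$ offsets the nontrivial resonance appearing in $b^{1,0,\pm}_{j,p}$, and the compact Fourier support of $\psi_{\pm 1}, \phi_{\pm 1}$ keeps the kernels of the cubic transformations $D^{n,\pm}_{j,p,q}, T^{n,\pm}_{j,p,q}$ bounded. Because the resulting operators are $\mathcal{O}(1)$ bounded on $H^{s}$, the standard Neumann-series argument inverts $I + \epsilon Z$ uniformly for small $\epsilon$. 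The only point that requires care is the asymmetric bookkeeping between the two components: $B^{0,1,\pm}$ is $\mathcal{O}(1)$ rather than $\mathcal{O}(\epsilon)$ because $\widehat{\vartheta}^{-1}(k) \sim \epsilon^{-1}$ for $|k|<\delta$ cancels one power of $\epsilon$, so the first-component estimate cannot be improved below $\mathcal{O}(1)$; by contrast $B^{1,0,\pm}$ contains the extra factor $\widehat{\vartheta}_{0}(k-k_{0})$ which vanishes at the resonance, restoring the $\mathcal{O}(\epsilon)$ gain that produces the sharper second-component bound. Tracking this asymmetry, rather than any delicate analytic estimate, is the essential content of the proposition.
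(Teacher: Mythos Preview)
Your proposal is correct and follows exactly the approach the paper intends: the paper simply states that Proposition~\ref{P3} follows from Propositions~\ref{P1} and~\ref{P2} without spelling out the composition argument, and you have filled in precisely those details by inverting \eqref{equation88} first and then \eqref{equ3}, tracking the asymmetric $\mathcal{O}(1)$ versus $\mathcal{O}(\epsilon)$ bounds through the composition.
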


Using the original system \eqref{Rj0}-\eqref{Rj1} for $(R^{0},R^{1})$ and the transformation \eqref{equation92}, we obtain the transformed system for $(\mathcal{R}^{0},\mathcal{R}^{1})$ as follows
\begin{equation}
\begin{split}\label{Rj11}
\partial_{t}\mathcal{R}_{j}^{0}=&j\Omega\mathcal{R}_{j}^{0}+\epsilon^{2}\mathcal{F}_{j}^{6},\\
\partial_{t}\mathcal{R}_{j}^{1}=&j\Omega\mathcal{R}_{j}^{1}
+\epsilon\sum_{p\in\{\pm1\}}P^{1}\displaystyle\frac{1}{2\vartheta}\partial_{x}
\left(N^{+}_{j,p}(\psi_{c},\vartheta \mathcal{R}_{p}^{1})+
N^{-}_{j,p}(\phi_{c},\vartheta \mathcal{R}_{p}^{1})\right)+\epsilon^{2}\mathcal{F}_{j}^{7},
\end{split}
\end{equation}
where $j\in\{\pm1\}$. The terms $\mathcal{F}_{j}^{6}$ and $\mathcal{F}_{j}^{7}$ have similar estimates \eqref{j2} and \eqref{j3} for $\mathcal{F}_{j}^{2}$ and $\mathcal{F}_{j}^{3}$ respectively, i.e. there is no loss of smooth for $\mathcal{F}_{j}^{6}$, while $\mathcal{F}_{j}^{7}$ loses one derivative. To estimate the error $(\mathcal{R}^{0},\mathcal{R}^{1})$ of the system \eqref{Rj11} in the next section, we need to extract all terms which lose derivatives from $\mathcal{F}_{j}^{7}$. For that purpose, we combine the error equations \eqref{Rj0}-\eqref{Rj1} with the expression of $\mathcal{F}^{1}$ in \eqref{F1} and the normal-form transformation \eqref{equation92} and then rewrite the transformed system \eqref{Rj11} as
\begin{equation}
\begin{split}\label{final}
\partial_{t}\mathcal{R}_{j}^{0}=&j\Omega\mathcal{R}_{j}^{0}+\epsilon^{2}\mathcal{F}_{j}^{6},\\
\partial_{t}\mathcal{R}_{j}^{1}=&j\Omega\mathcal{R}_{j}^{1}
+\epsilon\sum_{p\in\{\pm1\}}\frac{\partial_{x}}{2}
\left(N^{+}_{j,p}(\psi_{c},\mathcal{R}_{p}^{1})+
N^{-}_{j,p}(\phi_{c},\mathcal{R}_{p}^{1})\right)\\
&+\epsilon^{2}\partial_{x}\sum_{p\in\{\pm1\}} \mathcal{A}_{j,p}\mathcal{R}_{p}^{1} +\epsilon^{2}\mathcal{F}_{j}^{8}.
\end{split}
\end{equation}
Here we have ignored $\vartheta$ in front of $\mathcal{R}_{p}^{1}$ since $\mathcal{R}_{p}^{1}=P^{1}\mathcal{R}_{p}$ and $\hat{\vartheta}(m)=1$ on the support of $\widehat{P}^{1}$ in the Fourier space, and operators $\mathcal{A}_{j,p}$ are given by
\begin{equation}
\begin{split}\label{ABn}
\mathcal{A}_{j,p}=&\frac{1}{2}\left(p\varphi_{s_{3}}q
+q\varphi_{s_{4}}
+\frac{jp}{q}(q\varphi_{s_{4}})q
-\frac{j}{q}\varphi_{s_{3}}\right),
\end{split}
\end{equation}
with
\begin{equation}
\begin{split}\label{s34}
&\varphi_{s_{3}}:=\varphi_{s_{1}}
+\frac{\epsilon^{\frac{1}{2}}}{2}(\mathcal{R}_{1}^{1}+\mathcal{R}_{-1}^{1}), \
\varphi_{s_{4}}:=\varphi_{s_{2}}
+\frac{\epsilon^{\frac{1}{2}}}{2}(\mathcal{R}_{1}^{1}-\mathcal{R}_{-1}^{1}),
\end{split}
\end{equation}
where $\psi_{c}, \ \phi_{c}, \ \varphi_{s_{1}}, \ \varphi_{s_{2}}$ are defined in the equation \eqref{pc}. The residual terms $\epsilon^{-\frac{5}{2}}P^{0}Res_{j}(\epsilon\Theta)$ and $\epsilon^{-\frac{5}{2}}P^{1}Res_{j}(\epsilon\Theta)$ are included in $\epsilon^{2}\mathcal{F}^{6}$ and $\epsilon^{2}\mathcal{F}^{8}$, respectively, thanks to \eqref{Aesti-1} in Lemma \ref{L2} and we have
\begin{equation}
\begin{split}\label{J68}
\epsilon^{2}\|(\mathcal{F}_{j}^{6},\mathcal{F}^{8}_{j})\|_{H^{s}}\leq C \epsilon^{2}(1+\|\mathcal{R}\|_{H^{s}}+\epsilon^{3/2}\|\mathcal{R}\|^{2}_{H^{s}}
+\epsilon^{3}\|\mathcal{R}\|^{3}_{H^{s}}), \ \ \ \forall s\geq6.
\end{split}
\end{equation}
That means $\mathcal{F}_{j}^{6}$ and $\mathcal{F}^{8}_{j}$ do not lose derivatives.

Finally, for the sake of clearer presentation in the evolutionary equation \eqref{oel} for $\partial_{t}E_{\ell}$ in Section 8, we extract all terms losing one derivative from $\epsilon\sum_{p\in\{\pm1\}}\frac{\partial_{x}}{2}
\left(N^{+}_{j,p}(\psi_{c},\mathcal{R}_{p}^{1})+
N^{-}_{j,p}(\phi_{c},\mathcal{R}_{p}^{1})\right)$ and combine them with $\epsilon^{2}\partial_{x}\sum_{p\in\{\pm1\}}\mathcal{A}_{j,p}\mathcal{R}_{p}^{1}$ in equation \eqref{final} into the following term $\epsilon\partial_{x}
\sum_{p\in\{\pm1\}}\mathcal{D}_{j,p}\mathcal{R}_{p}^{1}$. Then we can further rewrite the transformed error equations \eqref{final} as
\begin{equation}
\begin{split}\label{finally}
\partial_{t}\mathcal{R}_{j}^{0}=&j\Omega\mathcal{R}_{j}^{0}+\epsilon^{2}\mathcal{F}_{j}^{6},\\
\partial_{t}\mathcal{R}_{j}^{1}=&j\Omega\mathcal{R}_{j}^{1}
+\epsilon\partial_{x}
\sum_{p\in\{\pm1\}}\mathcal{D}_{j,p}\mathcal{R}_{p}^{1}+\epsilon\mathcal{F}_{j}^{9},
\end{split}
\end{equation}
where $\mathcal{D}_{j,p}$ satisfies
\begin{equation}
\begin{split}\label{s56}
\mathcal{D}_{j,p}=\frac{1}{2}
\left(p\varphi_{s_{5}}q
+q\varphi_{s_{6}}
+\frac{jp}{q}(q\varphi_{s_{6}})q
-\frac{j}{q}\varphi_{s_{5}}\right),
\end{split}
\end{equation}
with
\begin{align*}
\varphi_{s_{5}}:=\psi_{c}+\phi_{c}+\epsilon\varphi_{s_{3}}, \ \
\varphi_{s_{6}}:=\psi_{c}-\phi_{c}+\epsilon\varphi_{s_{4}},
\end{align*}
in light of the expression in the equations \eqref{alpha} and \eqref{ABn}. Similar to \eqref{J68}, $\mathcal{F}^{9}_{j}$ has the following estimate
\begin{equation}
\begin{split}\label{J69}
\epsilon\|\mathcal{F}^{9}_{j}\|_{H^{s}}\leq C \epsilon(1+\|R\|_{H^{s}}+\epsilon^{3/2}\|R\|^{2}_{H^{s}}
+\epsilon^{3}\|R\|^{3}_{H^{s}}), \ \ \ \forall s\geq6.
\end{split}
\end{equation}

\section{\textbf{Normal-form transformation for the high frequency terms}}

From the above procedure, we have obtained the transformed system \eqref{final}. To obtain the uniform energy estimate for the error $(\mathcal{R}^{0},\mathcal{R}^{1})$ in the time of order $\mathcal{O}(\epsilon^{-2})$, the high frequency terms of $\mathcal{O}(\epsilon)$ still have to be eliminated by the corresponding normal-form transformation $B^{1,1}$ below. However, because of the regularity problems caused by quasilinear quadratic terms, we cannot apply $B^{1,1}$ directly to eliminate the high frequency terms of $\mathcal{O}(\epsilon)$ from \eqref{final} as done in the above section. In the next section, we will use the transformation $B^{1,1}$ to define a modified energy function $\mathcal{E}_{s}$ and show that $\mathcal{E}_{s}$ is equivalent to $\|\mathcal{R}\|^{2}_{H^{s}}$ by analyzing the properties of the normal-form transformation $B^{1,1}$ in this section.

Similar to the form of the normal-form transformation $B^{0,1}$ and $B^{1,0}$, we define
\begin{equation}
\begin{split}\label{B11}
\widehat{B}_{j,p}^{1,1,+}(\psi_{c},\mathcal{R}_{p}^{1})(k)
=\int b_{j,p}^{1,1,+}(k,k-m,m)\widehat{\psi}_{c}(k-m)\widehat{\mathcal{R}}_{p}^{1}(m)dm,\\
\widehat{B}_{j,p}^{1,1,-}(\phi_{c},\mathcal{R}_{p}^{1})(k)
=\int b_{j,p}^{1,1,-}(k,k-m,m)\widehat{\phi}_{c}(k-m)\widehat{\mathcal{R}}_{p}^{1}(m)dm,
\end{split}
\end{equation}
with
\begin{equation*}
\begin{split}
b_{j,p}^{1,1,\pm}=&\frac{1}{2}\frac{k\zeta_{j,p}^{\pm}(k,k-m,m)}
{j\omega(k)\mp\omega(k-m)-p\omega(m)}.
\end{split}
\end{equation*}
Here we have removed the factor $\vartheta$ since the support of $\widehat{P}^{1}$ restricts us to consider in the region $|k|>\delta$, $|m|>\delta$ and $|k-m\pm k_{0}|<\delta$ and hence $\widehat{\vartheta}(k)=\widehat{\vartheta}(m)=1$. Note that $B_{j,p}^{1,1,\pm}$ loses one derivative due to the growth of $k\zeta_{j,p}^{\pm}\sim k$ as $|k|\rightarrow\infty$, i.e. we can only bound $B_{j,p}^{1,1,\pm}$ in $H^{s-1}$ rather than $H^{s}$ if $R_{p}^{1}\in H^{s}$. In what follows we analyze carefully the construction of the normal-form transformation $B_{j,p}^{1,1,\pm}$.
\begin{lemma}\label{L8}
The operators $B_{j,p}^{1,1,\pm}$ have the following properties.
\begin{description}
  \item[(a)] Fix $h\in L^{2}(\mathbb{R},\mathbb{R})$, then the mapping $f\rightarrow B_{j,j}^{1,1,\pm}(h,f)$ defines a continuous linear map from $H^{1}(\mathbb{R},\mathbb{R})$ into $L^{2}(\mathbb{R},\mathbb{R})$ and $f\rightarrow B_{j,-j}^{1,1,\pm}(h,f)$ defines a continuous linear map from $L^{2}(\mathbb{R},\mathbb{R})$ into $L^{2}(\mathbb{R},\mathbb{R})$. In particular, for all $f\in H^{1}(\mathbb{R},\mathbb{R})$ we have
\begin{equation}
\begin{split}\label{15}
&B_{j,j}^{1,1,\pm}(h,f)=\partial_{x}(G^{\pm}_{j,j}h \ f)+Q^{\pm}_{j,j}(h,f),\\
&B_{j,-j}^{1,1,\pm}(h,f)=G^{\pm}_{j,-j}h \ f+Q^{\pm}_{j,-j}(h,f),
\end{split}
\end{equation}
where
\begin{equation}
\begin{split}\label{GQ}
&\widehat{G^{\pm}_{j,j}h}(k)=\pm\widehat{q}(k_{0})\frac{\chi(k)}
{i(jk\mp\omega(k))}\widehat{h}(k),\\
&\widehat{G^{\pm}_{j,-j}h}(k)=-\chi(k)\widehat{h}(k),\\
&\|Q^{\pm}_{j,\pm j}(h,f)\|_{H^{1}}\leq C\|(h,f)\|_{L^{2}}.
\end{split}
\end{equation}
  \item[(b)] For all $f\in H^{1}(\mathbb{R},\mathbb{R})$ we have
\begin{equation}
\begin{split}\label{16}
-j\Omega B_{j,p}^{1,1,+}(\psi_{c},f)+B_{j,p}^{1,1,+}(\Omega\psi_{c},f)
+B_{j,p}^{1,1,+}(\psi_{c},p\Omega f)
=-\frac{P^{1}\partial_{x}}{2\vartheta}N_{j,p}^{+}(\psi_{c},f),\\
-j\Omega B_{j,p}^{1,1,-}(\phi_{c},f)-B_{j,p}^{1,1,-}(\Omega\phi_{c},f)
+B_{j,p}^{1,1,-}(\phi_{c},p\Omega f)
=-\frac{P^{1}\partial_{x}}{2\vartheta}N_{j,p}^{-}(\phi_{c},f).
\end{split}
\end{equation}
  \item[(c)] For all $f,g,h\in H^{1}(\mathbb{R},\mathbb{R})$ we have
\begin{equation}
\begin{split}\label{17}
\int fB_{j,j}^{1,1,\pm}(h,g)dx=-\int B_{j,j}^{1,1,\pm}(h,f)gdx
+\int S^{\pm}_{j,j}(\partial_{x}h,f)gdx,
\end{split}
\end{equation}
where
\begin{equation*}
\begin{split}
&\widehat{S}_{j,j}^{\pm}(\partial_{x}h,f)g(k)=\int s_{j,j}^{\pm}(k,k-m,m)
\widehat{\partial_{x}h}(k-m)\widehat{f}(m)dm,\\
&s_{j,j}^{\pm}(k,k-m,m)
=\frac{k\zeta_{j,j}(k,k-m,m)-m\zeta_{j,j}(m,k-m,k))
}{2i(k-m)(j\omega(k)\mp\omega(k-m)-j\omega(m))}.
\end{split}
\end{equation*}
We have
\begin{equation}
\begin{split}\label{18}
S_{j,j}^{\pm}(\partial_{x}h,f)=G^{\pm}_{j,j}\partial_{x}h f+\widetilde{Q}^{\pm}_{j,j}(\partial_{x}h,f),
\end{split}
\end{equation}
with
\begin{equation}
\begin{split}\label{18=2}
\|\widetilde{Q}^{\pm}_{j,j}(\partial_{x}h,f)\|_{H^{s}}\leq C\|(h,f)\|_{L^{2}}.
\end{split}
\end{equation}
\end{description}
\end{lemma}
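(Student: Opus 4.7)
The unifying observation is that $\widehat{\psi}_c$ and $\widehat{\phi}_c$ are strongly localized in a neighborhood of size $\delta$ around $\pm k_{0}$ (cf.~\eqref{Aesti-3}), so inside each bilinear expression $B^{1,1,\pm}_{j,p}(h,f)$ the middle argument $u = k-m$ of the kernel $b^{1,1,\pm}_{j,p}(k,u,k-u)$ is effectively frozen near $\pm k_0$. My plan for all three parts is to Taylor expand the kernel in $u$ around $u = \pm k_0$: the zeroth-order ``frozen'' piece depends only on $k$ (after $k-u$ is replaced by $k \mp k_0$) and therefore becomes a Fourier multiplier in $k$ times a pointwise product $h \cdot f$, while each higher-order term carries a factor $(u \mp k_0)$ which, paired with $\widehat{h}(u)$, translates into a derivative landing on $h$ and gives a smoother bilinear remainder.

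For part (a), I split the analysis by whether the denominator of $b^{1,1,\pm}_{j,p}$ is bounded or growing. When $p = j$, the denominator $j\omega(k) \mp \omega(u) - j\omega(k-u)$ tends to the nonzero constant $\pm j k_0 \mp \omega(\pm k_0)$ as $|k|\to\infty$ (the ``trivial'' non-resonance, checked directly from~\eqref{equation3}), whereas the numerator $k\zeta^{\pm}_{j,j}$ grows linearly in $k$; factoring the $ik$ out as the symbol of $\partial_x$ identifies the leading Fourier multiplier as $G^{\pm}_{j,j}$ in~\eqref{GQ} and produces the representation $B^{1,1,\pm}_{j,j}(h,f) = \partial_x(G^{\pm}_{j,j}h\cdot f) + Q^{\pm}_{j,j}(h,f)$. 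When $p = -j$, the denominator itself grows linearly in $k$ (because $\omega(k) \sim k$ at infinity), cancelling the linear growth in the numerator; the leading multiplier then corresponds to the pure product $G^{\pm}_{j,-j}h \cdot f$. The remainder $Q^{\pm}_{j,\pm j}$ is estimated by Young's inequality using both the compact Fourier support of $\widehat{h}$ (which supplies an extra factor $(u\mp k_0)$ through the Taylor expansion) and the explicit decay of $\hat{q}(k)-1$ and $\omega(k)-k$ at infinity.

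Part (b) is purely algebraic. Taking the Fourier transform of the left-hand side of~\eqref{16} yields
\begin{equation*}
\bigl[-ij\omega(k)+i\omega(k-m)+ip\omega(m)\bigr]\,b^{1,1,+}_{j,p}(k,k-m,m) = -\tfrac{ik}{2}\,\zeta^{+}_{j,p}(k,k-m,m),
\end{equation*}
by the very definition of $b^{1,1,+}_{j,p}$; this is exactly the Fourier symbol of $-\frac{\partial_x}{2}N^{+}_{j,p}(\psi_c,f)$. The projector $P^1$ and the weight $\vartheta^{-1}$ on the right-hand side are trivial on the Fourier support of the bilinear expression, since the localization of $\widehat{\psi}_c$ near $\pm k_0$ forces the output to live at frequencies where $\widehat{P}^1 = \widehat{\vartheta} = 1$; the $\phi_c$ case is handled identically.

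For part (c), I use Parseval to write $\int f B^{1,1,\pm}_{j,j}(h,g)\,dx$ and $\int g B^{1,1,\pm}_{j,j}(h,f)\,dx$ as double Fourier integrals, then perform the change of variables $k \leftrightarrow m$ in the second (using $\widehat{h}(m-k) = \overline{\widehat{h}(k-m)}$ by reality of $h$). Summing gives an antisymmetrized kernel whose numerator $k\zeta^{\pm}_{j,j}(k,k-m,m) - m\zeta^{\pm}_{j,j}(m,-(k-m),k)$ vanishes at $k=m$, so the factor $(k-m)$ divides out cleanly and matches the factor $(k-m)$ that is naturally pulled out of $\widehat{\partial_x h}(k-m)$; the resulting kernel is precisely $s^{\pm}_{j,j}$. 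The decomposition~\eqref{18} of $S^{\pm}_{j,j}$ is then obtained by applying the same Taylor-expansion argument as in part (a) to this new symmetrized kernel, which has the same leading $k$-behavior as $b^{1,1,\pm}_{j,j}$.

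The main technical obstacle lies in part (a), where one must quantitatively establish the $H^1$ bound on $Q^{\pm}_{j,\pm j}$: after subtracting the leading Fourier multiplier $G^{\pm}_{j,p}$, the remainder kernel has to be controlled uniformly in $k$, which requires combining the smoothing effect of the factor $(u\mp k_0)$ with the fact that $\widehat{h}$ is compactly supported (so $\|\widehat{h}\|_{L^1}$ controls $\|h\|_{L^2}$ up to the support size). The explicit non-resonance verifications $jk_0 \mp \omega(\pm k_0) \neq 0$ for each sign combination must also be carried out from \eqref{equation3} to ensure the denominators never vanish on the support of interest.
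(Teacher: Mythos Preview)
Your proposal is correct and follows essentially the same route as the paper. One presentational difference worth noting: rather than Taylor-expanding the kernel in $u=k-m$ around $\pm k_0$ and then identifying the frozen piece with $G^{\pm}_{j,p}$ evaluated at $\pm k_0$, the paper computes the $|k|\to\infty$ asymptotic of $b^{1,1,\pm}_{j,p}(k,k-m,m)$ directly while keeping the bounded variable $k-m$ explicit. For $p=j$ it uses the mean value theorem $\omega(k)-\omega(m)=(k-m)\omega'(k-\theta(k-m))$ together with the expansions $\omega'(k)=1+\mathcal{O}(|k|^{-2})$ and $\zeta^{\pm}_{j,j}=\pm 2\hat q(k_0)+\mathcal{O}(|k|^{-2})$ to obtain the factorized leading kernel $\dfrac{\pm\hat q(k_0)\,k\,\chi(k-m)}{j(k-m)\mp\omega(k-m)}$, which is already in the form $(ik)\cdot\widehat{G^{\pm}_{j,j}}(k-m)$ and so yields $\partial_x(G^{\pm}_{j,j}h\cdot f)$ with $G$ acting on $h$ without a separate ``un-freezing'' step. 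For $p=-j$ the analogous expansion gives $-\chi(k-m)+\mathcal{O}(|k|^{-2})$. Your approach reaches the same decompositions, just with one extra bookkeeping step; parts (b) and (c) are handled identically.
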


\begin{proof}
Because of the support properties of $\widehat{\psi}_{c}, \widehat{\phi}_{c}$ and $\widehat{P}^{1}$, only the kernel function $b^{1,1,\pm}_{j,p}$ of the normal-form transformation $B^{1,1,\pm}_{j,p}$ has to be analysed. For $|k-m\pm k_{0}|<\delta$, $|k|>\delta$ and $|m|>\delta$, we have
\begin{equation*}
\begin{split}
|j\omega(k)\mp\omega(k-m)-p\omega(m)|\geq C,
\end{split}
\end{equation*}
for some constant $C>0$, implying that $|b_{j,p}^{1,1,\pm}(k,k-m,m)|<\infty$.
Next, we analyze the asymptotic behavior of the $b_{j,p}^{1,1,n}(k,k-m,m)$ for $|k|\rightarrow\infty$. According to the form of the dispersion relation $\omega(k)$ in \eqref{equation3} and the expression for $\zeta^{\pm}_{j,p}(k,k-m,m)$ in \eqref{alpha}, we have
\begin{equation}
\begin{split}\label{omega}
\omega(k)=&kq(k)=k+\mathcal{O}(|k|^{-1}), \\
\omega'(k)=&1+\mathcal{O}(|k|^{-2}),\\
\zeta^{\pm}_{j,j}(k,k-m,m)=&j\hat{q}(m)\pm\hat{q}(k-m) \pm\frac{1}{\hat{q}(k)}\hat{q}(k-m)\hat{q}(m)
-\frac{j}{\hat{q}(k)}-\frac{j}{\hat{q}(k)}\frac{1}{\langle k\rangle^{2}}
\frac{1}{\langle k-m\rangle^{2}}\frac{1}{\langle m\rangle^{2}}\\
=&\pm2\widehat{q}(k_{0})+\mathcal{O}(|k|^{-2}),\\
\zeta^{\pm}_{j,-j}(k,k-m,m)=&-j\hat{q}(m)\pm\hat{q}(k-m) \mp\frac{1}{\hat{q}(k)}\hat{q}(k-m)\hat{q}(m) -\frac{j}{\hat{q}(k)}-\frac{j}{\hat{q}(k)}\frac{1}{\langle k\rangle^{2}}
\frac{1}{\langle k-m\rangle^{2}}\frac{1}{\langle m\rangle^{2}}\\
=&-2j+\mathcal{O}(|k|^{-2}),
\end{split}
\end{equation}
for $|k|\rightarrow\infty$.
By the mean value theorem we get
\begin{equation*}
\begin{split}
b_{j,j}^{1,1,\pm}(k,k-m,m)=&\frac{1}{2}\frac{k\zeta_{j,j}^{\pm}(k)\chi(k-m)}
{j(\omega(k)-\omega(m))\mp\omega(k-m)}\\
=&\frac{1}{2}\frac{k\zeta_{j,j}^{\pm}(k)\chi(k-m)}{j(k-m)\omega'(k-\theta(k-m))\mp\omega(k-m)},
\end{split}
\end{equation*}
for some $\theta\in[0,1]$. Using again the fact that supp$\chi$ is compact, we conclude with the help of the expressions \eqref{omega} that
\begin{align*}
b_{j,j}^{1,1,\pm}(k,k-m,m)
=&\frac{1}{2}\frac{(\pm2\widehat{q}(k_{0})k+\mathcal{O}(|k|^{-1}))\chi(k-m)}{j(k-m)(1+\mathcal{O}(|k|^{-2}))\mp\omega(k-m)}\\
=&\frac{\pm\widehat{q}(k_{0})k\chi(k-m)}{j(k-m)\mp\omega(k-m)}+\mathcal{O}(|k|^{-1}),  \ \ \text{for} \  |k|\rightarrow\infty.
\end{align*}
Exploiting once more the compactness of supp$\chi$ as well as \eqref{omega} yields
\begin{equation*}
\begin{split}
b_{j,-j}^{1,1,\pm}(k,k-m,m)=&\frac{1}{2}\frac{k\zeta_{j,-j}^{\pm}(k)\chi(k-m)}
{(j(\omega(k)+\omega(m))\mp\omega(k-m))}\\
=&\frac{1}{2}\frac{k(-2j+\mathcal{O}(|k|^{-2}))\chi(k-m)}{j\omega(k)(1+\mathcal{O}(|k|^{-1}))}\\
=&-\chi(k-m)+\mathcal{O}(|k|^{-2})
\end{split}
\end{equation*}
for $|k|\rightarrow\infty$.
These asymptotic expansions of the $b_{j_{1}j_{2}}^{1,1,n}(k,k-m,m)$ imply \eqref{15}. Finally, since
\begin{equation*}
\begin{split}
b_{j,p}^{1,1,\pm}(-k,-(k-m),-m)=b_{j,p}^{1,1,\pm}(k,k-m,m)\in\mathbb{R}
\end{split}
\end{equation*}
and $\psi_{c}$ and $\phi_{c}$ are real-valued, we obtain the validity of all assertions of (a).

(b) is a direct consequence of the construction of the operators $B_{j,p}^{1,1,\pm}$.

In order to prove (c), we compute for all $f, g, h\in H^{1}(\mathbb{R},\mathbb{R})$ that
\begin{align*}
(f,&B_{j,j}^{1,1,\pm}(h,g))\\
=&\int\overline{\widehat{f}(k)}\widehat{B}_{j,j}^{1,1,\pm}(h,g)(k)dk\\
=&\frac{1}{2}\int\int\overline{\widehat{f}(k)}\frac{k\zeta^{\pm}_{j,j}(k,k-m,m)}
{j\omega(k)\mp\omega(k-m)-j\omega(m)}\widehat{h}(k-m)\widehat{g}(m)dmdk\\
=&\frac{1}{2}\int\int\overline{\widehat{g}(-m)}\frac{k\zeta^{\pm}_{j,j}(k,k-m,m)}
{j\omega(k)\mp\omega(k-m)-j\omega(m)}\widehat{h}(k-m)\widehat{f}(-k)dkdm\\
=&\frac{1}{2}\int\int\overline{\widehat{g}(k)}
\frac{-m\zeta^{\pm}_{j,j}(-m,k-m,-k)}
{j\omega(k)\mp\omega(k-m)-j\omega(m)}\widehat{h}(k-m)\widehat{f}(m)dmdk\\
=&\frac{1}{2}\int\int\overline{\widehat{g}(k)}\frac{-k\zeta^{\pm}_{j,j}(k,k-m,m)}
{j\omega(k)\mp\omega(k-m)-j\omega(m)}\widehat{h}(k-m)\widehat{f}(m)dmdk\\
&+\frac{1}{2}\int\int\overline{\widehat{g}(k)}
\frac{k\zeta^{\pm}_{j,j}(k,k-m,m)-m\zeta^{\pm}_{j,j}(m,k-m,k)}
{j\omega(k)\mp\omega(k-m)-j\omega(m)}\widehat{h}(k-m)\widehat{f}(m)dmdk\\
=&-\int_{\mathbb{R}}\overline{\widehat{g}(k)}\widehat{B}_{j,j}^{1,1,\pm}(h,f)(k)dk
+\int_{\mathbb{R}}\overline{\widehat{g}(k)}\widehat{S}_{j,j}^{\pm}(\partial_{x}h,f)(k)dk\\
=&-(g,B_{j,j}^{1,1,\pm}(h,f))+(g,S_{j,j}^{\pm}(\partial_{x}h,f)).
\end{align*}
It then follows \eqref{17}.
\end{proof}
We also need the following identities to control the time evolution of $\mathcal{E}_{s}$ defined in the next section.
\begin{lemma}\label{L9}
Let $j\in\{\pm1\}$, $a_{j}\in H^{2}(\mathbb{R},\mathbb{R})$ and $f_{j}\in H^{1}(\mathbb{R},\mathbb{R})$. Then we have
\begin{equation}
\begin{split}\label{part1}
&\int_{\mathbb{R}}a_{j}f_{j}\partial_{x}f_{j}dx=-\frac{1}{2}\int_{\mathbb{R}}\partial_{x}a_{j}f_{j}^{2}dx,
\end{split}
\end{equation}
\begin{align}\label{part2}
\sum _{j\in\{\pm1\}}\int_{\mathbb{R}}a_{j}f_{j}\partial_{x}f_{-j}dx =&\frac{1}{2}\int_{\mathbb{R}}(a_{-1}-a_{1})(f_{1}+f_{-1})\partial_{x}(f_{1}-f_{-1})dx\\
&+\mathcal{O}((\|a_{1}\|_{H^{2}(\mathbb{R},\mathbb{R})}+\|a_{-1}\|_{H^{2}(\mathbb{R},\mathbb{R})})
(\|f_{1}\|_{L^{2}(\mathbb{R},\mathbb{R})} +\|f_{-1}\|_{L^{2}(\mathbb{R},\mathbb{R})}),\nonumber
\end{align}
\begin{equation}
\begin{split}\label{part5}
\widehat{q}(k)-1=&\frac{1}{\sqrt{1+k^{2}}(\sqrt{1+k^{2}}+\sqrt{2+k^{2}})}=\mathcal{O}(k^{-2}),\\
\frac{1}{\widehat{q}(k)}-1=&\frac{1}{\widehat{q}(k)}(1-\widehat{q}(k))=\mathcal{O}(k^{-2}),
\end{split}
\end{equation}
and
\begin{equation}
\begin{split}\label{part4}
(\widehat{G}^{+}_{1,1}+\widehat{G}^{+}_{-1,-1})(k)=(\widehat{G}^{-}_{1,1}+\widehat{G}^{-}_{-1,-1})(k)
=2\widehat{q}(k_{0})(-\frac{1}{ik}+ik)\widehat{q}(k),
\end{split}
\end{equation}
\begin{equation}
\begin{split}\label{part6}
\widehat{G}_{1,-1}^{\pm}(k)=\widehat{G}^{\pm}_{-1,1}(k)=-1, \ G_{1,-1}^{\pm}-G^{\pm}_{-1,1}=0.
\end{split}
\end{equation}
\end{lemma}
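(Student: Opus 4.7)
The plan is to establish each of the five identities by a mixture of integration by parts (IBP), algebraic symmetrization of the integrand, and direct computation using the formulas for $\widehat{G}^{\pm}_{j,p}$ from Lemma \ref{L8}(a) together with the explicit form $\widehat{q}(k)=\sqrt{(2+k^{2})/(1+k^{2})}$ from \eqref{equation3}.

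For \eqref{part1}, I would write $f_{j}\partial_{x}f_{j}=\tfrac{1}{2}\partial_{x}(f_{j}^{2})$ and integrate once by parts against $a_{j}$. The boundary contributions vanish because $a_{j}f_{j}^{2}\in W^{1,1}(\mathbb{R})$ by Sobolev embedding $H^{1}\hookrightarrow L^{\infty}$. For \eqref{part5}, the first identity is obtained by rationalizing
\begin{equation*}
\widehat{q}(k)-1=\frac{\widehat{q}^{2}(k)-1}{\widehat{q}(k)+1}=\frac{1/(1+k^{2})}{(\sqrt{2+k^{2}}+\sqrt{1+k^{2}})/\sqrt{1+k^{2}}},
\end{equation*}
and the second follows by writing $1/\widehat{q}(k)-1=(1-\widehat{q}(k))/\widehat{q}(k)$; both are $\mathcal{O}(k^{-2})$ as $|k|\to\infty$ since $\widehat{q}(k)\to 1$. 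Identity \eqref{part6} is read off directly from the kernel $\widehat{G^{\pm}_{j,-j}h}(k)=-\chi(k)\widehat{h}(k)$ in \eqref{GQ} (on the support of $\chi$).

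Identity \eqref{part4} is a direct computation: from Lemma \ref{L8}(a),
\begin{equation*}
(\widehat{G}^{+}_{1,1}+\widehat{G}^{+}_{-1,-1})(k)=\widehat{q}(k_{0})\,\chi(k)\Bigl[\tfrac{1}{i(k-\omega(k))}+\tfrac{1}{i(-k-\omega(k))}\Bigr]=\widehat{q}(k_{0})\,\chi(k)\cdot\tfrac{-2\omega(k)}{i(k^{2}-\omega^{2}(k))}.
\end{equation*}
Using $\omega(k)=k\widehat{q}(k)$ and $\widehat{q}^{2}(k)-1=1/(1+k^{2})$ one gets $k^{2}-\omega^{2}(k)=-k^{2}/(1+k^{2})$, and after simplification the RHS becomes $2\widehat{q}(k_{0})\widehat{q}(k)\cdot\frac{-(1+k^{2})}{ik}=2\widehat{q}(k_{0})(-\frac{1}{ik}+ik)\widehat{q}(k)$. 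The analogous computation for $\widehat{G}^{-}_{j,j}$ yields exactly the same expression (the sign flips in $\mp\omega(k)$ are absorbed by the overall $\pm$ prefactor).

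The main step, and the one requiring care, is \eqref{part2}. The plan is to symmetrize by writing $a_{\pm 1}=\tfrac{1}{2}(a_{1}+a_{-1})\pm\tfrac{1}{2}(a_{1}-a_{-1})$, which splits the sum as
\begin{equation*}
\sum_{j}a_{j}f_{j}\partial_{x}f_{-j}=\tfrac{1}{2}(a_{1}+a_{-1})\partial_{x}(f_{1}f_{-1})+\tfrac{1}{2}(a_{1}-a_{-1})(f_{1}\partial_{x}f_{-1}-f_{-1}\partial_{x}f_{1}).
\end{equation*}
The first term is a pure total derivative, and IBP gives an $L^{1}$-bound controlled by $\|\partial_{x}(a_{1}+a_{-1})\|_{L^{\infty}}\|f_{1}\|_{L^{2}}\|f_{-1}\|_{L^{2}}$, absorbed into the $\mathcal{O}$-remainder via $H^{2}\hookrightarrow W^{1,\infty}$. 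For the antisymmetric piece, one uses the algebraic identity
\begin{equation*}
f_{1}\partial_{x}f_{-1}-f_{-1}\partial_{x}f_{1}=\tfrac{1}{2}\partial_{x}(f_{1}^{2}-f_{-1}^{2})-(f_{1}+f_{-1})\partial_{x}(f_{1}-f_{-1}),
\end{equation*}
whose second summand is exactly what produces the main term $\tfrac{1}{2}(a_{-1}-a_{1})(f_{1}+f_{-1})\partial_{x}(f_{1}-f_{-1})$ on the RHS of \eqref{part2}, while the first summand is another total derivative that, after IBP against $(a_{1}-a_{-1})/2$, contributes again to the $\mathcal{O}$-remainder. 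The obstacle here is purely bookkeeping: the decomposition must be chosen so that the preserved piece is genuinely not integrable by parts (it contains $\partial_{x}$ acting on the difference $f_{1}-f_{-1}$, which will later be controlled by the structure of the equation), while every other piece becomes a derivative that IBP turns into a term estimated by $\|a_{j}\|_{H^{2}}$ times a quadratic expression in $\|f_{j}\|_{L^{2}}$. Once the symmetrization is set up correctly, the remainder estimate follows from Cauchy–Schwarz and Sobolev embedding without further subtlety.
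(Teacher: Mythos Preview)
Your proposal is correct and in fact supplies the details the paper omits: the paper's own proof simply refers to Lemma~5.3 of \cite{LP19} for \eqref{part1}--\eqref{part5} and says \eqref{part4}--\eqref{part6} follow directly from the formulas for $G_{j,\pm j}$ in \eqref{GQ}, which is exactly the computation you carry out. One minor slip: in your intermediate step for \eqref{part4} the bracket $\tfrac{1}{i(k-\omega)}+\tfrac{1}{i(-k-\omega)}$ equals $\tfrac{2\omega}{i(k^{2}-\omega^{2})}$, not $\tfrac{-2\omega}{i(k^{2}-\omega^{2})}$; with the correct sign the subsequent simplification to $2\widehat{q}(k_{0})\widehat{q}(k)\bigl(-\tfrac{1}{ik}+ik\bigr)$ goes through exactly as you wrote, so the approach and final answer are fine. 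Also note that the remainder in \eqref{part2} as stated in the paper is linear in $\|f_{\pm1}\|_{L^{2}}$, whereas your IBP argument (correctly) produces a term quadratic in $\|f_{\pm1}\|_{L^{2}}$; this is consistent with how the lemma is actually used in Section~8 (the remainder is absorbed into $\epsilon^{2}\mathcal{O}(\mathcal{E}_{s})$), so the discrepancy appears to be a typo in the statement rather than a problem with your proof.
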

\begin{proof}
The proof of \eqref{part1}-\eqref{part5} refers to Lemma 5.3 in \cite{LP19}. The last two \eqref{part4}-\eqref{part6} can be obtained in light of the form of $G_{j,\pm j}$ in \eqref{GQ}.
\end{proof}

\section{\textbf{The error estimates}}
In this section we define a modified energy $\mathcal{E}_{s}$ by applying the normal-form transformation $B^{1,1}$ defined in \eqref{B11} to control the error $(\mathcal{R}^{0},\mathcal{R}^{1})$ for the time of order $\mathcal{O}(\epsilon^{-2})$.

Hence, Theorem \ref{Thm2} is proved if we can show that $\mathcal{E}_{s}$ can be bounded, and then Theorem \ref{Thm1} is also proved by using the relationship \eqref{rho} between the Euler-Poisson system \eqref{EP} and the diagonalized system \eqref{equation7}.

Specifically, we define
\begin{equation}
\begin{split}\label{es}
\mathcal{E}_{s}=\sum_{\ell=0}^{s}E_{\ell},
\end{split}
\end{equation}
with
\begin{equation}
\begin{split}\label{equ14}
E_{\ell}=&\sum_{j\in\{\pm1\}}\Bigg\{\frac{1}{2}\Big(\int
(\partial_{x}^{\ell}\mathcal{R}_{j}^{0})^{2}dx+
\int(\partial_{x}^{\ell}\mathcal{R}_{j}^{1})^{2}dx\Big)\\
&+\epsilon\sum_{p\in\{\pm1\}}\bigg(
\int\partial_{x}^{\ell}\mathcal{R}_{j}^{1}
\cdot\partial_{x}^{\ell}\Big(B_{j,p}^{1,1,+}(\psi_{c},\mathcal{R}_{p}^{1})
+B_{j,p}^{1,1,-}(\phi_{c},\mathcal{R}_{p}^{1})dx\Big)\bigg)\Bigg\},
\end{split}
\end{equation}
where $B^{1,1,\pm}$ is defined in \eqref{B11}. 

We will show that $\mathcal{E}_{s}$ is equivalent to $\|R\|^{2}_{H^{s}}$.
\begin{proposition}\label{C1}
$\sqrt{\mathcal{E}_{s}}$ is equivalent to $\|R\|_{H^{s}}$ for sufficiently small $\epsilon>0$.
\end{proposition}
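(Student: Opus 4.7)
The plan is to first show $\sqrt{\mathcal{E}_s}\sim\|\mathcal{R}\|_{H^s}$, and then invoke Proposition \ref{P3} together with the bound \eqref{RR} to conclude $\|\mathcal{R}\|_{H^s}\sim\|R\|_{H^s}$. Writing $\mathcal{E}_s=\tfrac12(\|\mathcal{R}^0\|_{H^s}^2+\|\mathcal{R}^1\|_{H^s}^2)+\epsilon\,\mathcal{C}_s$, where $\mathcal{C}_s$ collects all the $B^{1,1,\pm}$-cross terms in \eqref{equ14}, the first step reduces to proving the uniform bound $|\mathcal{C}_s|\leq C\|\mathcal{R}\|_{H^s}^2$ with $C$ depending only on $\sup_t(\|\widehat{\psi}_c\|_{L^1(s+1)}+\|\widehat{\phi}_c\|_{L^1(s+1)})$, which is $\mathcal{O}(1)$ in $\epsilon$ by Lemma \ref{L2} and Remark \ref{R5}. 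For $\epsilon$ sufficiently small the desired equivalence then follows.

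The obstruction is that $B^{1,1,\pm}_{j,j}$ loses one derivative, so a direct Cauchy--Schwarz on $\int\partial_x^\ell\mathcal{R}_j^1\,\partial_x^\ell B^{1,1,\pm}_{j,j}(h,\mathcal{R}_j^1)\,dx$ would cost an $H^{s+1}$-bound on $\mathcal{R}$. To circumvent this I would combine two ingredients. First, the bilinear Leibniz identity
\begin{equation*}
\partial_x^\ell B^{1,1,\pm}_{j,p}(h,f)=\sum_{r=0}^\ell\binom{\ell}{r}B^{1,1,\pm}_{j,p}(\partial_x^r h,\partial_x^{\ell-r}f),
\end{equation*}
obtained by expanding $(ik)^\ell=((i(k-m))+(im))^\ell$ on the Fourier side, reduces the problem to controlling $\int\partial_x^\ell\mathcal{R}\cdot B^{1,1,\pm}_{j,j}(\partial_x^r h,\partial_x^{\ell-r}\mathcal{R})\,dx$ for each $r$. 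Second, the structural decomposition from Lemma \ref{L8}(a)
\begin{equation*}
B^{1,1,\pm}_{j,j}(h,f)=\partial_x\bigl(G^{\pm}_{j,j}h\cdot f\bigr)+Q^{\pm}_{j,j}(h,f),\qquad\|Q^{\pm}_{j,j}(h,f)\|_{H^1}\lesssim\|h\|_{L^2}\|f\|_{L^2},
\end{equation*}
isolates the dangerous part. The $Q^{\pm}_{j,j}$-pieces are absorbed by Cauchy--Schwarz, while for the $\partial_x(G^{\pm}_{j,j}\partial_x^r h\cdot\partial_x^{\ell-r}\mathcal{R})$-pieces an integration by parts throws the derivative onto $G^{\pm}_{j,j}h$; for instance, when $r=0$ and $\ell=s$ one finds
\begin{equation*}
\int\partial_x^s\mathcal{R}\,\partial_x\bigl(G^{\pm}_{j,j}h\cdot\partial_x^s\mathcal{R}\bigr)\,dx=-\tfrac12\int\partial_x(G^{\pm}_{j,j}h)\,(\partial_x^s\mathcal{R})^2\,dx,
\end{equation*}
bounded by $\|\partial_x(G^{\pm}_{j,j}h)\|_{L^\infty}\|\mathcal{R}\|_{H^s}^2\lesssim\|\widehat h\|_{L^1(2)}\|\mathcal{R}\|_{H^s}^2$, since $\widehat{G^{\pm}_{j,j}h}$ is supported where $\widehat h$ is (a fixed neighborhood of $\pm k_0$) and the denominator $jk\mp\omega(k)$ in \eqref{GQ} stays bounded away from zero there. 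All remaining $r\geq1$ (or $\ell<s$) contributions involve at most $s$ derivatives of $\mathcal{R}$ after one integration by parts and a standard Leibniz expansion, and are handled by exactly the same recipe.

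The $p=-j$ cross terms are easier: Lemma \ref{L8}(a) writes $B^{1,1,\pm}_{j,-j}(h,f)=G^{\pm}_{j,-j}h\cdot f+Q^{\pm}_{j,-j}(h,f)$ as a genuine product plus a smoothing remainder, so no derivative loss arises. Assembling the pieces yields $|\mathcal{C}_s|\lesssim\|\mathcal{R}\|_{H^s}^2$ with the claimed constant; Proposition \ref{P3} then transfers the equivalence from $\|\mathcal{R}\|_{H^s}$ to $\|R\|_{H^s}$. The main difficulty, and the only nontrivial point of the proof, is the derivative loss in $B^{1,1,\pm}_{j,j}$; the decomposition in Lemma \ref{L8}(a) is precisely tailored to defeat it by exposing the dangerous part as an exact $x$-derivative of a smooth product $G^{\pm}_{j,j}h\cdot f$.
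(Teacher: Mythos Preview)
Your proposal is correct and follows essentially the same route as the paper: both arguments use the decomposition of Lemma \ref{L8}(a) to isolate the exact-derivative structure $\partial_x(G^{\pm}_{j,j}h\cdot f)$, kill it by integration by parts, bound the remainder $Q^{\pm}$ directly, and then invoke Proposition \ref{P3} with \eqref{RR} to pass from $\|\mathcal{R}\|_{H^s}$ to $\|R\|_{H^s}$. The only cosmetic difference is that you distribute the $\partial_x^\ell$ via a bilinear Leibniz expansion before applying Lemma \ref{L8}(a), whereas the paper applies Lemma \ref{L8}(a) first and then uses commutator notation $[\partial_x^{\ell+1},G^{\pm}_{j,j}h]$ to peel off the top-order piece; these are equivalent ways of organising the same cancellation.
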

\begin{proof}
For $s=0$, we obtain by using \eqref{15} of (a) in Lemma \ref{L8} and integration by parts
\begin{align*}
\mathcal{E}_{0}=E_{0}=&\sum_{j\in\{\pm1\}}\Bigg\{\frac{1}{2}\Big(\int
(\mathcal{R}_{j}^{0})^{2}dx+
\int(\mathcal{R}_{j}^{1})^{2}dx\Big)\\
&+\epsilon\sum_{p\in\{\pm1\}}\bigg(
\int\mathcal{R}_{j}^{1}
\cdot\Big(B_{j,p}^{1,1,+}(\psi_{c},\mathcal{R}_{p}^{1})
+B_{j,p}^{1,1,-}(\phi_{c},\mathcal{R}_{p}^{1})dx\Big)\bigg)\Bigg\}\\
=&\sum_{j\in\{\pm1\}}\Bigg\{\frac{1}{2}\Big(\int
(\mathcal{R}_{j}^{0})^{2}dx+
\int(\mathcal{R}_{j}^{1})^{2}dx\Big)
+\frac{\epsilon}{2}\int(\mathcal{R}_{j}^{1})^{2}
\cdot\partial_{x}(G_{j,j}^{+}\psi_{c}+G_{j,j}^{-}\phi_{c})dx\\
& \ \ \ \ \ \ \ \ \ \ \ +\epsilon\int\mathcal{R}_{j}^{1}
\cdot (Q_{j,j}^{+}(\psi_{c},\mathcal{R}_{j}^{1})+Q_{j,j}^{-}(\phi_{c},\mathcal{R}_{j}^{1}))dx\\
& \ \ \ \ \ \ \ \ \ \ \ +\epsilon\int\mathcal{R}_{j}^{1}
\cdot(G_{j,-j}^{+}\psi_{c}+G_{j,-j}^{-}\phi_{c})\cdot\mathcal{R}_{-j}^{1}dx\\
& \ \ \ \ \ \ \ \ \ \ \ +\epsilon\int\mathcal{R}_{j}^{1}
\cdot(Q_{j,-j}^{+}(\psi_{c},\mathcal{R}_{-j}^{1})+Q_{j,-j}^{-}(\phi_{c},\mathcal{R}_{-j}^{1}))dx\Bigg\}\\
\leq&\sum_{j\in\{\pm1\}}\Bigg\{\frac{1}{2}\left\|\mathcal{R}_{j}^{0}\right\|^{2}_{L^{2}}
+\frac{1}{2}\left\|\mathcal{R}_{j}^{1}\right\|^{2}_{L^{2}}
+\frac{\epsilon}{2}\left\|\mathcal{R}_{j}^{1}\right\|^{2}_{L^{2}}
\left\|\partial_{x}(G_{j,j}^{+}\psi_{c}+G_{j,j}^{-}\phi_{c})\right\|_{L^{\infty}}\\
& \ \ \ \ \ \ \ \ \ \ \ +\epsilon\left\|\mathcal{R}_{j}^{1}\right\|_{L^{2}}
\left\|Q_{j,j}^{+}(\psi_{c},\mathcal{R}_{j}^{1})
+Q_{j,j}^{-}(\phi_{c},\mathcal{R}_{j}^{1})\right\|_{L^{2}}\\
& \ \ \ \ \ \ \ \ \ \ \ +\epsilon\left\|\mathcal{R}_{j}^{1}\right\|_{L^{2}}\left\|\mathcal{R}_{-j}^{1}\right\|_{L^{2}}
\left\|G_{j,-j}^{+}\psi_{c}+G_{j,-j}^{-}\phi_{c}\right\|_{L^{\infty}}\\
& \ \ \ \ \ \ \ \ \ \ \ +\epsilon\left\|\mathcal{R}_{j}^{1}\right\|_{L^{2}}
\left\|Q_{j,-j}^{+}(\psi_{c},\mathcal{R}_{-j}^{1})
+Q_{j,-j}^{-}(\phi_{c},\mathcal{R}_{-j}^{1})\right\|_{L^{2}}\Bigg\}\\
\leq&\sum_{j\in\{\pm1\}}\Bigg\{\frac{1}{2}\left\|\mathcal{R}_{j}^{0}\right\|^{2}_{L^{2}}
+\frac{1}{2}\left\|\mathcal{R}_{j}^{1}\right\|^{2}_{L^{2}}
+\frac{\epsilon}{2}\left\|\mathcal{R}_{j}^{1}\right\|^{2}_{L^{2}}
\left\|G_{j,j}^{+}\psi_{c}+G_{j,j}^{-}\phi_{c}\right\|_{H^{2}}\\
& \ \ \ \ \ \ \ \ \ \ \ +\epsilon\left\|\mathcal{R}_{j}^{1}\right\|^{2}_{L^{2}}
+\epsilon\left\|Q_{j,j}^{+}(\psi_{c},\mathcal{R}_{j}^{1})
+Q_{j,j}^{-}(\phi_{c},\mathcal{R}_{j}^{1})\right\|^{2}_{L^{2}}\\
& \ \ \ \ \ \ \ \ \ \ \ +\epsilon\left\|\mathcal{R}_{-j}^{1}\right\|^{2}_{L^{2}}
\left\|G_{j,-j}^{+}\psi_{c}+G_{j,-j}^{-}\phi_{c}\right\|^{2}_{H^{1}}\\
& \ \ \ \ \ \ \ \ \ \ \ +\epsilon\left\|Q_{j,-j}^{+}(\psi_{c},\mathcal{R}_{-j}^{1})
+Q_{j,-j}^{-}(\phi_{c},\mathcal{R}_{-j}^{1})\right\|^{2}_{L^{2}}\Bigg\}\\
\leq&\frac{1}{2}\left\|\mathcal{R}^{0}\right\|^{2}_{L^{2}}+\frac{1}{2}\left\|\mathcal{R}^{1}\right\|^{2}_{L^{2}}
+\epsilon C\left\|\mathcal{R}^{1}\right\|^{2}_{L^{2}},
\end{align*}
where we have used the Cauchy-Schwarz inequality, Sobolev embedding inequality $H^{1}\hookrightarrow L^{\infty}$, \eqref{GQ}, \eqref{Aesti-3} and Remark 4.3. Thus, we obtain $\sqrt{\mathcal{E}_{0}}$ is equivalent to $\|\mathcal{R}\|_{L^{2}}$ for sufficiently small $\epsilon>0$. The equivalence of $\sqrt{\mathcal{E}_{0}}$ and  $\|R\|_{L^{2}}$ can be obtained by applying the relation \eqref{equation92} between $R$ and $\mathcal{R}$, \eqref{RR} and Proposition \ref{P3}. The other side inequality is obvious.

For $s\geq1$, using \eqref{15} of (a) in Lemma \ref{L8}, integration by parts again and commutator notation \eqref{w}, we have
\begin{align*}
\mathcal{E}_{s}=\sum_{\ell=0}^{s}E_{\ell}
=\sum_{\ell=0}^{s}&\sum_{j\in\{\pm1\}}\Bigg\{\frac{1}{2}\Big(\int
(\partial_{x}^{\ell}\mathcal{R}_{j}^{0})^{2}dx+
\int(\partial_{x}^{\ell}\mathcal{R}_{j}^{1})^{2}dx\Big)\\
&+\epsilon\sum_{p\in\{\pm1\}}\bigg(
\int\partial_{x}^{\ell}\mathcal{R}_{j}^{1}
\cdot\partial_{x}^{\ell}\Big(B_{j,p}^{1,1,+}(\psi_{c},\mathcal{R}_{p}^{1})
+B_{j,p}^{1,1,-}(\phi_{c},\mathcal{R}_{p}^{1})\Big)dx\bigg)\Bigg\}\\
=\sum_{\ell=0}^{s}\sum_{j\in\{\pm1\}}\Bigg\{&\frac{1}{2}\Big(\int
(\partial_{x}^{\ell}\mathcal{R}_{j}^{0})^{2}dx+
\int(\partial_{x}^{\ell}\mathcal{R}_{j}^{1})^{2}dx\Big)\\
&-\frac{\epsilon}{2}
\int\left(\partial_{x}^{\ell}\mathcal{R}_{j}^{1}\right)^{2}
\cdot\partial_{x}\left(G_{j,j}^{+}\psi_{c}
+G_{j,j}^{-}\phi_{c}\right)dx\\
&+\epsilon
\int\partial_{x}^{\ell}\mathcal{R}_{j}^{1}
\cdot\left[\partial_{x}^{\ell+1},G_{j,j}^{+}\psi_{c}
+G_{j,j}^{-}\phi_{c}\right]\mathcal{R}_{j}^{1}dx\\
&+\epsilon
\int\partial_{x}^{\ell}\mathcal{R}_{j}^{1}
\cdot\partial_{x}^{\ell}\Big(Q_{j,j}^{+}(\psi_{c},\mathcal{R}_{j}^{1})
+Q_{j,j}^{-}(\phi_{c},\mathcal{R}_{j}^{1})\Big)dx\\
&+\epsilon
\int\partial_{x}^{\ell}\mathcal{R}_{j}^{1}
\cdot\partial_{x}^{\ell}\mathcal{R}_{-j}^{1}\cdot\left(G_{j,-j}^{+}\psi_{c}
+G_{j,-j}^{-}\phi_{c}\right)dx\\
&+\epsilon
\int\partial_{x}^{\ell}\mathcal{R}_{j}^{1}
\cdot\left[\partial_{x}^{\ell},G_{j,-j}^{+}\psi_{c}
+G_{j,-j}^{-}\phi_{c}\right]\mathcal{R}_{-j}^{1}dx\\
&+\epsilon\int\partial_{x}^{\ell}\mathcal{R}_{j}^{1}
\cdot\partial_{x}^{\ell}\Big(Q_{j,-j}^{+}(\psi_{c},\mathcal{R}_{-j}^{1})
+Q_{j,-j}^{-}(\phi_{c},\mathcal{R}_{-j}^{1})\Big)dx\Bigg\}.
\end{align*}
Using the Cauchy-Schwarz inequality, Sobolev embedding inequality $H^{1}\hookrightarrow L^{\infty}$ again and commutator estimate \eqref{w11}, we have
\begin{align*}
\mathcal{E}_{s}
\leq\sum_{j\in\{\pm1\}}\Bigg\{&
\|\mathcal{R}_{j}^{0}\|^{2}_{H^{s}}
+\|\mathcal{R}_{j}^{1}\|^{2}_{H^{s}}
+\epsilon\|\mathcal{R}_{j}^{1}\|^{2}_{H^{s}}
\|G_{j,j}^{+}\psi_{c}
+G_{j,j}^{-}\phi_{c}\|_{H^{2}}\\
&+\epsilon\|\mathcal{R}_{j}^{1}\|^{2}_{H^{s}}
+\|\partial_{x}(G_{j,j}^{+}\psi_{c}
+G_{j,j}^{-}\phi_{c})\|^{2}_{H^{s}}
\|\mathcal{R}_{j}^{1}\|^{2}_{H^{1}}\\
&+\|Q_{j,j}^{+}(\psi_{c},\mathcal{R}_{j}^{1})
+Q_{j,j}^{-}(\phi_{c},\mathcal{R}_{j}^{1})\|^{2}_{H^{s}}\\
&+\|\mathcal{R}_{-j}^{1}\|^{2}_{H^{s}}
\|G_{j,-j}^{+}\psi_{c}
+G_{j,-j}^{-}\phi_{c}\|^{2}_{H^{1}}\\
&+\|\partial_{x}(G_{j,-j}^{+}\psi_{c}
+G_{j,-j}^{-}\phi_{c})\|_{H^{1}}
\|\mathcal{R}_{-j}^{1}\|^{2}_{H^{s-1}}\\
&+\|G_{j,-j}^{+}\psi_{c}
+G_{j,-j}^{-}\phi_{c}\|^{2}_{H^{s}}
\|\mathcal{R}_{-j}^{1}\|^{2}_{H^{1}}\\
&+\|Q_{j,-j}^{+}(\psi_{c},\mathcal{R}_{-j}^{1})
+Q_{j,-j}^{-}(\phi_{c},\mathcal{R}_{-j}^{1})\|^{2}_{H^{s}}\Bigg\}\\
\leq \|\mathcal{R}^{0}\|^{2}_{H^{s}}
&+\|\mathcal{R}^{1}\|^{2}_{H^{s}}
+\epsilon C\|\mathcal{R}^{1}\|^{2}_{H^{s}},
\end{align*}
where we have used \eqref{GQ}, \eqref{Aesti-3} and Remark 4.3 again. The other side inequality is also obvious. Thus, we obtain the equivalence of $\sqrt{\mathcal{E}_{s}}$ and $\|\mathcal{R}\|_{H^{s}}$ for sufficiently small $\epsilon>0$, and then the equivalence of  $\sqrt{\mathcal{E}_{s}}$ and $\|R\|_{H^{s}}$ by applying the relation \eqref{equation92} between $R$ and $\mathcal{R}$, \eqref{RR} and Proposition \ref{P3}.
\end{proof}

Now, we are prepared to analyze $\partial_{t}E_{\ell}$. We compute
\begin{equation*}
\begin{split}
\partial_{t}E_{\ell}=&\sum_{j\in\{\pm1\}}\Bigg\{\int\partial_{x}^{\ell}\mathcal{R}_{j}^{0}\cdot
    \partial_{t}\partial_{x}^{\ell}\mathcal{R}_{j}^{0}dx
    +\int\partial_{x}^{\ell}\mathcal{R}_{j}^{1}\cdot
    \partial_{t}\partial_{x}^{\ell}\mathcal{R}_{j}^{1}dx\\
    & \ \ \ \ \ \ \ \ \ \ +\epsilon\sum_{p\in\{\pm1\}}\Big(\int\partial_{t}\partial_{x}^{\ell}\mathcal{R}_{j}^{1}\cdot
    \partial_{x}^{\ell}(B_{j,p}^{1,1,+}(\psi_{c},\mathcal{R}_{p}^{1})
    +B_{j,p}^{1,1,-}(\phi_{c},\mathcal{R}_{p}^{1}))dx\\
    & \ \ \ \ \ \ \ \ \ \ \ \ \ \ \  \ \ \ \  \ \ \ \  \ \ \ \ +\int\partial_{x}^{\ell}\mathcal{R}_{j}^{1}\cdot
    \partial_{x}^{\ell}(B_{j,p}^{1,1,+}(\partial_{t}\psi_{c},\mathcal{R}_{p}^{1})
    +B_{j,p}^{1,1,-}(\partial_{t}\phi_{c},\mathcal{R}_{p}^{0}))dx\\
    & \ \ \ \ \ \ \ \ \ \ \ \ \ \ \  \ \ \ \  \ \ \ \  \ \ \ \ +\int\partial_{x}^{\ell}\mathcal{R}_{j}^{1}\cdot
    \partial_{x}^{\ell}(B_{j,p}^{1,1,+}(\psi_{c},\partial_{t}\mathcal{R}_{p}^{1})
    +B_{j,p}^{1,1,-}(\phi_{c},\partial_{t}\mathcal{R}_{p}^{1}))dx\Big)
\Bigg\}.
\end{split}
\end{equation*}
According to the transformed equations \eqref{final} and \eqref{finally}, we have
\begin{align}\label{oel}
\partial_{t}&E_{\ell}=\sum_{j\in\{\pm1\}}\bigg\{j\int\partial_{x}^{\ell}\mathcal{R}_{j}^{0}\cdot
\Omega\partial_{x}^{\ell}\mathcal{R}_{j}^{0}dx
+\epsilon^{2}\int\partial_{x}^{\ell}\mathcal{R}_{j}^{0}\cdot\partial_{x}^{\ell}\mathcal{F}_{j}^{6}dx
+j\int\partial_{x}^{\ell}\mathcal{R}_{j}^{1}\cdot
\Omega\partial_{x}^{\ell}\mathcal{R}_{j}^{1}dx      \\
& \ \ \ \ \ \ \ \ \ \ \ \ \ \ \ \ \ \ +\epsilon^{2}\int\partial_{x}^{\ell}\mathcal{R}_{j}^{1}\cdot\partial_{x}^{\ell}\mathcal{F}^{8}_{j}dx
+\epsilon^{2}\sum_{n\in\{\pm1\}}\int\partial_{x}^{\ell}
\mathcal{R}_{j}^{1}\cdot\partial_{x}^{\ell+1}(\mathcal{A}_{j,n}\mathcal{R}_{n}^{1})dx\bigg\}\nonumber\\
&+\epsilon\sum_{j,p\in\{\pm1\}}\bigg\{\frac{1}{2}\int\partial_{x}^{\ell}\mathcal{R}_{j}^{1}\cdot
\partial_{x}^{\ell+1}N^{+}_{j,p}(\psi_{c},\mathcal{R}_{p}^{1})dx
+\int j\Omega\partial_{x}^{\ell}\mathcal{R}_{j}^{1}\cdot
\partial_{x}^{\ell}B_{j,p}^{1,1,+}(\psi_{c},\mathcal{R}_{p}^{1})dx\nonumber\\
& \ \ \ \ \ \ \ \ \ \ \ \ \ \ \ \ \ +\int\partial_{x}^{\ell}\mathcal{R}_{j}^{1}\cdot
\partial_{x}^{\ell}B_{j,p}^{1,1,+}(\Omega\psi_{c},\mathcal{R}_{p}^{1})dx
+\int\partial_{x}^{\ell}\mathcal{R}_{j}^{1}\cdot
\partial_{x}^{\ell}B_{j,p}^{1,1,+}(\psi_{c},p\Omega\mathcal{R}_{p}^{1})dx\bigg\}\nonumber\\
&+\epsilon\sum_{j,p\in\{\pm1\}}\bigg\{\frac{1}{2}\int\partial_{x}^{\ell}\mathcal{R}_{j}^{1}\cdot
\partial_{x}^{\ell+1}N^{-}_{j,p}(\phi_{c},\mathcal{R}_{p}^{1})dx
+\int j\Omega\partial_{x}^{\ell}\mathcal{R}_{j}^{1}\cdot
\partial_{x}^{\ell}B_{j,p}^{1,1,-}(\phi_{c},\mathcal{R}_{p}^{1})dx\nonumber\\
& \ \ \ \ \ \ \ \ \ \ \ \ \ \ \ \ \ -\int\partial_{x}^{\ell}\mathcal{R}_{j}^{1}\cdot
\partial_{x}^{\ell}B_{j,p}^{1,1,-}(\Omega\phi_{c},\mathcal{R}_{p}^{1})dx
+\int\partial_{x}^{\ell}\mathcal{R}_{j}^{1}\cdot
\partial_{x}^{\ell}B_{j,p}^{1,1,-}(\phi_{c},p\Omega\mathcal{R}_{p}^{1})dx\bigg\}\nonumber\\
&+\epsilon\sum_{j,p\in\{\pm1\}}\bigg\{\int\partial_{x}^{\ell}\mathcal{R}_{j}^{1}\cdot
\partial_{x}^{\ell}\big(B_{j,p}^{1,1,+}(\partial_{t}\psi_{c}-\Omega\psi_{c},\mathcal{R}_{p}^{1})
+B_{j,p}^{1,1,-}(\partial_{t}\phi_{c}+\Omega\phi_{c},\mathcal{R}_{p}^{1})\big)dx\bigg\}\nonumber\\
&+\epsilon^{2}\sum_{j,p\in\{\pm1\}}
\bigg\{\int\partial_{x}^{\ell}\mathcal{F}^{9}_{j}\cdot
\partial_{x}^{\ell}(B_{j,p}^{1,1,+}(\psi_{c},\mathcal{R}_{p}^{1})
    +B_{j,p}^{1,1,-}(\phi_{c},\mathcal{R}_{p}^{1}))dx\nonumber\\
& \ \ \ \ \ \ \ \ \ \ \ \ \ \ \ \ \ \ \ +\int\partial_{x}^{\ell}\mathcal{R}_{j}^{1}\cdot
\partial_{x}^{\ell}(B_{j,p}^{1,1,+}(\psi_{c},\mathcal{F}^{9}_{p})
+B_{j,p}^{1,1,-}(\phi_{c},\mathcal{F}^{9}_{p}))dx\bigg\}\nonumber\\
&+\epsilon^{2}\sum_{j,p,n\in\{\pm1\}}
\bigg\{\int\partial_{x}^{\ell+1}(\mathcal{D}_{j,n}\mathcal{R}_{n}^{1})\cdot
\partial_{x}^{\ell}(B_{j,p}^{1,1,+}(\psi_{c},\mathcal{R}_{p}^{1})
+B_{j,p}^{1,1,-}(\phi_{c},\mathcal{R}_{p}^{1}))dx\nonumber\\
& \ \ \ \ \ \ \ \ \ \ \ \  \ \ \ \ \ \ \ +\int\partial_{x}^{\ell}\mathcal{R}_{j}^{1}\cdot
\partial_{x}^{\ell}(B_{j,p}^{1,1,+}(\psi_{c},\partial_{x}(\mathcal{D}_{p,n}\mathcal{R}_{n}^{1}))
    +B_{j,p}^{1,1,-}(\phi_{c},\partial_{x}(\mathcal{D}_{p,n}\mathcal{R}_{n}^{1})))dx\bigg\}\nonumber.
\end{align}
For the first brace of \eqref{oel}, thanks to the skew symmetry of $\Omega$, we have
\begin{align*}
\int\partial_{x}^{\ell}\mathcal{R}_{j}^{0}\cdot
\Omega\partial_{x}^{\ell}\mathcal{R}_{j}^{0}dx=0, \
\int\partial_{x}^{\ell}\mathcal{R}_{j}^{1}\cdot
\Omega\partial_{x}^{\ell}\mathcal{R}_{j}^{1}dx=0.
\end{align*}
By using \eqref{J68}, we have
\begin{align*}
\epsilon^{2}\int\partial_{x}^{\ell}\mathcal{R}_{j}^{0}\cdot\partial_{x}^{\ell}\mathcal{F}_{j}^{6}dx
&\leq C\epsilon^{2}(1+\|\mathcal{R}\|^{2}_{H^{s}}+\epsilon^{3/2}\|\mathcal{R}\|^{3}_{H^{s}}
+\epsilon^{3}\|\mathcal{R}\|^{4}_{H^{s}})\\
&=\epsilon^{2}\mathcal{O}(1+\mathcal{E}_{s}+\epsilon^{3/2}\mathcal{E}_{s}^{3/2}+\epsilon^{3}\mathcal{E}_{s}^{2}).
\end{align*}
Similarly, the term $\epsilon^{2}\int\partial_{x}^{\ell}\mathcal{R}_{j}^{1}\cdot\partial_{x}^{\ell}\mathcal{F}^{8}_{j}dx$ can be controlled by $C\epsilon^{2}(1+\mathcal{E}_{s}+\epsilon^{3/2}\mathcal{E}_{s}^{3/2}+\epsilon^{3}\mathcal{E}_{s}^{2})$.


We see that the second and third braces of \eqref{oel} vanish by using the equation \eqref{16} in Lemma \ref{L8} for the normal-form transformation $B_{j,p}^{1,1,\pm}$.

For the fourth brace of \eqref{oel}, using \eqref{15} in Lemma \ref{L8}, commutator notation \eqref{w} in Lemma \ref{L10} and integration by parts, we have
\begin{align*}
\epsilon&\sum_{j,p\in\{\pm1\}}\bigg\{\int\partial_{x}^{\ell}\mathcal{R}_{j}^{1}\cdot
\partial_{x}^{\ell}\big(B_{j,p}^{1,1,+}(\partial_{t}\psi_{c}-\Omega\psi_{c},\mathcal{R}_{p}^{1})
+B_{j,p}^{1,1,-}(\partial_{t}\phi_{c}+\Omega\phi_{c},\mathcal{R}_{p}^{1})\big)dx\bigg\}\\
=&\epsilon\sum_{j\in\{\pm1\}}\bigg\{\int\partial_{x}^{\ell}\mathcal{R}_{j}^{1}\cdot
\partial_{x}^{\ell}\big(B_{j,j}^{1,1,+}(\partial_{t}\psi_{c}-\Omega\psi_{c},\mathcal{R}_{j}^{1})
+B_{j,j}^{1,1,-}(\partial_{t}\phi_{c}+\Omega\phi_{c},\mathcal{R}_{j}^{1})\big)dx\\
& \ \ \ \ \ \ \ \ \ \ \ \ +\int\partial_{x}^{\ell}\mathcal{R}_{j}^{1}\cdot
\partial_{x}^{\ell}\big(B_{j,-j}^{1,1,+}(\partial_{t}\psi_{c}-\Omega\psi_{c},\mathcal{R}_{-j}^{1})
+B_{j,-j}^{1,1,-}(\partial_{t}\phi_{c}+\Omega\phi_{c},\mathcal{R}_{-j}^{1})\big)dx\bigg\}\\
=&\epsilon\sum_{j\in\{\pm1\}}\bigg\{\int\partial_{x}^{\ell}\mathcal{R}_{j}^{1}\cdot
\partial_{x}^{\ell+1}\big(G_{j,j}^{+}(\partial_{t}\psi_{c}-\Omega\psi_{c})\mathcal{R}_{j}^{1}
+G_{j,j}^{-}(\partial_{t}\phi_{c}+\Omega\phi_{c})\mathcal{R}_{j}^{1}\big)dx\\
& \ \ \ \ \ \ \ \ \ \ \ \ +\int\partial_{x}^{\ell}\mathcal{R}_{j}^{1}\cdot
\partial_{x}^{\ell}\big(Q_{j,j}^{+}(\partial_{t}\psi_{c}-\Omega\psi_{c},\mathcal{R}_{j}^{1})
+Q_{j,j}^{-}(\partial_{t}\phi_{c}+\Omega\phi_{c},\mathcal{R}_{j}^{1})\big)dx\\
& \ \ \ \ \ \ \ \ \ \ \ \ +\int\partial_{x}^{\ell}\mathcal{R}_{j}^{1}\cdot
\partial_{x}^{\ell}\big(G_{j,-j}^{+}(\partial_{t}\psi_{c}-\Omega\psi_{c})\mathcal{R}_{-j}^{1}
+G_{j,-j}^{-}(\partial_{t}\phi_{c}+\Omega\phi_{c})\mathcal{R}_{-j}^{1}\big)dx\\
& \ \ \ \ \ \ \ \ \ \ \ \ +\int\partial_{x}^{\ell}\mathcal{R}_{j}^{1}\cdot
\partial_{x}^{\ell}\big(Q_{j,-j}^{+}(\partial_{t}\psi_{c}-\Omega\psi_{c},\mathcal{R}_{-j}^{1})
+Q_{j,-j}^{-}(\partial_{t}\phi_{c}+\Omega\phi_{c},\mathcal{R}_{-j}^{1})\big)dx\bigg\}\\
=&\epsilon\sum_{j\in\{\pm1\}}\bigg\{-\frac{1}{2}\int\partial_{x}^{\ell}\mathcal{R}_{j}^{1}\cdot
\partial_{x}\big(G_{j,j}^{+}(\partial_{t}\psi_{c}-\Omega\psi_{c})+G_{j,j}^{-}(\partial_{t}\phi_{c}+\Omega\phi_{c})\big)
\cdot\partial_{x}^{\ell}\mathcal{R}_{j}^{1}dx\\
& \ \ \ \ \ \ \ \ \ \ \ \ +\int\partial_{x}^{\ell}\mathcal{R}_{j}^{1}\cdot
\big[\partial_{x}^{\ell+1},G_{j,j}^{+}(\partial_{t}\psi_{c}-\Omega\psi_{c})
+G_{j,j}^{-}(\partial_{t}\phi_{c}+\Omega\phi_{c})\big]
\mathcal{R}_{j}^{1}dx\\
& \ \ \ \ \ \ \ \ \ \ \ \ +\int\partial_{x}^{\ell}\mathcal{R}_{j}^{1}\cdot
\partial_{x}^{\ell}\big(Q_{j,j}^{+}(\partial_{t}\psi_{c}-\Omega\psi_{c},\mathcal{R}_{j}^{1})
+Q_{j,j}^{-}(\partial_{t}\phi_{c}+\Omega\phi_{c},\mathcal{R}_{j}^{1})\big)dx\\
& \ \ \ \ \ \ \ \ \ \ \ \ +\int\partial_{x}^{\ell}\mathcal{R}_{j}^{1}\cdot
\big(G_{j,-j}^{+}(\partial_{t}\psi_{c}-\Omega\psi_{c})
+G_{j,-j}^{-}(\partial_{t}\phi_{c}+\Omega\phi_{c})\big)\cdot\partial_{x}^{\ell}\mathcal{R}_{-j}^{1}dx\\
& \ \ \ \ \ \ \ \ \ \ \ \ +\int\partial_{x}^{\ell}\mathcal{R}_{j}^{1}\cdot
\left[\partial_{x}^{\ell},G_{j,-j}^{+}(\partial_{t}\psi_{c}-\Omega\psi_{c})
+G_{j,-j}^{-}(\partial_{t}\phi_{c}+\Omega\phi_{c})\right]\mathcal{R}_{-j}^{1}dx\\
& \ \ \ \ \ \ \ \ \ \ \ \ +\int\partial_{x}^{\ell}\mathcal{R}_{j}^{1}\cdot
\partial_{x}^{\ell}\big(Q_{j,-j}^{+}(\partial_{t}\psi_{c}-\Omega\psi_{c},\mathcal{R}_{-j}^{1})
+Q_{j,-j}^{-}(\partial_{t}\phi_{c}+\Omega\phi_{c},\mathcal{R}_{-j}^{1})\big)dx\bigg\}\\
&\leq C \epsilon\sum_{j\in\{\pm1\}}\bigg\{\|\partial_{x}^{\ell}\mathcal{R}_{j}^{1}\|_{L^{2}}
\|\partial_{x}\big(G_{j,j}^{+}(\partial_{t}\psi_{c}-\Omega\psi_{c})
+G_{j,j}^{-}(\partial_{t}\phi_{c}+\Omega\phi_{c})\big)\partial_{x}^{\ell}\mathcal{R}_{j}^{1}\|_{L^{2}}\\
& \ \ \ \ \ \ \ \ \ \ \ \ \ \ \ \ \  +\|\partial_{x}^{\ell}\mathcal{R}_{j}^{1}\|_{L^{2}}
\|\partial_{x}^{\ell}\big(Q_{j,j}^{+}(\partial_{t}\psi_{c}-\Omega\psi_{c},\mathcal{R}_{j}^{1})
+Q_{j,j}^{-}(\partial_{t}\phi_{c}+\Omega\phi_{c},\mathcal{R}_{j}^{1})\big)\|_{L^{2}}\\
& \ \ \ \ \ \ \ \ \ \ \ \ \ \ \ \ \   +\|\partial_{x}^{\ell}\mathcal{R}_{j}^{1}\|_{L^{2}}
\|\big(G_{j,-j}^{+}(\partial_{t}\psi_{c}-\Omega\psi_{c})
+G_{j,-j}^{-}(\partial_{t}\phi_{c}+\Omega\phi_{c})\big)
\partial_{x}^{\ell}\mathcal{R}_{-j}^{1}\|_{L^{2}}\\
& \ \ \ \ \ \ \ \ \ \ \ \ \ \ \ \ \  +\|\partial_{x}^{\ell}\mathcal{R}_{j}^{1}\|_{L^{2}}
\|\partial_{x}\big(G_{j,-j}^{+}(\partial_{t}\psi_{c}-\Omega\psi_{c})
+G_{j,-j}^{-}(\partial_{t}\phi_{c}+\Omega\phi_{c})\big)
\partial_{x}^{\ell-1}\mathcal{R}_{-j}^{1}\|_{L^{2}}\\
& \ \ \ \ \ \ \ \ \ \ \ \ \ \ \ \ \  +\|\partial_{x}^{\ell}\mathcal{R}_{j}^{1}\|_{L^{2}}
\|\partial_{x}^{\ell}\big(Q_{j,-j}^{+}(\partial_{t}\psi_{c}-\Omega\psi_{c},\mathcal{R}_{-j}^{1})
+Q_{j,-j}^{-}(\partial_{t}\phi_{c}+\Omega\phi_{c},\mathcal{R}_{-j}^{1})\big)\|_{L^{2}}
\bigg\}\\
&\leq C \epsilon\sum_{j\in\{\pm1\}}\bigg\{\|\partial_{x}^{\ell}\mathcal{R}_{j}^{1}\|_{L^{2}}
\|\partial_{x}\big(G_{j,j}^{+}(\partial_{t}\psi_{c}-\Omega\psi_{c})
+G_{j,j}^{-}(\partial_{t}\phi_{c}+\Omega\phi_{c})\big)\|_{L^{\infty}}
\|\partial_{x}^{\ell}\mathcal{R}_{j}^{1}\|_{L^{2}}\\
& \ \ \ \ \ \ \ \ \ \ \ \ \ \ \ \ \  +\|\partial_{x}^{\ell}\mathcal{R}_{j}^{1}\|_{L^{2}}
\|\big(Q_{j,j}^{+}(\partial_{t}\psi_{c}-\Omega\psi_{c},\mathcal{R}_{j}^{1})
+Q_{j,j}^{-}(\partial_{t}\phi_{c}+\Omega\phi_{c},\mathcal{R}_{j}^{1})\big)\|_{H^{s}}\\
& \ \ \ \ \ \ \ \ \ \ \ \ \ \ \ \ \  +\|\partial_{x}^{\ell}\mathcal{R}_{j}^{1}\|_{L^{2}}
\|\big(G_{j,-j}^{+}(\partial_{t}\psi_{c}-\Omega\psi_{c})
+G_{j,-j}^{-}(\partial_{t}\phi_{c}+\Omega\phi_{c})\big)\|_{L^{\infty}}
\|\partial_{x}^{\ell}\mathcal{R}_{-j}^{1}\|_{L^{2}}\\
& \ \ \ \ \ \ \ \ \ \ \ \ \ \ \ \ \  +\|\partial_{x}^{\ell}\mathcal{R}_{j}^{1}\|_{L^{2}}
\|\partial_{x}\big(G_{j,-j}^{+}(\partial_{t}\psi_{c}-\Omega\psi_{c})
+G_{j,-j}^{-}(\partial_{t}\phi_{c}+\Omega\phi_{c})\big)\|_{L^{\infty}}
\|\partial_{x}^{\ell-1}\mathcal{R}_{-j}^{1}\|_{L^{2}}\\
& \ \ \ \ \ \ \ \ \ \ \ \ \ \ \ \ \   +\|\partial_{x}^{\ell}\mathcal{R}_{j}^{1}\|_{L^{2}}
\|\big(Q_{j,-j}^{+}(\partial_{t}\psi_{c}-\Omega\psi_{c},\mathcal{R}_{-j}^{1})
+Q_{j,-j}^{-}(\partial_{t}\phi_{c}+\Omega\phi_{c},\mathcal{R}_{-j}^{1})\big)\|_{H^{s}}
\bigg\}\\
&\leq C \epsilon\|\mathcal{R}^{1}\|^{2}_{H^{s}}
\|(\partial_{t}\widehat{\psi}_{c}-i\omega\widehat{\psi}_{c})
+(\partial_{t}\widehat{\phi}_{c}+i\omega\widehat{\phi}_{c})\|_{L^{1}(s)}\\
&\leq C\epsilon^{3}\|\mathcal{R}^{1}\|^{2}_{H^{s}},
\end{align*}
where we have used H\"older inequality, Sobolev embedding $H^{1}\hookrightarrow L^{\infty}$, Remark \ref{R5}, commutator estimate \eqref{w11} in Lemma \ref{L10} and \eqref{l1s} in Lemma \ref{LO}.

For the fifth brace of \eqref{oel}, we have
\begin{align*}
\epsilon^{2}&\sum_{j,p\in\{\pm1\}}
\bigg\{\int\partial_{x}^{\ell}\mathcal{F}^{9}_{j}\cdot
\partial_{x}^{\ell}(B_{j,p}^{1,1,+}(\psi_{c},\mathcal{R}_{p}^{1})
    +B_{j,p}^{1,1,-}(\phi_{c},\mathcal{R}_{p}^{1}))dx\\
& \ \ \ \ \ \ \ \ \ \ \ +\int\partial_{x}^{\ell}\mathcal{R}_{j}^{1}\cdot
\partial_{x}^{\ell}(B_{j,p}^{1,1,+}(\psi_{c},\mathcal{F}^{9}_{p})
+B_{j,p}^{1,1,-}(\phi_{c},\mathcal{F}^{9}_{p}))dx\bigg\}\\
=\epsilon^{2}&\sum_{j\in\{\pm1\}}
\bigg\{\int\partial_{x}^{\ell}\mathcal{F}^{9}_{j}\cdot
\partial_{x}^{\ell}(B_{j,j}^{1,1,+}(\psi_{c},\mathcal{R}_{j}^{1})
    +B_{j,j}^{1,1,-}(\phi_{c},\mathcal{R}_{j}^{1}))dx\\
& \ \ \ \ \ \ \ \ \ \ \ \ +\int\partial_{x}^{\ell}\mathcal{R}_{j}^{1}\cdot
\partial_{x}^{\ell}(B_{j,j}^{1,1,+}(\psi_{c},\mathcal{F}^{9}_{j})
+B_{j,j}^{1,1,-}(\phi_{c},\mathcal{F}^{9}_{j}))dx\bigg\}\\
+&\epsilon^{2}\sum_{j\in\{\pm1\}}
\bigg\{\int\partial_{x}^{\ell}\mathcal{F}^{9}_{j}\cdot
\partial_{x}^{\ell}(B_{j,-j}^{1,1,+}(\psi_{c},\mathcal{R}_{-j}^{1})
    +B_{j,-j}^{1,1,-}(\phi_{c},\mathcal{R}_{-j}^{1}))dx\\
& \ \ \ \ \ \ \ \ \ \ \ \ \ \ +\int\partial_{x}^{\ell}\mathcal{R}_{j}^{1}\cdot
\partial_{x}^{\ell}(B_{j,-j}^{1,1,+}(\psi_{c},\mathcal{F}^{9}_{-j})
+B_{j,-j}^{1,1,-}(\phi_{c},\mathcal{F}^{9}_{-j}))dx\bigg\}\\
=&\epsilon^{2}\sum_{j\in\{\pm1\}}
\bigg\{\int\partial_{x}^{\ell}\mathcal{F}^{9}_{j}\cdot
\partial_{x}^{\ell}(S_{j,j}^{+}(\partial_{x}\psi_{c},\mathcal{R}_{j}^{1})
    +S_{j,j}^{-}(\partial_{x}\phi_{c},\mathcal{R}_{j}^{1}))dx\bigg\}\\
+&\epsilon^{2}\sum_{j\in\{\pm1\}}
\bigg\{\int\partial_{x}^{\ell}\mathcal{F}^{9}_{j}\cdot
\partial_{x}^{\ell}((G_{j,-j}^{+}\psi_{c}
    +G_{j,-j}^{-}\phi_{c})\mathcal{R}_{-j}^{1}))dx\\
& \ \ \ \ \ \ \ \ \ \ \ \ \ \ +\int\partial_{x}^{\ell}\mathcal{R}_{j}^{1}\cdot
\partial_{x}^{\ell}((G_{j,-j}^{,+}\psi_{c}
+G_{j,-j}^{-}\phi_{c})\mathcal{F}^{9}_{-j}))dx\bigg\}\\
+&\epsilon^{2}\sum_{j\in\{\pm1\}}
\bigg\{\int\partial_{x}^{\ell}\mathcal{F}^{9}_{j}\cdot
\partial_{x}^{\ell}(Q_{j,-j}^{+}(\psi_{c},\mathcal{R}_{-j}^{1})
    +Q_{j,-j}^{-}(\phi_{c},\mathcal{R}_{-j}^{1}))dx\\
& \ \ \ \ \ \ \ \ \ \ \ \ \ \ +\int\partial_{x}^{\ell}\mathcal{R}_{j}^{1}\cdot
\partial_{x}^{\ell}(Q_{j,-j}^{+}(\psi_{c},\mathcal{F}^{9}_{-j})
+Q_{j,-j}^{,-}(\phi_{c},\mathcal{F}^{9}_{-j}))dx\bigg\}.\\
\end{align*}
Therefore, we obtain
\begin{align}\label{EE}
&\partial_{t}E_{\ell}=\epsilon^{2}\sum_{j,n\in\{\pm1\}}\int\partial_{x}^{\ell}
\mathcal{R}_{j}^{1}\cdot\partial_{x}^{\ell+1}(\mathcal{A}_{j,n}\mathcal{R}_{n}^{1})dx\nonumber\\
&+\epsilon^{2}\sum_{j,n\in\{\pm1\}}
\Big[\int\partial_{x}^{\ell+1}(\mathcal{D}_{j,n}\mathcal{R}_{n}^{1})\cdot
\partial_{x}^{\ell}(B_{j,j}^{1,1,+}(\psi_{c},\mathcal{R}_{j}^{1})
+B_{j,j}^{1,1,-}(\phi_{c},\mathcal{R}_{j}^{1}))dx\nonumber\\
& \ \ \ \ \ \ \ \ \  \ \ \ \ \ \ \ \ \
+\int\partial_{x}^{\ell}\mathcal{R}_{j}^{1}\cdot
\partial_{x}^{\ell}(B_{j,j}^{1,1,+}(\psi_{c},\partial_{x}(\mathcal{D}_{j,n}\mathcal{R}_{n}^{1}))
 +B_{j,j}^{1,1,-}(\phi_{c},\partial_{x}(\mathcal{D}_{j,n}\mathcal{R}_{n}^{1})))dx\Big]\nonumber\\
&+\epsilon^{2}\sum_{j,n\in\{\pm1\}}
\Big[\int\partial_{x}^{\ell+1}(\mathcal{D}_{j,n}\mathcal{R}_{n}^{1})\cdot
\partial_{x}^{\ell}(B_{j,-j}^{1,1,+}(\psi_{c},\mathcal{R}_{-j}^{1})
+B_{j,-j}^{1,1,-}(\phi_{c},\mathcal{R}_{-j}^{1}))dx\nonumber\\
& \ \ \ \ \ \ \ \ \  \ \ \ \ \ \ \ \ \ +\int\partial_{x}^{\ell}\mathcal{R}_{j}^{1}\cdot
\partial_{x}^{\ell}(B_{j,-j}^{1,1,+}(\psi_{c},\partial_{x}(\mathcal{D}_{-j,n}\mathcal{R}_{n}^{1}))
+B_{j,-j}^{1,1,-}(\phi_{c},\partial_{x}(\mathcal{D}_{-j,n}\mathcal{R}_{n}^{1})))dx\Big]\nonumber\\
&+\epsilon^{2}\mathcal{O}(1+\mathcal{E}_{s}+\epsilon^{3/2}\mathcal{E}_{s}^{3/2}+\epsilon^{3}\mathcal{E}_{s}^{2})\nonumber\\
&=:\mathcal{P}
+\mathcal{Q}+\mathcal{K}+\epsilon^{2}\mathcal{O}(1+\mathcal{E}_{s}+\epsilon^{3/2}\mathcal{E}_{s}^{3/2}
+\epsilon^{3}\mathcal{E}_{s}^{2}).
\end{align}

\emph{Analysis of $\mathcal{Q}$.} We apply Leibniz's rule and integration by parts to extract all terms with more than $\ell$ spatial derivatives falling on $\mathcal{R}_{\pm1}^{1}$, to obtain
\begin{align*}
\mathcal{Q}=&\epsilon^{2}\sum_{j,n\in\{\pm1\}}
\bigg[\int\partial_{x}^{\ell+1}(\mathcal{D}_{j,n}\mathcal{R}_{n}^{1})\cdot
\partial_{x}^{\ell}\big(B_{j,j}^{1,1,+}(\psi_{c},\mathcal{R}_{j}^{1})
+B_{j,j}^{1,1,-}(\phi_{c},\mathcal{R}_{j}^{1})\big)dx\\
& \ \ \ \ \ \ \ \ \ \ \ \ \ \ +\int\partial_{x}^{\ell}\mathcal{R}_{j}^{1}\cdot
\partial_{x}^{\ell}\big(B_{j,j}^{1,1,+}(\psi_{c},
\partial_{x}(\mathcal{D}_{j,n}\mathcal{R}_{n}^{1}))
 +B_{j,j}^{1,1,-}(\phi_{c},\partial_{x}(\mathcal{D}_{j,n}\mathcal{R}_{n}^{1}))\big)dx\bigg]\\
=&\epsilon^{2}\sum_{j,n\in\{\pm1\}}
\bigg[\int\partial_{x}^{\ell+1}(\mathcal{D}_{j,n}\mathcal{R}_{n}^{1})\cdot
\big(B_{j,j}^{1,1,+}(\psi_{c},\partial_{x}^{\ell}\mathcal{R}_{j}^{1})
+B_{j,j}^{1,1,-}(\phi_{c},\partial_{x}^{\ell}\mathcal{R}_{j}^{1})\big)dx\\
& \ \ \ \ \ \ \ \ \ \ \ \ \ \  +\ell\int\partial_{x}^{\ell+1}(\mathcal{D}_{j,n}\mathcal{R}_{n}^{1})\cdot
\big(B_{j,j}^{1,1,+}(\partial_{x}\psi_{c},\partial_{x}^{\ell-1}\mathcal{R}_{j}^{1})
+B_{j,j}^{1,1,-}(\partial_{x}\phi_{c},\partial_{x}^{\ell-1}\mathcal{R}_{j}^{1})\big)dx\\
& \ \ \ \ \ \ \ \ \ \ \ \ \ \ +\int\partial_{x}^{\ell}\mathcal{R}_{j}^{1}\cdot
\big(B_{j,j}^{1,1,+}(\psi_{c},
\partial_{x}^{\ell+1}(\mathcal{D}_{j,n}\mathcal{R}_{n}^{1}))
+B_{j,j}^{1,1,-}(\phi_{c},\partial_{x}^{\ell+1}(\mathcal{D}_{j,n}\mathcal{R}_{n}^{1}))\big)dx\\
& \ \ \ \ \ \ \ \ \ \ \ \ \ \ +\ell\int\partial_{x}^{\ell}\mathcal{R}_{j}^{1}\cdot
\big(B_{j,j}^{1,1,+}(\partial_{x}\psi_{c},
\partial_{x}^{\ell}(\mathcal{D}_{j,n}\mathcal{R}_{n}^{1}))
+B_{j,j}^{1,1,-}(\partial_{x}\phi_{c},\partial_{x}^{\ell}(\mathcal{D}_{j,n}\mathcal{R}_{n}^{1}))
\big)dx\bigg]\\
&+\epsilon^{2}\mathcal{O}(1+\mathcal{E}_{s}+\epsilon^{3/2}\mathcal{E}_{s}^{3/2}
+\epsilon^{3}\mathcal{E}_{s}^{2})\\
=&\epsilon^{2}\sum_{j,n\in\{\pm1\}}
\bigg[\int\partial_{x}^{\ell+1}(\mathcal{D}_{j,n}\mathcal{R}_{n}^{1})\cdot
\big(B_{j,j}^{1,1,+}(\psi_{c},\partial_{x}^{\ell}\mathcal{R}_{j}^{1})
+B_{j,j}^{1,1,-}(\phi_{c},\partial_{x}^{\ell}\mathcal{R}_{j}^{1})\big)dx\\
& \ \ \ \ \ \ \ \ \ \ \ \ \ \ +\int\partial_{x}^{\ell}\mathcal{R}_{j}^{1}\cdot
\big(B_{j,j}^{1,1,+}(\psi_{c},
\partial_{x}^{\ell+1}(\mathcal{D}_{j,n}\mathcal{R}_{n}^{1}))
+B_{j,j}^{1,1,-}(\phi_{c},\partial_{x}^{\ell+1}(\mathcal{D}_{j,n}\mathcal{R}_{n}^{1})\big)dx\\
& \ \ \ \ \ \ \ \ \ \ \ \ \ \ -\ell\int\partial_{x}^{\ell}(\mathcal{D}_{j,n}\mathcal{R}_{n}^{1})\cdot
\big(B_{j,j}^{1,1,+}(\partial_{x}\psi_{c},\partial_{x}^{\ell}\mathcal{R}_{j}^{1})
+B_{j,j}^{1,1,-}(\partial_{x}\phi_{c},\partial_{x}^{\ell-1}\mathcal{R}_{j}^{1})\big)dx\\
& \ \ \ \ \ \ \ \ \ \ \ \ \ \ +\ell\int\partial_{x}^{\ell}\mathcal{R}_{j}^{1}\cdot
\big(B_{j,j}^{1,1,+}(\partial_{x}\psi_{c},
\partial_{x}^{\ell}(\mathcal{D}_{j,n}\mathcal{R}_{n}^{1}))
+B_{j,j}^{1,1,-}(\partial_{x}\phi_{c},\partial_{x}^{\ell}(\mathcal{D}_{j,n}\mathcal{R}_{n}^{1}))\big)dx\bigg]\\
&+\epsilon^{2}\mathcal{O}(1+\mathcal{E}_{s}+\epsilon^{3/2}\mathcal{E}_{s}^{3/2}
+\epsilon^{3}\mathcal{E}_{s}^{2}).
\end{align*}
According to \eqref{17} and the asymptotic expansion \eqref{15} and \eqref{18}, we have
\begin{align*}
\mathcal{Q}=&\epsilon^{2}\sum_{j,n\in\{\pm1\}}
\bigg[\int\partial_{x}^{\ell}\mathcal{R}_{j}^{1}\cdot
\big(S_{j,j}^{+}(\partial_{x}\psi_{c},\partial_{x}^{\ell+1}(\mathcal{D}_{j,n}\mathcal{R}_{n}^{1}))
+S_{j,j}^{-}(\partial_{x}\phi_{c},\partial_{x}^{\ell+1}(\mathcal{D}_{j,n}\mathcal{R}_{n}^{1}))\big)dx\\
& \ \ \ \ \ \ \ \ \ \ \ \ \ +2\ell\int\partial_{x}^{\ell}\mathcal{R}_{j}^{1}\cdot
\big(B_{j,j}^{1,1,+}(\partial_{x}\psi_{c},
\partial_{x}^{\ell}(\mathcal{D}_{j,n}\mathcal{R}_{n}^{1}))
+B_{j,j}^{1,1,-}(\partial_{x}\phi_{c},\partial_{x}^{\ell}(\mathcal{D}_{j,n}\mathcal{R}_{n}^{1}))\big)dx\bigg]\\
&+\epsilon^{2}\mathcal{O}(1+\mathcal{E}_{s}+\epsilon^{3/2}\mathcal{E}_{s}^{3/2}
+\epsilon^{3}\mathcal{E}_{s}^{2})\\
=&\epsilon^{2}\sum_{j,n\in\{\pm1\}}
\bigg[\int\partial_{x}^{\ell}\mathcal{R}_{j}^{1}\cdot
\big(G_{j,j}^{+}\partial_{x}\psi_{c}+G_{j,j}^{-}\partial_{x}\phi_{c}\big)\cdot \partial_{x}^{\ell+1}(\mathcal{D}_{j,n}\mathcal{R}_{n}^{1})dx\\
& \ \ \ \ \ \ \ \ \ \ \ \ \ +2\ell\int\partial_{x}^{\ell}\mathcal{R}_{j}^{1}\cdot
\partial_{x}\big((G_{j,j}^{+}\partial_{x}\psi_{c}+G_{j,j}^{-}\partial_{x}\phi_{c})  \partial_{x}^{\ell}(\mathcal{D}_{j,n}\mathcal{R}_{n}^{1})\big)dx\bigg]\\
&+\epsilon^{2}\mathcal{O}(1+\mathcal{E}_{s}+\epsilon^{3/2}\mathcal{E}_{s}^{3/2}
+\epsilon^{3}\mathcal{E}_{s}^{2})\\
=&(2\ell+1)\epsilon^{2}\sum_{j,n\in\{\pm1\}} \int\partial_{x}^{\ell}\mathcal{R}_{j}^{1}\cdot
\big(G_{j,j}^{+}\partial_{x}\psi_{c}+G_{j,j}^{-}\partial_{x}\phi_{c}\big)\cdot  \partial_{x}^{\ell+1}(\mathcal{D}_{j,n}\mathcal{R}_{n}^{1})dx\\
&+\epsilon^{2}\mathcal{O}(1+\mathcal{E}_{s}+\epsilon^{3/2}\mathcal{E}_{s}^{3/2}
+\epsilon^{3}\mathcal{E}_{s}^{2})\\
=&(2\ell+1)\epsilon^{2}\sum_{j\in\{\pm1\}}
\int\partial_{x}^{\ell}\mathcal{R}_{j}^{1}\cdot
(G_{j,j}^{+}\partial_{x}\psi_{c}+G_{j,j}^{-}\partial_{x}\phi_{c})\cdot \partial_{x}^{\ell+1}\big(q\varphi_{s_{6}}\mathcal{R}_{j}^{1}
-\frac{j}{q}\varphi_{s_{5}}\mathcal{R}_{-j}^{1})\big)dx\\
&+\epsilon^{2}\mathcal{O}(1+\mathcal{E}_{s}+\epsilon^{3/2}\mathcal{E}_{s}^{3/2}
+\epsilon^{3}\mathcal{E}_{s}^{2})\\
=&-j(2\ell+1)\epsilon^{2}\sum_{j\in\{\pm1\}}
\int(G_{j,j}^{+}\partial_{x}\psi_{c}+G_{j,j}^{-}\partial_{x}\phi_{c})\frac{1}{q}\varphi_{s_{5}}\cdot
\partial_{x}^{\ell}\mathcal{R}_{j}^{1}\cdot\partial_{x}^{\ell+1}\mathcal{R}_{-j}^{1}dx\\
&+\epsilon^{2}\mathcal{O}(1+\mathcal{E}_{s}+\epsilon^{3/2}\mathcal{E}_{s}^{3/2}
+\epsilon^{3}\mathcal{E}_{s}^{2})\\
=&-(\ell+\frac{1}{2})\epsilon^{2}
\int(G_{1,1}^{+}+G_{-1,-1}^{+})(\partial_{x}\psi_{c}+
\partial_{x}\phi_{c})\frac{1}{q}\varphi_{s_{5}}\cdot
\partial_{x}^{\ell}(\mathcal{R}_{1}^{1}+\mathcal{R}_{-1}^{1})\cdot
\partial_{x}^{\ell+1}(\mathcal{R}_{1}^{1}-\mathcal{R}_{-1}^{1})dx\\
&+\epsilon^{2}\mathcal{O}(1+\mathcal{E}_{s}+\epsilon^{3/2}\mathcal{E}_{s}^{3/2}
+\epsilon^{3}\mathcal{E}_{s}^{2})\\
=&-(2\ell+1)\epsilon^{2}\widehat{q}(k_{0})
\int(-1+\partial_{x}^{2})q(\psi_{c}+\phi_{c})\frac{1}{q}\varphi_{s_{5}}\cdot
\partial_{x}^{\ell}(\mathcal{R}_{1}^{1}+\mathcal{R}_{-1}^{1})\cdot
\partial_{x}^{\ell+1}(\mathcal{R}_{1}^{1}-\mathcal{R}_{-1}^{1})dx\\
&+\epsilon^{2}\mathcal{O}(1+\mathcal{E}_{s}+\epsilon^{3/2}\mathcal{E}_{s}^{3/2}
+\epsilon^{3}\mathcal{E}_{s}^{2}),
\end{align*}
where we have used \eqref{finally} with \eqref{ABn}, \eqref{part2}-\eqref{part4} and integration by parts.

On the other hand, by the equation \eqref{finally}-\eqref{ABn}, we have
\begin{equation}
\begin{split}\label{equ}
\partial_{t}(R_{1}^{1}+R_{-1}^{1})=&\partial_{x}q(\mathcal{R}_{1}^{1}-\mathcal{R}_{-1}^{1})+ \epsilon\partial_{x}(\varphi_{s_{5}}q(\mathcal{R}_{1}^{1}-\mathcal{R}_{-1}^{1})
+q\varphi_{s_{6}}(\mathcal{R}_{1}^{1}+\mathcal{R}_{-1}^{1}))
+\epsilon(\mathcal{F}_{1}^{9}+\mathcal{F}_{-1}^{9}).
\end{split}
\end{equation}
Taking $\partial_{x}^{\ell}$ on the equation \eqref{equ} and using commutator notation \eqref{w}, we have
\begin{align}\label{equu}
\partial_{x}^{\ell+1}&(\mathcal{R}_{1}^{1}-\mathcal{R}_{-1}^{1})
=\frac{1}{1+\epsilon\varphi_{s_{5}}}\Big[
\partial_{t}\partial_{x}^{\ell}(\mathcal{R}_{1}^{1}+\mathcal{R}_{-1}^{1})-\epsilon q \varphi_{s_{6}}\partial_{x}^{\ell+1}(\mathcal{R}_{1}^{1}+\mathcal{R}_{-1}^{1})\nonumber\\
&-\epsilon[\partial_{x}^{\ell+1},\varphi_{s_{5}}]q(\mathcal{R}_{1}^{1}-\mathcal{R}_{-1}^{1})
-\epsilon[\partial_{x}^{\ell+1},q\varphi_{s_{6}}](\mathcal{R}_{1}^{1}+\mathcal{R}_{-1}^{1})
-\epsilon\partial_{x}^{\ell}(\mathcal{F}_{1}^{9}+\mathcal{F}_{-1}^{9})\Big]\\
&+(1-q)\partial_{x}^{\ell+1}(\mathcal{R}_{1}^{1}-\mathcal{R}_{-1}^{1})\nonumber.
\end{align}
Then by \eqref{equu}, \eqref{part5}, integration by parts and commutator estimates \eqref{w11}, we have
\begin{align}\label{QQ}
\mathcal{Q}=&-(2\ell+1)\epsilon^{2}\widehat{q}(k_{0})
\int\frac{(-1+\partial_{x}^{2})q(\psi_{c}+\phi_{c})\frac{1}{q}\varphi_{s_{5}}}{1+\epsilon\varphi_{s_{5}}}\cdot
\partial_{x}^{\ell}(\mathcal{R}_{1}^{1}+\mathcal{R}_{-1}^{1})\cdot
\partial_{t}\partial_{x}^{\ell}(\mathcal{R}_{1}^{1}+\mathcal{R}_{-1}^{1})dx\nonumber\\
&+(2\ell+1)\epsilon^{3}\widehat{q}(k_{0})
\int\frac{(-1+\partial_{x}^{2})q(\psi_{c}+\phi_{c})\frac{1}{q}\varphi_{s_{5}}q\varphi_{s_{6}}}
{1+\epsilon\varphi_{s_{5}}}\cdot
\partial_{x}^{\ell}(\mathcal{R}_{1}^{1}+\mathcal{R}_{-1}^{1})\cdot
\partial_{x}^{\ell+1}(\mathcal{R}_{1}^{1}+\mathcal{R}_{-1}^{1})dx\nonumber\\
&+\epsilon^{2}(1+\mathcal{E}_{s}+\epsilon^{3/2}\mathcal{E}_{s}^{3/2}
+\epsilon^{3}\mathcal{E}_{s}^{2})\nonumber\\
=&-(\ell+\frac{1}{2})\epsilon^{2}\widehat{q}(k_{0})\frac{d}{dt}
\int\frac{(-1+\partial_{x}^{2})q(\psi_{c}+\phi_{c})\frac{1}{q}\varphi_{s_{5}}}{1+\epsilon\varphi_{s_{5}}}\cdot
(\partial_{x}^{\ell}(\mathcal{R}_{1}^{1}+\mathcal{R}_{-1}^{1}))^{2}dx\\
&+\epsilon^{2}\mathcal{O}(1+\mathcal{E}_{s}+\epsilon^{3/2}\mathcal{E}_{s}^{3/2}
+\epsilon^{3}\mathcal{E}_{s}^{2})\nonumber.
\end{align}

\emph{Analysis of $\mathcal{K}$.} According to \eqref{15}, \eqref{part2}, \eqref{part5}, \eqref{part6}, integration by parts and Lemma \ref{L9}, we have
\begin{align}\label{KK}
\mathcal{K}=&\epsilon^{2}\sum_{j,n\in\{\pm1\}}
\Big[\int\partial_{x}^{\ell+1}(\mathcal{D}_{j,n}\mathcal{R}_{n}^{1})\cdot
(B_{j,-j}^{1,1,+}(\psi_{c},\partial_{x}^{\ell}\mathcal{R}_{-j}^{1})
+B_{j,-j}^{1,1,-}(\phi_{c},\partial_{x}^{\ell}\mathcal{R}_{-j}^{1}))dx\nonumber\\
& \ \ \ \ \ \ \ \ \  \ \ \ \ \ \ +\int\partial_{x}^{\ell}\mathcal{R}_{j}^{1}\cdot
(B_{j,-j}^{1,1,+}(\psi_{c},\partial_{x}^{\ell+1}(\mathcal{D}_{-j,n}\mathcal{R}_{n}^{1}))
+B_{j,-j}^{1,1,-}(\phi_{c},\partial_{x}^{\ell+1}(\mathcal{D}_{-j,n}\mathcal{R}_{n}^{1})))dx\Big]\nonumber\\
&+\epsilon^{2}\mathcal{O}(\mathcal{E}_{s}+\epsilon^{3/2}\mathcal{E}_{s}^{3/2})\nonumber\\
=&\epsilon^{2}\sum_{j\in\{\pm1\}}\Big[\int\partial_{x}^{\ell+1}(\mathcal{D}_{j,j}\mathcal{R}_{j}^{1})\cdot
(B_{j,-j}^{1,1,+}(\psi_{c},\partial_{x}^{\ell}\mathcal{R}_{-j}^{1})
+B_{j,-j}^{1,1,-}(\phi_{c},\partial_{x}^{\ell}\mathcal{R}_{-j}^{1}))dx\nonumber\\
& \ \ \ \ \ \ \ \ \  \ \ \ \  +\int\partial_{x}^{\ell}\mathcal{R}_{j}^{1}\cdot
(B_{j,-j}^{1,1,+}(\psi_{c},\partial_{x}^{\ell+1}(\mathcal{D}_{-j,-j}\mathcal{R}_{-j}^{1}))
+B_{j,-j}^{1,1,-}(\phi_{c},\partial_{x}^{\ell+1}(\mathcal{D}_{-j,-j}\mathcal{R}_{-j}^{1})))dx\Big]\nonumber\\
&+\epsilon^{2}\mathcal{O}(\mathcal{E}_{s}+\epsilon^{3/2}\mathcal{E}_{s}^{3/2})\nonumber\\
=&2\epsilon^{2}\sum_{j\in\{\pm1\}} \int q\varphi_{s_{6}}(
G_{j,-j}^{+}\psi_{c}+G_{j,-j}^{-}\phi_{c})\cdot \partial_{x}^{\ell}\mathcal{R}_{j}^{1}\cdot\partial_{x}^{\ell+1}\mathcal{R}_{-j}^{1}dx
+\epsilon^{2}\mathcal{O}(\mathcal{E}_{s}+\epsilon^{3/2}\mathcal{E}_{s}^{3/2})\nonumber\\
=&-2\epsilon^{2} \int q\varphi_{s_{6}}(
(G_{1,-1}^{+}-G_{-1,1}^{+})\psi_{c}+(G_{1,-1}^{-}-G_{-1,1}^{-})\phi_{c})\cdot \partial_{x}^{\ell+1}(\mathcal{R}_{1}^{1}+\mathcal{R}_{-1}^{1})
\cdot\partial_{x}^{\ell}(\mathcal{R}_{1}^{1}-\mathcal{R}_{-1}^{1})dx\nonumber\\
&+\epsilon^{2}\mathcal{O}(\mathcal{E}_{s}+\epsilon^{3/2}\mathcal{E}_{s}^{3/2})\nonumber\\
=&\epsilon^{2}\mathcal{O}(\mathcal{E}_{s}+\epsilon^{3/2}\mathcal{E}_{s}^{3/2}).
\end{align}

\emph{Analysis of $\mathcal{P}$.} According to \eqref{ABn}, \eqref{part2}-\eqref{part5} and integration by parts, we have
\begin{align*}
\mathcal{P}=&\epsilon^{2}\sum_{j,n\in\{\pm1\}}\int\partial_{x}^{\ell}
\mathcal{R}_{j}^{1}\cdot\partial_{x}^{\ell+1}(\mathcal{A}_{j,n}\mathcal{R}_{n}^{1})dx\\
=&\epsilon^{2}\sum_{j\in\{\pm1\}}\int\partial_{x}^{\ell}
\mathcal{R}_{j}^{1}\cdot\partial_{x}^{\ell+1}(\mathcal{A}_{j,-j}\mathcal{R}_{-j}^{1})dx
+\epsilon^{2}\mathcal{O}(\mathcal{E}_{s}+\epsilon^{3/2}\mathcal{E}_{s}^{3/2})\\
=&j\epsilon^{2}\sum_{j\in\{\pm1\}}\int\varphi_{s_{3}}\cdot\partial_{x}^{\ell}
\mathcal{R}_{j}^{1}\cdot\partial_{x}^{\ell+1}\mathcal{R}_{-j}^{1}dx
+\epsilon^{2}\mathcal{O}(\mathcal{E}_{s}+\epsilon^{3/2}\mathcal{E}_{s}^{3/2})\\
=&-\frac{\epsilon^{2}}{2}\int\varphi_{s_{3}}\cdot\partial_{x}^{\ell}
(\mathcal{R}_{1}^{1}+\mathcal{R}_{-1}^{1})\cdot
\partial_{x}^{\ell+1}(\mathcal{R}_{1}^{1}-\mathcal{R}_{-1}^{1})dx
+\epsilon^{2}\mathcal{O}(\mathcal{E}_{s}+\epsilon^{3/2}\mathcal{E}_{s}^{3/2}).
\end{align*}
Then using \eqref{equu}, \eqref{part5}, integration by parts and commutator estimates \eqref{w11} once more, we have
\begin{equation}
\begin{split}\label{PP}
\mathcal{P}=&-\frac{\epsilon^{2}}{2}\int\varphi_{s_{3}}\cdot\partial_{x}^{\ell}
(\mathcal{R}_{1}^{1}+\mathcal{R}_{-1}^{1})\cdot
\partial_{x}^{\ell+1}(\mathcal{R}_{1}^{1}-\mathcal{R}_{-1}^{1})dx
+\epsilon^{2}\mathcal{O}(\mathcal{E}_{s}+\epsilon^{3/2}\mathcal{E}_{s}^{3/2})\\
=&-\frac{\epsilon^{2}}{2}\int\frac{\varphi_{s_{3}}}{1+\epsilon\varphi_{s_{5}}}
\partial_{x}^{\ell}(\mathcal{R}_{1}^{1}+\mathcal{R}_{-1}^{1})
\partial_{t}\partial_{x}^{\ell}
(\mathcal{R}_{1}^{1}+\mathcal{R}_{-1}^{1})dx\\
&-\frac{\epsilon^{3}}{2}\int\frac{\varphi_{s_{3}}q\varphi_{s_{6}}}{1+\epsilon\varphi_{s_{5}}}
\partial_{x}^{\ell}(\mathcal{R}_{1}^{1}+\mathcal{R}_{-1}^{1})
\partial_{x}^{\ell+1}
(\mathcal{R}_{1}^{1}+\mathcal{R}_{-1}^{1})dx
+\epsilon^{2}\mathcal{O}(\mathcal{E}_{s}+\epsilon^{3/2}\mathcal{E}_{s}^{3/2})\\
=&-\epsilon^{2}\frac{d}{dt}\int\frac{\varphi_{s_{3}}}{1+\epsilon\varphi_{s_{5}}}
(\partial_{x}^{\ell}(\mathcal{R}_{1}^{1}+\mathcal{R}_{-1}^{1}))^{2}dx
+\epsilon^{2}\mathcal{O}(\mathcal{E}_{s}+\epsilon^{3/2}\mathcal{E}_{s}^{3/2}).
\end{split}
\end{equation}

Combing \eqref{EE} and \eqref{QQ}-\eqref{PP}, we further define a modified energy
\begin{align*}
\widetilde{\mathcal{E}_{s}}=\mathcal{E}_{s}+\epsilon^{2}\sum_{\ell=1}^{s}h_{\ell},
\end{align*}
with
\begin{equation*}
\begin{split}
h_{\ell}=
\int\frac{((\ell+\frac{1}{2})\widehat{q}(k_{0})(-1+\partial_{x}^{2})q(\psi_{c}+\phi_{c})
\frac{1}{q}\varphi_{s_{5}}+\varphi_{s_{3}}}{1+\epsilon\varphi_{s_{5}}}\cdot
(\partial_{x}^{\ell}(\mathcal{R}_{1}^{1}+\mathcal{R}_{-1}^{1}))^{2}dx.
\end{split}
\end{equation*}
According to the form of $\varphi_{s_{3}}$ and $\varphi_{s_{5}}$ in \eqref{s34} and \eqref{s56}, we see that $\sum_{\ell=1}^{s}h_{\ell}=\mathcal{O}(\|\mathcal{R}\|^{2}_{H^{s}})$ as long as $\|\mathcal{R}\|^{2}_{H^{s}}=\mathcal{O}(1)$, and $\epsilon^{2}\partial_{t}\sum_{\ell=1}^{s}h_{\ell}$ eliminates all the integral terms on the right-hand side of the evolution equation of $\mathcal{E}_{s}$ with a factor $\partial_{x}^{s+1}\mathcal{R}$. Consequently, we obtain
\begin{equation*}
\begin{split}
\partial_{t}\widetilde{\mathcal{E}_{s}}\lesssim\epsilon^{2}(\widetilde{\mathcal{E}_{s}}
+\epsilon^{1/2}\widetilde{\mathcal{E}_{s}}^{3/2}+\epsilon\widetilde{\mathcal{E}_{s}}^{2}+1),
\end{split}
\end{equation*}
as long as $\|\mathcal{R}\|^{2}_{H^{s}}=\mathcal{O}(1)$.
So Gronwall's inequality yields the $\mathcal{O}(1)$ boundedness of $\widetilde{\mathcal{E}_{s}}$ and hence the $\mathcal{O}(1)$ boundedness of $R$ for all $t\in[0,T_{0}/\epsilon^{2}]$, thanks to the equivalence of $\|R\|_{H^{s}}$ and $\sqrt{\mathcal{E}_{s}}$ in Proposition \ref{C1}. Then we finish the proof of Theorem \ref{Thm2} and hence of Theorem \ref{Thm1}.

\bigskip

Conflict of Interest: The authors declare that they have no conflict of interest.

\end{document}